\numberwithin{equation}{section}
\newtheorem{theorem}{Theorem}[section]
\newtheorem{thm}[theorem]{Theorem}
\newtheorem{lemma}[theorem]{Lemma}
\newtheorem{lem}[theorem]{Lemma}
\newtheorem{prop}[theorem]{Proposition}
\newtheorem{coro}[theorem]{Corollary}
\theoremstyle{definition}
\newtheorem{defi}[theorem]{Definition}
\newtheorem{example}[theorem]{Example}
\newtheorem{exa}[theorem]{Example}
 \newtheorem*{ackn}{Acknowledgements}
 \newtheorem*{thmA}{Theorem A} 
 \newtheorem*{thmB}{Theorem B} 
\newtheorem*{thmC}{Theorem C}
 \theoremstyle{plain}
\newtheorem*{namedthm}{\namedthmname}
\newcounter{namedthm}
 \newcommand{\R}{\mathbb R}
 \newcommand{\Q}{\mathbb Q}
 \newcommand{\C}{\mathbb C}
  \newcommand{\PP}{\mathbb P}
 \newcommand{\N}{\mathbb N}
 \newcommand{\Z}{\mathbb Z}
 \newcommand{\reg}{\mathrm{\rm reg}}
 \newcommand{\sing}{\operatorname{\rm sing}}
 \newcommand{\e}{\varepsilon}
 \newcommand{\f}{\varphi}
 \newcommand{\p}{\psi}
 \newcommand \PSH {{\rm PSH}}
 \newcommand \vol{{\rm Vol}}
\newcommand{\Ric}{{\rm Ric}}
   \newcommand \ord {{\rm ord}}
\newcommand{\fa}{\mathfrak{a}}
\newcommand{\cO}{\mathcal{O}}
\newcommand{\ie}{{\rm i.e.\ }}
\DeclareMathOperator{\Num}{N^1}
\newcommand{\hvol}{\widehat{\mathrm{vol}}}
\newcommand{\ld}{\mathrm{A}}
\DeclareMathOperator{\DivVal}{DivVal}
\DeclareMathOperator{\Divb}{Div_{\mathrm{b}}}
\DeclareMathOperator{\Div}{Div}
\DeclareMathOperator{\lct}{lct}
\newcommand{\inter}{\cdot\ldots\cdot}
\subjclass[2010]{32W20, 32U05, 32Q15, 35A23}
\keywords{Dirichlet problem, K\"ahler-Einstein metrics, log-terminal singularity, complex Monge-Amp\`ere  equation, normalized volume}
\begin{document}


\title[K\"ahler-Einstein metrics near a log-terminal singularity]{K\"ahler-Einstein metrics with positive curvature near an isolated log terminal singularity}

\author{Vincent Guedj \& Antonio Trusiani, \\
{\tiny Appendix by S\'ebastien Boucksom}}


\address{Institut de Math\'ematiques de Toulouse  \& Institut Universitaire de France  \\ Universit\'e de Toulouse \\
118 route de Narbonne \\
31400 Toulouse, France\\}

\email{\href{mailto:vincent.guedj@math.univ-toulouse.fr}{vincent.guedj@math.univ-toulouse.fr}}
\urladdr{\href{https://www.math.univ-toulouse.fr/~guedj}{https://www.math.univ-toulouse.fr/~guedj/}}

\address{Institut de Math\'ematiques de Toulouse   \\ Universit\'e de Toulouse \\
118 route de Narbonne \\
31400 Toulouse, France\\}

\email{\href{mailto:antonio.trusiani@math.univ-toulouse.fr}{antonio.trusiani@math.univ-toulouse.fr}}
\urladdr{\href{https://sites.google.com/view/antonio-trusiani/home}{https://sites.google.com/view/antonio-trusiani/home}}
\date{\today}

 \address{Institut de Math\'ematiques  de Jussieu - Paris Rive Gauche  \\
 Sorbonne Université - Campus Pierre et Marie Curie \\
4 place Jussieu  \\
75252 Paris Cedex 05, France \\}

\email{\href{mailto:sebastien.boucksom@imj-prg.fr }{sebastien.boucksom@imj-prg.fr }}
\urladdr{\href{http://sebastien.boucksom.perso.math.cnrs.fr}{http://sebastien.boucksom.perso.math.cnrs.fr}}

 \begin{abstract}
 We  analyze the existence of  K\"ahler-Einstein   metrics of positive curvature in the neighborhood of
  a germ of a  log terminal singularity $(X,p)$.
 This boils down to solve a Dirichlet problem for certain complex Monge-Amp\`ere equations.
 We  show that the solvability of the latter is independent of the shape of the domain
 and of the boundary data.
   We establish a Moser-Trudinger $(MT)_{\gamma}$ inequality in subcritical regimes $\gamma<\gamma_p$
   and establish the existence of smooth solutions in that cases.
   We show that the expected critical exponent $\hat{\gamma}_p=\frac{n+1}{n} \widehat{\mathrm{vol}}(X,p)^{1/n}$ can be expressed
   in terms of the normalized volume, an important algebraic invariant of the singularity.
 \end{abstract}

 \maketitle

\tableofcontents

\section*{Introduction}

 Let $(X,p)$ be a germ of an isolated singularity. 
 We  analyze the existence of local  K\"ahler-Einstein 
 metrics of positive curvature in a neighborhood of $p$. 
 It follows from \cite[Proposition 3.8]{BBEGZ} that the singularity has
 to be   {\it log terminal},  a relatively mild type of singularity
 that plays a central role in birational geometry.
 We refer the reader to Definition \ref{def:lt} for a precise formulation and simply indicate here
 that  a prototypical example is the vertex of the affine cone over a Fano manifold. Consider indeed
 $$
 X=\{ z \in \C^{n+1}, \, P(z)=0 \},
 $$
 $P$ a homogeneous polynomial of degree $d \in \N^*$ so that 
 $H=\{ [z] \in \C\PP^n, \, P(z)=0 \}$ is a smooth hypersurface of the complex projective space.
 Then $(X,0)$ is log terminal if and only if $H$ is Fano (which is equivalent 
 here to $d <n+1$).
 Thus log terminal singularities can be seen as a local analogue of Fano varieties.
 
 \smallskip
 
Given a local embedding $(X,p)\hookrightarrow (\C^{N},0)$, 
constructing such a local  K\"ahler-Einstein  metric
boils down to solve a complex Monge-Amp\`ere equation
\begin{equation*}
     (MA)_{\gamma,\phi,\Omega}
    \begin{cases}
    (dd^{c}\varphi)^{n}=\frac{e^{-\gamma\varphi}d\mu_{p}}{\int_{\Omega}e^{-\gamma \varphi} d\mu_{p}}\\
    \varphi_{\vert\partial\Omega}=\phi,
    \end{cases}
\end{equation*} 
where $\Omega$ is a 
smooth neighborhood of $p$, 
$\phi$ is a smooth boundary data,  $\mu_{p}$ is an adapted volume form (see Definition \ref{def:adapted}), and 
$\gamma>0$ is a parameter.
We seek for a solution $\varphi \in   {\mathcal C}^{\infty}\big(\Omega\setminus\{p\}\big) \cap {\mathcal C}^0(\overline{\Omega})$
which is strictly plurisubharmonic in $\Omega \setminus \{ p \}$,
so that $\omega_{KE}:=dd^c \f$ is a K\"ahler form in $\Omega \setminus \{p\}$  
satisfying the Einstein equation
$$
\Ric(\omega_{KE})=\gamma \omega_{KE}.
$$


\medskip

An important motivation comes from the global study of positively curved 
K\"ahler-Einstein  metrics $\omega_{KE}$ on $\Q$-Fano varieties. 
Such canonical singular metrics have been constructed in \cite{BBEGZ} and further studied in \cite{BBJ21,LTW21,Li22}, extending
the resolution of the Yau-Tian-Donaldson conjecture \cite{CDS15} to this   singular context.
Despite recent important progress \cite{HS17,Druel18,HP19,BGL20}, the geometry of these singular metrics remains mysterious
and one needs to better understand the asymptotic behaviour of $\omega_{KE}$ near the singularities.

We restrict the metric $\omega_{KE}$ to a neighborhood of $p$ and wish to analyze
the behavior of its local potentials $\omega_{KE}=dd^c \f_{KE}$ near $p$.
The latter solve a Monge-Amp\`ere equation $(MA)_{\gamma,\phi,\Omega}$,
as can be seen by locally trivializing a representative of  the first Chern class
(after an appropriate rescaling). 
The boundary data is thus given by the solution $\f_{KE}=\phi$ itself.

\smallskip

Studying the corresponding family of equations $(MA)_{\gamma,\phi,\Omega}$
we will show that:
\begin{itemize}
    \item the possibility of solving $(MA)_{\gamma,\phi,\Omega}$ is independent of $\Omega$ and $\phi$;
    \item the largest exponent ${\gamma}_{crit}(X,p)$ for which we can solve $(MA)_{\gamma,\phi,\Omega}$ 
    only depends on the algebraic nature of the log-terminal singularity;
\end{itemize}

Following earlier works dealing with the case of compact K\"ahler varieties 
or the local smooth setting \cite{BBGZ13,GKY13,BB22,BBEGZ},
 we develop a variational approach to solve these equations.
 A crucial role is   played by
 $$
 E_{\phi}(\f)= \frac{1}{n+1}\sum_{j=0}^{n}\int_{\Omega}(\varphi-\phi_{0})(dd^{c}\varphi)^{j}\wedge(dd^{c}\phi_{0})^{n-j},
 $$
the Monge-Amp\`ere energy of $\f$ relative to a plurisubharmonic extension $\phi_0$ of $\phi$.
 This energy is a primitive of  the Monge-Amp\`ere operator and a building block of the functional
$F_{\gamma}$ whose Euler-Lagrange equation is $(MA)_{\gamma,\phi,\Omega}$,
$$
\f \in {\mathcal T}_{\phi}(\Omega) \mapsto 
F_{\gamma}(\f)= E_{\phi}(\f)+\frac{1}{\gamma} \log \int_{\Omega} e^{-\gamma \f} d\mu_p \in \R,
$$
Here ${\mathcal T}_{\phi}(\Omega)$ denotes the set of all
 plurisubharmonic functions $\f$ in $\Omega$ 
which are continuous on $\overline{\Omega}$ and such that $\f_{|\partial \Omega}=\phi$.

\smallskip

In order to solve $(MA)_{\gamma,\phi,\Omega}$ one can try and 
extremize $F_{\gamma}$ by
showing that it is a proper functional.
Our first main result in this direction (Theorem \ref{thm:MT})
is the following Moser-Trudinger type inequality.

\begin{thmA}
For any $0<\gamma<\frac{n+1}{n} \alpha (X,\mu_p)$, there exists $C_{\gamma}>0$ such that   
\begin{equation} \label{eq:MTintro} 
\tag{{$MT_{\gamma}$}}
\left( \int_{\Omega} e^{-\gamma \f} d\mu_p \right)^{\frac{1}{\gamma}} \leq C_{\gamma} \exp \left( -E_{\phi}(\f) \right).
\end{equation}
for all $\f \in {\mathcal T}_{\phi}(\Omega)$.
\end{thmA}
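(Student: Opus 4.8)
The plan is to run a pluripotential/variational argument in the spirit of Berman--Berndtsson's Moser--Trudinger inequalities, the only subtle point being that one must reach the exponent $\tfrac{n+1}{n}\alpha(X,\mu_p)$ and not merely $\alpha(X,\mu_p)$. First a harmless normalization. Every $\f\in\mathcal{T}_\phi(\Omega)$ is subharmonic with $\f_{|\partial\Omega}=\phi$, so the maximum principle gives $\sup_\Omega\f=\sup_{\partial\Omega}\phi=:c_\phi$, a constant depending only on $\phi$. Replacing $(\f,\phi_0,\phi)$ by $(\f-c_\phi,\phi_0-c_\phi,\phi-c_\phi)$ leaves $E_\phi(\f)$ unchanged — the shift cancels in every factor $\f-\phi_0$ — while multiplying the left-hand side of $(MT_\gamma)$ by $e^{c_\phi}$. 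Hence we may assume $\sup_\Omega\f=0$ on $\mathcal{T}_\phi(\Omega)$, and, choosing $\phi_0$ to be the maximal psh extension (the choice of $\phi_0$ only shifts $E_\phi$ by a constant), we get $\f\le\phi_0\le0$ and $-E_\phi(\f)\ge0$. In this normalization $(MT_\gamma)$ is exactly the uniform bound $F_\gamma(\f)\le\log C_\gamma$.

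The first input is the $\alpha$-invariant, or uniform Skoda, estimate: because $(X,p)$ is log terminal and $\mu_p$ is an adapted volume form, $\alpha:=\alpha(X,\mu_p)>0$, and for each $\alpha'<\alpha$ there is $A_{\alpha'}>0$ with $\int_\Omega e^{-\alpha'u}\,d\mu_p\le A_{\alpha'}$ for all $u\in\PSH(\Omega)$ with $\sup_\Omega u=0$. This Skoda/Demailly--Kollár-type statement is the only place where log terminality enters; it is established in the preceding section, possibly after pulling back to a log resolution of $(X,p)$, where $\mu_p$ becomes a product of $|s_i|^{2a_i}$ with $a_i>-1$ times a smooth volume form and uniform integrability is classical. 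Applied to $\f$ itself, this already gives $(MT_\gamma)$ in the non-optimal range $\gamma<\alpha$, using $-E_\phi(\f)\ge0$.

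The second input and the heart of the matter is the gain by the factor $\tfrac{n+1}{n}$, which must come from the Monge--Amp\`ere energy (it is built out of $n+1$ mixed terms, whereas only one controls integrability) and not from convexity alone. Fix $\alpha'<\alpha$ and set $\gamma=\tfrac{n+1}{n}\alpha'$. By the layer-cake formula,
\[
\int_\Omega e^{-\gamma\f}\,d\mu_p=\mu_p(\Omega)+\gamma\int_0^\infty e^{\gamma t}\,\mu_p(\{\f<-t\})\,dt,
\]
so everything reduces to estimating $\mu_p(\{\f<-t\})$. Here one uses the truncations $\f_t:=\max(\f,\phi_0-t)\in\mathcal{T}_\phi(\Omega)$, the monotonicity $E_\phi(\f)\le E_\phi(\f_t)$, and the Bedford--Taylor comparison principle, which yields capacity bounds $\Capa(\{\f<\phi_0-t\})\lesssim t^{-n}$ and, more precisely, compares $-E_\phi(\f)$ with $\tfrac{1}{n+1}$ times a weighted mass of the sets $\{\f<\phi_0-t\}$. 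Feeding a suitably rescaled psh function built from $\f$ and $\f_t$ into the $\alpha$-invariant estimate on the sublevel set produces exponential decay of $\mu_p(\{\f<-t\})$ up to a threshold in $t$ governed by $-E_\phi(\f)$, and the bookkeeping shows that converting this "order $\alpha'$" decay into something compatible with integrating $e^{\gamma t}$ costs precisely a power $\tfrac{n}{n+1}$ — whence the admissible exponent $\tfrac{n+1}{n}\alpha'$. Optimizing the cut-off level against $-E_\phi(\f)$ and integrating gives $\int_\Omega e^{-\gamma\f}\,d\mu_p\le C_\gamma e^{-\gamma E_\phi(\f)}$ for all $\gamma<\tfrac{n+1}{n}\alpha'$; letting $\alpha'\uparrow\alpha$ yields the full range.

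The main obstacle is exactly this last step: extracting the sharp constant $\tfrac{n+1}{n}$ requires the Monge--Amp\`ere comparison principle and the exchange of mass between $\MA(\f)$ and $\MA(\phi_0)$ across the level sets $\{\f<-t\}$, together with a careful choice of the truncations; naive convexity bounds on $E_\phi$ stop at $\gamma<\alpha$. A secondary, purely technical difficulty is that $\Omega$ is a germ of a possibly singular neighbourhood of $p$, so the pluripotential toolkit (capacities, comparison principle, Skoda) must be set up for $(X,\mu_p)$ directly, or transferred to a resolution as above; in either case it is the log-terminal weights of $\mu_p$ that make $\alpha(X,\mu_p)>0$ and drive the whole estimate.
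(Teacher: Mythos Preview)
There is a genuine gap, rooted in a misreading of $\alpha(X,\mu_p)$. In this paper the invariant is \emph{not} a sup-normalized integrability exponent; it is defined over $\mathcal{F}_1(\Omega)=\{\f:\int_\Omega(dd^c\f)^n\le 1\}$. Your version --- uniform integrability of $e^{-\alpha' u}$ over $\{u\in\PSH(\Omega):\sup_\Omega u=0\}$ --- is simply $0$ in the local setting: for $\phi\equiv 0$ the functions $u_N=\max(N\log|z-p|,A\rho)$ lie in $\mathcal{T}_0(\Omega)$ with $\sup u_N=0$, yet $\int_\Omega e^{-\alpha' u_N}d\mu_p=+\infty$ once $\alpha' N$ is large. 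So your ``easy case $\gamma<\alpha$'' already fails, and the level-set scheme built on it has no input: neither $\f$ nor its truncations $\f_t$ need lie in $\mathcal{F}_1(\Omega)$, and you never produce a function to which the $\alpha$-bound actually applies. The vague step (``a suitably rescaled psh function\dots the bookkeeping shows'') is precisely where the whole content of the theorem should sit.

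The paper's argument is of a different nature and makes the factor $\tfrac{n+1}{n}$ transparent. One writes, via Legendre duality (Lemma~\ref{lem:2.11}),
\[
\log\int_\Omega e^{-\frac{n+1}{n}\alpha\f}d\mu_p=\sup_{\nu}\Big(-\tfrac{n+1}{n}\alpha\int_\Omega\f\,d\nu-H_{\mu_p}(\nu)\Big),
\]
and for each competitor $\nu$ with finite entropy one \emph{solves} $(dd^c v)^n=\nu$ with $v\in\mathcal{F}_1(\Omega)$ (Proposition~\ref{prop:Entropy}). It is this auxiliary potential $v$, not $\f$, that sits in $\mathcal{F}_1$ and hence obeys the $\alpha$-bound, yielding $H_{\mu_p}(\nu)\ge-\alpha\int v\,d\nu-C_\alpha$ (Corollary~\ref{cor:Entropy}). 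The residual term is rewritten as $\tfrac{n+1}{n}\alpha\int(v-\f)(dd^c v)^n-\tfrac{\alpha}{n}\int v(dd^c v)^n$, and the energy inequalities of Lemma~\ref{lem:Ener} turn this into $-\tfrac{n+1}{n}\alpha\,E_\phi(\f)+O(1)$: the coefficient $\tfrac{n+1}{n}$ is exactly what makes the two occurrences of $E_\phi(v)$ cancel. This mechanism --- solving an auxiliary Monge--Amp\`ere equation to manufacture a function in $\mathcal{F}_1$ --- is the missing idea in your outline.
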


The {\it alpha invariant} of the singularity $(X,p)$ is defined by 
$$
\alpha (X,\mu_p):=\sup \left\{ \alpha>0, \,  
\sup_{\f \in {\mathcal F}_1(\Omega)} \int_{\Omega} e^{-\alpha \f} d\mu_p <+\infty 
\right\}.
$$
where  ${\mathcal F}_1(\Omega)$ denotes the set of plurisubharmonic functions $\f$ with $\phi$-boundary values,
whose Monge-Amp\`ere mass is bounded by  $\int_{\Omega} (dd^c \f)^n \leq 1$.

When $(X,p)$ is smooth, Theorem A has been obtained independently in \cite{BB22,GKY13}
with $\alpha(X,\mu_p)=n$ (the normalizations and methods are quite different in these two
works, but they eventually produce the same critical exponent).

\smallskip

We introduce
$$
\gamma_{crit}(X,p):=\sup \{ \gamma>0
\text{ such that } \eqref{eq:MTintro} \text{ holds} \}.
$$
We show in Section \ref{sec:invariance} that 
$\gamma_{crit}(X,p)$  depends neither on the size of 
$\Omega$, nor on  the boundary values $\phi$.
While Theorem A provides a lower bound for
$\gamma_{crit}(X,p)$, we provide an upper bound in
Theorem \ref{thm:UB}, which yields
$$
\frac{n+1}{n} \alpha (X,\mu_p) \leq \gamma_{crit}(X,p) \leq \frac{n+1}{n}\widehat{\rm vol}(X,p)^{1/n},
$$
where $\widehat{\rm vol}(X,p)$ denotes the {\it normalized volume} of the singularity $(X,p)$.
This is an algebraic invariant of the singularity at $p$ introduced by Chi Li in \cite{Li18}, 
which has recently played a  key role in the algebraic understanding of the moduli space of K-stable Fano varieties
(see \cite{Blum18,Liu18,LWX21,LWZ22} and the references therein);
we refer  to Definition \ref{def:normvol} for a precise definition.
  
  \smallskip
  
  When $p$ is smooth then $\alpha(X,\mu_p)= \widehat{\rm vol}(X,p)^{1/n}=n$ by
\cite{ACKPZ09,Dem09}.
It is tempting to conjecture that the equality $\alpha(X,\mu_p)= \widehat{\rm vol}(X,p)^{1/n}$
always holds. We establish  in Section \ref{sec:minoralpha}
 the following partial bounds on $\alpha(X,\mu_p)$.

\begin{thmB}
 The following inequalities hold 
 $$
  \frac{n}{{\rm mult}(X,p)^{1-1/n}} \frac{{\rm lct}(X,p)}{1+{\rm lct}(X,p)}  
 \leq \alpha(X,\mu_p) \leq \widehat{\rm vol}(X,p)^{1/n}.
 $$
 
  Moreover $\alpha(X,\mu_p)=\widehat{\rm vol}(X,p)^{1/n} $ if $(X,p)$ is an admissible singularity.
\end{thmB}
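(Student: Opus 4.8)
The strategy is to establish the three inequalities separately. \textbf{The upper bound} $\alpha(X,\mu_p)\le\hvol(X,p)^{1/n}$ is in fact a formal consequence of what precedes: since the Moser--Trudinger exponent of Theorem~\ref{thm:MT} is $\frac{n+1}{n}\alpha(X,\mu_p)$, one has $\frac{n+1}{n}\alpha(X,\mu_p)\le\gamma_{crit}(X,p)$, and Theorem~\ref{thm:UB} gives $\gamma_{crit}(X,p)\le\frac{n+1}{n}\hvol(X,p)^{1/n}$; combining the two yields the claim. The mechanism behind Theorem~\ref{thm:UB} — which one could also run directly here — is to test the supremum defining $\alpha(X,\mu_p)$ against the one-parameter family of plurisubharmonic potentials $\f_t$ attached to a divisorial valuation $v$ centered at $p$ (built from the logarithm of a sum of squares of generators of the valuation ideals of $v$), rescaled so that $\int_\Omega(dd^c\f_t)^n\le1$, which forces the vanishing order of $\f_t$ along the corresponding divisor to be $\asymp\mathrm{vol}(v)^{-1/n}$; since $\mu_p$ has integrability threshold $A_X(v)$ there, the integral $\int_\Omega e^{-\alpha\f_t}\,d\mu_p$ diverges as soon as $\alpha>A_X(v)\,\mathrm{vol}(v)^{1/n}$, and infimizing over $v$ recovers $\hvol(X,p)^{1/n}$ via the valuative description of the normalized volume (Definition~\ref{def:normvol}).

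\textbf{The lower bound} is the substantive part. Fix $\f\in{\mathcal F}_1(\Omega)$, normalized so that $\sup_\Omega\f=0$ (at the cost of a constant depending only on $\phi$) and with $\int_\Omega(dd^c\f)^n\le1$; shrinking $\Omega$ is harmless by the invariance results of Section~\ref{sec:invariance}. The aim is a uniform bound on $\int_\Omega e^{-\alpha\f}\,d\mu_p=\alpha\int_0^{\infty}e^{\alpha t}\,\mu_p(\{\f<-t\})\,dt$, for which one needs a volume--capacity estimate $\mu_p(K)\le C\,\capa_\Omega(K)^{\tau}$ on compacts, together with the standard bound $\capa_\Omega(\{\f<-t\})\lesssim t^{-n}$. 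The exponent $\tau$ is produced by a Hölder splitting carried out on flat space. Fix a general linear projection $\pi\colon(X,p)\to(\C^n,0)$, finite of degree $\mathrm{mult}(X,p)$ with $\pi^{-1}(0)=\{p\}$, and let $\psi=\pi_*\f$ be the plurisubharmonic trace of $\f$ near $0$ in $\C^n$; then $\f\ge\psi\circ\pi$ (as $\f\le0$), so $\int_\Omega e^{-\alpha\f}\,d\mu_p\le\int e^{-\alpha\psi}h\,dV$ where $\pi_*\mu_p=h\,dV$. Log-terminality of $(X,p)$, read off a log resolution, gives $h\in L^r_{\mathrm{loc}}$ for every $r<1+\lct(X,p)$, so Hölder with conjugate exponents $(r,r')$ reduces matters to the uniform Skoda estimate of Zeriahi, $\int e^{-\beta\psi}\,dV\le C$ valid when $\beta\,\bigl(\int(dd^c\psi)^n\bigr)^{1/n}<n$. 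The degree of $\pi$ enters only through the Monge--Ampère mass comparison $\int(dd^c\pi_*\f)^n\le C\,\mathrm{mult}(X,p)^{n-1}\int_\Omega(dd^c\f)^n$, so that the admissible range becomes $\alpha<n\bigl(r'\,\mathrm{mult}(X,p)^{1-1/n}\bigr)^{-1}$; letting $r\uparrow1+\lct(X,p)$, hence $r'\downarrow\frac{1+\lct(X,p)}{\lct(X,p)}$, yields the stated bound. \emph{The main obstacle} is precisely this Monge--Ampère mass comparison under a finite morphism — controlling the mixed terms in the expansion of $(dd^c\pi_*\f)^n$ over the branches of $\pi$ accumulating at $0$ — and keeping all constants independent of $\Omega$ and $\phi$, which is what makes the resulting exponent intrinsic.

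\textbf{The equality for admissible singularities.} In view of the upper bound, it suffices to upgrade the lower bound to $\alpha(X,\mu_p)\ge\hvol(X,p)^{1/n}$ under the admissibility hypothesis. The idea is that for an admissible singularity the infimum defining $\hvol(X,p)$ is attained by a valuation $v_0$ that is concrete enough — induced by a degeneration of $(X,p)$ to a cone, hence monomial in suitable coordinates in which $\mu_p$ is, up to comparability, a monomial measure — that one can replace the lossy Hölder step above by a single-scale integrability estimate adapted to $v_0$. Concretely, the extremal potential $\f_{v_0}$ of the upper-bound construction serves as a gauge: since both $\mu_p$ and the sublevel sets of an arbitrary competitor $\f\in{\mathcal F}_1(\Omega)$ are controlled by $v_0$, one obtains $\mu_p(\{\f<-t\})\lesssim e^{-\beta t}$ for every $\beta<\hvol(X,p)^{1/n}$ with no loss, whence $\int_\Omega e^{-\alpha\f}\,d\mu_p<+\infty$ for all $\alpha<\hvol(X,p)^{1/n}$. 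Here the main obstacle is to extract from the definition of admissibility the precise compatibility between $\mu_p$, the minimizing valuation $v_0$, and the Monge--Ampère mass of an arbitrary $\f\in{\mathcal F}_1(\Omega)$ — i.e. to confirm that the worst competitor in the $\alpha$-invariant problem is the one modeled on $v_0$.
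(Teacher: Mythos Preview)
Your upper bound and lower bound are essentially aligned with the paper, but the admissible case is off.

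\textbf{Upper bound.} Deducing $\alpha(X,\mu_p)\le\hvol(X,p)^{1/n}$ from Theorems~\ref{thm:MT} and~\ref{thm:UB} is valid. The paper instead proves this directly (Proposition~\ref{pro:alphavsvol1}) by testing $\alpha$ against the functions $\psi_{\mathcal I,\lambda,\epsilon}\in\mathcal F_1(\Omega)$ with algebraic singularities along a primary ideal $\mathcal I$, scaled so that $\int(dd^c\psi)^n\le 1$; this gives $\alpha\le\lct(X,\mathcal I)\,e(X,\mathcal I)^{1/n}$ and one infimizes. Your sketch of the ``direct'' argument is essentially this.

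\textbf{Lower bound.} Your ingredients (linear projection to $\C^n$, H\"older splitting, the uniform Skoda/ACKPZ bound, and the mass inequality $\int(dd^c\pi_*\f)^n\le\mathrm{mult}(X,p)^{n-1}$ via Cegrell's mixed inequalities) are exactly the paper's, but the order matters. You project first and then apply H\"older on $\C^n$, which requires $h:=(\pi_*\mu_p)/dV\in L^r$ for $r<1+\lct(X,p)$. This is \emph{not} Lemma~\ref{lem:integrabilityf}: that lemma controls $f=\mu_p/dV_X$, and since $\pi^*dV$ can vanish along the branch locus of $\pi$ while $dV_X=\sum_I\pi_I^*dV_I$ does not, the density $h$ can be strictly worse than $f$. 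The paper avoids this by applying H\"older on $X$ first (reducing $\mu_p$ to $dV_X$ at the cost of the factor $\lct/(1+\lct)$), and \emph{then} using the decomposition $dV_X=\sum_I\pi_I^*dV_I$ to pass to $\C^n$; this is why several projections $\pi_I$ appear rather than a single one. With that reordering your argument becomes the paper's proof of Theorem~\ref{thm:lowerboundalpha}.

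\textbf{Admissible case.} Here there is a genuine gap. ``Admissible'' in the paper (following \cite{LTW21}) means only that some log resolution has all discrepancies $a_i\in(-1,0]$; it says nothing about a degeneration to a cone, a monomial structure for $\mu_p$, or the existence of a distinguished minimizing valuation $v_0$ with the compatibility you describe. Your proposed mechanism (``the worst competitor is modeled on $v_0$'') is therefore not available. The paper's argument is different and much shorter: when all $a_i\le 0$, the pullback $\pi^*\mu_p=\prod|s_{E_i}|^{2a_i}dV_{\tilde\Omega}$ is \emph{dominated} by a smooth volume form on the resolution, so one may work with Lebesgue measure there. Demailly--Koll\'ar semicontinuity \cite{DK01} on the smooth space, combined with the compactness of $\mathcal F_1(\Omega)$, then forces $\alpha(X,\mu_p)=\tilde\alpha(X,\mu_p)$; and Proposition~\ref{pro:Skoda} gives $\tilde\alpha(X,\mu_p)=\hvol(X,p)^{1/n}$. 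The obstruction in the non-admissible case is precisely that a twisted Demailly--Koll\'ar statement (with positive discrepancies contributing a weight $e^{U}$ that is not bounded above) is not known, cf.\ the remark after Proposition~\ref{prop:Alpha_Resolution}.
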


Here ${\rm mult}(X,p)$ denotes the algebraic multiplicity of $(X,p)$,
while ${\rm lct}(X,p)$ is its log canonical threshold (see Definition \ref{def:lct}).
Bounded  $\alpha(X,\mu_p)$ from below is quite involved; we show that
$\alpha(X,\mu_p)=\widehat{\rm vol}(X,p)^{1/n}$ when $n=2$,
but our lower-bound is not sharp when $n \geq 3$ unless $(X,p)$ is an {\it admissible singularity},
a notion introduced in \cite{LTW21}.
The vertex of the affine cone over a smooth Fano manifold is an example of admissible singularity
(see Section \ref{sec:minoralpha}).

Using analytic Green functions and Demailly's comparison theorem, we provide in Propositions \ref{pro:alphavsvol1}, \ref{pro:Skoda} evidence
for the equality $\alpha(X,\mu_p) = \widehat{\rm vol}(X,p)^{1/n}$.
The Appendix, written by S.Boucksom, uses an algebraic approach based on \cite{BdFF}, to
establish a stronger result than Proposition \ref{pro:Skoda}.

\smallskip

We note in Lemma \ref{lem:coercive} that if \eqref{eq:MTintro} holds, then
$F_{\gamma}$ is {\it coercive} (a strong quantitative version of properness).
When $\gamma <{\gamma}_{crit}(X,p)$,
we then further show the existence of  smooth solutions to $(MA)_{\gamma,\phi,\Omega}$.

\begin{thmC}
If $\gamma <\gamma_{crit}(X,p)$ then
there exists a plurisubharmonic function
$\f \in {\mathcal C}^{\infty}({\Omega} \setminus \{p\})$
which is continuous in $\overline{\Omega}$ with 
$\f_{|\partial \Omega} =\phi$, and such that
$$
(dd^{c}\varphi)^{n}=\frac{e^{-\gamma\varphi}d\mu_{p}}{\int_{\Omega}e^{-\gamma \varphi} d\mu_{p}}
\; \; \text{ in } \; \; \Omega.
$$
\end{thmC}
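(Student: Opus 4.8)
The plan is to solve the equation variationally, by producing a maximizer of $F_\gamma$ over the finite-energy class $\mathcal{E}^1_\phi(\Omega)$ of plurisubharmonic functions with boundary values $\phi$ (which contains $\mathcal{T}_\phi(\Omega)$ and on which $E_\phi$ and $F_\gamma$ extend), and then to bootstrap its regularity. Since $\gamma<\gamma_{crit}(X,p)$, I will fix $\gamma'$ with $\gamma<\gamma'<\gamma_{crit}(X,p)$ for which \eqref{eq:MTintro} holds; H\"older's inequality then gives \eqref{eq:MTintro} for $\gamma$ as well, so by Lemma \ref{lem:coercive} the functional $F_\gamma$ is coercive. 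In particular $\sup F_\gamma\ge F_\gamma(\phi_0)>-\infty$ is finite and every maximizing sequence $(\f_j)$ has $E_\phi(\f_j)$ bounded, hence lies in a sublevel set $\{E_\phi\ge -M\}$, which is compact in $L^1(\Omega)$. Passing to an $L^1$-limit $\f^{\ast}\in\mathcal{E}^1_\phi(\Omega)$ and using that $E_\phi$ is upper semicontinuous along this convergence while $\f\mapsto\int_\Omega e^{-\gamma\f}\,d\mu_p$ is continuous along energy-bounded sequences — the required uniform integrability of the family $(e^{-\gamma\f_j})_j$ being furnished by \eqref{eq:MTintro} at the larger exponent $\gamma'$ — we get $F_\gamma(\f^{\ast})\ge\limsup_j F_\gamma(\f_j)=\sup F_\gamma$, so $\f^{\ast}$ maximizes $F_\gamma$.

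Next I will show $\f^{\ast}$ is a weak solution. Computing the derivative at $t=0^{+}$ of $t\mapsto F_\gamma\big((1-t)\f^{\ast}+tv\big)$ for $v\in\mathcal{E}^1_\phi(\Omega)$ and using the differentiability of the Monge--Amp\`ere energy (whose first variation at $\f$ in the direction $u$ is $\int_\Omega u\,(dd^c\f)^n$), the maximality of $\f^{\ast}$ forces, by the now-standard argument of \cite{BBGZ13,BB22},
\[
(dd^{c}\f^{\ast})^{n}=\frac{e^{-\gamma\f^{\ast}}d\mu_{p}}{\int_{\Omega}e^{-\gamma \f^{\ast}}d\mu_{p}}\quad\text{on }\Omega
\]
in the pluripotential sense; in particular $\int_\Omega(dd^c\f^{\ast})^n=1$, i.e.\ $\f^{\ast}\in\mathcal{F}_1(\Omega)$. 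Applying \eqref{eq:MTintro} at the exponent $\gamma'$ to $\f^{\ast}$ (approximate it from above by elements of $\mathcal{T}_\phi(\Omega)$ and use Fatou) gives $e^{-\gamma'\f^{\ast}}\in L^1(\mu_p)$, hence $e^{-\gamma\f^{\ast}}=(e^{-\gamma'\f^{\ast}})^{\gamma/\gamma'}\in L^{\gamma'/\gamma}(\mu_p)$ with $\gamma'/\gamma>1$; since $(X,p)$ is log terminal (Definition \ref{def:lt}), $\mu_p$ has density in $L^{1+\delta}_{\mathrm{loc}}$ with respect to Lebesgue measure for some $\delta>0$, so by H\"older the right-hand side measure has $L^r_{\mathrm{loc}}$ density against Lebesgue measure for some $r>1$.

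Continuity up to the boundary now follows from Ko\l odziej's solution of the Dirichlet problem: there is $u\in\PSH(\Omega)\cap\mathcal{C}^0(\overline\Omega)$ with $u_{|\partial\Omega}=\phi$ and $(dd^c u)^n$ equal to that measure, and by the comparison/domination principle in $\mathcal{E}^1(\Omega)$ (uniqueness for this Dirichlet problem) $u=\f^{\ast}$, so $\f^{\ast}\in\mathcal{C}^0(\overline\Omega)$ with $\f^{\ast}_{|\partial\Omega}=\phi$. Finally, on $\Omega\setminus\{p\}$ the density of $e^{-\gamma\f^{\ast}}d\mu_p/\int_\Omega e^{-\gamma\f^{\ast}}d\mu_p$ against Lebesgue measure is continuous and strictly positive, $\f^{\ast}$ being continuous there and $\mu_p$ a smooth non-degenerate volume form away from $p$. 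Solving on small balls $B\Subset\Omega\setminus\{p\}$ the Dirichlet problem with this right-hand side and boundary data $\f^{\ast}_{|\partial B}$, the a priori estimates for the complex Monge--Amp\`ere equation (Caffarelli--Kohn--Nirenberg--Spruck, Evans--Krylov) together with a standard bootstrap produce a smooth, strictly plurisubharmonic solution, which coincides with $\f^{\ast}$ on $B$ by the comparison principle; letting $B$ vary we conclude $\f^{\ast}\in\mathcal{C}^\infty(\Omega\setminus\{p\})$ with $dd^c\f^{\ast}>0$ there, which completes the proof.

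The main obstacle lies in the passage from the weak variational solution to a continuous one: the integrability $e^{-\gamma\f^{\ast}}\in L^r$ with $r>1$ that feeds Ko\l odziej's estimate is exactly what the strict subcriticality $\gamma<\gamma_{crit}(X,p)$ buys (via \eqref{eq:MTintro} at a slightly larger exponent), and this is essentially the only place the hypothesis is used; the variational existence itself rests on having set up the finite-energy Dirichlet formalism — semicontinuity, compactness of energy sublevel sets, differentiability of $E_\phi$, correctness of the boundary values in the limit — with care, after which the interior regularity is classical.
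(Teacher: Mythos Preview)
Your variational route is genuinely different from the paper's proof, which proceeds by \emph{Ricci inverse iteration}: starting from a smooth $\f_0\in\mathcal{T}^\infty_\phi(\Omega)$ one inductively solves $(dd^c\f_{j+1})^n=e^{-\gamma\f_j}\mu_p\big/\int_\Omega e^{-\gamma\f_j}\,d\mu_p$ with boundary data $\phi$, derives uniform $\mathcal{C}^0$ bounds from coercivity together with the singular Ko\l odziej estimate of~\cite{GGZ23}, then uniform weighted $\mathcal{C}^2$ bounds on a resolution (Proposition~\ref{prop:C2estimate}), and finally shows that any cluster point of $(\f_j)$ is a fixed point of the iteration. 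Your maximization of $F_\gamma$, the Euler--Lagrange identification (this is the content of Proposition~\ref{prop:MaxSol}, extended to $\mathcal{E}^1$), and the continuity step via Ko\l odziej are all sound and in fact use exactly the machinery the paper builds in Section~2.

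The genuine gap is your last step. With $\f^\ast$ merely continuous, the density $e^{-\gamma\f^\ast}g$ on a ball $B\Subset\Omega\setminus\{p\}$ is only continuous and the boundary data $\f^\ast|_{\partial B}$ is only continuous; the Dirichlet theory of~\cite{CKNS85} requires smooth data to produce a smooth solution, and Evans--Krylov requires a prior $\mathcal{C}^{1,1}$ bound you do not have. Unlike the real Monge--Amp\`ere equation (where Caffarelli's interior theory applies), there is no known interior regularity result for complex Monge--Amp\`ere that upgrades a bounded psh solution with merely continuous positive right-hand side to $\mathcal{C}^{2,\alpha}$ --- this is a well-known open problem --- so your ``standard bootstrap'' does not close. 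The iteration sidesteps exactly this difficulty: since $\f_j$ is smooth away from $p$ by induction, the right-hand side at step $j+1$ is smooth there and~\cite{Fu21} delivers a smooth $\f_{j+1}$; the substantive work is then to make the $\mathcal{C}^2$ estimates uniform in $j$. To repair your approach you would need essentially those same degenerate Laplacian estimates (applied, say, to the CKNS solutions on balls with smooth boundary data approximating $\f^\ast|_{\partial B}$, with interior bounds independent of that data), which is not lighter than what the paper does.
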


We expect the solution to be unique, at least when $\Omega$
is a generic Stein neighborhood of $p$. We refer the reader
to \cite{GKY13,BB22} for partial results in this direction when 
$p$ is a smooth point.


   \begin{ackn} 
The authors are   supported by the research project Hermetic (ANR-11-LABX-0040),
the ANR projects Paraplui. The second author is supported by a grant from the
Knut and Alice Wallenberg foundation. We thank S\'ebastien Boucksom for writing the Appendix, 
which enhances Proposition \ref{pro:Skoda}.
\end{ackn}

    \section{Preliminaries}
    
        \subsection{Analysis on singular spaces}

     Let $X$ be a reduced complex analytic space  of pure dimension $n \ge 1$. 
 We let $X_{\reg}$ denote the complex manifold of regular points of $X$ and
 $
 X_{\sing} := X \setminus X_{\reg}
 $
 be the set of singular points; this is an analytic subset of $X$  of complex codimension $\ge 1$. 
  We always assume in this article that:
 \begin{itemize}
 \item $X_{\sing}=\{p\}$ consists of a single isolated point;
 \item $X_{\reg}$ is locally irreducible at $p$;
 \item $U$ is a fixed neighborhood of $p$ and  $j: U \hookrightarrow \mathbb C^N$ is a local embedding onto an analytic subset 
 of  $\mathbb C^N$ for some $N \ge 1$.
 \end{itemize}
 As we are interested in the asymptotic behavior of K\"ahler-Einstein potentials near the singular point $p$, we shall identify
 $X$ with $j(U)$ in the sequel.

  \subsubsection{Plurisubharmonic functions}

 Using the local embedding $j$, it is possible to define the spaces of smooth forms  on $X$
 as restriction of smooth forms of $\C^N$.
  The notion of currents on $X$ is   defined by duality; the operators $\partial $ and $\bar{\partial }$, $d$, $d^c$ and $dd^c$ are 
 also well defined by duality (see \cite{Dem85} for more details). 

Here $d=\partial+\overline{\partial}$ and $d^c=\frac{1}{4i\pi}(\partial-\overline{\partial})$ are real operators 
and $dd^c =\frac{i}{2\pi}\partial\overline{\partial}$. With this normalization the function
$z \in \C^n \mapsto \rho_{FS}(z)=\log[1+|z|^2] \in \R$ 
is smooth and plurisubharmonic in $\C^n$, with
$$
\int_{\C^n} (dd^c \rho_{FS})^n=1.
$$

 \begin{defi} 
  We say that a function  $u : X \longrightarrow \mathbb R \cup \{-\infty\}$ 
    is  plurisubharmonic (psh for short) on $X$ if it is  the restriction of a plurisubharmonic function 
 of $\mathbb C^N$. 
 
  We let $PSH(X)$ denote the set of all  plurisubharmonic functions on $X$ that are not identically $-\infty$.
  \end{defi}
 
Recall  that  $u$ is  called weakly plurisubharmonic on $X$ if it is locally bounded from above on $X$ and  its restriction to   $X_{\reg}$ is plurisubharmonic.
 One can extend it to $X$ by $u^* (p) := \limsup_{X_{\reg} \ni y \to p } \,  u (y)$.
 Since $X$ is locally irreducible, it follows from the work of Forn\ae ss-Narasimhan \cite{FN80}  that
 $u$ is weakly plurisubharmonic if and only if $u^*$ is plurisubharmonic
 (see \cite[Corollary 1.11]{Dem85}).

\smallskip

If $u \in PSH(X)$, then $u$ is upper semi-continuous on $X$ and locally integrable 
with respect to the volume form 
$$
dV_X:=\omega_{eucl}^n \wedge [X].
$$
Here $[X]$ denotes the current of integration along $X$ and $\omega_{eucl}:=\sum_{j=1}^N i dz_j \wedge d\overline{z_j}$
is the euclidean K\"ahler form.
In particular    $dd^c u$ is a well defined current of bidegree $(1,1)$ which is positive.

 \subsubsection{Pseudoconvex domains and boundary data}

 Following \cite{FN80}  we say that $X$ is Stein if it admits a ${\mathcal C}^2$-smooth strongly plurisubharmonic exhaustion.

\begin{defi} 
A domain $\Omega \Subset X$ is   strongly pseudoconvex if it admits a negative ${\mathcal C}^2$-smooth strongly plurisubharmonic exhaustion,
 i.e. a function $\rho$ strongly plurisubharmonic in a neighborhood $\Omega'$ of $\overline{\Omega}$ such that 
 $\Omega := \{ x \in \Omega' \, ; \, \rho (x) < 0\}$,
 $d\rho \neq 0$ on $\partial \Omega$,
  and for any $c < 0$, 
 $$
 \Omega_c := \{x \in \Omega'; \, \rho (x) < c\} \Subset \Omega
 $$
 is relatively compact.
  \end{defi}
 
  We are interested in solving a Dirichlet problem for some complex Monge-Amp\`ere equations
  in a bounded strongly pseudoconvex domain $\Omega=\{\rho<0\}$, 
  with given boundary data $\phi \in {\mathcal C}^{\infty}(\partial \Omega)$.

  \begin{defi}
  Given $\phi \in {\mathcal C}^{\infty}(\partial \Omega)$,
  we fix $\phi_0$ a plurisubharmonic function in $\Omega$
  which is ${\mathcal C}^{\infty}$-smooth near $\overline{\Omega}$
  and such that $\phi_0{|\partial \Omega}=\phi$.
  \end{defi}
  
  Such an extension can be obtained as follows:
  we pick $\tilde{\phi}$ an arbitrary ${\mathcal C}^2$-smooth extension  
  to $\overline{\Omega}$, and then consider 
  $\phi_0:=\tilde{\phi}+A \rho$,
  for $A$ so large that   $\phi_0$ is ${\mathcal C}^2$-smooth 
  and plurisubharmonic in $\overline{\Omega}$.
    All quantities   introduced in the remainder of the paper  
 are essentially independent of the  particular choice of the extension.

 \subsubsection{Monge-Amp\`ere operators} \label{sec:MA}

 The complex Monge-Amp\`ere operator $(dd^c \cdot)^n$ acts on
 a smooth psh functions  $\f$. When $X=\C^n$, it boils down to
 $$
 (dd^c \f)^n=c_n \det \left( \frac{\partial^2 \f}{\partial z_j \overline{\partial} z_k} \right) \omega_{eucl}^n,
 $$
 where $c_n>0$ is a normalizing constant.

 \subsubsection*{Bounded functions}
  Following \cite{BT82} this operator can be extended
 to the class $PSH(X) \cap L^{\infty}_{loc}$ by using  approximation
 by smooth psh functions: given $\f \in  PSH(X) \cap L^{\infty}_{loc}$,
 there exists a unique positive Radon measure $\mu_{\f}$ on $X$ such that for {\it any}  sequence $(\f_j)$
 of smooth psh functions decreasing  to $\f$, one has
 $$
 \mu_{\f}=\lim (dd^c \f_j)^n,
 $$
 where the limit holds in the weak sense.
 One then sets $(dd^c \f)^n:=\mu_{\f}$.

 \begin{defi}
 We set 
 $$
 \mathcal{T}^{\infty}_{\phi}(\Omega):=\Big\{\varphi\in SPSH(\Omega)
 \cap {\mathcal C}^{\infty}(\overline{\Omega})\, : 
 \,  \varphi_{|\partial \Omega}=\phi \Big\},
 $$
 where $SPSH(\Omega)$ is the set of strictly plurisubharmonic functions,
 and
 $$
 \mathcal{T}_{\phi}(\Omega):=\Big\{\varphi\in PSH(\Omega)\cap {\mathcal C}^0(\overline{\Omega})\, : 
 \,  \varphi_{|\partial \Omega}=\phi, \, \int_{\Omega}(dd^{c}\varphi)^{n}<+\infty\Big\},
 $$
 \end{defi}
 
 This   latter class has been introduced by Cegrell in \cite{Ceg98}; it can be used as a psh version of 
 test functions (in the sense of distributions), as well as a building block for finite energy classes of
 mildly unbounded functions.
 
 \begin{lem} \label{lem:smoothing}
 Any $\f \in \mathcal{T}_{\phi}(\Omega)$ is a quasi-decreasing limit of functions
 in $ \mathcal{T}^{\infty}_{\phi}(\Omega)$.
 \end{lem}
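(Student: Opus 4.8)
The plan is to recognise $\f$ as the unique continuous solution of a Dirichlet problem and to obtain the approximating sequence by solving the \emph{same} problem with regularised data. First I would set $\mu:=(dd^{c}\f)^{n}$, which by hypothesis is a positive measure of finite mass on $\Om$ and which puts no mass on the pluripolar set $\{p\}$ since $\f$ is bounded; the Bedford--Taylor comparison principle (\cite{BT82}) then shows that $\f$ is the only element of $PSH(\Om)\cap {\mathcal C}^{0}(\ove\Om)$ with boundary values $\phi$ whose Monge--Amp\`ere measure equals $\mu$. It therefore suffices to produce smooth, strictly positive measures $\mu_{j}$ with $\mu_{j}\to\mu$ such that the solutions $\f_{j}$ of $(dd^{c}\f_{j})^{n}=\mu_{j}$, $\f_{j}{}_{|\bd\Om}=\phi$, lie in $\mathcal{T}^{\infty}_{\phi}(\Om)$, converge to $\f$, and do so in a quasi-decreasing way.

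To build the $\mu_{j}$, I would first regularise $\f$ on the domain $\Om$ itself (which is Stein, being strongly pseudoconvex in the Stein space $X$), ignoring boundary values: combining local convolutions on the manifold $X_{\reg}$ with the mollification, through the embedding $j$, of an ambient plurisubharmonic extension of $\f$ near $p$, and patching these by Demailly's regularised maxima, one obtains bounded strictly plurisubharmonic functions $\psi_{j}$, smooth on $X_{\reg}$, decreasing to $\f$ on $\Om$ --- the Forn\ae ss--Narasimhan/Richberg type regularisation on a Stein space (\cite{FN80,Dem85}). Then, with smooth cut-offs $\chi_{j}$ exhausting $\Om$, I would set $\mu_{j}:=\chi_{j}\,(dd^{c}\psi_{j})^{n}+\tfrac{1}{j}\,dV_{X}$: these are smooth and strictly positive on $X_{\reg}$, of finite mass, and $\mu_{j}\to\mu$ weakly by continuity of the Monge--Amp\`ere operator along the decreasing sequence $(\psi_{j})$. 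Solving the Dirichlet problem $(dd^{c}\f_{j})^{n}=\mu_{j}$ with $\f_{j}{}_{|\bd\Om}=\phi$ produces a bounded strictly plurisubharmonic $\f_{j}$, continuous on $\ove\Om$, with $\f_{j}{}_{|\bd\Om}=\phi$; the interior and boundary a priori estimates for non-degenerate complex Monge--Amp\`ere equations on the smooth strongly pseudoconvex manifold $\Om\sm\{p\}$ make $\f_{j}$ of class ${\mathcal C}^{\infty}$ on $\ove\Om\sm\{p\}$. Finally, stability of the Dirichlet problem gives $\f_{j}\to\f$ uniformly on $\ove\Om$, so that $\e_{k}:=\sup_{j\ge k}\|\f_{j}-\f_{k}\|_{L^{\infty}(\ove\Om)}$ decreases to $0$ and $\f_{j}\le\f_{k}+\e_{k}$ for $j\ge k$: this is exactly the quasi-decreasing convergence claimed, modulo the verification (taken up below) that each $\f_{j}$ belongs to $\mathcal{T}^{\infty}_{\phi}(\Om)$.

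Two analytic inputs, used here in the singular setting, are the points needing care: the solvability of the Dirichlet problem $(dd^{c}u)^{n}=\mu_{j}$ together with the regularity of its solution (continuity up to $p$, smoothness away from $p$), and the stability estimate producing the uniform convergence $\f_{j}\to\f$; both should follow from the Bedford--Taylor/Ko\l{}odziej theory adapted to $X$, or be available from earlier in the paper. The hardest part is the behaviour at the singular point $p$: solutions of complex Monge--Amp\`ere equations need not be ${\mathcal C}^{\infty}$ across $p$ in the ambient sense, so the regularity one really controls there is precisely the one appearing in the main theorems, \ie\ ${\mathcal C}^{\infty}$ on $\Om\sm\{p\}$ and continuous at $p$. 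If ambient smoothness up to $p$ is demanded in $\mathcal{T}^{\infty}_{\phi}(\Om)$, one must in addition patch $\f_{j}$ near $p$, by a regularised maximum, with an ambient mollification of $\f$; since a plurisubharmonic function that dominates $\f$ near $p$ cannot dip below it near a small sphere around $p$, arranging this patch so that the glued function remains globally plurisubharmonic forces a delicate, scale-dependent choice of the patching parameters --- this is where the real work lies, and a reader should check whether Lemma~\ref{lem:smoothing} is actually invoked only for the weaker regularity, in which case the $\f_{j}$ above already do the job.
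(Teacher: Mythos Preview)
Your approach has a genuine gap at the singular point, and is also far more elaborate than what is needed. The class $\mathcal{T}^{\infty}_{\phi}(\Omega)$ requires $\mathcal{C}^{\infty}(\overline{\Omega})$, which on the singular space $X$ means \emph{restriction of an ambient smooth function from} $\C^N$. Solutions of Dirichlet problems for $(dd^c u)^n=\mu_j$ on $\Omega$ are, at best, smooth on $\overline{\Omega}\setminus\{p\}$ and continuous at $p$ (this is exactly the regularity established later in the paper, e.g.\ via \cite{Fu21,GGZ23}); there is no reason for them to extend to ambient smooth functions across the singularity. You identify this yourself at the end, and your proposed fix --- patching near $p$ with an ambient mollification of $\f$ --- is in fact the entire content of the paper's argument: once you have the ambient mollification, the Dirichlet problem is superfluous.

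The paper's proof is four lines. By definition of $PSH(X)$, the function $\f$ is the restriction to $X$ of a continuous psh function $\psi$ on a neighborhood in $\C^N$. Standard convolution in $\C^N$ yields smooth strictly psh $\psi_j$ decreasing to $\psi$ (uniformly on $\overline{\Omega}$ by continuity). Set $\f_j:=\psi_j|_X-\e_j$ with $\e_j\searrow 0$ chosen so that $\f_j<\phi_0$ near $\partial\Omega$, and then $\tilde{\f}_j:=\widetilde{\max}(\f_j,A_j\rho+\phi_0)$ with $A_j\to+\infty$. The regularised maximum equals $A_j\rho+\phi_0$ near $\partial\Omega$ (hence has boundary values $\phi$) and equals $\f_j$ in the interior; it is ambient-smooth and strictly psh, so $\tilde{\f}_j\in\mathcal{T}^{\infty}_{\phi}(\Omega)$, and converges to $\f$. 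No PDE is solved, and no regularity theory is invoked.
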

 
 \begin{proof}
Fix a local embedding $X \hookrightarrow \C^N$. A function 
$\f \in \mathcal{T}_{\phi}(\Omega)$ is the restriction of an ambient continuous psh function $\p$.
We use standard convolution in $\C^N$ to find a  sequence of smooth 
strictly psh functions 
$\p_j$ decreasing  to $\p$. Consider $\f_j={\p_j}_{|X}-\e_j$, where $0 <\e_j$ goes to zero so that
$\f_j<\phi_0$ near $\partial \Omega$ (the  functions ${\p_j}_{|X}$ uniformly converge to $\f$ by continuity).
Set  $\tilde{\f_j}:=\tilde{\max}(\f_j,A_j \rho+\phi_0)$, where 
$\tilde{\max}$ is a regularized maximum, then $\f_j \in  \mathcal{T}^{\infty}_{\phi}(\Omega)$
converges to $\f$ as $A_j \rightarrow +\infty$.
 \end{proof}

  \subsubsection*{Midly unbounded functions}
  
  The   complex Monge-Amp\`ere operator can be defined for mildly unbounded psh functions.
  We refer the reader to \cite{Ceg04,Blo06} for the case of smooth domains in $\C^n$;
  their analysis easily extends to our context.
  
\begin{defi}
We let $ \mathcal{F}(\Omega)$ denote the set of all functions $\varphi\in PSH(\Omega)$ which are decreasing limit 
of a sequence of functions $\varphi_{j}\in \mathcal{T}_{\phi}(\Omega)$ such that
$$
\sup_{j}\int_{\Omega}(dd^{c}\varphi_{j})^{n}<+\infty.
$$
\end{defi}

The operator $(dd^c \cdot )^n$ is well defined on  $ \mathcal{F}(\Omega)$, continuous along monotonic sequences,
and yields Radon measures $(dd^c \f)^n$ which have finite mass in $\Omega$.
We endow $ \mathcal{F}(\Omega)$ with the $L^1$-topology. 
Let us stress that the operator $\f \mapsto (dd^c \f)^n$ is {\it not} continuous
for the $L^1$-topology, but the class  $ \mathcal{F}(\Omega)$ enjoys the following  useful compactness property.

\begin{prop} \label{prop:Compact}
The set $ \mathcal{F}_1(\Omega)=\big\{\varphi\in\mathcal{F}(\Omega);  \, \int_{\Omega}(dd^{c}\varphi)^{n}\leq 1 \big\}$ is compact.
\end{prop}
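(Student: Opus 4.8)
The plan is to follow the classical pluripotential argument of Cegrell and Błocki \cite{Ceg04,Blo06}, checking that its only ingredients---the maximum and comparison principles, the Chern--Levine--Nirenberg and capacity estimates, the comparison between Lebesgue volume and Monge--Amp\`ere capacity, and the continuity of $(dd^c\cdot)^n$ along monotone sequences---remain valid on the locally irreducible space $X$, using \cite{FN80,Dem85,BT82} and the material recalled above. Write $h$ for the maximal plurisubharmonic function on $\Omega$ with boundary values $\phi$; it is continuous on $\overline\Omega$, bounded, satisfies $(dd^ch)^n=0$, and by the maximum principle dominates every $\varphi\in\mathcal T_\phi(\Omega)$, hence every $\varphi\in\mathcal F(\Omega)$. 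Thus $\mathcal F_1(\Omega)$ is a family of psh functions uniformly bounded from above, so it is automatically relatively compact in $L^1_{loc}(\Omega)$. There remain two points: (i) no sequence in $\mathcal F_1(\Omega)$ can escape to $-\infty$; and (ii) $\mathcal F_1(\Omega)$ is closed. The main difficulty is (i), as it is the only place where the mass constraint $\int_\Omega(dd^c\varphi)^n\le1$ is really used.

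For (i) the goal is the uniform bound $\sup_{\varphi\in\mathcal F_1(\Omega)}\int_\Omega(-\varphi)\,dV_X<+\infty$. Subtracting the constant $\|h\|_{\mathcal C^0(\overline\Omega)}$, which does not change the Monge--Amp\`ere mass, reduces us to $\varphi\in\mathcal F(\Omega)$ with $\varphi\le0$ and $\int_\Omega(dd^c\varphi)^n\le1$. Writing $\int_\Omega(-\varphi)\,dV_X=\int_0^\infty \Vol_X(\{\varphi<-t\})\,dt$, the standard capacity estimate for sublevel sets gives $\Capa(\{\varphi<-t\})\le C\,t^{-n}\int_\Omega(dd^c\varphi)^n\le C\,t^{-n}$ for $t\ge1$, and the volume--capacity comparison bounds $\Vol_X(\{\varphi<-t\})$ by a function of $\Capa(\{\varphi<-t\})$ which is integrable as $t\to+\infty$; for $t\le 1$ one simply uses $\Vol_X(\{\varphi<-t\})\le\Vol_X(\Omega)$. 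The resulting bound depends only on $\Omega$, $h$ and $X$, not on $\varphi$. It follows that if some sequence $\varphi_k\in\mathcal F_1(\Omega)$ tended to $-\infty$ locally uniformly, then $\int_K(-\varphi_k)\,dV_X\to+\infty$ for any $K\Subset\Omega$ of positive volume, while $-\varphi_k\ge-\|h\|_{\mathcal C^0}$ lets one bound this by $\int_\Omega(-\varphi_k)\,dV_X$ up to an additive constant---a contradiction. The same uniform bound, together with the uniform majorant $h\in L^\infty(\Omega)$, upgrades $L^1_{loc}$-convergence of any subsequence to $L^1(\Omega)$-convergence.

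For (ii), suppose $\varphi_k\to\psi$ in $L^1(\Omega)$ with $\varphi_k\in\mathcal F_1(\Omega)$; after passing to a subsequence, this is the situation provided by (i). Put $u_m:=\big(\sup_{k\ge m}\varphi_k\big)^*$. Each $u_m$ is psh with $\varphi_m\le u_m\le h$, hence has boundary values $\phi$ and belongs to $\mathcal F(\Omega)$; moreover, using $(dd^ch)^n=0$ and the comparison principle exactly as in \cite{Ceg04}, one gets $\int_\Omega(dd^cu_m)^n\le\sup_{k\ge m}\int_\Omega(dd^c\varphi_k)^n\le1$, so $u_m\in\mathcal F_1(\Omega)$. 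The $u_m$ decrease to $\big(\limsup_k\varphi_k\big)^*$, which equals $\psi$ since the $\varphi_k$ converge to $\psi$ in $L^1_{loc}$; thus $\psi\in\mathcal F(\Omega)$ by definition, and the continuity of $(dd^c\cdot)^n$ along the decreasing sequence $u_m\downarrow\psi$ together with the lower semicontinuity of total mass on the open set $\Omega$ gives $\int_\Omega(dd^c\psi)^n\le\liminf_m\int_\Omega(dd^cu_m)^n\le1$. Hence $\psi\in\mathcal F_1(\Omega)$, and $\mathcal F_1(\Omega)$ is compact.

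To summarize, the one substantial step is the uniform estimate $\int_\Omega(-\varphi)\,dV_X\le C$ of (i): the constant must be independent of $\varphi\in\mathcal F_1(\Omega)$, which forces one to replace the (possibly $-\infty$) pointwise values of $\varphi$ by the quantitative decay of the capacities of its sublevel sets---and this is precisely the role of the normalization $\int_\Omega(dd^c\varphi)^n\le1$. Everything else is soft pluripotential theory.
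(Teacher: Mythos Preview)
Your argument is correct and follows the standard pluripotential route (uniform $L^1$-bound via capacity/volume estimates, then closedness via the decreasing envelopes $u_m=(\sup_{k\ge m}\varphi_k)^*$). This is precisely the approach of \cite[Observation~A.3]{Zer09}, which is all the paper invokes: the paper gives no proof of its own beyond the remark that Zeriahi's argument carries over to the mildly singular setting. Your write-up is thus considerably more detailed than the paper's, and in the same spirit. One small point: in step (ii) the inequality $\int_\Omega(dd^cu_m)^n\le 1$ already follows from the single comparison $\varphi_m\le u_m$ (monotonicity of total Monge--Amp\`ere mass in $\mathcal F(\Omega)$), so the $\sup_{k\ge m}$ is not needed; and the membership $u_m\in\mathcal F(\Omega)$ deserves a word, though it is routine from $\varphi_m\le u_m\le h$.
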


This is shown in \cite[Observation A.3]{Zer09} for smooth domains,
and the same proof applies in our midly singular context.  
Let us stress that the Monge-Amp\`ere operator cannot be defined for all 
psh functions: there is e.g. no reasonable way to make sense of $(dd^c \log |z_1|)^n$.
A consequence of Proposition \ref{prop:Compact} is that one can not approximate
such a function by a decreasing sequence of psh functions with prescribed boundary values and
uniformly bounded Monge-Amp\`ere masses.

   \subsection{Adapted volume form}

     \subsubsection{Log terminal singularities}

    Let $Y$ be   a connected normal  complex variety 
    such that $K_Y$ is $\Q$-Cartier
    near $p \in Y$.    One can consider the $dd^c$-cohomology class of $-K_Y$, denoted by $c_1(Y)$. 
   
Given a log-resolution $\pi: \tilde{Y} \to Y$ of $(Y,p)$, 
 there exists a unique $\Q$-divisor $\sum_i a_i E_i$ whose push-forward to $Y$ is $0$ and with
$$
K_{\tilde{Y}}=\pi^*(K_Y)+\sum_i a_i E_i. 
$$

    \begin{defi} \label{def:lt}
     The coefficient $a_i\in\Q$ is  the \emph{discrepancy} of $Y$ along $E_j$.
    One says that $p$ is a
{\it  log terminal} singularity if $a_j>-1$ for all $j$. 
     \end{defi}

It is  classical   that this  condition is independent of the choice of resolution. 
   In the remainder of this article we assume that
   \begin{itemize}
   \item the singularity $(X,0)$ is log terminal.
   \item $Y=\Omega$ is a strongly  pseudoconvex neighborhood of $0=p \in X$;
   \item the canonical  bundle $K_{\Omega}$ is $\Q$-Cartier and  $rK_{\Omega}=0$ for some $r\in\N$.
   \end{itemize}

 \begin{defi}\cite[Definition 6.5]{EGZ09}
 \label{def:adapted}
Fix $\sigma$  a nowhere vanishing holomorphic section of $rK_{\Omega}$, 
and $h$ a smooth hermitian metric of $ K_{\Omega}$,
then
$$
\mu_{p}=\lambda \frac{\big(c_{n}\sigma\wedge \bar{\sigma}\big)^{1/r}}{|\sigma|^{2/r}_{h^r}}
$$
is an \emph{adapted measure}, where $\lambda >0$ is a positive normalizing constant.
    \end{defi}
    
    Observe  that $\mu_p$ is independent of the choice of $\sigma$, and  
    $$
    dd^c \log \mu_{p}=-\Theta_h(K_{\Omega})
    $$
    is the curvature of $h$, as follows from the Poincar\'e-Lelong formula.
    
The measure $\mu_p$ has finite mass 
by \cite[Lemma 6.4]{EGZ09}:
    let $\pi: \tilde{\Omega}\to \Omega$ be a resolution of $(\Omega,0)$, then
$$
\pi^{*}\mu_{p}=\lambda \prod_{j=1}^{M}\lvert s_{E_{j}} \rvert^{2a_{j}} dV_{\tilde{\Omega}},
$$
where $dV_{\tilde{\Omega}}$ is a smooth volume form on $\tilde{\Omega}$, $E_{1},\dots,E_{M}$ are exceptional divisors, the $s_{E_j}$'s are holomorphic sections such that 
$E_j=(s_{E_j}=0)$, and  
$$
rK_{\tilde{\Omega}}=\pi^{*}(rK_{\Omega})+r\sum_{j=1}^{M}a_{j}E_{j}=r\sum_{j=1}^{M}a_{j}E_{j}.
$$
Thus $\tilde{f}=\prod_{j=1}^{M}\lvert s_{E_{j}} \rvert^{2a_{j}}$ belongs to $L^s(dV_{\tilde{\Omega}})$
for some $s>1$,  as $p$ is log terminal.

  \begin{defi}
 We choose $\lambda=\lambda_{\Omega}$ so that 
 $\mu_p$ is a probability measure in $\Omega$.
 \end{defi}
   
The results to follow are independent of this (convenient) normalization.

    \subsubsection{Ricci curvature}
    
  Let  $\omega$ be a positive closed current of bidegree $(1,1)$ in $\Omega$ with bounded local potentials.
  Its top power $\omega^n$ is well-defined as explained in Section \ref{sec:MA}.
  If $\omega^n$ is absolutely continuous with respect to $dV_X$, then we set
  $$
  {\rm Ric}(\omega):=-dd^c \log \omega^n.
  $$

  \begin{defi}
  We say that $\omega$ is a K\"ahler-Einstein metric if it satisfies
  $$
  {\rm Ric}(\omega)=\gamma \omega
  $$
  for some $\gamma \in \R$.
  \end{defi}
  
  In this article we are mainly interested in the case when $\gamma >0$.  
   We choose the hermitian metric $h \equiv 1$ for $K_{\Omega}$, so that
   $\Theta_h=0$.  Since 
   $$
   {\rm Ric}(\omega)={\rm Ric}(\mu_p)-dd^c \log (\omega^n/\mu_p),
   $$
   the above K\"ahler-Einstein equation is equivalent, writing $\omega=dd^c \f$, to
   $$
   (dd^c \f)^n=e^{-\gamma \f} e^{w} \mu_p,
   $$
  where $w$ is a pluriharmonic function in $\Omega$.
Changing $\f$ in $\f-w/\gamma$ and then $\f$ in $t \f$
(observe that ${\rm Ric}(t \omega)={\rm Ric}(\omega)$ for any $t>0$),
we can normalize $\omega$ by
  $\int_{\Omega} \omega^n=1$
and reduce to
$$
(dd^c \f)^n= \frac{e^{-\gamma \f} \mu_p}{\int_{\Omega} e^{-\gamma \f} \mu_p}.
$$
Seeking for a K\"ahler-Einstein metric thus leads one to solve  $(MA)_{\gamma,\phi,\Omega}$. 
 
 Conversely solving    $(MA)_{\gamma,\phi,\Omega}$
 will produce a K\"ahler-Einstein metric $\omega=dd^c \f$, if we can establish
 enough regularity of the solution $\f$.

\subsubsection{Log canonical threshold}
  
We consider the density $f=\mu_p/dV_X$. It is related to the density
$\tilde{f}$ in a resolution by  
$$
 \pi^* \mu_p=f \circ \pi \cdot \pi^* dV_X=\tilde{f} dV_{\tilde{\Omega}}.
$$
  An analytic expression for $f$ is obtained as follows.
  Recall that $dV_X=\omega_{eucl}^n \wedge [X]$, 
where $\omega_{eucl}$ denotes the euclidean K\"ahler form on $\C^{N}$.  Set
$
dz_{I}=dz_{i_{1}}\wedge \cdots \wedge dz_{i_{n}},
$
where $1 \leq i_1< \cdots <i_n \leq N$.
There exists germs of holomorphic functions $f_{I}\in \mathcal{O}_{\Omega,0}$ such that
$
\big(dz_{I}\big)^{r}=f_{I}\sigma
$
since $\sigma$ is a local generator of $rK_{X}$. In particular the volume form $dV_{X}:=\omega_{eucl}^{n}\wedge [\Omega]$ is comparable to$\Big(\sum_{I}\lvert f_{I}\rvert^{\frac{2}{r}}\Big)\mu_{p},$ i.e.  
$$
\mu_{p}=f dV_{X},
\; \text{ with } \; 
f:=\Big(\sum_{I}\lvert f_{I}\rvert^{\frac{2}{r}}\Big)^{-1}.
$$

The germs of holomorphic functions $f_{I}$ generate an ideal $\mathcal{I}_{p}^{r}$, where $\mathcal{I}^{r}$ is an ideal sheaf associated to the singularities of $(X,p)$. In particular  
$$
\pi^{-1}\mathcal{I}^{r}\cdot \mathcal{O}_{\tilde{\Omega}}=\mathcal{O}_{\tilde{\Omega}}\big(-r\sum_{j=1}^{M}b_{j}E_{j}\big)
$$
for coefficients $b_{j}\in\N$ such that 
$
f\circ \pi\sim \prod_{j=1}^{M}\lvert s_{E_{j}} \rvert^{-2b_{j}}.
$

\begin{defi} \label{def:lct}
The \emph{log canonical threshold} of 
$(X,p)$  is given by
$$
\mathrm{lct}(X,p):=\inf_{j\in{1,\dots,M}} \frac{a_{j}+1}{b_{j}}.
$$
\end{defi}

We let the reader check that the definition is independent of the choice of resolution,
and that $\mathrm{lct}(X,\mathcal{I})\in (0,n].$
One can  equivalently use the following point of view:
  if $\mathcal{I}$ is a general ideal sheaf, 
\begin{equation}
    \label{eqn:lct}
    \mathrm{lct}(X,\mathcal{I}):=\inf_{E/X}\frac{A_{X}(E)}{\mathrm{ord}_{E}(\mathcal{I})}
\end{equation}
where 
$
A_{X}(E):=1+\mathrm{ord}_{E}(K_{Y/X})
$
 is the log-discrepancy of $E$, and 
the infimum is over all prime divisors $E$ on resolutions $Y$ of $X$.
When $\mathcal{I}$ is supported at $p$ we can   restrict in (\ref{eqn:lct}) to consider prime divisors centered at $p$.

\begin{exa}
The ordinary double point (ODP) 
$
X=\left\{ z \in \C^{n+1}, \; \sum_{j=0}^n z_j^2=0\right\}
$
is the simplest isolated log terminal singularity which is not a quotient singularity when $n \geq 3$
(when $n=2$, log-terminal singularities are precisely the singularities of the form
$X=\C^2/G$, $G \subset GL(2,\C)$ a finite subgroup).

In this case   $\mathcal{I}^2=(z_1^2,\dots,z_n^2)$. Indeed the $n$-forms
$$
\sigma_j:=\frac{\big(dz_0\wedge \cdots\wedge \widehat{dz_j}\wedge \cdots \wedge dz_n\big)^2}{z_j^2}=-\frac{\big(dz_0\wedge \cdots \wedge \widehat{dz_j}\wedge \cdots \wedge dz_n\big)^2}{\sum_{k\neq j} z_k^2},
$$
defined  on $U_j:=\{z_j\neq 0\}$, glue together to give a local generator $\sigma$ of $2 K_X$ 
(note that $\sum_{j=0}^n z_j dz_j=0$). 
In particular $|f_I|^{2/r}=|z_j|^2$ where $j=[0,n] \setminus I$, $r=2$  and
$$
\mu_{p} \sim \frac{1}{\sum_{j=0}^n \lvert z_j \rvert^2}dV_X 
$$

If $\pi: \rm Bl_0 \C^{n+1}\to \C^{n+1}$ denotes the blow-up at $0$, $E$ the exceptional divisor,
 and  $F$ the restriction of $E$ to $Y$, the strict transform of $X$, 
 we obtain 
$$
\pi^{-1}\mathcal{I}^2\cdot \mathcal{O}_Y=\mathcal{O}_Y(-2F)
\; \; \text{ and } \; \; 
\pi^* \mu_p= \lvert s_F \rvert^{2(n-2)} dV_Y
$$
for a smooth volume form $dV_Y$.
Thus ${\rm lct}(X,p)={\rm lct}(X,\mathcal{I})=n-1$.
\end{exa}

We will need the following result which connects ${\rm lct}(X,p)$
and the integrability properties of the density $f=\mu_p/dV_X$.

\begin{lem} \label{lem:integrabilityf}
The density $f=\mu_p/dV_X$ belongs to $L^r(dV_X)$ for  
$r<1+{\rm lct}(X,p)$.
\end{lem}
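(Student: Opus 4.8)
The statement is local and invariant under passing to a log resolution, so the plan is to pull everything back to the resolution $\pi\colon\tilde\Omega\to\Omega$ and reduce to an explicit monomial integrability computation. First I would recall that $\mu_p=f\,dV_X$ with $f=\big(\sum_I|f_I|^{2/r}\big)^{-1}$, that $\pi^*\mu_p=\tilde f\,dV_{\tilde\Omega}$ with $\tilde f=\prod_{j=1}^M|s_{E_j}|^{2a_j}$, and that $f\circ\pi\sim\prod_{j=1}^M|s_{E_j}|^{-2b_j}$; comparing the two expressions for $\pi^*\mu_p$ (using $\pi^*dV_X\sim\prod_j|s_{E_j}|^{2k_j}dV_{\tilde\Omega}$ for the Jacobian exponents $k_j$, so that $a_j=k_j-b_j$) one obtains that the quantity to estimate, $\int_\Omega f^r\,d\mu_p=\int_{\tilde\Omega}(f\circ\pi)^r\,\pi^*\mu_p$, equals up to bounded factors
\[
\int_{\tilde\Omega}\prod_{j=1}^M|s_{E_j}|^{-2rb_j+2a_j}\,dV_{\tilde\Omega}.
\]

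**Key step: the monomial criterion.** The next step is the standard fact that a product $\prod_j|s_{E_j}|^{2c_j}$ with $E=\sum E_j$ a simple normal crossings divisor is locally integrable against a smooth volume form if and only if $c_j>-1$ for every $j$; this follows by working in local coordinates where each $E_j$ is a coordinate hyperplane and applying Fubini together with the one-variable computation $\int_{|z|<1}|z|^{2c}\,d\lambda(z)<\infty\iff c>-1$, with a mild argument (covering $\tilde\Omega$ by finitely many such charts, or invoking that only finitely many of the $E_j$ meet a given point) to handle the compactness. Applying this with $c_j=a_j-rb_j$, local integrability of the integrand above holds precisely when $a_j-rb_j>-1$, i.e. $r<\frac{a_j+1}{b_j}$, for every $j$ with $b_j>0$ (the components with $b_j=0$ impose no condition since $a_j>-1$ by log terminality). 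Taking the infimum over $j$ and recalling Definition~\ref{def:lct}, $\mathrm{lct}(X,p)=\inf_j\frac{a_j+1}{b_j}$, this reads exactly $r<1+\mathrm{lct}(X,p)$, which is the assertion.

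**Main obstacle.** The genuinely delicate point is bookkeeping the exponents correctly: one must keep straight the three families of coefficients — the discrepancies $a_j$ (from $K_{\tilde\Omega}=\pi^*K_\Omega+\sum a_jE_j$), the order of vanishing $b_j$ of the ideal $\mathcal I^r$ (equivalently of $f\circ\pi$, via $f\circ\pi\sim\prod|s_{E_j}|^{-2b_j}$), and the relation between $\pi^*\mu_p$, $\mu_p\circ\pi$ and $dV_{\tilde\Omega}$ — and verify that the net exponent of $|s_{E_j}|^2$ in $(f\circ\pi)^r\,\pi^*\mu_p$ is indeed $a_j-rb_j$ and not, say, $a_j-r(b_j-a_j)$ or similar. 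A clean way to do this without tracking the Jacobian separately is to write $f^r\,d\mu_p=f^{r+1}\,dV_X$ directly and note $\mu_p=f\,dV_X$ already has $\pi^*\mu_p=\prod|s_{E_j}|^{2a_j}dV_{\tilde\Omega}$, while $f\circ\pi\sim\prod|s_{E_j}|^{-2b_j}$, so $(f\circ\pi)^r\cdot\pi^*\mu_p\sim\prod|s_{E_j}|^{2(a_j-rb_j)}dV_{\tilde\Omega}$ immediately; this is the bookkeeping I would present. Everything else is the one-variable integral and a partition-of-unity argument, both routine.
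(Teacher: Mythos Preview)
Your approach is essentially the same as the paper's --- pull back to a log resolution and reduce to a monomial integrability check --- but there is a genuine bookkeeping slip that makes the conclusion come out wrong as written.

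The statement asks for $f\in L^r(dV_X)$, i.e.\ finiteness of $\int_\Omega f^r\,dV_X$. You instead compute $\int_\Omega f^r\,d\mu_p$. As you yourself note, $f^r\,d\mu_p=f^{r+1}\,dV_X$, so these are \emph{not} the same integral; the exponent is off by one. Your computation $(f\circ\pi)^r\cdot\pi^*\mu_p\sim\prod_j|s_{E_j}|^{2(a_j-rb_j)}dV_{\tilde\Omega}$ is correct, and the integrability condition $a_j-rb_j>-1$ gives $r<\frac{a_j+1}{b_j}$, hence $r<\mathrm{lct}(X,p)$ after taking the infimum --- not $r<1+\mathrm{lct}(X,p)$ as you claim. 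The sentence ``this reads exactly $r<1+\mathrm{lct}(X,p)$'' is simply false for the integral you wrote down.

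The fix is immediate: either compute $\int_\Omega f^r\,dV_X$ directly, using $\pi^*dV_X\sim\prod_j|s_{E_j}|^{2(a_j+b_j)}dV_{\tilde\Omega}$ (which follows from dividing $\pi^*\mu_p=\tilde f\,dV_{\tilde\Omega}$ by $f\circ\pi$), so the pulled-back integrand has exponent $2(a_j+b_j-rb_j)$ and the condition becomes $r<1+\frac{a_j+1}{b_j}$; or keep your integral but relabel, noting that $\int f^{r-1}\,d\mu_p=\int f^r\,dV_X$ and apply your computation with $r$ replaced by $r-1$. The paper does the former. Ironically, the ``main obstacle'' you flagged --- keeping the exponents straight --- is exactly where the argument went off.
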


\begin{proof}
Let $\pi:\tilde{\Omega} \rightarrow \Omega$ be a resolution of the singularity. Recall that
$$
f \circ \pi \sim \prod_{j=1}^{M}\lvert s_{E_{j}} \rvert^{-2b_{j}}
\; \; \text{ and } \; \;
\tilde{f}=\prod_{j=1}^{M}\lvert s_{E_{j}} \rvert^{2a_{j}},
\; \; \text{ hence } \; \;
\pi^* dV_X \sim  \prod_{j=1}^{M}\lvert s_{E_{j}} \rvert^{2(a_{j}+b_j)} dV_{\tilde{\Omega}}.
$$
It follows that 
$
\int_{{\Omega}} f^r dV_X \sim \int_{\tilde{\Omega}} 
\prod_{j=1}^{M}\lvert s_{E_{j}} \rvert^{2(a_{j}+b_j)-2rb_j} dV_{\tilde{\Omega}}<+\infty
$
if and only if $r<\frac{1+a_j+b_j}{b_j}$ for all $j$, which  yields the statement since  
 ${\rm lct}(X,p)=\inf_j \frac{1+a_j}{b_j}$.
\end{proof}

\subsection{Normalized volume}
\label{ssec:Alg}

The \emph{(Hilbert-Samuel) multiplicity} of an ideal $\mathcal{I}$ supported at $p$ is defined as
$$
\mathrm{e}(X,\mathcal{I}):=\lim_{m\to +\infty}\frac{l(\mathcal{O}_{X,p}/\mathcal{I}^{m})}{m^{n}/n!}
$$
where $l$ denotes the length of an Artinian module.

 Given a divisor $E$ over $X$ centered at $p$, the volume of $E$ over $p\in X$ is  
$$
\mathrm{vol}_{X,p}(E):=\lim_{m\to +\infty}\frac{l(\mathcal{O}_{X,p}/\mathfrak{a}_{m}(E))}{m^{n}/n!}
$$
where $\mathfrak{a}_{m}(E):=\big\{f\in \mathcal{O}_{X,p}\, : \, \mathrm{ord}_{E}(f)\geq m\big\}$ (see \cite{ELS03}).

\begin{defi} \label{def:normvol}
\cite{Li18}
The \emph{normalized volume of $p\in X$} is  
$$
\widehat{\mathrm{vol}}(X,p):=\inf_{E/X}\widehat{\mathrm{vol}}_{X,p}(E)
$$
where the infimum runs over all prime divisors $E$ over $X$ centered at $p$, and
$$
\widehat{\mathrm{vol}}_{X,p}(E):=A_{X}(E)^{n}\cdot\mathrm{vol}_{X,p}(E).
$$
is the \emph{normalized volume of $E$ over $(x\in X)$}.
\end{defi}

We shall need the following important result.

\begin{thm}\cite[Theorem 27]{Liu18}
\label{thm:NormVol}
Let $(X,p)$ be a log terminal singularity of complex dimension $\dim_{\C} X=n$. Then  
$$
\widehat{\mathrm{vol}}(X,p)=\inf_{\mathcal{I}\, \mathrm{supported}\, \mathrm{at}\, p} \mathrm{lct}(X,\mathcal{I})^{n}\cdot \mathrm{e}(X,\mathcal{I}).
$$
\end{thm}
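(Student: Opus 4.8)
The plan is to follow \cite{Liu18} and prove the two inequalities between $\widehat{\mathrm{vol}}(X,p)$ and $\inf_{\mathcal I}\mathrm{lct}(X,\mathcal I)^n\mathrm{e}(X,\mathcal I)$ separately; the identity never holds for a fixed $\mathcal I$, but emerges after passing to infima. I would quote two algebraic facts. First, for a prime divisor $E$ over $X$ centered at $p$, the family $\mathfrak{a}_\bullet(E)=(\mathfrak{a}_m(E))_{m\ge 1}$ is a graded sequence of $\mathfrak{m}_p$-primary ideals (each $\mathfrak{a}_m(E)\supseteq\mathfrak{m}_p^m$, since $\mathrm{ord}_E(\mathfrak{m}_p)\ge 1$), whose volume exists \cite{ELS03} and equals the asymptotic Hilbert--Samuel multiplicity:
$$
\mathrm{vol}_{X,p}(E)=\lim_{m\to\infty}\frac{l(\mathcal{O}_{X,p}/\mathfrak{a}_m(E))}{m^n/n!}=\lim_{m\to\infty}\frac{\mathrm{e}(X,\mathfrak{a}_m(E))}{m^n};
$$
this is the graded--family version of ``volume $=$ multiplicity'' (Lazarsfeld--Mustaţă, and Cutkosky for general analytically unramified local domains, which covers the excellent normal ring $\mathcal{O}_{X,p}$). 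Second, the log canonical threshold of an $\mathfrak{m}_p$-primary ideal $\mathcal I$ is computed by a prime divisor, $\mathrm{lct}(X,\mathcal I)=A_X(E)/\mathrm{ord}_E(\mathcal I)$, read off a log resolution of $(X,\mathcal I)$ via Definition \ref{def:lct}; since $\mathcal I$ is supported at $p$ such an $E$ may be taken centered at $p$.

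For the inequality $\widehat{\mathrm{vol}}(X,p)\le\inf_{\mathcal I}\mathrm{lct}(X,\mathcal I)^n\mathrm{e}(X,\mathcal I)$ I would fix $\mathcal I$, pick $E$ computing $\mathrm{lct}(X,\mathcal I)$ and set $c=\mathrm{ord}_E(\mathcal I)$. Every element of $\mathcal I$ has $\mathrm{ord}_E\ge c$, so $\mathcal I\subseteq\mathfrak{a}_c(E)$, hence $\mathcal I^m\subseteq\mathfrak{a}_{cm}(E)$ and $l(\mathcal{O}_{X,p}/\mathcal I^m)\ge l(\mathcal{O}_{X,p}/\mathfrak{a}_{cm}(E))$; dividing by $m^n/n!$ and letting $m\to\infty$ (the right side is $c^n$ times $l(\mathcal{O}_{X,p}/\mathfrak{a}_{cm}(E))/((cm)^n/n!)\to c^n\mathrm{vol}_{X,p}(E)$) yields $\mathrm{e}(X,\mathcal I)\ge c^n\mathrm{vol}_{X,p}(E)$. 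Therefore
$$
\mathrm{lct}(X,\mathcal I)^n\mathrm{e}(X,\mathcal I)=\Big(\tfrac{A_X(E)}{c}\Big)^n\mathrm{e}(X,\mathcal I)\ \ge\ A_X(E)^n\,\mathrm{vol}_{X,p}(E)=\widehat{\mathrm{vol}}_{X,p}(E)\ \ge\ \widehat{\mathrm{vol}}(X,p),
$$
and taking the infimum over $\mathcal I$ closes this direction.

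For the reverse inequality I would fix a prime divisor $E$ centered at $p$ and $\varepsilon>0$, and test the right-hand infimum against the valuation ideals $\mathfrak{a}_m=\mathfrak{a}_m(E)$. Using $E$ itself as a test divisor in Definition \ref{def:lct} gives $\mathrm{lct}(X,\mathfrak{a}_m)\le A_X(E)/\mathrm{ord}_E(\mathfrak{a}_m)\le A_X(E)/m$, since $\mathrm{ord}_E(\mathfrak{a}_m)\ge m$ by definition of $\mathfrak{a}_m$; and by the first fact above, $\mathrm{e}(X,\mathfrak{a}_m)\le(\mathrm{vol}_{X,p}(E)+\varepsilon)\,m^n$ once $m$ is large. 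Hence for such $m$,
$$
\mathrm{lct}(X,\mathfrak{a}_m)^n\,\mathrm{e}(X,\mathfrak{a}_m)\ \le\ \frac{A_X(E)^n}{m^n}\,(\mathrm{vol}_{X,p}(E)+\varepsilon)\,m^n\ =\ A_X(E)^n\big(\mathrm{vol}_{X,p}(E)+\varepsilon\big),
$$
so $\inf_{\mathcal I}\mathrm{lct}(X,\mathcal I)^n\mathrm{e}(X,\mathcal I)\le A_X(E)^n(\mathrm{vol}_{X,p}(E)+\varepsilon)$; letting $\varepsilon\to 0$ and then taking the infimum over $E$ gives $\le\widehat{\mathrm{vol}}(X,p)$, which together with the previous paragraph yields equality.

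The hard part — the only input that is not bookkeeping with valuations and lengths — is the first fact: the equality of the volume and the asymptotic Hilbert--Samuel multiplicity of the graded sequence $\mathfrak{a}_\bullet(E)$. This is where one must invoke the theory of graded systems of ideals on a singular base, and where the normality and excellence of the analytic local ring $\mathcal{O}_{X,p}$ are genuinely used; everything else reduces to the two inclusions $\mathcal I^m\subseteq\mathfrak{a}_{cm}(E)$ and $\mathfrak{m}_p^m\subseteq\mathfrak{a}_m(E)$ together with monotonicity of length under surjections.
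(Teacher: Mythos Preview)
The paper does not give its own proof of this theorem; it is quoted from \cite[Theorem~27]{Liu18} and used as a black box (the only content after the statement is the remark on rescaling invariance and coherence). Your argument is a correct reconstruction of Liu's proof: the two inequalities via the inclusions $\mathcal I^m\subseteq\mathfrak a_{cm}(E)$ and the test ideals $\mathfrak a_m(E)$, with the one nontrivial input being the equality $\mathrm{vol}_{X,p}(E)=\lim_m m^{-n}\mathrm e(X,\mathfrak a_m(E))$ for the graded sequence of valuation ideals, which you correctly flag as requiring the Lazarsfeld--Musta\c t\u a/Cutkosky theory on a possibly singular (excellent, normal) base.
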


Observe that the quantity $\mathrm{lct}(X,\mathcal{I})^{n}\cdot \mathrm{e}(X,\mathcal{I})$ is invariant under rescaling $\mathcal{I}\to \mathcal{I}^{r}$, $r\in\mathbb{N}$.
One can actually only consider coherent ideal sheaves supported at $p$. 
Indeed   any ideal $\mathcal{I}$ supported at $p$ is associated to a closed subscheme $Z$ such that $\mathrm{Supp}\,Z=\{p\}$ 
\cite[Corollary II.5.10]{Hart77},  while any ideal associated to a closed subscheme is coherent 
\cite[Proposition II.5.9]{Hart77}.

   \begin{exa}
Consider  again 
$
X=\left\{ z \in \C^{n+1}, \; \sum_{j=0}^n z_j^2=0\right\}.
$
Recall  that $\mathcal{I}^2=(z_1^2,\dots,z_n^2)$ is the ideal sheaf associated to the adapted measure,
and that the ideal $\mathcal{I}^2$ corresponds to $2F$ where $F$ is the exceptional divisor
in the blow up at $p$. In particular $A_X(F)=n-1$.

We  observe here that $e(X,\mathcal{I}^2)=2^{n+1}$ 
and  ${\rm \widehat{vol}}_{X,p}(F)=2(n-1)^n$ since
$$
l\big(\mathcal{O}_{X,p}/\mathcal{I}^{2m}\big)=l\big(\mathcal{O}_{X,p}/\mathfrak{a}_{2m}(F)\big)=2^{n+1}\frac{m^n}{n!}+O(m^{n-1}).
$$
 In \cite[Example 5.3]{Li18} it is further shown that $F$ is a minimizer 
for the normalized volume of $p\in X$, i.e. that ${\rm \widehat{vol}}(X,p)=2(n-1)^n$.
\end{exa}

  \section{A variational approach}
  
  A variational approach for solving degenerate complex Monge-Amp\`ere equations
  has been developed in \cite{BBGZ13} in the context of compact K\"ahler manifolds.
  It notably applies to the construction of singular K\"ahler-Einstein metrics
  of non-positive curvature.
  This has been partially adapted to smooth pseudoconvex domains of $\C^n$ in \cite{ACC12}.
  
  The case of positive curvature is notoriously more difficult, as 
  illustrated by the resolution of the Yau-Tian-Donaldson conjecture by Chen-Donaldson-Sun \cite{CDS15}.
  It has been treated extensively in \cite{BBEGZ}, and eventually lead  to an alternative 
  solution of the Yau-Tian-Donaldson conjecture for Fano varieties \cite{BBJ21,LTW21,Li22}.
  Adapting \cite{BBEGZ} to our local singular context, we develop in this section
  a variational approach for solving the equation
  \begin{equation}
    \label{eqn:MA}
     (MA)_{\gamma,\phi,\Omega}
    \begin{cases}
    (dd^{c}\varphi)^{n}=\frac{e^{-\gamma\varphi}d\mu_{p}}{\int_{\Omega}e^{-\gamma \varphi} d\mu_{p}}\\
    \varphi_{\vert\partial\Omega}=\phi.
    \end{cases}
\end{equation}

\subsection{Monge-Amp\`ere energy}

\subsubsection{Smooth tests}
Fix $\Omega=\{\rho<0\}$ and $\phi$ as described previously, and  
$$
\mathcal{T}_{\phi}^{\infty}(\Omega)=\big\{\varphi\in SPSH(\Omega)\cap \mathcal{C}^{\infty}(\overline{\Omega})\, : 
\, \varphi_{\vert \partial\Omega}\equiv \phi\big\}.
$$

Recall that 
$\phi_{0}\in \mathcal{C}^{\infty}(\bar{\Omega})\cap PSH(\Omega)$ denotes a smooth psh extension
of $\phi$ to $\bar{\Omega}$.
We set   $\omega:=dd^{c}\phi_{0}$. This is a semi-positive form,
which can be assumed to be K\"ahler. However if $\phi \equiv 0$,
we can equally well take $\phi_0 \equiv 0$ and get $\omega \equiv 0$.

\begin{defi}
We call $E_{\phi}(\varphi):=\frac{1}{n+1}\sum_{j=0}^{n}\int_{\Omega}(\varphi-\phi_{0})(dd^{c}\varphi)^{j}\wedge(dd^{c}\phi_{0})^{n-j}$ the $\phi$-relative Monge-Amp\`ere energy of 
$\f \in \mathcal{T}_{\phi}^{\infty}(\Omega)$.
\end{defi}

While the formula depends on the choice of   $\phi_0$, it follows from Lemma 
\ref{lem:smoothIPP} that the difference of two such relative 
energies is constant, 
$$
E_{\phi_1}(\varphi)-E_{\phi_0}(\varphi)=E_{{\phi_1}}(\phi_0).
$$ 
For $\phi_{0}=0$, the formula  reduces to $E(\varphi):=E_{0}(\varphi)=\frac{1}{n+1}\int_{\Omega}\varphi(dd^{c}\varphi)^{n}$.

This definition is motivated by the fact the $E_{\phi}$ is a primitive of the Monge-Amp\`ere operator
for smooth psh functions with $\phi$-boundary values.

\begin{lem} \label{lem:smoothIPP}
Fix $\f \in \mathcal{T}_{\phi}^{\infty}(\Omega), v \in {\mathcal D}({\Omega})$.
Then $\f+tv \in \mathcal{T}_{\phi}^{\infty}(\Omega)$ for $t$ small, and
$$
{\frac{d}{dt} }_{|t=0}E_{\phi}(\varphi+tv)=\int_{\Omega} v (dd^c \f)^n.
$$
In particular $\f \mapsto E_{\phi}(\varphi)$ is increasing.
\end{lem}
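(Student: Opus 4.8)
The plan is to reduce the statement to the classical integration-by-parts formula for the complex Monge-Ampère operator on smooth strictly plurisubharmonic functions. First I would note that since $\f \in \mathcal{T}_{\phi}^{\infty}(\Omega)$ is strictly plurisubharmonic and smooth up to $\overline{\Omega}$, and $v$ has compact support in $\Omega$, for $|t|$ small the function $\f + tv$ remains strictly plurisubharmonic on a neighborhood of $\overline{\Omega}$ and agrees with $\f$ (hence with $\phi_0$) near $\partial\Omega$; so $\f + tv \in \mathcal{T}_{\phi}^{\infty}(\Omega)$, and in particular $(\f+tv) - \phi_0 = (\f-\phi_0) + tv$ has the same boundary behaviour as $\f - \phi_0$, vanishing on $\partial\Omega$.

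Next I would compute the derivative of each summand of $E_{\phi}(\f+tv) = \frac{1}{n+1}\sum_{j=0}^n \int_{\Omega} (\f+tv-\phi_0)(dd^c(\f+tv))^j \wedge (dd^c\phi_0)^{n-j}$. Differentiating under the integral sign (legitimate since everything is smooth and compactly supported in the variable part), the $t$-derivative at $t=0$ of the $j$-th term is
\[
\int_{\Omega} v\, (dd^c\f)^j \wedge (dd^c\phi_0)^{n-j} \;+\; j \int_{\Omega} (\f-\phi_0)\, dd^c v \wedge (dd^c\f)^{j-1} \wedge (dd^c\phi_0)^{n-j}.
\]
The second integral is an exact pairing of smooth forms on $\Omega$ with $v$ (and hence $dd^c v$) compactly supported, so Stokes' theorem moves the two $d^c$ and $d$ operators off $v$: $\int_{\Omega}(\f-\phi_0)\, dd^c v \wedge T = \int_{\Omega} v\, dd^c(\f-\phi_0)\wedge T$ for any smooth closed $(n-1,n-1)$-form $T$ on $\Omega$, because the boundary terms vanish by compact support of $v$. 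Applying this with $T = (dd^c\f)^{j-1}\wedge(dd^c\phi_0)^{n-j}$ rewrites the second integral as $j\int_{\Omega} v\, (dd^c\f - dd^c\phi_0)\wedge (dd^c\f)^{j-1}\wedge(dd^c\phi_0)^{n-j} = j\int_{\Omega} v\, \big[(dd^c\f)^j - (dd^c\f)^{j-1}\wedge(dd^c\phi_0)\big]\wedge(dd^c\phi_0)^{n-j}$.

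Summing over $j$, the plan is then a telescoping computation: writing $a_j := \int_{\Omega} v\,(dd^c\f)^j \wedge (dd^c\phi_0)^{n-j}$, the $j$-th contribution to $(n+1)\,\frac{d}{dt}|_{t=0}E_{\phi}(\f+tv)$ is $a_j + j(a_j - a_{j-1})$, and $\sum_{j=0}^n \big[a_j + j a_j - j a_{j-1}\big] = \sum_{j=0}^n (j+1)a_j - \sum_{j=0}^n j a_{j-1} = \sum_{j=0}^n (j+1)a_j - \sum_{j=0}^{n-1}(j+1) a_j = (n+1) a_n$, which gives exactly $\int_{\Omega} v\,(dd^c\f)^n$ after dividing by $n+1$. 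Finally, monotonicity of $E_{\phi}$ follows: for $\f_0, \f_1 \in \mathcal{T}_{\phi}^{\infty}(\Omega)$ with $\f_0 \le \f_1$, one connects them along $\f_t = (1-t)\f_0 + t\f_1 \in \mathcal{T}_{\phi}^{\infty}(\Omega)$ and applies the formula with $v = \f_1 - \f_0 \ge 0$ (which vanishes on $\partial\Omega$; the mild technical point that $v$ is not compactly supported is handled since $\f_1 - \f_0 \in \mathcal{C}^{\infty}(\overline\Omega)$ vanishes on $\partial\Omega$, so the Stokes argument still has no boundary term) to get $\frac{d}{dt} E_{\phi}(\f_t) = \int_{\Omega}(\f_1-\f_0)(dd^c\f_t)^n \ge 0$.

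The only genuine subtlety — and the step I would be most careful about — is the justification of the integrations by parts when $v$ is merely smooth on $\overline{\Omega}$ vanishing on $\partial\Omega$ rather than compactly supported in $\Omega$, since then the Stokes argument requires checking that $\int_{\partial\Omega} v\,(\text{stuff})$ and $\int_{\partial\Omega}(\f-\phi_0)\,d^c v\wedge(\text{stuff})$ both vanish; the first vanishes because $v|_{\partial\Omega}=0$ and the second because $(\f-\phi_0)|_{\partial\Omega}=0$. For the statement as literally written ($v \in \mathcal{D}(\Omega)$) this is immediate, so I would prove the formula first for $v \in \mathcal{D}(\Omega)$ exactly as above, and then record the extension to boundary-vanishing $v$ as the input needed for monotonicity.
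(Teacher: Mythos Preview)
Your proof is correct and follows essentially the same approach as the paper's: differentiate the energy term-by-term, integrate by parts using that $\f-\phi_0$ and $v$ vanish on $\partial\Omega$, and collapse a telescoping sum to $(n+1)\int_\Omega v\,(dd^c\f)^n$. The only cosmetic difference is that the paper substitutes $\p_t=\f-\phi_0+tv$ and writes the mixed forms as $(\omega+dd^c\p_t)^j\wedge\omega^{n-j}$ before telescoping, whereas you work directly with $a_j=\int_\Omega v\,(dd^c\f)^j\wedge(dd^c\phi_0)^{n-j}$; your treatment of monotonicity via the convex path $\f_t=(1-t)\f_0+t\f_1$ and the observation that the Stokes argument only needs vanishing on $\partial\Omega$ (not compact support) is in fact more explicit than the paper's one-line remark.
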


Here ${\mathcal D}({\Omega})$ denotes the space of smooth functions with compact support in $\Omega$.

\begin{proof}
Fix $\f \in \mathcal{T}_{\phi}^{\infty}(\Omega)$  and $v \in {\mathcal D}({\Omega})$.
Since $v$ is smooth with compact support, the function $\pm v+C\rho$ is psh for $C>0$ large enough, while
$\f-\e \rho$ is psh for $\e>0$ small enough. It follows that $\f+tv$
is psh for $t$ small enough.  

Set $\omega=dd^c \phi_0$. The function $\p_t=\f-\phi_0+tv$ has zero boundary values, and
$$
E_{\phi}(\varphi+tv)=\frac{1}{n+1}\sum_{j=0}^{n}\int_{\Omega} \p_t (\omega+dd^{c}\p_t)^{j}\wedge \omega^{n-j}.
$$
It follows from Stokes theorem, as all functions involved in the integration by parts are identically zero on $\partial \Omega$,
that
\begin{eqnarray*}
\lefteqn{(n+1) {\frac{d}{dt} }E_{\phi}(\varphi+tv)} \\
&=& \sum_{j=0}^{n}\int_{\Omega} \dot{\p_t} (\omega+dd^{c}\p_t)^{j}\wedge \omega^{n-j}
+\sum_{j=1}^{n}\int_{\Omega} j {\p_t} dd^c \dot{\p_t} \wedge (\omega+dd^{c}\p_t)^{j-1}\wedge \omega^{n-j} \\
&=&\sum_{j=0}^{n}\int_{\Omega} \dot{\p_t} (\omega+dd^{c}\p_t)^{j}\wedge \omega^{n-j}
+\sum_{j=1}^{n}\int_{\Omega} j \dot{\p_t} dd^c {\p_t} \wedge (\omega+dd^{c}\p_t)^{j-1}\wedge \omega^{n-j} \\
&=& \sum_{j=0}^{n}\int_{\Omega} (j+1) \dot{\p_t} (\omega+dd^{c}\p_t)^{j}\wedge \omega^{n-j}
-\sum_{j=1}^{n}\int_{\Omega} j \dot{\p_t}  (\omega+dd^{c}\p_t)^{j-1}\wedge \omega^{n-j+1} \\
&=& (n+1) \int_{\Omega} \dot{\p_t} (\omega+dd^{c}\p_t)^n
\end{eqnarray*}
writing $dd^c \p_t=(\omega+dd^c \p_t)-\omega$ in the third line, and then distributing and relabelling   
so as to obtain a telescopic series.
The formula follows for $t=0$.

In short the derivative of $E_{\phi}$ is the complex Monge-Amp\`ere operator $(dd^c \f)^n$ which is a positive
measure. It follows that $\f \mapsto E_{\phi}(\f)$ is increasing.
\end{proof}

\subsubsection{Continuous setting}

The previous result extends to the case of continuous 
plurisubharmonic functions that are not necessarily strictly psh. Recall that
$$
\mathcal{T}_{\phi}(\Omega):=
\left\{\varphi\in PSH(\Omega)\cap C^{0}(\bar{\Omega}), \, \varphi_{|\partial \Omega}=\phi 
\text{ and } \int_{\Omega} (dd^c \f)^n <+\infty \right\}.
$$

We would like to  extend Lemma \ref{lem:smoothIPP} to this less regular setting.
As   $\f+tv$ is not necessarily plurisubharmonic, we need to project it 
 onto the cone of all plurisubharmonic functions.
 The following result will thus be useful.
 
\begin{lem}
\label{lem:1}
Fix $\varphi\in \mathcal{T}_{\phi}(\Omega)$ and $f \in {\mathcal D}({\Omega})$.
 Then $P(\varphi+f)\in\mathcal{T}_{\phi}(\Omega)$ where
$$
P(\varphi+f):=\sup\big\{\p \in PSH(\Omega), \, \p \leq \varphi+f\big\}.
$$
Moreover $\big(dd^{c}P(\varphi+f)\big)^{n}$
is supported on the contact set $\{P(\varphi+f)=\varphi+f\}$.
\end{lem}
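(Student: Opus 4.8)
The plan is to verify that $u:=P(\varphi+f)$ meets the four requirements in the definition of $\mathcal{T}_{\phi}(\Omega)$, and then to deduce the support statement by a balayage argument. That $u$ is plurisubharmonic is immediate: the competing family is nonempty ($\varphi+\min_{\overline{\Omega}}f$ belongs to it) and, $\varphi$ being continuous on the compact set $\overline{\Omega}$, uniformly bounded above, so $u^{*}\in PSH(\Omega)$; and $u^{*}\leq\varphi+f$ because $\varphi+f$ is continuous, so $u^{*}$ itself lies in the family and $u=u^{*}$. For the boundary values I would use a lower barrier: set $m:=\min_{\overline{\Omega}}f$ (which is $\leq 0$, since $f\equiv 0$ off $\supp f\Subset\Omega=\{\rho<0\}$) and choose $A>0$ so large that $A\rho\leq f$ on $\overline{\Omega}$ and $\max(A\rho,m)\equiv m$ near $p$; then $\psi:=\varphi+\max(A\rho,m)$ is plurisubharmonic, $\psi\leq\varphi+f$, hence $\psi\leq u\leq\varphi+f$, and since $\psi$ and $\varphi+f$ are continuous on $\overline{\Omega}$ with $\psi_{|\partial\Omega}=(\varphi+f)_{|\partial\Omega}=\phi$, squeezing shows that $u$ is continuous up to $\partial\Omega$ with $u_{|\partial\Omega}=\phi$.

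The interior continuity of $u$ is the delicate point; I would establish it by reduction to a smooth obstacle. By Lemma~\ref{lem:smoothing} there are $\varphi_{j}\in\mathcal{T}^{\infty}_{\phi}(\Omega)$ with $\varphi_{j}\to\varphi$, and Dini's theorem makes this convergence uniform (the sequence is quasi-decreasing and $\overline{\Omega}$ is compact); the obstacles $\varphi_{j}+f$ are then smooth on $\overline{\Omega}$, so $P(\varphi_{j}+f)$ is continuous, and since $g\mapsto P(g)$ is $1$-Lipschitz for the sup-norm one gets $P(\varphi_{j}+f)\to u$ uniformly, whence $u\in C^{0}(\overline{\Omega})$. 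Finally, $\psi$ above belongs to $\mathcal{T}_{\phi}(\Omega)$ (being the sum of $\varphi\in\mathcal{T}_{\phi}(\Omega)$ and the bounded plurisubharmonic function $\max(A\rho,m)$, which is smooth near $p$ and near $\partial\Omega$ and hence has finite Monge--Amp\`ere mass), and $\psi\leq u$ with identical boundary values, so the comparison of total masses for bounded plurisubharmonic functions agreeing on $\partial\Omega$ (\cite{BT82,Ceg98}) yields $\int_{\Omega}(dd^{c}u)^{n}\leq\int_{\Omega}(dd^{c}\psi)^{n}<+\infty$. Thus $u\in\mathcal{T}_{\phi}(\Omega)$.

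For the last assertion, $D:=\{u<\varphi+f\}$ is open, being the positivity set of the lower semicontinuous function $\varphi+f-u$. Given $x_{0}\in D$ with $x_{0}\neq p$, I would pick a small coordinate ball $B\Subset X_{\reg}$ about $x_{0}$ on which $\varphi+f-u>\delta$ and the oscillation of $\varphi+f$ is $<\delta$, for a suitable $\delta>0$; solving the homogeneous complex Monge--Amp\`ere Dirichlet problem in $B$ with boundary data $u_{|\partial B}$ gives $h\in PSH(B)\cap C^{0}(\overline{B})$ with $(dd^{c}h)^{n}=0$ in $B$ and $h\geq u$, and the oscillation bound forces $h\leq\varphi+f$ on $\overline{B}$, so gluing $h$ (inside $B$) to $u$ (outside) produces a plurisubharmonic competitor; hence $h\leq u$, so $h=u$ and $(dd^{c}u)^{n}=0$ on $B$. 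Covering $D\setminus\{p\}$ by such balls shows that $(dd^{c}u)^{n}$ vanishes on $D\setminus\{p\}$, and since $u$ is bounded this measure charges no mass on the pluripolar set $\{p\}$; therefore $(dd^{c}u)^{n}$ is carried by $\{P(\varphi+f)=\varphi+f\}$.

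The only point demanding genuine care is the one flagged above: the theory of plurisubharmonic envelopes (nonemptiness and upper regularity of the family, the sup-norm contraction property, continuity of the envelope of a smooth obstacle, the Dirichlet problem on balls, and the comparison of Monge--Amp\`ere masses) is classical for strongly pseudoconvex domains of $\C^{n}$, and it must be transported to the mildly singular domain $\Omega\subset X$ through the local embedding $X\hookrightarrow\C^{N}$, exactly as is done for the other pluripotential-theoretic tools recalled in Section~\ref{sec:MA}.
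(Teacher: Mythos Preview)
Your argument is correct and follows the same route as the paper's: classical envelope theory for membership in $\mathcal{T}_\phi(\Omega)$, then balayage on small balls away from $p$ (plus the fact that the Monge--Amp\`ere measure of a bounded psh function puts no mass on the pluripolar point $\{p\}$) for the support statement. The paper's own proof is extremely terse---it defers the envelope-regularity claims to ``classical'' pluripotential theory and only names the balayage step---so your explicit barrier $\psi=\varphi+\max(A\rho,m)$, the approximation-plus-$1$-Lipschitz argument for interior continuity, and the mass-comparison argument are genuine elaborations of steps the paper leaves implicit.
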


\begin{proof}
Since $\f+f$ is bounded and continuous, it is classical to check that the envelope $P(\varphi+f)$ is  a well-defined psh function.
As $f$ has compact support, one moreover checks that 
$P(\varphi+f)$ is continuous on $\partial\Omega$ with 
$P(\varphi+f)_{|\partial \Omega}=\varphi_{|\partial\Omega}=\phi$. 
 
Solving Dirichlet problems in small "balls" not containing the singular point,
it follows from a balayage argument that the Monge-Amp\`ere measure of the envelope
$\big(dd^{c}P(\varphi+f)\big)^{n}$ is supported on the contact set $\{P(\varphi+f)=\varphi+f\}$.
\end{proof}

We extend $E_{\phi}(\cdot)$ to $\mathcal{T}_{\phi}(\Omega)$ by monotonicity, setting
$$
E_{\phi}(\varphi):=\inf\left\{E_{\phi}(\p), \, \p \in \mathcal{T}_{\phi}^{\infty}(\Omega)\text{ and } \f \leq \p \right\}.
$$

It has been observed by Berman and Boucksom (in the setting of compact K\"ahler manifolds \cite{BB10})
that $E_{\phi} \circ P$ is still differentiable, with $(E_{\phi} \circ P)'=E_{\phi}'\circ P$.
This result extends to our local singular setting.

\begin{prop}
\label{prop:DerivEnergy}
Fix $\varphi\in \mathcal{T}_{\phi}(\Omega)$ and $f\in\mathcal{D}(\Omega)$. 
Then $t\to E_{\phi}\big(P(\varphi+tf)\big)$ is differentiable and
$$
{\frac{d}{dt}}_{|t=0}E_{\phi}\big(P(\varphi+tf)\big) =\int_{\Omega}f(dd^{c}\varphi)^{n}.
$$
\end{prop}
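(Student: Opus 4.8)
The plan is to follow the Berman--Boucksom differentiability argument for the composition $E_\phi \circ P$, adapted to the local singular setting, exploiting the orthogonality property of Lemma \ref{lem:1} together with the monotonicity and (one-sided) differentiability of $E_\phi$ from Lemma \ref{lem:smoothIPP}. First I would reduce to the case where $\varphi$ is smooth by approximation: using Lemma \ref{lem:smoothing}, write $\varphi$ as a quasi-decreasing limit of $\varphi_j \in \mathcal{T}^\infty_\phi(\Omega)$, prove the statement for each $\varphi_j$, and then pass to the limit. For fixed $t$, the function $P(\varphi+tf)$ has $\phi$-boundary values and finite Monge-Amp\`ere mass (by Lemma \ref{lem:1}), so $E_\phi(P(\varphi+tf))$ makes sense via the monotone extension, and continuity of $E_\phi$ along the approximating sequence handles the limit.

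For the smooth case, the key computation is to establish the two one-sided estimates
\[
\limsup_{t\to 0^+}\frac{E_\phi(P(\varphi+tf))-E_\phi(\varphi)}{t}\ \le\ \int_\Omega f\,(dd^c\varphi)^n\ \le\ \liminf_{t\to 0^+}\frac{E_\phi(P(\varphi+tf))-E_\phi(\varphi)}{t},
\]
and similarly from the left, which together give differentiability with the claimed derivative. The upper bound is the easy direction: since $P(\varphi+tf)\le \varphi+tf$ and $E_\phi$ is increasing (Lemma \ref{lem:smoothIPP}), and since $E_\phi$ is concave along affine paths $\varphi+tv$ (its derivative $\int v\,(dd^c\varphi_t)^n$ being decreasing in $\varphi_t$ by the comparison of mixed Monge-Amp\`ere masses), one gets $E_\phi(P(\varphi+tf))\le E_\phi(\varphi+tf)\le E_\phi(\varphi)+t\int_\Omega f(dd^c\varphi)^n$ for $t>0$, and the reversed inequality for $t<0$. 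The lower bound is where the orthogonality relation enters: writing $u_t:=P(\varphi+tf)$, one has
\[
E_\phi(u_t)-E_\phi(\varphi)\ \ge\ \int_\Omega (u_t-\varphi)\,(dd^c u_t)^n,
\]
using concavity of $E_\phi$ along the segment from $\varphi$ to $u_t$ and that $E_\phi'$ at the endpoint $u_t$ is $(dd^cu_t)^n$. On the contact set $\{u_t=\varphi+tf\}$, which by Lemma \ref{lem:1} carries all of $(dd^cu_t)^n$, we have $u_t-\varphi=tf$, so the right-hand side equals $t\int_\Omega f\,(dd^cu_t)^n$. Dividing by $t>0$ and letting $t\to 0^+$, it remains to show $(dd^cu_t)^n \to (dd^c\varphi)^n$ tested against $f$; this follows because $u_t\to\varphi$ uniformly as $t\to 0$ (since $\|u_t-\varphi\|_\infty\le |t|\|f\|_\infty$), the masses $\int_\Omega(dd^cu_t)^n$ are uniformly bounded, and one invokes the weak continuity of the Monge-Amp\`ere operator along uniformly convergent sequences of uniformly bounded psh functions (Bedford--Taylor), together with convergence of boundary contributions which vanish since $u_t=\varphi=\phi$ near $\partial\Omega$.

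The main obstacle I expect is making the orthogonality/concavity step fully rigorous in the continuous (not smooth) category and near the singular point $p$: the identity $E_\phi(u_t)-E_\phi(\varphi)\ge \int(u_t-\varphi)(dd^cu_t)^n$ requires a valid integration-by-parts / comparison of mixed masses for merely bounded continuous psh functions on a domain with an isolated singularity, and one must check there are no hidden mass concentrations at $p$. This is handled by the remark (following Lemma \ref{lem:smoothing} and Proposition \ref{prop:Compact}) that all relevant functions are restrictions of ambient bounded psh functions on $\C^N$ to which the Bedford--Taylor machinery applies directly, so the integration by parts is licit and no boundary term at $p$ appears because $p$ is pluripolar and the potentials are bounded there. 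The left derivative ($t<0$) is symmetric, using that $P(\varphi+tf)\ge P(\varphi)+$ (something) is replaced by the simple bound $P(\varphi+tf)\ge \varphi+tf-2|t|\|f\|_\infty$ won't be needed; instead one runs the same two-sided concavity argument, noting $P(\varphi+tf)\le \varphi+tf$ still holds and gives the matching inequality with reversed sign. Combining the four one-sided bounds yields differentiability at $t=0$ with derivative $\int_\Omega f\,(dd^c\varphi)^n$, and the approximation argument transfers this to general $\varphi\in\mathcal{T}_\phi(\Omega)$.
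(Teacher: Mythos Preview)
Your core strategy is the same as the paper's (the Berman--Boucksom argument): sandwich $E_\phi(\varphi_t)-E_\phi(\varphi)$ between $\int(\varphi_t-\varphi)(dd^c\varphi_t)^n$ and $\int(\varphi_t-\varphi)(dd^c\varphi)^n$, use the orthogonality relation from Lemma~\ref{lem:1} on the left side, and pass to the limit via weak continuity of the Monge--Amp\`ere operator under uniform convergence. Your lower-bound half is essentially identical to the paper's proof.

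However, the reduction to smooth $\varphi$ is an unnecessary detour and, as written, introduces a gap. The paper works directly with $\varphi\in\mathcal{T}_\phi(\Omega)$: for the upper bound it applies the concavity inequality (Lemma~\ref{lem:Ener}) to the pair $(\varphi_t,\varphi)$, both of which lie in $\mathcal{T}_\phi(\Omega)$, obtaining $E_\phi(\varphi_t)-E_\phi(\varphi)\le\int(\varphi_t-\varphi)(dd^c\varphi)^n\le t\int f(dd^c\varphi)^n$ for $t>0$ since $\varphi_t-\varphi\le tf$. Your route instead passes through $E_\phi(\varphi+tf)$, which is only defined when $\varphi+tf$ is psh, forcing the smooth hypothesis. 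Then in the approximation step you need to transfer differentiability from $g_j(t):=E_\phi(P(\varphi_j+tf))$ to $g(t):=E_\phi(P(\varphi+tf))$, but pointwise convergence of concave functions with $g_j'(0)\to c$ only yields $g'(0^+)\le c\le g'(0^-)$, not $g'(0^+)=g'(0^-)$. You would still need a direct argument for one of the two one-sided bounds on $g$ itself---at which point you are back to the paper's proof. The fix is simply to drop the approximation and run your sandwich argument directly for continuous $\varphi$, replacing the step $E_\phi(\varphi_t)\le E_\phi(\varphi+tf)$ by the inequality $E_\phi(\varphi_t)-E_\phi(\varphi)\le\int(\varphi_t-\varphi)(dd^c\varphi)^n$ applied to psh functions only.
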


\begin{proof}
The proof is very similar to the one in the compact case, we provide it as a courtesy to the reader.
Set $\varphi_{t}:=P(\varphi+tf)$. 
By Lemma \ref{lem:Ener} below we have
\begin{equation} \label{eq:proj}
\int_{\Omega}(\varphi_{t}-\varphi)(dd^{c}\varphi_{t})^{n}\leq E_{\phi}(\varphi_{t})-E_{\phi}(\varphi) 
 \leq \int_{\Omega}(\varphi_{t}-\varphi)(dd^{c}\varphi)^{n}.
\end{equation}

Since $\f_t-\f \leq tf$, the second inequality yields
$$
\limsup_{t\to 0^{+}}\frac{E_{\phi}(\varphi_{t})-E_{\phi}(\varphi)}{t}\leq \int_{X}f(dd^{c}\varphi)^{n},
$$
and
$$
\liminf_{t\to 0^{-}}\frac{E_{\phi}(\varphi_{t})-E_{\phi}(\varphi)}{t}\geq\int_{X}f (dd^{c}\varphi)^{n}.
$$

It follows from Lemma \ref{lem:1} that
 $(dd^{c}\varphi_{t})^{n}$ is supported on $\{\varphi_{t}=\varphi+tf\}$, hence
the first inequality in \eqref{eq:proj} yields
$$
\int_{\Omega}\frac{\varphi_{t}-\varphi}{t}(dd^{c}\varphi_{t})^{n}=\int_{\Omega}f(dd^{c}\varphi_{t})^{n}.
$$
Now $(dd^{c}\varphi_{t})^{n}\to (dd^{c}\varphi)^{n}$ weakly since $\varphi_{t}\to \varphi$ uniformly, therefore
$$
\liminf_{t\to 0^{+}}\frac{E_{\phi}(\varphi_{t})-E_{\phi}(\varphi)}{t}\geq \liminf_{t\to 0^{+}}\int_{\Omega}f(dd^{c}\varphi_{t})^{n}=\int_{\Omega}f(dd^{c}\varphi)^{n},
$$
and
$$
\limsup_{t\to0^{-}}\frac{E_{\phi}(\varphi_{t})-E_{\phi}(\varphi)}{t}\leq\limsup_{t\to 0^{-}}\int_{\Omega}f(dd^{c}\varphi_{t})^{n}=\int_{\Omega}f(dd^{c}\varphi)^{n}. 
$$
\end{proof}

\begin{lem}  \label{lem:Ener}
For any $\varphi_{1},\varphi_{2}\in \mathcal{T}_{\phi}(\Omega)$,
\begin{equation}
    \int_{\Omega}(\varphi_{1}-\varphi_{2})(dd^{c}\varphi_{1})^{n}\leq E_{\phi}(\varphi_{1})-E_{\phi}(\varphi_{2})\leq \int_{\Omega}(\varphi_{1}-\varphi_{2})(dd^{c}\varphi_{2})^{n},
\end{equation}
while if $\varphi_{1}\leq \varphi_{2}$ then
\begin{equation}
    E_{\phi}(\varphi_{1})-E_{\phi}(\varphi_{2})\leq \frac{1}{n+1}\int_{X}(\varphi_{1}-\varphi_{2})(dd^{c}\varphi_{1})^{n}.
\end{equation}

The energy is continuous along decreasing sequence in $\mathcal{T}_{\phi}(\Omega)$.
\end{lem}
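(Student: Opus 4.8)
The plan is to reduce the statement to the class $\mathcal{T}_\phi^\infty(\Omega)$ of smooth strictly psh tests, where everything comes from integration by parts, and then to pass to the limit via the smoothing Lemma \ref{lem:smoothing}. The starting point, for $\varphi_1,\varphi_2\in\mathcal{T}_\phi^\infty(\Omega)$, is the cocycle identity
\[
E_\phi(\varphi_1)-E_\phi(\varphi_2)=\frac{1}{n+1}\sum_{j=0}^{n}\int_\Omega(\varphi_1-\varphi_2)(dd^c\varphi_1)^j\wedge(dd^c\varphi_2)^{n-j}.
\]
Indeed the segment $\varphi_t:=(1-t)\varphi_2+t\varphi_1$ lies in $\mathcal{T}_\phi^\infty(\Omega)$ for $t\in[0,1]$ (a convex combination of strictly psh functions, all equal to $\phi$ on $\partial\Omega$), so the computation in the proof of Lemma \ref{lem:smoothIPP} applies verbatim — every boundary term carries a factor $\varphi_1-\varphi_2$ or $\varphi_i-\phi_0$ vanishing on $\partial\Omega$ — and gives $\frac{d}{dt}E_\phi(\varphi_t)=\int_\Omega(\varphi_1-\varphi_2)(dd^c\varphi_t)^n$. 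Expanding $(dd^c\varphi_t)^n=\sum_{j}\binom{n}{j}(1-t)^{n-j}t^j(dd^c\varphi_2)^{n-j}\wedge(dd^c\varphi_1)^j$, integrating over $t\in[0,1]$, and using $\int_0^1\binom{n}{j}t^j(1-t)^{n-j}\,dt=\frac{1}{n+1}$ for each $j$, one obtains the identity.

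From this I would read off the two inequalities in the smooth case. Put $t_j:=\int_\Omega(\varphi_1-\varphi_2)(dd^c\varphi_1)^j\wedge(dd^c\varphi_2)^{n-j}$. Subtracting consecutive terms and integrating by parts once — the boundary term vanishing because $\varphi_1-\varphi_2\equiv0$ on $\partial\Omega$ — gives
\[
t_{j+1}-t_j=-\int_\Omega d(\varphi_1-\varphi_2)\wedge d^c(\varphi_1-\varphi_2)\wedge(dd^c\varphi_1)^j\wedge(dd^c\varphi_2)^{n-j-1}\le0,
\]
so that $t_0\ge t_1\ge\dots\ge t_n$. Since $E_\phi(\varphi_1)-E_\phi(\varphi_2)=\frac{1}{n+1}\sum_{j=0}^{n}t_j$ is a mean of the $t_j$, it lies between $t_n=\int_\Omega(\varphi_1-\varphi_2)(dd^c\varphi_1)^n$ and $t_0=\int_\Omega(\varphi_1-\varphi_2)(dd^c\varphi_2)^n$, which is the first double inequality. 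If moreover $\varphi_1\le\varphi_2$, every $t_j$ is $\le0$, hence $\sum_{j=0}^{n}t_j\le t_n$, which is the second inequality.

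To extend to $\mathcal{T}_\phi(\Omega)$, given $\varphi_1,\varphi_2$ I would choose quasi-decreasing approximants $\varphi_i^{(k)}\in\mathcal{T}_\phi^\infty(\Omega)$ as in Lemma \ref{lem:smoothing}. Then $E_\phi(\varphi_i^{(k)})\to E_\phi(\varphi_i)$ by the monotone definition of $E_\phi$ on $\mathcal{T}_\phi(\Omega)$, the Monge--Amp\`ere measures converge weakly, $(dd^c\varphi_i^{(k)})^n\to(dd^c\varphi_i)^n$, by Bedford--Taylor \cite{BT82}, and the integrands $\varphi_1^{(k)}-\varphi_2^{(l)}$ — which belong to $C^0(\overline{\Omega})$ and vanish on $\partial\Omega$ — converge uniformly on $\overline{\Omega}$ to $\varphi_1-\varphi_2$ by Dini's lemma. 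Together with a uniform bound on the total masses $\int_\Omega(dd^c\varphi_i^{(k)})^n$, this allows passage to the limit in all the integrals above, yielding the two inequalities for arbitrary $\varphi_1,\varphi_2\in\mathcal{T}_\phi(\Omega)$. Finally, continuity along a decreasing sequence $\varphi^{(k)}\downarrow\varphi$ in $\mathcal{T}_\phi(\Omega)$ follows by applying the first inequality to the pair $(\varphi^{(k)},\varphi)$: $0\le E_\phi(\varphi^{(k)})-E_\phi(\varphi)\le\int_\Omega(\varphi^{(k)}-\varphi)(dd^c\varphi)^n$, and the right-hand side tends to $0$ by dominated convergence against the finite measure $(dd^c\varphi)^n$.

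The integration-by-parts identities are routine. I expect the main obstacle to be the limiting argument of the third step, and specifically the mass control: one must show that no Monge--Amp\`ere mass of the approximants escapes to $\partial\Omega$, i.e. that $\sup_k\int_\Omega(dd^c\varphi_i^{(k)})^n<+\infty$. I would obtain this from the comparison principle together with balayage on small ``balls'' avoiding the singular point $p$, as in the proof of Lemma \ref{lem:1}. Closely related is the point that the monotone-extension definition of $E_\phi$ must be continuous along the \emph{quasi}-decreasing sequences produced by Lemma \ref{lem:smoothing}, not only along genuinely decreasing ones; this reduces, through a Dini-type comparison, to the smooth inequalities established above.
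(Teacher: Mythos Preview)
Your argument is correct, and the core computations---the cocycle identity and the monotonicity $t_{j+1}\le t_j$ via integration by parts---are exactly what the paper uses. The difference is structural: the paper works \emph{directly} in $\mathcal{T}_\phi(\Omega)$, invoking Stokes' theorem (i.e.\ Bedford--Taylor integration by parts for bounded psh functions with matching continuous boundary values) to obtain the cocycle identity and the chain $t_0\ge\cdots\ge t_n$ without any smooth approximation. This is legitimate because Bedford--Taylor theory extends to the mildly singular setting at hand (Section~\ref{sec:MA}), and it sidesteps precisely the obstacle you flag: you never need to control the masses $\int_\Omega(dd^c\varphi_i^{(k)})^n$ of approximants, nor worry about escape of mass to $\partial\Omega$, because no approximants are introduced.

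Your detour through $\mathcal{T}_\phi^\infty(\Omega)$ is not wrong, but it creates work. The mass-control issue you identify is real in your route and would need the fix you sketch (or, more simply, the observation from the proof of Lemma~\ref{lem:smoothing} that the approximants converge \emph{uniformly} on $\overline{\Omega}$, which together with weak continuity of mixed Monge--Amp\`ere measures along uniformly convergent sequences handles the passage to the limit without separate mass bounds). Either way this is avoidable. The derivation of the cocycle identity via $\frac{d}{dt}E_\phi(\varphi_t)$ and the Beta-integral is a nice alternative to the paper's bare ``follows from Stokes''; both yield the same formula. Your deduction of the second inequality and of continuity along decreasing sequences matches the paper's.
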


\begin{proof}
It follows from  Stokes theorem that
$$
E_{\phi}(\varphi_{1})-E_{\phi}(\varphi_{2})=\frac{1}{n+1}\sum_{j=0}^{n}\int_{\Omega}(\varphi_{1}-\varphi_{2})(dd^{c}\varphi_{1})^{j}\wedge (dd^{c}\varphi_{2})^{n-j}
$$
and  
$$
\int_{\Omega}(\varphi_{1}-\varphi_{2})(dd^{c}\varphi_{1})^{j+1}\wedge (dd^{c}\varphi_{2})^{n-j-1}\leq \int_{\Omega}(\varphi_{1}-\varphi_{2})(dd^{c}\varphi_{1})^{j}\wedge (dd^{c}\varphi_{2})^{n-j},
$$
for any $j=0,\dots,n-1$. The desired inequalities  follow. 

Let $\varphi_{j}\in \mathcal{T}_{\phi}(\Omega)$ be a decreasing sequence converging to $\varphi \in \mathcal{T}_{\phi}(\Omega)$.
We  obtain
$$
0\leq E_{\phi}(\varphi_{j})-E_{\phi}(\varphi)\leq \int_{\Omega}(\varphi_{j}-\varphi)(dd^{c}\varphi)^{n}\to 0
$$
as $j\to +\infty$ by Monotone Convergence Theorem.
\end{proof}

\subsubsection{Finite energy class}

Let $PSH_{\phi}(\Omega)$ denote the set of  decreasing limits of
functions in $\mathcal{T}_{\phi}(\Omega)$.
We extend $E_{\phi}$ to $PSH_{\phi}(\Omega)$ by monotonicity, setting
$$
E_{\phi}(\varphi):=\inf\left\{E_{\phi}(\p), \, \p \in \mathcal{T}_{\phi}(\Omega)\text{ and } \f \leq \p \right\}.
$$

\begin{defi}
We set $\mathcal{E}^1(\Omega):=\left\{ \f \in PSH_{\phi}(\Omega); \; E_{\phi}(\f)>-\infty \right\}$.
\end{defi}

This "finite energy class" has been introduced and intensively studied by 
Cegrell for smooth domains of $\C^n$. His analysis extends to our mildly singular context.
We summarize here the key facts that we shall need.

\begin{theorem}[Cegrell] \label{thm:Cegrell}
The complex Monge-Amp\`ere operator $(dd^c \cdot)^n$ and the energy $E_{\phi}$ are well-defined on the class $\mathcal{E}^1(\Omega)$. Moreover
\begin{itemize}
\item functions in $\mathcal{E}^1(\Omega)$ have zero Lelong numbers;
\item the sets $\mathcal{G}_b(\Omega)=\{ \f \in \mathcal{E}^1(\Omega), \; -b \leq E_{\phi}(\f) \}$ are compact for all $b \in \R$;
\item Lemma \ref{lem:Ener} holds if $\f_1,\f_2 \in \mathcal{E}^1(\Omega)$;
\item if $\mu$ is a  non pluripolar probability measure such that 
$\mathcal{E}^1(\Omega) \subset L^1(\mu)$, then there exists a unique function 
$v \in \mathcal{E}^1(\Omega) \cap \mathcal{F}_1(\Omega)$ such that $\mu=(dd^c v)^n$.
\end{itemize}
\end{theorem}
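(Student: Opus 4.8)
\emph{Proof plan (sketch).}
The plan is to reduce everything to Cegrell's theory for pseudoconvex domains of $\mathbb{C}^n$ \cite{Ceg98,Ceg04}, verifying at each step that the only new feature---the isolated singular point $p$---is harmless, because $\{p\}$ is pluripolar and because psh functions on $X$ are restrictions of psh functions on $\mathbb{C}^N$. That $E_{\phi}$ and $(dd^c\cdot)^n$ are well defined on $\mathcal{E}^1(\Omega)$ follows by monotone approximation: for $\varphi\in\mathcal{T}_{\phi}(\Omega)$ this is Lemma \ref{lem:Ener} and Section \ref{sec:MA}; the extension to $PSH_{\phi}(\Omega)$ is consistent because $E_{\phi}$ is continuous along decreasing sequences (Lemma \ref{lem:Ener}); and the finiteness condition $E_{\phi}(\varphi)>-\infty$ is precisely what guarantees, as in \cite{Ceg04}, that $(dd^c\varphi_j)^n$ converges weakly, along any defining decreasing sequence, to a Radon measure of finite mass that is independent of the sequence. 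Item~(3) is then the telescoping computation of Lemma \ref{lem:Ener} carried out on smooth approximants and passed to the limit by monotone convergence; every integration by parts takes place on $\Omega\setminus\{p\}$, and the boundary terms on small spheres $\{|z-p|=r\}$ tend to $0$ as $r\to 0$ exactly because the functions involved have zero Lelong number at $p$. This last fact is item~(1): a psh function with positive Lelong number $\nu>0$ at $p$ is $\le\nu\log|z-p|+O(1)$ near $p$, so $(dd^c\varphi)^n$ carries a Dirac mass at $p$ by Demailly's comparison, whence $\int_{\Omega}(\varphi-\phi_0)(dd^c\varphi)^n=-\infty$ and $E_{\phi}(\varphi)=-\infty$; thus membership in $\mathcal{E}^1(\Omega)$ forces all Lelong numbers to vanish.

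For item~(2), an energy bound $E_{\phi}(\varphi)\ge-b$ yields, via Lemma \ref{lem:Ener} (applied with $\varphi_2=\phi_0$), a uniform bound $\int_{\Omega}(dd^c\varphi)^n\le C(b)$, so that $\mathcal{G}_b(\Omega)$ is contained in $\{\varphi\in\mathcal{F}(\Omega):\int_{\Omega}(dd^c\varphi)^n\le C(b)\}$, which is $L^1$-compact by the same proof as Proposition \ref{prop:Compact}. It then suffices to check that $\mathcal{G}_b(\Omega)$ is $L^1$-closed, i.e.\ that $\varphi\mapsto-E_{\phi}(\varphi)$ is lower semicontinuous for the $L^1$-topology on this set; this is a verbatim transcription of Cegrell's argument, using the monotone-regularization definition of $E_{\phi}$, the fact that an $L^1$-limit of psh functions is dominated by the upper-semicontinuous regularization of their $\limsup$, and the continuity of $E_{\phi}$ along decreasing sequences.

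The substantive point is item~(4), which I would treat variationally. Consider $G(\varphi):=E_{\phi}(\varphi)-\int_{\Omega}\varphi\,d\mu$ on $\mathcal{E}^1(\Omega)$. The technical heart is the domination estimate: for every $\varepsilon>0$ there is $C_{\varepsilon}>0$ with $\int_{\Omega}(-\varphi)\,d\mu\le\varepsilon\bigl(-E_{\phi}(\varphi)\bigr)+C_{\varepsilon}$ for all $\varphi\in\mathcal{E}^1(\Omega)$; it is obtained by the usual compactness argument, using the compactness of $\mathcal{G}_b(\Omega)$ from item~(2) together with the hypotheses that $\mathcal{E}^1(\Omega)\subset L^1(\mu)$ and that $\mu$ charges no pluripolar set. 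Granted this, $G$ is bounded above and coercive, and---as in \cite{BBGZ13}, using also Hartogs' lemma and the upper semicontinuity of $E_{\phi}$---$G$ is upper semicontinuous on each $\mathcal{G}_b(\Omega)$; a maximizing sequence therefore stays in some $\mathcal{G}_b(\Omega)$ and, by item~(2), converges in $L^1$ to a maximizer $v\in\mathcal{E}^1(\Omega)$. To identify the equation, fix $f\in\mathcal{D}(\Omega)$ and set $g(t):=E_{\phi}\bigl(P(v+tf)\bigr)-\int_{\Omega}P(v+tf)\,d\mu$, which has a maximum at $t=0$ because $v$ maximizes $G$ and $P(v+tf)\in\mathcal{T}_{\phi}(\Omega)$. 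By Proposition \ref{prop:DerivEnergy} the map $t\mapsto E_{\phi}(P(v+tf))$ is differentiable at $0$ with derivative $\int_{\Omega}f(dd^c v)^n$, while the concavity of $t\mapsto P(v+tf)$ together with the bound $P(v+tf)-v\le tf$ shows that the right derivative at $t=0^+$ of $t\mapsto\int_{\Omega}P(v+tf)\,d\mu$ is at most $\int_{\Omega}f\,d\mu$; since $g'(0^+)\le 0$ this forces $\int_{\Omega}f(dd^c v)^n\le\int_{\Omega}f\,d\mu$, and replacing $f$ by $-f$ gives the reverse inequality. Hence $\int_{\Omega}f(dd^c v)^n=\int_{\Omega}f\,d\mu$ for every $f\in\mathcal{D}(\Omega)$, i.e.\ $(dd^c v)^n=\mu$; as $\mu$ is a probability measure, $\int_{\Omega}(dd^c v)^n=1$ and $v\in\mathcal{F}_1(\Omega)$. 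Uniqueness follows from the comparison principle in the finite-energy class $\mathcal{E}^1(\Omega)$ \cite{Ceg04,Blo06}: two functions of $\mathcal{E}^1(\Omega)$ with the same boundary data and the same Monge-Amp\`ere measure coincide. The step I expect to be the main obstacle is the domination estimate for the singular measure $\mu$: this is exactly where the hypotheses on $\mu$ enter, through a generalized H\"older/Skoda-type inequality, and one must check that the presence of the isolated singularity at $p$ does not disturb the argument.
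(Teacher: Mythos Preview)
The paper does not actually prove this theorem: it merely cites \cite[Theorems 3.8, 7.2 and 8.2]{Ceg98} for the smooth case and asserts that Cegrell's analysis extends to the mildly singular setting. Your sketch goes well beyond this and is essentially sound, so in that sense there is nothing to ``compare''. Two remarks are nevertheless in order.

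First, your treatment of item~(4) is genuinely different from Cegrell's. In \cite{Ceg98} the equation $(dd^c v)^n=\mu$ is solved by a decomposition/approximation scheme (writing $\mu$ as a weak limit of measures $g_j(dd^c\psi_j)^n$ with $\psi_j$ bounded and $g_j$ bounded densities, solving each approximate problem, and passing to the limit using energy estimates), not variationally. Your approach is the one of \cite{BBGZ13,BBEGZ}: maximize $G(\varphi)=E_\phi(\varphi)-\int\varphi\,d\mu$ and identify the Euler--Lagrange equation via the projection trick. Both routes work; yours is arguably cleaner once the variational machinery of Section~2 is in place, while Cegrell's is more self-contained. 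Note, however, that you invoke Proposition~\ref{prop:DerivEnergy} at the maximizer $v\in\mathcal{E}^1(\Omega)$, whereas that proposition is only stated for $\varphi\in\mathcal{T}_\phi(\Omega)$; you need to observe that its proof goes through verbatim in $\mathcal{E}^1(\Omega)$ once item~(3) and Lemma~\ref{lem:1} are available there.

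Second, your reduction for item~(2) has a small gap: Lemma~\ref{lem:Ener} with $\varphi_2=\phi_0$ only gives $\int_\Omega(\phi_0-\varphi)(dd^c\varphi)^n\le -E_\phi(\varphi)\le b$, which bounds the \emph{weighted} mass, not $\int_\Omega(dd^c\varphi)^n$ itself. The missing step is the standard Cegrell inclusion $\mathcal{E}^1(\Omega)\subset\mathcal{F}(\Omega)$: split the mass over $\{\varphi<\phi_0-1\}$ (controlled by the weighted integral) and $\{\varphi\ge\phi_0-1\}$ (controlled by comparison with $\max(\varphi,\phi_0-1)$, whose Monge--Amp\`ere mass is bounded in terms of the boundary data alone). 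Once this is said, your appeal to Proposition~\ref{prop:Compact} and the upper semicontinuity of $E_\phi$ is correct.
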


We refer the reader to \cite[Theorems 3.8, 7.2 and 8.2]{Ceg98} for the proof of these results when
$\Omega$ is smooth.

\subsection{Ding functional}

\subsubsection{Euler-Lagrange equation}

The Ding functional is  
$$
F_{\gamma}(\varphi):=E_{\phi}(\varphi)+\frac{1}{\gamma}\log \int_{\Omega}e^{-\gamma \varphi}d\mu_{p}
$$

\begin{prop}
\label{prop:MaxSol}
If $\varphi$ maximizes $F_{\gamma}$ over $\mathcal{T}_{\phi}(\Omega)$
then $\varphi$ solves the complex Monge-Amp\`ere equation (\ref{eqn:MA}). 
\end{prop}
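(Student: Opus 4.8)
The plan is to show that a maximizer $\varphi$ of $F_\gamma$ over $\mathcal{T}_\phi(\Omega)$ is a critical point, and then to identify the critical-point equation with $(MA)_{\gamma,\phi,\Omega}$. Since elements of $\mathcal{T}_\phi(\Omega)$ need not stay plurisubharmonic under arbitrary perturbations, the standard device is to perturb by $P(\varphi+tf)$ for $f\in\mathcal{D}(\Omega)$ and use the differentiability of the projected energy from Proposition \ref{prop:DerivEnergy}. Concretely, for fixed $f\in\mathcal{D}(\Omega)$ set $\varphi_t:=P(\varphi+tf)\in\mathcal{T}_\phi(\Omega)$ and $g(t):=F_\gamma(\varphi_t)$. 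Since $\varphi$ is a maximizer, $g$ has a maximum at $t=0$, so $g'(0)=0$ provided $g$ is differentiable there.

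Next I would compute $g'(0)$. By Proposition \ref{prop:DerivEnergy}, $t\mapsto E_\phi(P(\varphi+tf))$ is differentiable at $t=0$ with derivative $\int_\Omega f\,(dd^c\varphi)^n$. For the second term, since $\varphi_t\le\varphi+tf$ and $\varphi_t\to\varphi$ uniformly (indeed $\varphi+tf-\|f\|_\infty\,|t|\le\varphi_t\le\varphi+tf$, or more simply $\varphi_t$ interpolates monotonically), one gets $e^{-\gamma\varphi_t}\to e^{-\gamma\varphi}$ uniformly and a direct estimate gives that $t\mapsto\log\int_\Omega e^{-\gamma\varphi_t}\,d\mu_p$ is differentiable at $t=0$. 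Here the slightly delicate point is that $t\mapsto\varphi_t$ is only known to be Lipschitz in $t$ (uniformly in $x$), not differentiable pointwise; but since $\varphi_t=\varphi$ off the contact set and we only need the one-sided derivatives to sandwich correctly, one can argue as in Proposition \ref{prop:DerivEnergy}: using $\varphi_t-\varphi\le tf$ for $t>0$ and $\varphi_t-\varphi\ge tf$ for $t<0$ together with the fact that $(dd^c\varphi_t)^n$ lives on $\{\varphi_t=\varphi+tf\}$, one shows the difference quotient of $\log\int e^{-\gamma\varphi_t}d\mu_p$ converges to $-\gamma\,\frac{\int_\Omega f\,e^{-\gamma\varphi}d\mu_p}{\int_\Omega e^{-\gamma\varphi}d\mu_p}$. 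Combining,
$$
0=g'(0)=\int_\Omega f\,(dd^c\varphi)^n-\frac{\int_\Omega f\,e^{-\gamma\varphi}d\mu_p}{\int_\Omega e^{-\gamma\varphi}d\mu_p}
$$
for every $f\in\mathcal{D}(\Omega)$.

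Finally I would conclude. The displayed identity says that the two positive Radon measures $(dd^c\varphi)^n$ and $\frac{e^{-\gamma\varphi}d\mu_p}{\int_\Omega e^{-\gamma\varphi}d\mu_p}$ agree when tested against all $f\in\mathcal{D}(\Omega)$, hence they coincide as measures on $\Omega$; this is exactly the equation $(dd^c\varphi)^n=\frac{e^{-\gamma\varphi}d\mu_p}{\int_\Omega e^{-\gamma\varphi}d\mu_p}$ of $(MA)_{\gamma,\phi,\Omega}$, and the boundary condition $\varphi_{|\partial\Omega}=\phi$ holds by definition of $\mathcal{T}_\phi(\Omega)$. The main obstacle is the differentiability at $t=0$ of the two pieces of $F_\gamma$ along the projected path $\varphi_t$ — the energy part is handled by Proposition \ref{prop:DerivEnergy}, and the entropy/$\log$-integral part needs the uniform convergence $\varphi_t\to\varphi$ plus the contact-set localization of $(dd^c\varphi_t)^n$ to pin down the one-sided derivatives; once both one-sided derivatives are computed and shown to agree, optimality at $t=0$ forces them to vanish, and the rest is routine.
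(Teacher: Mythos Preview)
Your overall strategy is right and matches the paper's: perturb via $\varphi_t:=P(\varphi+tf)$, use Proposition~\ref{prop:DerivEnergy} for the energy, and extract the Euler--Lagrange equation from maximality at $t=0$. However, your handling of the $\log$-integral term is muddled and contains an error. You write ``$\varphi_t-\varphi\le tf$ for $t>0$ and $\varphi_t-\varphi\ge tf$ for $t<0$''; in fact $\varphi_t=P(\varphi+tf)\le\varphi+tf$ holds for \emph{all} $t$, so the second inequality is false. Moreover, the contact-set localization of $(dd^c\varphi_t)^n$ is irrelevant here: the measure in the $\log$-integral is $\mu_p$, which has nothing to do with the Monge--Amp\`ere mass of $\varphi_t$, so that argument does not help you compute $\frac{d}{dt}\log\int e^{-\gamma\varphi_t}d\mu_p$. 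You never actually establish the differentiability of $g$ that you assert.

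The paper sidesteps this neatly. Since $\varphi_t\le\varphi+tf$, one has $\int e^{-\gamma\varphi_t}d\mu_p\ge\int e^{-\gamma(\varphi+tf)}d\mu_p$, hence
\[
h(t):=E_\phi(\varphi_t)+\tfrac{1}{\gamma}\log\int_\Omega e^{-\gamma(\varphi+tf)}\,d\mu_p\;\le\;F_\gamma(\varphi_t)\;\le\;F_\gamma(\varphi)=h(0).
\]
Thus $h$ is maximized at $0$, and $h$ \emph{is} differentiable there: the first term by Proposition~\ref{prop:DerivEnergy}, the second because $\varphi+tf$ is linear in $t$ (this is the trivial Lemma~\ref{lem:DerivAction}). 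Setting $h'(0)=0$ gives the equation directly. The point is that you never need to differentiate $\int e^{-\gamma\varphi_t}d\mu_p$; you replace $\varphi_t$ by its obstacle $\varphi+tf$ in that term and work with the resulting auxiliary function instead. Your argument can be repaired along these lines, but as written it does not go through.
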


\begin{proof}
Assume that $\varphi$ maximizes $F_{\gamma}$ over $\mathcal{T}_{\phi}(\Omega)$,
fix $f \in \mathcal{D}(\Omega)$, and set $\varphi_{t}:=P(\varphi+tf)$.
 Then
$$
E_{\phi}(\varphi_{t})+\frac{1}{\gamma}\log \int_{\Omega}e^{-\gamma(\varphi+tf)}d\mu_{p}\leq F_{\gamma}(\varphi_{t})\leq F_{\gamma}(\varphi),
$$
i.e. the function $t\to E_{\phi}(\varphi_{t})+\frac{1}{\gamma}\log\int_{\Omega}e^{-\gamma(\varphi+tf)}d\mu_{p}$ 
reaches its maximum at $t=0$. 
Combining Proposition \ref{prop:DerivEnergy}   and Lemma \ref{lem:DerivAction} below, we obtain
$$
0=\frac{d}{dt}\Big(E_{\phi}(\varphi_{t})+\frac{1}{\gamma}\log\int_{\Omega}e^{-\gamma(\varphi+tf)}d\mu_{p}\Big)
=\int_{\Omega}f\left((dd^{c}\varphi)^{n}-\frac{e^{-\gamma\varphi}d\mu_{p}}{\int_{\Omega}e^{-\gamma\varphi}d\mu_{p}}\right),
$$
i.e. $\varphi$ solves $(\ref{eqn:MA})$. 
\end{proof}

\begin{lem}
\label{lem:DerivAction}
Fix $\varphi \in \mathcal{T}_{\phi}(\Omega)$, 
$f \in {\mathcal D}(\Omega)$, and set $\p_t:=\varphi+tf$. Then
$$
\frac{d}{dt}\Big(\log \int_{\Omega}e^{-\gamma\p_t}d\mu_{p}\Big)_{|t=0}=
-\gamma\frac{\int_{\Omega}fe^{-\gamma\varphi}d\mu_{p}}{\int_{\Omega}e^{-\gamma\varphi}d\mu_{p}}.
$$
\end{lem}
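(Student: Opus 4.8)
The plan is to differentiate under the integral sign; the only point requiring a word of justification is the domination hypothesis, which is immediate here since $f$ is compactly supported in $\Omega$ and $\varphi$ is bounded on $\overline{\Omega}$.

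First I would set $g(t):=\int_{\Omega}e^{-\gamma\p_t}d\mu_{p}=\int_{\Omega}e^{-\gamma\varphi}e^{-\gamma tf}d\mu_{p}$. Since $\varphi\in\mathcal{T}_{\phi}(\Omega)\subset C^{0}(\overline{\Omega})$ is bounded and $f\in\mathcal{D}(\Omega)$ satisfies $|f|\le M$ for some $M>0$, for every $|t|\le 1$ one has the pointwise estimates
$$
0\le e^{-\gamma\p_t}\le e^{\gamma M}e^{-\gamma\varphi},\qquad \left|\frac{\partial}{\partial t}e^{-\gamma\p_t}\right|=\gamma|f|\,e^{-\gamma\p_t}\le \gamma M e^{\gamma M}e^{-\gamma\varphi},
$$
and the right-hand sides are $\mu_{p}$-integrable: indeed $\mu_{p}$ is a probability measure and $e^{-\gamma\varphi}$ is bounded. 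By the standard theorem on differentiation under the integral sign, $g$ is differentiable near $0$ with
$$
g'(t)=-\gamma\int_{\Omega}f\,e^{-\gamma\p_t}d\mu_{p},\qquad\text{hence}\qquad g'(0)=-\gamma\int_{\Omega}f\,e^{-\gamma\varphi}d\mu_{p}.
$$

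Then I would conclude by the chain rule: since $g(0)=\int_{\Omega}e^{-\gamma\varphi}d\mu_{p}>0$ (the integrand is strictly positive and $\mu_{p}\neq 0$), the function $t\mapsto\log g(t)$ is differentiable at $0$ with derivative $g'(0)/g(0)$, which is precisely the claimed identity. There is no genuine obstacle; the statement is a routine computation recorded here only because it is invoked in the proof of Proposition \ref{prop:MaxSol}.
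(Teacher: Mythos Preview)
Your proof is correct and follows essentially the same approach as the paper: the paper writes out the difference quotient explicitly and applies the Dominated Convergence Theorem, while you invoke the standard differentiation-under-the-integral-sign theorem (which is itself just DCT), and both then conclude via the chain rule for $\log$. Your domination argument is in fact slightly more explicit than the paper's.
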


\begin{proof}
By chain rule, it is enough to observe that
$$
\frac{\int_{\Omega}e^{-\gamma \p_t}d\mu_{p}-\int_{\Omega}e^{-\gamma \varphi}d\mu_{p}}{t}=-\int_{\Omega}e^{-\gamma\varphi}\left(\frac{1-e^{-t\gamma f}}{t} \right)d\mu_{p}
$$
and   to apply Lebesgue Dominated  Convergence Theorem to conclude.
\end{proof}

\subsubsection{Coercivity}

In order to solve (\ref{eqn:MA}), one is  lead to try and maximize $F_{\gamma}$. 
We will show in Section \ref{sec:RicciIteration} that
when $F_{\gamma}$ is {\it coercive},
 the complex Monge-Amp\`ere equation (\ref{eqn:MA}) admits a solution 
 $\varphi\in \mathcal{T}_{\phi}(\Omega)$
 which is  smooth away from $p$.

\begin{defi}
The functional $F_{\gamma}$ is coercive if there exists $A,B>0$ such that
$$
F_{\gamma}(\varphi)\leq A E_{\phi}(\varphi)+B
$$
for all $\varphi\in \mathcal{T}_{\phi}(\Omega)$. 
\end{defi}

We observe in Lemma \ref{lem:coercive} that 
$E_{\phi}(\varphi) \leq  C(\phi_0)$
is  bounded from above, uniformly in $\varphi \in  \mathcal{T}_{\phi}(\Omega)$.
In particular if $F_{\gamma}$ is coercive with \emph{slope} $A>0$ 
then it is coercive for any $A'\in (0,A]$. 
We can thus assume,  without loss of generality, that $A\in(0,1)$.
The coercivity property is then equivalent to
$$
\frac{1}{\gamma}\log \int_{\Omega}e^{-\gamma\varphi}d\mu_{p}\leq (1-A)(-E_{\phi}(\varphi))+B,
$$
or, equivalently, to the following Moser-Trudinger inequality
$$
   \left( \int_{X}e^{-\gamma \varphi}d\mu_{p} \right)^{\frac{1}{\gamma}} \leq C  e^{(1-A)(-E_{\phi}(\varphi))}.
$$

We summarize these observations in the following.

\begin{prop} \label{pro:CoerciveMT}
Fix $\gamma>0$. The  following properties are equivalent:
\begin{itemize}
    \item[i)] $F_{\gamma}$ is coercive;
    \item[ii)] there exists $C_{\gamma}>0$ and $a\in (0,1)$ such that for all 
    $\varphi \in \mathcal{T}_{\phi}(\Omega)$,
    $$
    \Big(\int_{\Omega}e^{-\gamma\varphi}d\mu_{p}\Big)^{1/\gamma}\leq C_{\gamma} e^{-a E_{\phi}(\varphi)}.
    $$
\end{itemize}
\end{prop}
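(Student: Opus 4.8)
The plan is to establish the equivalence of (i) and (ii) in Proposition \ref{pro:CoerciveMT} by a direct algebraic manipulation, relying on the a priori upper bound $E_{\phi}(\varphi) \leq C(\phi_0)$ mentioned in the text (to be proven in Lemma \ref{lem:coercive}). The key observation is that both conditions are statements comparing $\frac{1}{\gamma}\log \int_{\Omega} e^{-\gamma\varphi}\,d\mu_p$ with a multiple of $-E_{\phi}(\varphi)$, so the whole argument is just unwinding the definition of the Ding functional $F_{\gamma}(\varphi) = E_{\phi}(\varphi) + \frac{1}{\gamma}\log\int_{\Omega}e^{-\gamma\varphi}\,d\mu_p$.

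First I would prove (i) $\Rightarrow$ (ii). Suppose $F_{\gamma}$ is coercive, so there are $A, B>0$ with $F_{\gamma}(\varphi) \leq A\,E_{\phi}(\varphi) + B$ for all $\varphi \in \mathcal{T}_{\phi}(\Omega)$. Since $E_{\phi}$ is bounded above on $\mathcal{T}_{\phi}(\Omega)$, coercivity with slope $A$ implies coercivity with any smaller slope, so we may assume $A \in (0,1)$ (if $F_{\gamma}(\varphi) \leq A E_{\phi}(\varphi)+B$ and $A' \leq A$, then $F_{\gamma}(\varphi) \leq A' E_{\phi}(\varphi) + (A-A')C(\phi_0) + B$). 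Substituting the definition of $F_{\gamma}$ and rearranging,
\[
\frac{1}{\gamma}\log \int_{\Omega}e^{-\gamma\varphi}\,d\mu_p \leq (A-1)E_{\phi}(\varphi) + B = (1-A)\bigl(-E_{\phi}(\varphi)\bigr) + B.
\]
Setting $a = 1-A \in (0,1)$ and exponentiating gives
\[
\Bigl(\int_{\Omega}e^{-\gamma\varphi}\,d\mu_p\Bigr)^{1/\gamma} \leq e^{B}\, e^{-a E_{\phi}(\varphi)},
\]
which is (ii) with $C_{\gamma} = e^{B}$.

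For the converse (ii) $\Rightarrow$ (i), I would simply run this backwards: taking logarithms of the inequality in (ii) yields $\frac{1}{\gamma}\log\int_{\Omega}e^{-\gamma\varphi}\,d\mu_p \leq \log C_{\gamma} - a\,E_{\phi}(\varphi)$, hence
\[
F_{\gamma}(\varphi) = E_{\phi}(\varphi) + \frac{1}{\gamma}\log\int_{\Omega}e^{-\gamma\varphi}\,d\mu_p \leq (1-a)E_{\phi}(\varphi) + \log C_{\gamma},
\]
which is precisely coercivity with slope $A = 1-a > 0$ and constant $B = \log C_{\gamma}$. The only genuine ingredient beyond bookkeeping is the upper bound $E_{\phi} \leq C(\phi_0)$ used to pass to a smaller slope in the first implication; this is not really an obstacle here since it is part of the cited Lemma \ref{lem:coercive}, and in fact the direction (ii) $\Rightarrow$ (i) needs nothing at all. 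I expect no serious difficulty; the proposition is essentially a reformulation, and the main care is to get the constants and the requirement $a \in (0,1)$ versus $A \in (0,1)$ consistent.
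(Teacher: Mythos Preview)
Your argument is correct and matches the paper's own reasoning exactly: the paper proves the equivalence in the paragraph immediately preceding the proposition by the same algebraic rearrangement, using the upper bound $E_{\phi}\le C(\phi_0)$ from Lemma~\ref{lem:coercive} to reduce to slope $A\in(0,1)$, and then states the proposition as a summary without a separate proof.
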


It follows from H\"older inequality (and the normalization $\mu_p(\Omega)=1$)
that if 
\begin{equation} \label{eq:MT}
\Big(\int_{\Omega}e^{-\gamma\varphi}d\mu_{p}\Big)^{1/\gamma}\leq Ce^{-E_{\phi}(\varphi)}
\end{equation}
holds for some $\gamma>0$, then it also holds for any $\gamma'<\gamma$.
It is thus natural to introduce the following critical exponent. 
 
\begin{defi}
We set
$$
\gamma_{crit}(X,p):=\sup\Big\{\gamma>0, \, (\ref{eq:MT}) \, \mbox{holds for all }\, 
\f \in {\mathcal F}_1(\Omega) \Big\}.
$$
\end{defi}

Recall that ${\mathcal F}_1(\Omega)$ denotes the set of psh functions $\f \in {\mathcal F}(\Omega)$
with $\phi$-boundary values,  whose Monge-Amp\`ere measure is well-defined with  
$\int_{\Omega} (dd^c \f)^n \leq 1$.

\begin{lem} \label{lem:coercive}
The functional $E_{\phi}$ is bounded from above on $\mathcal{T}_{\phi}(\Omega)$. Moreover
\begin{itemize}
\item if  $F_{\gamma}$ is coercive then $\gamma \leq \gamma_{crit}(X,p)$;
\item conversely if $\gamma < \gamma_{crit}(X,p)$ then $F_{\gamma}$ is coercive.
\end{itemize}
\end{lem}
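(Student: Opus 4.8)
The first statement, that $E_\phi$ is bounded above on $\mathcal T_\phi(\Omega)$, follows directly from the monotonicity established in Lemma \ref{lem:smoothIPP}: if $\f\in\mathcal T_\phi(\Omega)$ then $\f\le\phi_0$ (since $\phi_0$ is a psh function with the same boundary values and $\f-\phi_0$ is subharmonic with zero boundary values, hence $\le 0$ by the maximum principle), so $E_\phi(\f)\le E_\phi(\phi_0)=0$. Actually one gets the cleaner bound $E_\phi(\f)\le E_\phi(\phi_0)=0=:C(\phi_0)$ by monotonicity of $E_\phi$ and $\f\le\phi_0$; the extension to $\mathcal E^1(\Omega)$ is then immediate from the defining infimum.

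For the two equivalences I would go through Proposition \ref{pro:CoerciveMT}, which already identifies coercivity of $F_\gamma$ with the inequality $(\int_\Omega e^{-\gamma\f}d\mu_p)^{1/\gamma}\le C_\gamma e^{-aE_\phi(\f)}$ for some $a\in(0,1)$, valid for all $\f\in\mathcal T_\phi(\Omega)$. So the content is: relate this weighted MT inequality with slope $a<1$, over the \emph{test class} $\mathcal T_\phi(\Omega)$, to the inequality \eqref{eq:MT} with slope exactly $1$, over the \emph{finite-mass class} $\mathcal F_1(\Omega)$.

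\emph{Coercive $\Rightarrow$ $\gamma\le\gamma_{crit}$.} Suppose $F_\gamma$ is coercive, so by Proposition \ref{pro:CoerciveMT} we have $(\int_\Omega e^{-\gamma\f}d\mu_p)^{1/\gamma}\le C_\gamma e^{-aE_\phi(\f)}$ with $a\in(0,1)$. Pick $\gamma'$ with $\gamma'<\gamma$; I claim \eqref{eq:MT} holds at level $\gamma'$ over $\mathcal F_1(\Omega)$. First reduce to $\mathcal T_\phi(\Omega)$: any $\f\in\mathcal F_1(\Omega)$ is a decreasing limit of $\f_j\in\mathcal T_\phi(\Omega)$ with $\sup_j\int(dd^c\f_j)^n\le 1+o(1)$ (rescale), and both sides of the desired inequality behave well under decreasing limits ($\int e^{-\gamma'\f_j}\to\int e^{-\gamma'\f}$ by monotone convergence, $E_\phi(\f_j)\to E_\phi(\f)$ by Lemma \ref{lem:Ener}), so it suffices to prove it for $\f\in\mathcal T_\phi(\Omega)$ with $\int_\Omega(dd^c\f)^n\le 1$. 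For such $\f$ there is an elementary comparison: since $-E_\phi(\f)=\frac{1}{n+1}\sum_j\int_\Omega(\phi_0-\f)(dd^c\f)^j\wedge(dd^c\phi_0)^{n-j}$ and each term is controlled by $\sup_\Omega(\phi_0-\f)$ times a mixed mass bounded in terms of the total masses (which are $\le 1$ and $\le\int(dd^c\phi_0)^n=:M$), one gets $-E_\phi(\f)\le C(1+\sup_\Omega(\phi_0-\f))$; more to the point, the weighted integral $\int e^{-\gamma'\f}$ can be estimated by applying the hypothesis to the normalized competitor and absorbing the gap $\gamma-\gamma'>0$ via the a priori bound on $-E_\phi$ on the set $\{\f: \int(dd^c\f)^n\le 1\}$ against Hölder — concretely, $\gamma'<\gamma$ lets one trade the factor $e^{-aE_\phi(\f)}$ ($a<1$) for $e^{-E_\phi(\f)}$ by a convexity/Hölder argument using that $-E_\phi(\f)\ge 0$. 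This last trade is the step I expect to be slightly delicate: one needs $\f\mapsto\log\int e^{-\gamma\f}$ convex in $\gamma$ (true, it is a log-Laplace transform) to interpolate between $\gamma'=0$ (where the integral is $\mu_p(\Omega)=1$) and the coercive inequality at $\gamma$, producing slope exactly $1$ at the intermediate value $\gamma'$ once $a<1$; then $\gamma'\le\gamma_{crit}$ for all $\gamma'<\gamma$, hence $\gamma\le\gamma_{crit}$.

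\emph{$\gamma<\gamma_{crit}\Rightarrow$ coercive.} If $\gamma<\gamma_{crit}(X,p)$, choose $\gamma<\gamma_1<\gamma_{crit}$, so \eqref{eq:MT} holds at level $\gamma_1$ over $\mathcal F_1(\Omega)$, and in particular over $\mathcal T_\phi(\Omega)\cap\{\int(dd^c\f)^n\le 1\}$. For a general $\f\in\mathcal T_\phi(\Omega)$ with total mass $m:=\int_\Omega(dd^c\f)^n$ (which is finite, possibly $>1$), apply the inequality to a suitable competitor — typically $\psi:=P(\f)$ or a scaled/truncated version living in $\mathcal F_1$, or directly to $\f$ after noting that by Proposition \ref{prop:Compact}-type reasoning the relevant quantity is controlled — and use the convexity of $\gamma\mapsto\log\int e^{-\gamma\f}$ together with $\gamma<\gamma_1$ to interpolate against the trivial bound at $\gamma=0$, obtaining $\frac1\gamma\log\int e^{-\gamma\f}\le(1-a)(-E_\phi(\f))+B$ for some $a\in(0,1)$, $B>0$; this is exactly the coercivity inequality per Proposition \ref{pro:CoerciveMT}. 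The main obstacle throughout is the same: passing cleanly between the slope-$1$ inequality on $\mathcal F_1(\Omega)$ and a slope-$(1-a)$ inequality on all of $\mathcal T_\phi(\Omega)$, which I would handle by the log-convexity (Hölder) interpolation in the parameter $\gamma$ anchored at $\gamma=0$ where $\int e^{-0\cdot\f}d\mu_p=1$.
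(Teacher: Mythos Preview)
Your argument for the upper bound on $E_\phi$ has a slip: $\f-\phi_0$ is a difference of psh functions and is \emph{not} subharmonic in general, so the maximum principle does not apply as stated. The paper handles this by passing to the maximal psh extension $\tilde\phi_0:=P(\phi)=\sup\{\psi:\psi\in\mathcal T_\phi(\Omega)\}$ (which satisfies $(dd^c\tilde\phi_0)^n=0$); then $\f\le\tilde\phi_0$ by definition of the envelope, hence $E_{\tilde\phi_0}(\f)\le 0$, and one transfers to an arbitrary $\phi_0$ via the cocycle identity $E_{\phi_0}(\f)=E_{\tilde\phi_0}(\f)+E_{\phi_0}(\tilde\phi_0)$.

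For ``coercive $\Rightarrow\gamma\le\gamma_{crit}$'' you are overcomplicating. Once one works with $\phi_0=P(\phi)$ so that $E_\phi\le 0$, the step you flag as ``slightly delicate'' is trivial: from $-E_\phi(\f)\ge 0$ and $a<1$ one has $-aE_\phi(\f)\le -E_\phi(\f)$, hence $e^{-aE_\phi(\f)}\le e^{-E_\phi(\f)}$. No interpolation in $\gamma$ is needed; the slope-$a$ inequality from Proposition~\ref{pro:CoerciveMT} directly gives \eqref{eq:MT} at $\gamma$ itself.

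The genuine gap is in the converse direction. Your log-convexity interpolation in $\gamma$ between $0$ and $\gamma_1$ yields $\tfrac1\gamma\log\int e^{-\gamma\f}\le\tfrac1{\gamma_1}\log\int e^{-\gamma_1\f}\le -E_\phi(\f)+\log C_{\gamma_1}$, i.e.\ slope exactly $1$ again; it produces no $a\in(0,1)$, so coercivity does not follow. The paper's mechanism is different and concrete: it rescales the \emph{function}, not the exponent. With $\phi_0=P(\phi)$, pick $\gamma<\gamma'<\gamma_{crit}$, set $\lambda:=\gamma/\gamma'<1$ and $\f_\lambda:=\lambda\f+(1-\lambda)\phi_0\in\mathcal T_\phi(\Omega)$. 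Applying \eqref{eq:MT} at level $\gamma'$ to $\f_\lambda$ and using $\gamma'\lambda=\gamma$ together with $\f_\lambda-\phi_0=\lambda(\f-\phi_0)$ gives $(\int e^{-\gamma\f})^{1/\gamma}\le C'\,e^{-E_\phi(\f_\lambda)/\lambda}$. The key computation then shows $E_\phi(\f_\lambda)\ge a\lambda\,E_\phi(\f)$ for some $a<1$ independent of $\f$ (when $\phi\equiv 0$ this is simply the homogeneity $E_0(\lambda\f)=\lambda^{n+1}E_0(\f)$, yielding slope $\lambda^n<1$). This rescaling and the resulting $(n+1)$-homogeneity of the energy are the ingredients your sketch lacks.
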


\begin{proof}
Consider $\tilde{\phi_0}=P(\phi):=\sup \{ \p, \p  \in  \mathcal{T}_{\phi}(\Omega)\}$.
 This is the largest psh function
in $\Omega$ such that $\tilde{\phi_0} =\phi$ on $\partial \Omega$. The reader can check that it is
continuous on ${\overline \Omega}$ and satisfies
$(dd^c \tilde{\phi_0})^n=0$ in $\Omega$. If $\f \in \mathcal{T}_{\phi}(\Omega)$ then 
$\f \leq \tilde{\phi_0}$ hence $E_{\tilde{\phi_0}}(\f) \leq 0$. Thus
$$
E_{{\phi_0}}(\f) = E_{\tilde{\phi_0}}(\f)+E_{\phi_0}(\tilde{\phi_0}) 
\leq E_{\phi_0}(\tilde{\phi_0}),
$$
hence $E_{{\phi_0}}(\f)$ is uniformly bounded from above independently of the choice of $\phi_0$.

Similarly the coercivity of $F_{\gamma}$ or the Moser-Trudinger inequality \eqref{eq:MT} do not
depend on the choice of $\phi_0$. In the remainder of this proof we thus assume that 
$\phi_0=P(\phi)$. Since $E_{\phi}(\f) \leq 0$ in this case, it follows from
Proposition \ref{pro:CoerciveMT} that if $F_{\gamma}$ is coercive then
\eqref{eq:MT} holds, hence $\gamma \leq \gamma_{crit}(X,p)$. 

Conversely assume $\gamma < \gamma_{crit}(X,p)$. Fix $\gamma<\gamma'<\gamma_{crit}(X,p)$
and $\lambda=\gamma/\gamma'<1$.
We can assume without loss of generality that $\lambda$ is close to $1$.
We assume first that $\phi \equiv 0$. For $\f \in \mathcal{T}_{0}(\Omega)$
we observe that $\lambda \f \in \mathcal{T}_{0}(\Omega)$, with
$E_0(\lambda \f)=\lambda^{n+1} E_0(\f)$.
The Moser-Trudinger \eqref{eq:MT} applied to $(\gamma',\lambda \f)$ thus yields
$$
\Big(\int_{\Omega}e^{-\gamma\varphi}d\mu_{p}\Big)^{1/\gamma}
\leq C_{\gamma'} e^{-\lambda^n  E_0(\varphi)},
$$
so that $F_{\gamma}$ is coercive.    

We now treat the general case, replacing the condition $\phi \equiv 0$ by $(dd^c \phi_0)^n \equiv 0$.
For $\f \in \mathcal{T}_{\phi}(\Omega)$ we observe that
$\f_{\lambda}=\lambda \f+(1-\lambda) \phi_0 \in \mathcal{T}_{\phi}(\Omega)$, with
$\f_{\lambda}-\phi_0=\lambda(\f-\phi_0) \leq 0$ and 
\begin{eqnarray*}
(n+1) E_{\phi_0}(\f_{\lambda})
&=& \lambda \sum_{j=0}^{n-1} \int_{\Omega} (\f-\phi_0) (dd^c \f_{\lambda})^j \wedge (dd^c \phi_0)^{n-j} \\
&=& \lambda \sum_{k=1}^n \sum_{j=k}^{n} 
\left( \begin{array}{c} j \\ k \end{array} \right) 
\lambda^k (1-\lambda)^{j-k}
\int_{\Omega} (\f-\phi_0) (dd^c \f)^k \wedge (dd^c \phi_0)^{n-k}.
\end{eqnarray*}

Now $\sum_{j=k}^{n} 
\left( \begin{array}{c} j \\ k \end{array} \right) 
\lambda^k (1-\lambda)^{j-k}  \leq a <1$
for all $1 \leq k \leq n$,
since $\lambda<1$ can be chosen arbitrarily close to $1$.
Thus $E_{\phi_0}(\f_{\lambda}) \geq a \lambda E_{\phi_0}(\f)$ and the result follows
as previously by applying  the Moser-Trudinger inequality \eqref{eq:MT} to $\f_{\lambda}$.
 \end{proof}

\subsection{Invariance} \label{sec:invariance}

In this section we observe that the critical exponent
$\gamma_{crit}$ is essentially independent of the domain $\Omega$
and boundary values.

\subsubsection{Enlarging the domain}

We first reduce to the case of zero boundary values.

\begin{prop}
Let $\Omega_2$ be a smooth strongly pseudoconvex domain containing $\overline{\Omega}$.
If the Moser-Trudinger inequality holds for $(\gamma,\Omega_2,0)$ then it holds for 
$(\gamma, \Omega,\phi)$.
\end{prop}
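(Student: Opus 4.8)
The plan is to reduce the Moser-Trudinger inequality on $(\Omega,\phi)$ to the one on $(\Omega_2, 0)$ by a gluing argument together with an extension of functions that controls the relative energy. First I would fix $\f \in \mathcal{T}_\phi(\Omega)$ and define $\Phi$ on $\Omega_2$ by setting $\Phi := \f$ on $\Omega$ and $\Phi := \widetilde{\max}(\f, A\rho_2 + C)$ near $\partial\Omega$, where $\rho_2$ is a defining function for $\Omega_2$ and the constants $A, C$ are chosen (independently of $\f$) so that the two definitions agree on a neighborhood of $\partial\Omega$ and so that $\Phi$ extends to a function in $\mathcal{T}_0(\Omega_2)$ (or, more properly, $PSH_0(\Omega_2)\cap C^0(\overline{\Omega_2})$). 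Here one uses that $\phi_0$ is bounded on $\overline\Omega$ and that $\f \le \widetilde{\phi_0} = P(\phi)$, so that $\f$ is uniformly bounded above by a constant depending only on $\phi$; since $A\rho_2$ is very negative on $\partial\Omega$ (a fixed positive distance inside $\Omega_2$), the regularized max is governed by $\f$ near $\partial\Omega$ and by $A\rho_2 + C$ near $\partial\Omega_2$, where it picks up the zero boundary data.

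Next I would compare the two energies. The key point is that $\Phi = \f$ on an open set $\Omega' \Supset \overline{\Omega'} $ slightly smaller than $\Omega$ containing the singular point, and outside a compact set $K \subset \Omega_2 \setminus \{p\}$ the function $\Phi$ is \emph{bounded} (being a max of bounded functions, as $\f$ is bounded near $\partial\Omega$). Using Lemma \ref{lem:Ener} (valid on $\mathcal{E}^1$ by Theorem \ref{thm:Cegrell}) applied on $\Omega_2$ with base point $0$, together with the fact that $\Phi$ differs from a fixed bounded psh function only on the relatively compact region $\Omega \setminus \Omega'$ where everything is uniformly bounded, one gets
$$
E_0^{\Omega_2}(\Phi) \ge E_\phi^{\Omega}(\f) - C_1
$$
for a constant $C_1$ depending only on $\Omega, \Omega_2, \phi$ but not on $\f$. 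This is essentially a localization/gluing estimate for the Monge-Amp\`ere energy: the energy on $\Omega_2$ splits, up to bounded error terms supported on the fixed compact region, into the energy of $\f$ on $\Omega$ plus a bounded contribution from the collar where $\Phi = \widetilde{\max}(\f, A\rho_2+C)$.

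For the mass term, $\Phi = \f$ on $\Omega$ and $\mu_p$ is supported on $\Omega$ (the adapted measure is intrinsic to the singularity at $p \in \Omega$), so $\int_{\Omega_2} e^{-\gamma\Phi} d\mu_p = \int_\Omega e^{-\gamma\f} d\mu_p$ up to the fixed normalizing constants relating $\mu_p$ on $\Omega$ and on $\Omega_2$ — or one simply keeps using the measure $\mu_p$ on $\Omega$ throughout, viewed as a measure on $\Omega_2$ with support in $\overline\Omega$. Applying the hypothesis $(MT_\gamma)$ for $(\gamma, \Omega_2, 0)$ to $\Phi$ gives
$$
\left( \int_\Omega e^{-\gamma\f} d\mu_p \right)^{1/\gamma} = \left( \int_{\Omega_2} e^{-\gamma\Phi} d\mu_p \right)^{1/\gamma} \le C_\gamma' \exp(-E_0^{\Omega_2}(\Phi)) \le C_\gamma' e^{C_1} \exp(-E_\phi^\Omega(\f)),
$$
which is the desired inequality on $(\Omega, \phi)$ with $C_\gamma := C_\gamma' e^{C_1}$. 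I expect the main obstacle to be making the energy comparison estimate $E_0^{\Omega_2}(\Phi) \ge E_\phi^\Omega(\f) - C_1$ fully rigorous: one must handle the regularized maximum carefully (ensuring the extension genuinely lies in the finite-energy class on $\Omega_2$, possibly via the smoothing Lemma \ref{lem:smoothing} and continuity of $E$ along decreasing sequences), and verify that the integration-by-parts / Stokes manipulations localizing the energy only produce errors supported where all functions are uniformly bounded, hence uniformly controlled in $\f$.
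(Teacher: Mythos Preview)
Your construction of $\Phi$ has a genuine gap. You want $\Phi\in\mathcal{T}_0(\Omega_2)$ with $\Phi=\f$ on all of $\Omega$, but a regularized maximum cannot deliver this. If you set $\Phi=\widetilde{\max}(\f,A\rho_2+C)$, then for $\Phi$ to have zero boundary values on $\partial\Omega_2$ you need $C=0$, and for $\Phi$ to extend psh across $\partial\Omega$ as $A\rho_2$ outside, you would need $\f<A\rho_2$ near $\partial\Omega$ from inside; but on $\partial\Omega$ one has $\f=\phi$ bounded while $A\rho_2\le -A\delta_0$ is very negative, so the inequality goes the wrong way. Any max-type gluing that does produce a psh function on $\Omega_2$ will satisfy $\Phi\ge\f$ on $\Omega$, and then $e^{-\gamma\Phi}\le e^{-\gamma\f}$, which is the \emph{wrong} direction for the exponential integral you need to bound. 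Moreover, even if you only ask $\Phi=\f$ on a smaller $\Omega'\Subset\Omega$, the leftover $\int_{\Omega\setminus\Omega'}e^{-\gamma\f}d\mu_p$ is not uniformly controlled: there is no uniform lower bound on $\f$ away from $\partial\Omega$. (Also, $\mu_p$ is not supported in $\Omega$; it is an absolutely continuous measure on the whole space with density blowing up only at $p$.)

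The paper's approach is the opposite: instead of extending $\f$, it takes the \emph{envelope from below}
\[
\f_2:=\sup\{u\in\mathcal{T}_0(\Omega_2):\ u\le\f\ \text{in}\ \Omega\}.
\]
This gives $\f_2\le\f$ on $\Omega$, so $\int_\Omega e^{-\gamma\f}d\mu_p\le\int_{\Omega_2}e^{-\gamma\f_2}d\mu_p$ in the right direction. The price is that the energy comparison $E_0(\f_2)\ge E_\phi(\f)$ now requires work; the paper obtains it (for smooth $\f$, then by approximation) from the balayage structure of the envelope: $(dd^c\f_2)^n=0$ in $\Omega_2\setminus\overline\Omega$ and $(dd^c\f_2)^n=\mathbf{1}_{\{\f_2=\f\}}(dd^c\f)^n$ in $\Omega$, after which the inequality follows by a direct chain of estimates (reducing, via a shift, to $\phi\ge 0$ and $\phi_0=P(\phi)$). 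Your energy-localization idea is plausible in spirit, but without a working extension it has nothing to act on.
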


\begin{proof}
Consider indeed $\f \in {\mathcal T}_{\phi}(\Omega)$ and set
$$
\f_2:=\sup \left\{ u \in {\mathcal T}_0(\Omega_2), \; 
\text{ such that } u \leq \f \text{ in } \Omega \right\}.
$$

The family ${\mathcal F}$ of such functions is non-empty, as it contains $A \rho_2$ for some large $A>1$,
where $\rho_2$ is a psh defining function for $\Omega_2$.
Moreover ${\mathcal F}$ is uniformly bounded from above by $0$, so 
the upper-envelope $\f_2$ is well-defined and psh, as ${\mathcal F}$ is compact.
Finally  $\f_2 \geq A \rho_2$, hence $\f_2$ has zero boundary values, 
and $\f_2$ is lower semi-continuous, as an envelope of continuous functions,
thus $\f_2 \in {\mathcal T}_0(\Omega_2)$.

Since $\f_2 \leq \f$ in $\Omega$, we observe that 
$$
\int_{\Omega} e^{-\gamma \f} d\mu_p \leq \int_{\Omega_2} e^{-\gamma \f_2} d\mu_p.
$$
Our claim will follow if we can show that on the other hand
$E_0(\f_2) \geq E_{\phi}(\f)$.

If $\f$ is smooth one can  show, 
by adapting standard techniques, that
\begin{itemize}
\item $\f_2$ is ${\mathcal C}^{1,\overline{1}}$-smooth in $\Omega_2 \setminus \{p\}$;
\item $(dd^c \f_2)^n=0$ in $\Omega_2 \setminus \Omega$
and  $(dd^c \f_2)^n={{\bf 1}}_{\{ \f_2 =\f\}} (dd^c \f)^n$ in $\overline{\Omega}$.
\end{itemize}
Assuming $\phi \geq 0$ and $\phi_0=\sup \{ \p, \; \p \in {\mathcal T}_{\phi}(\Omega) \}$,
we infer
\begin{eqnarray*}
E_0(\f_2) &=& \frac{1}{n+1}\int_{\Omega} \f_2 (dd^c \f_2)^n
= \frac{1}{n+1}\int_{\Omega} {{\bf 1}}_{\{ \f_2 =\f\}} \f (dd^c \f)^n \\
& \geq & \frac{1}{n+1}\int_{\Omega}   \f (dd^c \f)^n
\geq \frac{1}{n+1}\int_{\Omega}   (\f-\phi) (dd^c \f)^n 
\geq  E_{\phi}(\f).
\end{eqnarray*}

To get rid of the assumption $\phi \geq 0$, we observe that
the Moser-Trudinger inequality holds for given boundary data 
$\phi$ if and only if does so for $\phi+c$, for any $c \in \R$
(by changing $\f$ in $\f+c$).

Using Lemma \ref{lem:smoothing}, one can uniformly approximate $\f$ by a  
sequence of smooth $\f_{j} \in {\mathcal T}_{\phi}(\Omega)$.
The corresponding sequence $\f_{2,j}$ uniformly converges to $\f_2$,
and we obtain the desired inequality by passing to the limit
in $E_0(\f_{2,j}) \geq E_{\phi}(\f_{j})$.
\end{proof}

\subsubsection{Rescaling}

We now assume that $\phi=0$ and reformulate the coercivity property 
 after an appropriate rescaling.
Observe that for any $\lambda>0$, the map
 $$
 \varphi\in\mathcal{T}_{0}(\Omega) \mapsto \lambda \f \in \mathcal{T}_{0}(\Omega)
$$
is a homeomorphism.
This allows to reformulate the Moser-Trudinger inequality:

\begin{prop}
The following statements are equivalent:
\begin{itemize}
    \item[a)] $F_{\gamma}$ is coercive;
    \item[b)] $\exists C>0, B\in(0,1)$ such that  for all $\varphi\in\mathcal{T}_{0}(\Omega)$,
    $
    \int_{\Omega}e^{-\varphi}d\mu_{p}\leq Ce^{-\frac{B}{\gamma^{n}}E_{0}(\varphi)}.
    $
   \end{itemize}
\end{prop}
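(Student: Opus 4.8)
The plan is to deduce the equivalence directly from Proposition \ref{pro:CoerciveMT} by exploiting the homogeneity of $E_{0}$ under rescaling. Since $\phi\equiv 0$, we may take $\phi_{0}\equiv 0$, so that $E_{\phi}=E_{0}$ and, because $(dd^{c}(\lambda\varphi))^{n}=\lambda^{n}(dd^{c}\varphi)^{n}$, one has $E_{0}(\lambda\varphi)=\lambda^{n+1}E_{0}(\varphi)$ for every $\lambda>0$ and every $\varphi\in\mathcal{T}_{0}(\Omega)$ (this holds first for smooth $\varphi$ by Lemma \ref{lem:smoothIPP}, then in general by approximating via Lemma \ref{lem:smoothing} and using continuity of $E_{0}$ along decreasing sequences from Lemma \ref{lem:Ener}). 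Recall also that $\varphi\mapsto\lambda\varphi$ is a self-homeomorphism of $\mathcal{T}_{0}(\Omega)$, since $\lambda\varphi$ is psh, continuous on $\overline{\Omega}$ with zero boundary values, and has finite total Monge--Amp\`ere mass whenever $\varphi$ does.

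Assume first a). By Proposition \ref{pro:CoerciveMT} there are $C_{\gamma}>0$ and $a\in(0,1)$ with
$$
\Big(\int_{\Omega}e^{-\gamma\varphi}d\mu_{p}\Big)^{1/\gamma}\leq C_{\gamma}\,e^{-aE_{0}(\varphi)}\qquad\text{for all }\varphi\in\mathcal{T}_{0}(\Omega).
$$
Given any $\psi\in\mathcal{T}_{0}(\Omega)$, apply this to $\varphi:=\psi/\gamma\in\mathcal{T}_{0}(\Omega)$. Then $\gamma\varphi=\psi$ and $E_{0}(\varphi)=\gamma^{-(n+1)}E_{0}(\psi)$, so raising to the power $\gamma$ gives
$$
\int_{\Omega}e^{-\psi}d\mu_{p}\leq C_{\gamma}^{\gamma}\,e^{-\frac{a\gamma}{\gamma^{n+1}}E_{0}(\psi)}=C_{\gamma}^{\gamma}\,e^{-\frac{a}{\gamma^{n}}E_{0}(\psi)},
$$
which is exactly b) with $C=C_{\gamma}^{\gamma}$ and $B=a\in(0,1)$.

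Conversely, assume b). Given $\varphi\in\mathcal{T}_{0}(\Omega)$, apply the inequality in b) to $\psi:=\gamma\varphi\in\mathcal{T}_{0}(\Omega)$; using $E_{0}(\gamma\varphi)=\gamma^{n+1}E_{0}(\varphi)$ we obtain
$$
\int_{\Omega}e^{-\gamma\varphi}d\mu_{p}\leq C\,e^{-\frac{B}{\gamma^{n}}\gamma^{n+1}E_{0}(\varphi)}=C\,e^{-B\gamma E_{0}(\varphi)},
$$
hence $\big(\int_{\Omega}e^{-\gamma\varphi}d\mu_{p}\big)^{1/\gamma}\leq C^{1/\gamma}e^{-BE_{0}(\varphi)}$. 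Since $B\in(0,1)$, Proposition \ref{pro:CoerciveMT} yields that $F_{\gamma}$ is coercive, i.e. a). There is no substantial obstacle here: the whole content is the change of variable $\varphi\leftrightarrow\gamma\varphi$ combined with the scaling law for $E_{0}$, the only routine checks being that rescaling preserves $\mathcal{T}_{0}(\Omega)$ and that $E_{0}$ is $(n+1)$-homogeneous on it.
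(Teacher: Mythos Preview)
Your proof is correct and follows exactly the approach the paper indicates: the proposition is stated without an explicit proof, but the surrounding text signals that the equivalence is obtained by rescaling via the homeomorphism $\varphi\mapsto\lambda\varphi$ of $\mathcal{T}_{0}(\Omega)$ together with the homogeneity $E_{0}(\lambda\varphi)=\lambda^{n+1}E_{0}(\varphi)$, and then appealing to Proposition~\ref{pro:CoerciveMT}. Your computation with $\varphi\leftrightarrow\gamma\varphi$ carries this out cleanly.
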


In particular we can define the following critical exponent.

\begin{defi}
We set
$$
\beta_{crit}:=\inf \left\{\beta>0; \, \sup_{\varphi\in\mathcal{T}_{0}(\Omega)}
\Big(\int_{\Omega}e^{-\varphi}d\mu_{p}/e^{-\beta E_{0}(\varphi)}\Big)<+\infty \right\}.
$$
\end{defi}

Note that $\gamma_{crit}^{n}=1/\beta_{crit}$, hence
it follows from the previous analysis that $F_{\gamma}$ is coercive 
if and only if $\gamma< \beta_{crit}^{-1/n}$.
When $p\in X$ is smooth, it has been shown in \cite[Theorem 9]{GKY13} and
 independently \cite[Theorem 1.5]{BB22}   that
$$
\beta_{crit}(\Omega)=\frac{1}{(n+1)^{n}},
$$
or equivalently that $\gamma_{crit}(\Omega)=n+1$.
In particular it does not depend on $\Omega$.

\smallskip

We extend this independence to the case of conical singularities.

\begin{prop}
Assume that $(X,p)$ is a conical singularity and fix $\lambda \in \C^*$.
The Moser-Trudinger inequality holds for $(\gamma,\Omega,0)$
iff it does so for $(\gamma,\lambda \Omega, 0)$.
\end{prop}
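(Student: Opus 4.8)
The plan is to exploit the scaling structure of a conical singularity. Recall that $(X,p)$ conical means $X$ carries a $\C^*$-action contracting to $p$, so for $\lambda\in\C^*$ the dilation $m_\lambda\colon X\to X$ is a biholomorphism with $m_\lambda(\Omega)=\lambda\Omega$ (after identifying $\Omega$ with a sublevel set of a homogeneous weight). First I would record how each ingredient of the Moser--Trudinger inequality \eqref{eq:MT} transforms under $m_\lambda$. Pullback gives a homeomorphism $m_\lambda^*\colon \mathcal{T}_0(\lambda\Omega)\to\mathcal{T}_0(\Omega)$, $\f\mapsto\f\circ m_\lambda$, since psh functions, continuity, and zero boundary values are all preserved by a biholomorphism. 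Moreover $dd^c(\f\circ m_\lambda)=m_\lambda^*(dd^c\f)$, so Monge--Amp\`ere masses are preserved and $\mathcal{F}_1(\lambda\Omega)$ corresponds to $\mathcal{F}_1(\Omega)$; and the energy, being an integral of wedge products of such currents against the function, satisfies $E_0(\f\circ m_\lambda)=E_0(\f)$ by change of variables.

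The one genuinely non-trivial point is the behavior of the adapted measure $\mu_p$. I expect $m_\lambda^*\mu_p = c(\lambda)\,\mu_p$ for a positive constant $c(\lambda)$, because the adapted measure is canonically attached to the singularity up to the normalizing constant $\lambda_\Omega$: concretely, $\mu_p=\lambda_\Omega (c_n\sigma\wedge\bar\sigma)^{1/r}/|\sigma|_{h^r}^{2/r}$ with $h\equiv 1$, and $m_\lambda^*\sigma$ is again a nowhere-vanishing section of $rK$, differing from $\sigma$ by an invertible holomorphic factor; since $\mu_p$ is independent of the choice of such section, the two measures agree up to the ratio of normalizing constants, which is the constant $c(\lambda)$. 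This is the step I would be most careful about, and where I would invoke that $\mu_p$ is independent of the choice of $\sigma$ (stated after Definition~\ref{def:adapted}) together with the homogeneity of the defining function of $\Omega$.

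With these transformation rules in hand, the proof is a direct substitution. Suppose \eqref{eq:MT} holds for $(\gamma,\Omega,0)$ with constant $C$. Given $\f\in\mathcal{T}_0(\lambda\Omega)$, set $\tf=\f\circ m_\lambda\in\mathcal{T}_0(\Omega)$. Then
\begin{align*}
\Big(\int_{\lambda\Omega}e^{-\gamma\f}\,d\mu_p\Big)^{1/\gamma}
&= c(\lambda)^{-1/\gamma}\Big(\int_{\Omega}e^{-\gamma\tf}\,d\mu_p\Big)^{1/\gamma}
\le c(\lambda)^{-1/\gamma} C\, e^{-E_0(\tf)}
= c(\lambda)^{-1/\gamma} C\, e^{-E_0(\f)},
\end{align*}
so \eqref{eq:MT} holds for $(\gamma,\lambda\Omega,0)$ with constant $c(\lambda)^{-1/\gamma}C$. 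The converse follows by applying the same argument with $\lambda^{-1}$. Finally, since $\gamma_{crit}$ is defined via \eqref{eq:MT} over $\mathcal{F}_1$, the same substitution (using that $m_\lambda^*$ preserves $\mathcal{F}_1$) shows $\gamma_{crit}(\lambda\Omega)=\gamma_{crit}(\Omega)$; in fact the normalization constant $c(\lambda)$ is harmless because one may absorb it, or simply note that \eqref{eq:MT} is unaffected by a multiplicative constant on the right-hand side. The main obstacle, as noted, is pinning down $c(\lambda)$ and confirming that the conical structure genuinely identifies $\Omega$ with $\lambda\Omega$ as analytic spaces with their adapted measures; everything else is bookkeeping.
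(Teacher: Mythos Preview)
Your proposal is correct and follows essentially the same approach as the paper: pull back via the dilation $D_\lambda$, use that $D_\lambda^*\mu_p=c(\lambda)\mu_p$ for a positive constant (the paper writes $|\lambda|^{2a}$), and observe that the energy $E_0$ is invariant under this pullback. The only minor slip is the sign of the exponent on $c(\lambda)$ in your display (it should be $c(\lambda)^{1/\gamma}$, not $c(\lambda)^{-1/\gamma}$, since $m_\lambda^*\mu_p=c(\lambda)\mu_p$ gives $\int_{\lambda\Omega}e^{-\gamma\f}d\mu_p=c(\lambda)\int_\Omega e^{-\gamma\tf}d\mu_p$), but as you yourself note this constant is harmless for the conclusion.
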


\begin{proof}
Let $D_{\lambda}$ denote the dilatation $z \mapsto \lambda z$
and set $\Omega_{\lambda}=D_{\lambda}( \Omega)$.
The fact that $(X,p)$ is conical ensures that 
$D_{\lambda}^* \mu_p =|\lambda |^{2a} \mu_p$ for some $a>0$.

For $\f \in {\mathcal T}_0(\Omega_{\lambda})$ we set
$\f_{\lambda}=\f \circ D_{\lambda} \in {\mathcal T}_0(\Omega)$ and
observe that
$$
|\lambda |^{2a} \left( \int_{\Omega} e^{-\gamma \f_{\lambda}} d\mu_p \right)^{1/\gamma}
=\int_{\Omega_{\lambda}} e^{-\gamma \f} d\mu_p,
$$
while $E_{\Omega,0}(\f_{\lambda})=E_{\Omega_{\lambda},0}(\f)$.
The conclusion follows.
\end{proof}

\section{Upper bound for the coercivity}

The purpose of this section is to establish the following upper bound
$$
\gamma_{crit}(X,p) \leq \frac{n+1}{n}\widehat{\mathrm{vol}}(X,p)^{1/n}.
$$

\subsection{Functions with algebraic singularities}

Let $\mathcal{I}$ be a coherent ideal sheaf, and 
$f_{1},\dots,f_{N}\in \mathcal{O}_{X,p}$ be local generators of $\mathcal{I}_{p}$. The plurisubharmonic function
$$
\varphi_{\mathcal{I}}:=\log\Big(\sum_{i=1}^{N}\lvert f_{i}\rvert^{2}\Big)
$$
 is well defined  near $p$, with  \emph{algebraic singularities}  encoded in $\mathcal{I}$. 
 

\begin{prop}
\label{prop:lct}
Let $\mathcal{I}$ be a coherent ideal sheaf supported at $p$. Then
$$
e(X,\mathcal{I})=\int_{\{p\}}\big(dd^c\varphi_\mathcal{I}\big)^n \; \; 
$$
and
$$
\mathrm{lct}(X,\mathcal{I})=\sup\left\{\alpha >0: \, 
\int_{\Omega}e^{-\alpha \varphi_{\mathcal{I}}}d\mu_{p}<+\infty\right\}
$$
where $\Omega$ is any (small) neighborhood of $p\in X$.
\end{prop}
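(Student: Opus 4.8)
The plan is to reduce both identities to standard facts about log-resolutions by pulling everything back to a fixed resolution $\pi:\tilde\Omega\to\Omega$ of the singularity. First I would choose $\pi$ to be a log-resolution that simultaneously resolves the ideal sheaf $\mathcal{I}$, so that $\pi^{-1}\mathcal{I}\cdot\mathcal{O}_{\tilde\Omega}=\mathcal{O}_{\tilde\Omega}(-\sum_j c_j E_j)$ for a simple normal crossing divisor $\sum_j E_j$ (with prime components $E_1,\dots,E_M$), and such that the discrepancy divisor $K_{\tilde\Omega/\Omega}=\sum_j a_j E_j$ and the divisor $\sum_j b_j E_j$ encoding $f\circ\pi\sim\prod_j|s_{E_j}|^{-2b_j}$ (as in the discussion before Definition \ref{def:lct}) are all supported on the same SNC divisor. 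On $\tilde\Omega$ the function $\varphi_{\mathcal{I}}\circ\pi$ then has the form $\log\big(\sum_i|f_i\circ\pi|^2\big) = \sum_j c_j\log|s_{E_j}|^2 + (\text{locally bounded psh})$, i.e. it differs from $\sum_j c_j\log|s_{E_j}|^2$ by a locally bounded function whose $dd^c$ is a positive current, because the pulled-back generators cut out exactly the divisor $\sum_j c_jE_j$ plus a zero-dimensional base locus that is blown up.

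For the first identity, I would argue that $\int_{\{p\}}(dd^c\varphi_{\mathcal{I}})^n$ computes the Hilbert--Samuel multiplicity $e(X,\mathcal{I})$ directly from the definition of the non-pluripolar Monge-Amp\`ere mass concentrated at $p$. The cleanest route: $\varphi_{\mathcal{I}}$ has the same singularity type as $\log|\mathcal{I}|$, and by the Demailly--Kollár / Rashkovskii-type description of Monge-Amp\`ere masses of psh functions with analytic singularities, the mass $\int_{\{p\}}(dd^c\log|\mathcal{I}|)^n$ equals the intersection number $(\sum_j c_jE_j)^n$ computed on the resolution up to the relevant correction, which in turn is exactly $e(X,\mathcal{I})$ by the classical identification of the Hilbert--Samuel multiplicity with this intersection number (Teissier, or \cite{ELS03}). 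Alternatively one can invoke the known equality $e(X,\mathcal{I})=\int(dd^c\varphi_{\mathcal{I}})^n$ in the smooth case and reduce the singular case by pulling back along a log-resolution and tracking the $\mathcal{O}(-\sum c_jE_j)$ bundle; the total mass away from $p$ is zero since $\varphi_{\mathcal{I}}$ is locally bounded there, so all the mass sits at $p$. I would present this via the resolution identity $\pi^*(dd^c\varphi_{\mathcal{I}})^n$ has mass $\int_{\tilde\Omega}(dd^c(\varphi_{\mathcal{I}}\circ\pi))^n$, combined with the fact that pushing forward and the projection formula give the stated multiplicity.

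For the second identity, the key observation is that $\int_\Omega e^{-\alpha\varphi_{\mathcal{I}}}\,d\mu_p<+\infty$ can be checked on the resolution: writing $\pi^*\mu_p=\lambda\prod_j|s_{E_j}|^{2a_j}\,dV_{\tilde\Omega}$ (Definition \ref{def:adapted} and the discussion following it) and $e^{-\alpha\varphi_{\mathcal{I}}}\circ\pi\sim\prod_j|s_{E_j}|^{-2\alpha c_j}$, the integral $\int_{\tilde\Omega}\prod_j|s_{E_j}|^{2a_j-2\alpha c_j}\,dV_{\tilde\Omega}$ is finite iff $a_j-\alpha c_j>-1$ for every $j$, i.e. iff $\alpha<\min_j\frac{a_j+1}{c_j}=\mathrm{lct}(X,\mathcal{I})$ by the divisorial formula \eqref{eqn:lct} (restricting to divisors centered at $p$, which suffices since $\mathcal{I}$ is supported at $p$). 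This is essentially the same local integrability computation as in Lemma \ref{lem:integrabilityf}, applied with the extra weight coming from $\mathcal{I}$. So the supremum of admissible $\alpha$ is exactly $\mathrm{lct}(X,\mathcal{I})$, with the boundary case $\alpha=\mathrm{lct}$ excluded.

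The main obstacle I anticipate is the first identity rather than the second: making rigorous the claim that $\int_{\{p\}}(dd^c\varphi_{\mathcal{I}})^n$ equals the Hilbert--Samuel multiplicity in the \emph{singular} ambient space $X$ requires care, since one must define the non-pluripolar mass correctly across the singular point and match it with the algebraic multiplicity $e(X,\mathcal{I})=\lim l(\mathcal{O}_{X,p}/\mathcal{I}^m)/(m^n/n!)$. I would handle this by passing to a log-resolution where $\mathcal{I}\cdot\mathcal{O}_{\tilde\Omega}$ is locally principal, reducing to the computation of $\int_{\tilde\Omega}(dd^c(\varphi_{\mathcal{I}}\circ\pi))^n$ over a small neighborhood of the exceptional fiber, and then invoking the classical intersection-theoretic formula for $e(X,\mathcal{I})$ (the multiplicity equals the self-intersection of the relevant Cartier divisor on the blow-up, \cite{ELS03}); the analytic-to-algebraic bridge is then the standard fact (Demailly) that the Monge-Amp\`ere mass of a psh function with analytic singularities along a divisor equals that intersection number. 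The second identity is then comparatively routine once the resolution framework is set up.
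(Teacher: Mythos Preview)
Your approach to the second identity is exactly the paper's: pull back to a log resolution, write $\pi^*\mu_p=\prod_j|s_{E_j}|^{2a_j}dV_{\tilde\Omega}$ and $e^{-\alpha\varphi_{\mathcal I}}\circ\pi\sim\prod_j|s_{E_j}|^{-2\alpha c_j}$, and read off the integrability condition $a_j-\alpha c_j>-1$ for all $j$, i.e.\ $\alpha<\min_j\frac{a_j+1}{c_j}=\mathrm{lct}(X,\mathcal I)$. For the first identity the paper does not argue at all: it simply cites the smooth case \cite[Lemma~2.1]{Dem12} and refers to \cite[Chapter~4]{Dem85} for the adaptation to the singular ambient space, which is the same reduction you are aiming at.

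One caution on your elaboration of the first identity: the line ``$\pi^*(dd^c\varphi_{\mathcal I})^n$ has mass $\int_{\tilde\Omega}(dd^c(\varphi_{\mathcal I}\circ\pi))^n$'' is not well-posed as written, because $\varphi_{\mathcal I}\circ\pi$ has logarithmic poles along the exceptional divisors and its Bedford--Taylor Monge--Amp\`ere measure on $\tilde\Omega$ is not defined. The correct route (and the one underlying Demailly's references) is either to work directly on $X$ via the comparison of $\varphi_{\mathcal I}$ with weighted log-norms and King's formula, or to compute the mass as the limit of $\int(dd^c\log(\sum|f_i|^2+\varepsilon^2))^n$ and identify this with the top self-intersection of $-\sum_j c_jE_j$ on the blowup, which is the Hilbert--Samuel multiplicity. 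Your other proposed routes (analytic singularities $\Rightarrow$ intersection number, or reduction to the smooth case) are fine; just avoid the naive pullback of $(dd^c\,\cdot\,)^n$.
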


These algebraic quantities are thus independent of the choice of generators.

\begin{proof}
The equality
  $e(X,\mathcal{I})=\int_{\{p\}}\big(dd^c\varphi_{\mathcal{I}}\big)^n$ is classical when $\Omega$ is smooth (see e.g. \cite[Lemma 2.1]{Dem12}), and the proof can be adapted to 
 the singular  context (see \cite[Chapter 4]{Dem85}).

\smallskip

Let $\pi:\tilde{\Omega}\to \Omega$ be a local log resolution of the ideal $(X,\mathcal{I})$, i.e. a composition of blow-ups such that
$
\pi^{*}\mu_{p}=\prod_{j=1}^{N}\lvert s_{E_{j}} \rvert^{2a_{j}}dV_{\tilde{\Omega}}
$
and
$$
\pi^{-1}\mathcal{I}\cdot \mathcal{O}_{\tilde{\Omega}}=\mathcal{O}_{\tilde{\Omega}}\Big(-\sum_{j=1}^{M}b_{j}E_{j}\Big)
$$
where $b_{j}\in\N$, $a_{j}\in\Q_{>-1}$,
and $E_{1},\dots,E_{M}$ have simple normal crossings. 
Observe that
$$
\int_{\Omega}e^{-\alpha \varphi_{\mathcal{I}}}d\mu_{p}\sim\int_{\tilde{\Omega}}\frac{\prod_{j=1}^{M}\lvert s_{E_{j}}\rvert^{2a_{j}}}{\prod_{j=1}^{M}\lvert s_{E_{j}}\rvert^{2b_{j}\alpha}}dV_{\tilde{\Omega}}=\int_{\tilde{\Omega}}\prod_{j=1}^{M}\lvert s_{E_{j}} \rvert^{2(a_{j}-\alpha b_{j})}dV_{\tilde{\Omega}},
$$
 is finite if and only if $a_{j}-\alpha b_{j}>-1$ for any $j=1,\dots,M$, i.e. if and only if
$$
\alpha< \inf_{j=1,\dots,M}\frac{a_{j}+1}{b_{j}}=\mathrm{lct}(X,\mathcal{I}),
$$
as recalled in Definition \ref{def:lct}.
\end{proof}

\subsection{Approximate Green functions}
   
 The functions $\lambda \varphi_{\mathcal{I}}$ play the role of  Green functions
 adapted to the singularity $(X,p)$.
 We show here how to approximate them from above by smooth
 functions with prescribed boundary values.

\begin{lem}
\label{lem:Cons}
Let $\mathcal{I}$ be a coherent ideal sheaf supported at $p$,
and let $f_{1},\dots,f_{m}$ denote local generators of $\mathcal{I}$.
Fix an open set $\Omega'\Subset \Omega$.
There exists a family $\big\{\varphi_{\mathcal{I},\lambda,\epsilon}\big\}_{\lambda>0, \epsilon> 0}\in PSH(\Omega)\cap \mathcal{C}^{\infty}(\bar{\Omega})$ such that
\begin{itemize}
    \item[i)] $\varphi_{\mathcal{I},\lambda,\epsilon|\partial\Omega}=\phi$ for any $\lambda>0, \epsilon\in [0,1]$;
    \item[ii)] $\varphi_{\mathcal{I},\lambda,\epsilon}=\lambda\log\big(\sum_{j=1}^{m}\lvert f_{j} \rvert^{2}+\epsilon^{2}\big)+\phi_{0}$ in $\Omega'$, 
    \item[iii)] $\varphi_{\mathcal{I},\lambda,\epsilon}\searrow \varphi_{\mathcal{I},\lambda,0}=:\varphi_{\mathcal{I},\lambda}$ as $\epsilon\searrow 0$ for any $\lambda>0$ fixed.
\end{itemize}
\end{lem}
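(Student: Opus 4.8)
The plan is to realize $\varphi_{\mathcal I,\lambda,\epsilon}$ as a regularized maximum of the model function $\lambda\log\big(\sum_j|f_j|^2+\epsilon^2\big)+\phi_0$ with a boundary barrier of the form $A_\epsilon\rho+\phi_0$, in the spirit of the gluing carried out in the proof of Lemma~\ref{lem:smoothing}. Since a coherent ideal sheaf on a Stein space is globally generated (Cartan's Theorem A), I first replace the germs $f_j$ by holomorphic functions on a Stein neighborhood $\widetilde\Omega\Supset\overline\Omega$ generating $\mathcal I$ there, and rescale them by a small positive constant so that $\sum_j|f_j|^2\le\tfrac14$ on $\overline\Omega$; this only shifts $\varphi_{\mathcal I}$ by an additive constant, which is immaterial in the sequel. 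Set $v_{\lambda,\epsilon}:=\lambda\log\big(\sum_j|f_j|^2+\epsilon^2\big)$. For $\epsilon\in(0,\tfrac12]$ this is a smooth psh function on $\widetilde\Omega$ (the logarithm of a sum of squared moduli of holomorphic functions, bounded below by $\epsilon^2>0$), with $2\lambda\log\epsilon\le v_{\lambda,\epsilon}\le\lambda\log\tfrac12<0$ on $\overline\Omega$; moreover $v_{\lambda,\epsilon}$ increases with $\epsilon$ and decreases to $\lambda\varphi_{\mathcal I}$ as $\epsilon\searrow0$. Restricting the parameter to $\epsilon\in(0,\tfrac12]$ is harmless, as only the limit $\epsilon\to0$ of (iii) will be used.

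Let $\rho$ be the defining function of $\Omega=\{\rho<0\}$, strictly psh near $\overline\Omega$, put $c_0:=-\max_{\overline{\Omega'}}\rho>0$ and $\eta_0:=-\tfrac12\lambda\log\tfrac12>0$, fix a small $\delta\in(0,\eta_0)$, and let $\widetilde{\max}$ be the regularized maximum (smooth, convex, non-decreasing in each entry, equal to $\max$ as soon as the entries differ by at least $\delta$). Choose $A_\epsilon>0$, continuous and non-increasing in $\epsilon$, with $A_\epsilon\to+\infty$ as $\epsilon\to0^{+}$ and $A_\epsilon\ge c_0^{-1}\big(\delta+2\lambda|\log\epsilon|\big)$ for all $\epsilon\in(0,\tfrac12]$ (say $A_\epsilon:=c_0^{-1}(1+\delta+2\lambda|\log\epsilon|)$), and define
\[
\varphi_{\mathcal I,\lambda,\epsilon}:=\widetilde{\max}\big(v_{\lambda,\epsilon}+\phi_0,\; A_\epsilon\rho+\phi_0\big),\qquad \epsilon\in(0,\tfrac12].
\]
Both entries are smooth and psh on a fixed neighborhood of $\overline\Omega$ (for $\epsilon>0$, using smoothness of $v_{\lambda,\epsilon}$, of $\rho$ and of $\phi_0$ there), hence $\varphi_{\mathcal I,\lambda,\epsilon}\in PSH(\Omega)\cap\mathcal C^\infty(\overline\Omega)$.

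It then remains to read off the three properties. On $\overline{\Omega'}$ one has $v_{\lambda,\epsilon}\ge2\lambda\log\epsilon\ge-A_\epsilon c_0+\delta=A_\epsilon\max_{\overline{\Omega'}}\rho+\delta\ge A_\epsilon\rho+\delta$, so the first entry dominates the second by at least $\delta$ and $\widetilde{\max}$ selects it: $\varphi_{\mathcal I,\lambda,\epsilon}=v_{\lambda,\epsilon}+\phi_0$ on $\Omega'$, which is (ii). Since $v_{\lambda,\epsilon}\le-2\eta_0$ on $\overline\Omega$, on the neighborhood $\{\rho>-\eta_0/A_\epsilon\}$ of $\partial\Omega$ one has $A_\epsilon\rho-v_{\lambda,\epsilon}>-\eta_0+2\eta_0=\eta_0>\delta$, so there $\varphi_{\mathcal I,\lambda,\epsilon}=A_\epsilon\rho+\phi_0$; this is smooth up to $\partial\Omega$ and equals $\phi_0=\phi$ on $\partial\Omega$ because $\rho_{|\partial\Omega}=0$, which is (i). Finally, as $\epsilon\searrow0$ both entries decrease ($v_{\lambda,\epsilon}$ because it increases with $\epsilon$, and $A_\epsilon\rho$ because $A_\epsilon$ is non-increasing in $\epsilon$ and $\rho<0$), hence so does $\widetilde{\max}$; and for every $x\in\Omega$ one has $v_{\lambda,\epsilon}(x)\to\lambda\varphi_{\mathcal I}(x)$ while $A_\epsilon\rho(x)\to-\infty$, whence $\varphi_{\mathcal I,\lambda,\epsilon}(x)\searrow\lambda\varphi_{\mathcal I}(x)+\phi_0(x)=:\varphi_{\mathcal I,\lambda}(x)$, which on $\Omega'$ is the $\epsilon=0$ instance of the formula in (ii). This gives (iii).

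The one genuinely delicate point, which dictates the whole design, is that $v_{\lambda,\epsilon}$ has a logarithmic pole of depth $\sim2\lambda\log\epsilon$ at $p$. For the regularized maximum to retain the model function on all of $\Omega'$, the barrier $A_\epsilon\rho+\phi_0$ must be pushed below that pole, which forces $A_\epsilon\to+\infty$ as $\epsilon\to0$; one must then check that this growth spoils neither the monotonicity in $\epsilon$ (handled by choosing $A_\epsilon$ monotone) nor the convergence in (iii) (handled by the fact that $A_\epsilon\rho\to-\infty$ pointwise inside $\Omega$, so the barrier disappears in the limit). The preliminary rescaling making $v_{\lambda,\epsilon}$ negative near $\partial\Omega$ is exactly what lets the barrier dominate near the boundary, and hence pins the boundary value down to $\phi$.
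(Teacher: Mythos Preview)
Your argument is correct for $\epsilon>0$ and establishes (ii) and (iii), but it does not deliver the $\epsilon=0$ case of (i) as stated. Because you let $A_\epsilon\to+\infty$, the barrier $A_\epsilon\rho+\phi_0$ tends to $-\infty$ pointwise in $\Omega$, so the decreasing limit is $\lambda\varphi_{\mathcal I}+\phi_0$ on all of $\Omega$; its boundary trace is $\lambda\log\big(\sum_j|f_j|^2\big)\big|_{\partial\Omega}+\phi$, which is not $\phi$ in general. The paper's construction avoids this by using a \emph{fixed} constant $A$: it inserts an intermediate open set $\Omega'\Subset\tilde\Omega\Subset\Omega$ together with a small ball $B_r(0)\Subset\Omega'$, and defines a three-region function
\[
u_{\mathcal I,\epsilon}=
\begin{cases}
A\rho & \text{on }\Omega\setminus\tilde\Omega,\\
\max_\delta\big(\log(\sum_j|f_j|^2+\epsilon^2),\,A\rho\big) & \text{on }\tilde\Omega\setminus\Omega',\\
\log(\sum_j|f_j|^2+\epsilon^2) & \text{on }\Omega',
\end{cases}
\]
checking the gluing conditions only on the annulus $\Omega'\setminus B_r(0)$ (where $\sum_j|f_j|^2$ is bounded away from $0$) and near $\partial\tilde\Omega$. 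The key point is that (ii) is then imposed \emph{by fiat} on $\Omega'$ rather than by forcing the barrier below the logarithmic pole at $p$, so no $\epsilon$-dependence of $A$ is needed; the limit $\varphi_{\mathcal I,\lambda,0}$ still equals $\lambda A\rho+\phi_0$ near $\partial\Omega$ and hence has boundary value $\phi$.

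This is not a cosmetic difference. With your choice, near $\partial\Omega$ one has $dd^c\varphi_{\mathcal I,\lambda,\epsilon}=A_\epsilon\,dd^c\rho+dd^c\phi_0$ on a collar of width $\sim A_\epsilon^{-1}$, so the integrals $\int_{\Omega\setminus\Omega_\ell}(\phi_0-\varphi_{\mathcal I,\lambda,\epsilon})(dd^c\varphi_{\mathcal I,\lambda,\epsilon})^j\wedge(dd^c\phi_0)^{n-j}$ are of order $A_\epsilon^{\,j-1}$ and blow up for $j\ge2$. This is precisely the quantity that must stay uniformly bounded in $\epsilon\in[0,1]$ in the proof of Lemma~\ref{lem:Mult}. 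The paper's fixed-$A$ family is uniformly smooth on $\overline\Omega\setminus\Omega_\ell$ for all $\epsilon\in[0,1]$, which is what that argument uses. Your construction is easily repaired by adopting the three-region gluing; the moral is that one should not let the boundary barrier see the depth of the pole at $p$.
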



\begin{proof}
Without loss of generality we can assume that 
that $\sum_{j=1}^{m}\lvert f_{j}\rvert^{2}\leq 1/e-1$ in $\Omega$. 
Let $\rho$ be a smooth psh exhaustion  for $\Omega$ and fix $0< r\ll 1$, $0<\delta\ll 1$ small enough. There exists $A>0$ big enough and relatively compact open sets $B_{r}(0)\Subset\Omega'\Subset \tilde{\Omega}\Subset \Omega$ such that
\begin{gather*}
    \log\big(\sum_{j=1}^{m}\lvert f_{j} \rvert^{2}+1\big)+\delta\leq A\rho \,\,\, \mathrm{over}\,\,\, \Omega\setminus \tilde{\Omega}\\
    \log\big(\sum_{j=1}^{m}\lvert f_{j}\rvert^{2}\big)-\delta\geq A\rho \,\,\, \mathrm{over}\,\,\, \Omega'\setminus B_{r}(0)
\end{gather*}
We infer that
$$
u_{\mathcal{I},\epsilon}:=
\begin{cases}
A\rho \,\,\, \mathrm{on}\,\,\, \Omega\setminus \tilde{\Omega}\\
\max_{\delta}\Big(\log\big(\sum_{j=1}^{m}\lvert f_{j}\rvert^{2}+\epsilon^{2}\big), A\rho\Big)\,\,\, \mathrm{on}\,\,\, \tilde{\Omega}\setminus \Omega'\\
\log\big(\sum_{j=1}^{m}\lvert f_{j}\rvert^{2}+\epsilon^{2}\big)\,\,\,\mathrm{on}\,\,\, \Omega'
\end{cases}
$$
is a decreasing family (in $\epsilon\in[0,1]$) of plurisubharmonic functions which are smooth in $\bar{\Omega}\setminus\{p\}$ (smooth in $\bar{\Omega}$ for $\epsilon>0$) and which are identically $0$ on $\partial \Omega$. 
Here $\max_{\delta}(\cdot,\cdot)$ denotes the regularized maximum. 
The lemma follows by setting
$
\varphi_{\mathcal{I},\lambda,\epsilon}:=\lambda u_{\mathcal{I},\epsilon}+\phi_{0}.
$
\end{proof}

We now compute the asymptotic behavior, as $\e$ decreases to $0$,
 of the quantities involved in the expected Moser-Trudinger inequality.

\begin{lem}
\label{lem:lct}
Let $\mathcal{I}$ and $\{\varphi_{\mathcal{I},\lambda,\epsilon}\}_{\epsilon\in(0,1]}\subset\mathcal{T}(\Omega)$
be as in Lemma \ref{lem:Cons}. Then for any $\gamma>0, \lambda>0$ fixed there exists a constant $C_{\lambda,\gamma}\in\R$ (independent of $\epsilon$) such that
\begin{equation}
    \label{eqn:EstLct}
    C_{\lambda,\gamma}+\big(\gamma\lambda-\mathrm{lct}(X,\mathcal{I})\big)\log\epsilon^{-2}\leq \log\int_{\Omega}e^{-\gamma \varphi_{\mathcal{I},\lambda,\epsilon}}d\mu_{p}
\end{equation}
for all $0<\epsilon<\epsilon_0$.
\end{lem}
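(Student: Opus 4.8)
The plan is to estimate $\int_{\Omega}e^{-\gamma\varphi_{\mathcal{I},\lambda,\epsilon}}d\mu_{p}$ from below by passing to a log resolution of $(X,\mathcal{I})$ and carrying out an explicit local computation in coordinates adapted to the exceptional divisor realizing the log canonical threshold. Since by property ii) of Lemma \ref{lem:Cons} we have $\varphi_{\mathcal{I},\lambda,\epsilon}=\lambda\log\big(\sum_{j}|f_j|^2+\epsilon^2\big)+\phi_0$ on the fixed relatively compact set $\Omega'$, and $\phi_0$ is smooth hence bounded there, it suffices to bound from below $\int_{\Omega'}\big(\sum_j|f_j|^2+\epsilon^2\big)^{-\gamma\lambda}d\mu_p$; the contribution from $\Omega\setminus\Omega'$ is uniformly bounded below in $\epsilon$ since there $e^{-\gamma\varphi_{\mathcal{I},\lambda,\epsilon}}$ is controlled by $e^{-\gamma\lambda u_{\mathcal{I},\epsilon}-\gamma\phi_0}$ with $u_{\mathcal{I},\epsilon}$ decreasing, so $e^{-\gamma\varphi_{\mathcal{I},\lambda,\epsilon}}\geq e^{-\gamma\varphi_{\mathcal{I},\lambda,1}}$ there and the integral is a fixed positive constant.

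First I would take $\pi:\tilde{\Omega}\to\Omega$ a log resolution as in Proposition \ref{prop:lct}, so that $\pi^*\mu_p\sim\prod_j|s_{E_j}|^{2a_j}\,dV_{\tilde{\Omega}}$ and $\pi^{-1}\mathcal{I}\cdot\mathcal{O}_{\tilde{\Omega}}=\mathcal{O}_{\tilde{\Omega}}(-\sum_j b_j E_j)$, with $\mathrm{lct}(X,\mathcal{I})=\min_j\frac{a_j+1}{b_j}$ attained at some index $j_0$. Pulling back, $\sum_j|f_j\circ\pi|^2\sim\prod_j|s_{E_j}|^{2b_j}\cdot g$ for a nonvanishing smooth $g$, so on a coordinate chart $(w_1,\dots,w_n)$ near a general point of $E_{j_0}$ where $E_{j_0}=\{w_1=0\}$ and no other $E_j$ meets the chart, the integrand becomes comparable to $\big(|w_1|^{2b_{j_0}}+\epsilon^2\big)^{-\gamma\lambda}|w_1|^{2a_{j_0}}$. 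Restricting to a polydisc where $|w_1|<\delta_0$ and the remaining $|w_i|<\delta_0$, Fubini reduces everything to the one-variable integral $\int_{|w_1|<\delta_0}\big(|w_1|^{2b_{j_0}}+\epsilon^2\big)^{-\gamma\lambda}|w_1|^{2a_{j_0}}\,dV(w_1)$, which after the substitution $w_1=\epsilon^{1/b_{j_0}}\zeta$ scales like $\epsilon^{(2a_{j_0}+2-2\gamma\lambda b_{j_0})/b_{j_0}}$ times an integral over $|\zeta|<\delta_0\epsilon^{-1/b_{j_0}}$ of $(|\zeta|^{2b_{j_0}}+1)^{-\gamma\lambda}|\zeta|^{2a_{j_0}}$. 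For $\gamma\lambda>\mathrm{lct}(X,\mathcal{I})$ that last integral stays bounded below by a positive constant (the integrand near $\zeta=0$ is integrable and positive), so the whole thing is $\gtrsim\epsilon^{2(a_{j_0}+1)/b_{j_0}-2\gamma\lambda}$. Taking logarithms and recalling $(a_{j_0}+1)/b_{j_0}=\mathrm{lct}(X,\mathcal{I})$ gives exactly $\log\int\geq C_{\lambda,\gamma}+(\mathrm{lct}(X,\mathcal{I})-\gamma\lambda)\log\epsilon^2=C_{\lambda,\gamma}+(\gamma\lambda-\mathrm{lct}(X,\mathcal{I}))\log\epsilon^{-2}$, as claimed.

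The main obstacle is bookkeeping the constants uniformly in $\epsilon$: one must make sure the chart near $E_{j_0}$ and the comparison constants for $g$, for $\pi^*\mu_p$ versus $\prod|s_{E_j}|^{2a_j}dV_{\tilde{\Omega}}$, and for the hermitian norms of the $s_{E_j}$ can all be chosen independently of $\epsilon$ (they can, as $\pi$ and the divisors do not depend on $\epsilon$), and that the scaling substitution is applied only on a region where the other exceptional components are absent so that no competing divisor worsens the exponent. One should also note that $\gamma\lambda b_{j_0}$ could exceed $a_{j_0}+1$ by a lot, but since we only need a lower bound this causes no trouble — we simply keep the positive contribution from a neighborhood of a single general point of $E_{j_0}$ and discard everything else, the discarded part being nonnegative. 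A minor point worth stating explicitly is that when $\gamma\lambda\leq\mathrm{lct}(X,\mathcal{I})$ the inequality \eqref{eqn:EstLct} is trivially true for a suitable constant since the left side is then bounded above while the right side's integral is bounded below by its $\epsilon=1$ value, so the interesting and only substantive case is $\gamma\lambda>\mathrm{lct}(X,\mathcal{I})$.
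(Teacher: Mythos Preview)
Your proof is correct and follows essentially the same approach as the paper's: reduce to $\Omega'$ where $\varphi_{\mathcal{I},\lambda,\epsilon}=\lambda\log(\sum|f_j|^2+\epsilon^2)+\phi_0$, pull back to a log resolution, localize near a general point of the exceptional divisor $E_{j_0}$ computing $\mathrm{lct}(X,\mathcal{I})$, and perform the scaling substitution $w_1=\epsilon^{1/b_{j_0}}\zeta$ to extract the factor $\epsilon^{-2(\gamma\lambda-\mathrm{lct}(X,\mathcal{I}))}$. The paper likewise treats the case $\gamma\lambda\le\mathrm{lct}(X,\mathcal{I})$ as trivial. Your remark about the contribution of $\Omega\setminus\Omega'$ is unnecessary (positivity of the integrand lets you simply drop it), but harmless.
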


\begin{proof}
Taking a log resolution $\pi: Y\to X$ we obtain
\begin{eqnarray*}
\int_{\Omega}e^{-\gamma \varphi_{\mathcal{I,\lambda,\epsilon}}}d\mu_{p}
&\geq & \int_{\Omega'}\frac{1}{\Big(\sum_{j=1}^{m}\lvert f_{j}\rvert^{2}+\epsilon^{2}\Big)^{\gamma\lambda}}d\mu_{p} \\
&\geq &C_{1}\int_{\pi^{-1}(\Omega')}\frac{\prod_{j=1}^{M}\lvert s_{E_{j}}\rvert^{2a_{j}}}{\Big(\prod_{j=1}^{M}\lvert s_{E_{j}} \rvert^{2b_{j}}+\epsilon^{2}\Big)^{\gamma\lambda}}dV_{\pi^{-1}(\Omega')},
\end{eqnarray*}
where $C_{1}$ is a uniform constant (independent on $\epsilon$).
We set
$$
f:=\frac{\prod_{j=1}^{M}\lvert s_{E_{j}}\rvert^{2a_{j}}}{\Big(\prod_{j=1}^{M}\lvert s_{E_{j}} \rvert^{2b_{j}}+\epsilon^{2}\Big)^{\gamma\lambda}}.
$$

We can assume without loss of generality  that $\mathrm{lct}(X,\mathcal{I})=\frac{a_{1}+1}{b_{1}}$.
Pick $x\in E_{1}, x\notin E_{j}, j=2,\dots,M$. We can find $0<r\ll 1$ so small  
that $B_{r}(x)\cap E_{j}=\emptyset$ for any $j=2,\dots,M$. 
We choose holomorphic coordinates $(z_{1},\dots,z_{n})$ centered at $x$ such that $E_{1}=\{z_{1}=0\}$. Thus, setting $a:=a_{1},b:=b_{1}$ and $c:=\gamma\lambda$ we get
$$
\int_{\pi^{-1}(\Omega')}fdV_{\pi^{-1}(\Omega')}\geq C_{2}\int_{B_{r}(0)}\frac{\lvert z_{1} \rvert^{2a}}{\big(\lvert z_{1}\rvert^{2b}+\epsilon^{2}\big)^{c}}d\lambda(z)=C_{3}\int_{0}^{r}\frac{u^{2a+1}}{\big(u^{2b}+\epsilon^{2}\big)^{c}}du
$$
where $C_{2}, C_{3}$ are uniform constants. If $c\leq \frac{a+1}{b}$ (i.e. $\gamma\lambda\leq \mathrm{lct}(X,\mathcal{I})$) then
$$
\int_{0}^{r}\frac{u^{2a+1}}{\big(u^{2b}+\epsilon^{2}\big)^{c}}du\geq \int_{0}^{1}\frac{u^{2a+1}}{\big(u^{2b}+1\big)^{c}}=:C_{4}
$$
and (\ref{eqn:EstLct}) trivially follows. If $c>\frac{a+1}{b}$ then the substitution $v:=u/\epsilon^{1/b}$ yields
\begin{eqnarray*}
\int_{0}^{r}\frac{u^{2a+1}}{\big(u^{2b}+\epsilon^{2}\big)^{c}}du
&=& \epsilon^{-2\big(c-\frac{a+1}{b}\big)}\int_{0}^{r/\epsilon^{1/b}}\frac{v^{2a+1}}{\big(v^{2b}+1\big)^{c}} \\
&\geq & \epsilon^{-2\big(\gamma\lambda-\mathrm{lct}(X,\mathcal{I})\big)}\int_{0}^{r}\frac{v^{2a+1}}{\big(v^{2b}+1\big)^{c}} dv.
\end{eqnarray*}
The lemma follows.
\end{proof}

\begin{lem}
\label{lem:Mult}
Let $\mathcal{I}$ and  $\varphi_{\mathcal{I},\lambda,\epsilon}\in\mathcal{T}(\Omega)$ be as in Lemma \ref{lem:Cons}. 
There exist positive constants $\{C_{\ell,\lambda}\}_{\ell\in\N,\lambda>0}$ 
and a family of functions $F_{\ell}:(0,1]\to \R_{>0}$ such that
$$
-E_{\phi}(\varphi_{\mathcal{I},\lambda,\epsilon})\leq C_{\ell,\lambda}+\frac{\lambda^{n+1}}{n+1}F_{\ell}(\epsilon)\log\epsilon^{-2},
$$
for any $\epsilon \in (0,1]$, where
\begin{itemize}
\item $\{C_{\ell,\lambda}\}_{\ell\in\N,\lambda>0}$ is independent of $\epsilon\in(0,1]$;
\item $F_{\ell}(\epsilon)\to F_{\ell}(0)=:e_{\ell}>0$ as $\epsilon\searrow 0$;
\item $e_{\ell}\searrow e(X,\mathcal{I})$ as $\ell\to +\infty$.
\end{itemize}
 
\end{lem}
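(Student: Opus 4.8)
The plan is to expand the definition of the relative energy into a polynomial in $\lambda$, and to isolate the single contribution responsible for the logarithmic divergence as $\epsilon\searrow0$. Write $u_\epsilon:=u_{\mathcal{I},\epsilon}$, so $\varphi_{\mathcal{I},\lambda,\epsilon}=\lambda u_\epsilon+\phi_0$ with $u_\epsilon\le 0$, smooth on $\overline\Omega$, equal to $A\rho$ near $\partial\Omega$ (hence to $0$ on $\partial\Omega$), and equal to $\log\bigl(\sum_j|f_j|^2+\epsilon^2\bigr)$ on $\Omega'$. First I would substitute $dd^c\varphi_{\mathcal{I},\lambda,\epsilon}=\omega+\lambda\,dd^cu_\epsilon$ (with $\omega:=dd^c\phi_0$) into the definition of $E_\phi$, expand by the binomial theorem, and use the identity $\sum_{j=k}^n\binom{j}{k}=\binom{n+1}{k+1}$ to obtain
\begin{equation*}
-(n+1)\,E_\phi(\varphi_{\mathcal{I},\lambda,\epsilon})=\sum_{k=0}^{n}\lambda^{k+1}\binom{n+1}{k+1}\,I_k(\epsilon),\qquad I_k(\epsilon):=\int_\Omega(-u_\epsilon)\,(dd^cu_\epsilon)^k\wedge\omega^{n-k}\ \ge 0 .
\end{equation*}
It then suffices to prove: (a) $I_k(\epsilon)\le D_k$ for $k<n$, with $D_k$ independent of $\epsilon$; and (b) $I_n(\epsilon)\le \ell M_n+F_\ell(\epsilon)\log\epsilon^{-2}$, with $M_n$ independent of $\epsilon$ and $F_\ell$ a family (independent of $\lambda$) satisfying $F_\ell(0)=e_\ell\searrow e(X,\mathcal{I})$. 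Granting these, the lemma follows with $C_{\ell,\lambda}=\tfrac1{n+1}\bigl(\sum_{k<n}\lambda^{k+1}\binom{n+1}{k+1}D_k+\lambda^{n+1}\ell M_n\bigr)$.

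For (a): when $k=0$ one has $-u_\epsilon\nearrow-\varphi_{\mathcal{I}}$ on $\Omega'$ (and $u_\epsilon$ is uniformly bounded below off $\Omega'$), while $\varphi_{\mathcal{I}}\in L^1(\omega^n)$ since $\omega^n$ has bounded density; hence $I_0(\epsilon)$ increases to a finite limit. For $1\le k\le n-1$ I would pull everything back to a log-resolution $\pi:Y\to\Omega$ of $(\Omega,\mathcal{I})$, where in local simple normal crossing coordinates $\pi^*u_\epsilon=\log\bigl(|z^\beta|^2 h+\epsilon^2\bigr)$ with $z^\beta$ a monomial in the exceptional divisors and $h>0$ smooth, so that $I_k(\epsilon)=\int_Y(-\pi^*u_\epsilon)(dd^c\pi^*u_\epsilon)^k\wedge(\pi^*\omega)^{n-k}$; a scaling estimate exactly of the type used in the proof of Lemma~\ref{lem:lct} (substitution $v=|z_1|\,\epsilon^{-1/b_1}$, reduction to one-dimensional integrals) then shows this stays bounded as $\epsilon\to 0$. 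The point is that a wedge power of degree $k<n$ of $dd^c\pi^*u_\epsilon$ carries no Dirac mass at the center of $p$, so the factor $\log\epsilon^{-2}$ is absorbed by the vanishing of the measure; the convergence of the resulting integrals is precisely where the log-terminal hypothesis $a_j>-1$ enters. This uniform control of the sub-leading energies is the step I expect to be the main obstacle, and the reason $F_\ell$ must be allowed to depend on $\ell$: only the top-degree term is allowed to keep a $\log\epsilon^{-2}$.

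For (b): split $-u_\epsilon=\min(-u_\epsilon,\ell)+\max(-u_\epsilon-\ell,0)$. For the first piece, $\int_\Omega\min(-u_\epsilon,\ell)\,(dd^cu_\epsilon)^n\le\ell\int_\Omega(dd^cu_\epsilon)^n=\ell M_n$, where $M_n:=\int_\Omega(dd^c(A\rho))^n$ is independent of $\epsilon$: indeed $u_\epsilon$ and $A\rho$ are smooth psh functions on $\overline\Omega$ agreeing near $\partial\Omega$, so $(dd^cu_\epsilon)^n-(dd^cA\rho)^n=dd^c(u_\epsilon-A\rho)\wedge T_\epsilon$ with $T_\epsilon$ a closed form and $u_\epsilon-A\rho$ compactly supported, whence the masses coincide by Stokes. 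For the second piece, for $\ell$ large $\{u_\epsilon<-\ell\}$ is contained in $V_\ell:=\{\varphi_{\mathcal{I}}<-\ell\}\Subset\Omega'$ (since $\sum_j|f_j|^2+\epsilon^2<e^{-\ell}$ forces $\sum_j|f_j|^2<e^{-\ell}$), and there $u_\epsilon\ge\log\epsilon^2$, so $\max(-u_\epsilon-\ell,0)\le\log\epsilon^{-2}$ and
\begin{equation*}
\int_\Omega\max(-u_\epsilon-\ell,0)\,(dd^cu_\epsilon)^n\ \le\ \bigl(\log\epsilon^{-2}\bigr)\,m_{\ell,\epsilon},\qquad m_{\ell,\epsilon}:=\int_{\{u_\epsilon<-\ell\}}(dd^cu_\epsilon)^n .
\end{equation*}
As $\epsilon\searrow0$ one has $u_\epsilon\searrow\varphi_{\mathcal{I}}$ on $\Omega'$ and $(dd^cu_\epsilon)^n$ converges weakly to $(dd^c\varphi_{\mathcal{I}})^n$ there; upper semicontinuity of mass on the closed set $\overline{V_\ell}\subset\{\varphi_{\mathcal{I}}\le-\ell\}$ gives $\limsup_{\epsilon\to0}m_{\ell,\epsilon}\le(dd^c\varphi_{\mathcal{I}})^n\bigl(\{\varphi_{\mathcal{I}}\le-\ell\}\bigr)=:e_\ell$, and by Proposition~\ref{prop:lct} together with continuity of measures from above (using $\bigcap_\ell\{\varphi_{\mathcal{I}}\le-\ell\}=\{p\}$ and finite total mass) one gets $e_\ell\searrow(dd^c\varphi_{\mathcal{I}})^n(\{p\})=e(X,\mathcal{I})$. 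Setting $F_\ell(\epsilon):=\max\bigl(e_\ell,\ \sup_{0<\epsilon'\le\epsilon}m_{\ell,\epsilon'}\bigr)$ — which is non-decreasing, bounded above by $M_n$, satisfies $F_\ell(\epsilon)\ge m_{\ell,\epsilon}$, and has $F_\ell(0^+)=e_\ell>0$ — establishes (b), and hence the lemma.
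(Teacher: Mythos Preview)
Your overall strategy is sound and does yield the lemma, but it takes a different route from the paper, and your justification of step~(a) is confused in one place.

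\medskip

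\textbf{Comparison with the paper.} The paper does not attempt to show that the sub-top-degree energies $I_k(\epsilon)$, $k<n$, are uniformly bounded. Instead it fixes a shrinking sequence of open neighborhoods $\Omega_\ell\searrow\{p\}$ with $\Omega_\ell\Subset\Omega'$ and splits the energy integral \emph{geographically}: on $\Omega\setminus\Omega_\ell$ the family $\{\varphi_{\mathcal{I},\lambda,\epsilon}\}_\epsilon$ is a continuous family of smooth functions, so that contribution is bounded by some $C_{\ell,\lambda}$; on $\Omega_\ell$ one uses the explicit form together with the crude bound $-\log(\sum_k|f_k|^2+\epsilon^2)\le\log\epsilon^{-2}$, and sets
\[
F_\ell(\epsilon):=\sum_{j=0}^n\int_{\Omega_\ell}\Bigl(dd^c\log\bigl(\textstyle\sum_k|f_k|^2+\epsilon^2\bigr)\Bigr)^{j}\wedge(dd^c\phi_0)^{n-j}.
\]
All degrees $j$ are lumped into $F_\ell$; the fact that the $j<n$ terms vanish \emph{only in the double limit} $\epsilon\to0$ then $\ell\to\infty$ (since $(dd^c\varphi_{\mathcal{I}})^j\wedge\omega^{n-j}$ puts no mass at $p$ for $j<n$) already suffices for $e_\ell\searrow e(X,\mathcal{I})$. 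This bypasses your step~(a) entirely, at the cost of a slightly coarser $F_\ell$. Your approach, by contrast, isolates the single divergent term and controls the rest uniformly --- a sharper statement, but requiring the extra work in~(a).

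\medskip

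\textbf{The gap in your step (a).} Your claim that $I_k(\epsilon)$ is bounded for $k<n$ is correct, but the justification you sketch is off: the integrals $I_k$ involve only the smooth form $\omega^{n-k}$, not $\mu_p$, so the log-terminal hypothesis $a_j>-1$ plays \emph{no role whatsoever} here; and there is no need to pass to a log resolution or invoke scaling estimates. A one-line argument is integration by parts: since $u_\epsilon=A\rho$ in a fixed neighborhood of $\partial\Omega$ (so $u_\epsilon|_{\partial\Omega}=0$ and all boundary data are $\epsilon$-independent), one has for $k<n$
\[
I_k(\epsilon)=\int_\Omega(-\phi_0)\,(dd^cu_\epsilon)^{k+1}\wedge\omega^{n-k-1}\;+\;B_k,
\]
with $B_k$ an $\epsilon$-independent boundary term. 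The first integral is bounded by $\lVert\phi_0\rVert_\infty\int_\Omega(dd^cu_\epsilon)^{k+1}\wedge\omega^{n-k-1}$, and this mass equals $\int_\Omega(A\,dd^c\rho)^{k+1}\wedge\omega^{n-k-1}$ by Stokes, since $u_\epsilon-A\rho$ has compact support. No resolution, no discrepancies. Your argument for~(b) is essentially correct as written.
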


\begin{proof}
We take a sequence $\{\Omega_{\ell}\}_{\ell\in\N}$ of open sets such that 
$\Omega_{\ell+1}\Subset \Omega_{\ell}$ for any $\ell\in\N$ and such that $\bigcap_{\ell\in\N}\Omega_{\ell}=\{p\}$. 
Since $\Omega_{\ell}\subset \Omega'$ (same notation of Lemma \ref{lem:Cons}) for $\ell\in\N$ big enough, 
we obtain
\begin{multline}
    \label{eqn:Calc}
    -E_{\phi}(\varphi_{\mathcal{I},\lambda,\epsilon})=\frac{1}{n+1}\sum_{j=0}^{n}\int_{\Omega}\big(\phi_{0}-\varphi_{\mathcal{I},\lambda,\epsilon}\big)\big(dd^{c}\varphi_{\mathcal{I},\lambda,\epsilon}\big)^{j}\wedge \big(dd^{c}\phi_{0}\big)^{n-j}\\
    =\frac{1}{n+1}\sum_{j=0}^{n}\int_{\Omega\setminus\Omega_{\ell}}\big(\phi_{0}-\varphi_{\mathcal{I},\lambda,\epsilon}\big)\big(dd^{c}\varphi_{\mathcal{I},\lambda,\epsilon}\big)^{j}\wedge \big(dd^{c}\phi_{0}\big)^{n-j}\\
    -\frac{1}{n+1}\sum_{j=0}^{n}\int_{\Omega_{\ell}} \lambda^{j+1} \log\big(\sum_{k=1}^{m}\lvert f_{k}\rvert^{2}+\epsilon^{2}\big)\Big(dd^{c}\log\big(\sum_{k=1}^{m}\lvert f_{k}\rvert^{2}+\epsilon^{2}\big)\Big)^{j}\wedge \Big(dd^{c}\phi_{0}\Big)^{n-j}.
\end{multline}

The first term on the right hand side of (\ref{eqn:Calc}) is uniformly bounded in $\epsilon\in [0,1]$, for $\lambda>0,\ell\in\N$ fixed, since $\{\varphi_{\mathcal{I},\lambda,\epsilon}\}_{\epsilon\in [0,1]}$ is a continuous family of 
smooth functions on $\Omega\setminus \Omega_{\ell}$. 
We let $C_{\ell,\lambda}$ denote a uniform upper bound for this quantity.

The second term on the right hand side of (\ref{eqn:Calc}) is bounded from above by
\begin{multline*}
-\frac{1}{n+1}\sum_{j=0}^{n}\int_{\Omega_{\ell}} \lambda^{j+1} \log\big(\sum_{k=1}^{m}\lvert f_{k}\rvert^{2}+\epsilon^{2}\big)\Big(dd^{c}\log\big(\sum_{k=1}^{m}\lvert f_{k}\rvert^{2}+\epsilon^{2}\big)\Big)^{j}\wedge \Big(dd^{c}\phi_{0}\Big)^{n-j} \\
\leq \frac{\lambda^{n+1}}{n+1}\log\epsilon^{-2}\sum_{j=0}^{n}\int_{\Omega_{\ell}}\Big(dd^{c}\log\big(\sum_{k=1}^{m}\lvert f_{k}\rvert^{2}+\epsilon^{2}\big)\Big)^{j}\wedge \Big(dd^{c}\phi_{0}\Big)^{n-j} .
\end{multline*}
We set
$$
F_{\ell}(\epsilon):=\sum_{j=0}^{n}\int_{\Omega_{\ell}}\Big(dd^{c}\log\big(\sum_{k=1}^{m}\lvert f_{k}\rvert^{2}+\epsilon^{2}\big)\Big)^{j}\wedge \Big(dd^{c}\phi_{0}\Big)^{n-j}.
$$

Observe that for $j=0,\dots,n-1$
$$
\lim_{\ell\nearrow +\infty}\lim_{\epsilon \searrow 0}\int_{\Omega_{\ell}}\Big(dd^{c}\log\big(\sum_{k=1}^{m}\lvert f_{k}\rvert^{2}+\epsilon^{2}\big)\Big)^{j}\wedge \Big(dd^{c}\phi_{0}\Big)^{n-j}=0
$$
since  the ideal sheaf $\mathcal{I}$ generated by $f_{1},\dots,f_{m}$ is supported at one point,
while  
$$
\int_{\Omega_{\ell}}\Big(dd^{c}\log\big(\sum_{k=1}^{m}\lvert f_{k}\rvert^{2}+\epsilon^{2}\big)\Big)^{n}\to e_{\ell}
$$
as $\epsilon\searrow 0$, where $e_{\ell}\geq e(X,\mathcal{I})$ and $e_{\ell}\searrow \int_{\{p\}}\big(dd^c\varphi_{\mathcal{I},1}\big)^n$
as $\ell \nearrow +\infty$.

Proposition \ref{prop:lct} yields $\int_{\{p\}}\big(dd^c\varphi_{\mathcal{I},1}\big)^n=e(X,\mathcal{I})$, ending the proof.
\end{proof}

\subsection{The upper bound}

We are now ready for the proof of the following result.

\begin{thm} \label{thm:UB}
Let $(X,p)$ be a an isolated log terminal singularity. Then
$$
\gamma_{crit}\leq \frac{n+1}{n}\widehat{\mathrm{vol}}(X,p)^{1/n}.
$$
\end{thm}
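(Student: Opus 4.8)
The plan is to test the Moser-Trudinger inequality \eqref{eq:MT} against the explicit family of approximate Green functions $\varphi_{\mathcal{I},\lambda,\epsilon}$ constructed in Lemma \ref{lem:Cons}, and to optimize over the ideal sheaf $\mathcal{I}$ and the scaling parameter $\lambda$. First I would fix a coherent ideal sheaf $\mathcal{I}$ supported at $p$ and observe that $\varphi_{\mathcal{I},\lambda,\epsilon} \in \mathcal{T}^\infty_\phi(\Omega)$, so after rescaling its Monge--Amp\`ere mass it gives an admissible competitor in $\mathcal{F}_1(\Omega)$; equivalently, by homogeneity of $E_\phi$ under $\varphi \mapsto \lambda\varphi$ (cf. the rescaling discussion in Section \ref{sec:invariance}), it is enough to keep $\lambda$ as a free parameter and compare the two sides of \eqref{eq:MT} directly. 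Suppose \eqref{eq:MT} holds for some $\gamma>0$. Applying it to $\varphi_{\mathcal{I},\lambda,\epsilon}$ and taking $\log$, we get
\[
\frac{1}{\gamma}\log\int_{\Omega}e^{-\gamma\varphi_{\mathcal{I},\lambda,\epsilon}}\,d\mu_p \leq \log C - E_\phi(\varphi_{\mathcal{I},\lambda,\epsilon}).
\]

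Next I would plug in the two asymptotic estimates already proved: Lemma \ref{lem:lct} gives the lower bound
\[
\log\int_{\Omega}e^{-\gamma\varphi_{\mathcal{I},\lambda,\epsilon}}\,d\mu_p \geq C_{\lambda,\gamma} + \bigl(\gamma\lambda - \mathrm{lct}(X,\mathcal{I})\bigr)\log\epsilon^{-2},
\]
while Lemma \ref{lem:Mult} gives the upper bound
\[
-E_\phi(\varphi_{\mathcal{I},\lambda,\epsilon}) \leq C_{\ell,\lambda} + \frac{\lambda^{n+1}}{n+1} F_\ell(\epsilon)\log\epsilon^{-2}.
\]
Combining these and collecting the coefficients of the divergent term $\log\epsilon^{-2}$, the inequality forces
\[
\frac{1}{\gamma}\bigl(\gamma\lambda - \mathrm{lct}(X,\mathcal{I})\bigr) \leq \frac{\lambda^{n+1}}{n+1} F_\ell(\epsilon) + o(1)
\]
as $\epsilon \searrow 0$; letting $\epsilon \to 0$ and then $\ell \to \infty$ replaces $F_\ell(\epsilon)$ by $e(X,\mathcal{I})$. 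This yields, for every $\lambda>0$,
\[
\lambda - \frac{\mathrm{lct}(X,\mathcal{I})}{\gamma} \leq \frac{\lambda^{n+1}}{n+1} e(X,\mathcal{I}).
\]

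Finally I would optimize the elementary inequality in $\lambda$: writing $g(\lambda) = \frac{\lambda^{n+1}}{n+1}e(X,\mathcal{I}) - \lambda$, its minimum over $\lambda>0$ is attained at $\lambda_0 = e(X,\mathcal{I})^{-1/n}$ and equals $-\frac{n}{n+1}e(X,\mathcal{I})^{-1/n}$. Thus $\gamma$ must satisfy $-\frac{\mathrm{lct}(X,\mathcal{I})}{\gamma} \leq -\frac{n}{n+1}e(X,\mathcal{I})^{-1/n}$, i.e.
\[
\gamma \leq \frac{n+1}{n}\,\mathrm{lct}(X,\mathcal{I})\cdot e(X,\mathcal{I})^{1/n} = \frac{n+1}{n}\bigl(\mathrm{lct}(X,\mathcal{I})^n\, e(X,\mathcal{I})\bigr)^{1/n}.
\]
Taking the infimum over all coherent ideal sheaves supported at $p$ and invoking Liu's theorem (Theorem \ref{thm:NormVol}), which identifies $\inf_{\mathcal{I}} \mathrm{lct}(X,\mathcal{I})^n\, e(X,\mathcal{I})$ with $\widehat{\mathrm{vol}}(X,p)$, gives $\gamma \leq \frac{n+1}{n}\widehat{\mathrm{vol}}(X,p)^{1/n}$ for every $\gamma<\gamma_{crit}$, hence the claimed bound on $\gamma_{crit}$. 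The main obstacle I anticipate is purely bookkeeping: one must be careful that the $\epsilon$-independent constants $C_{\lambda,\gamma}$ and $C_{\ell,\lambda}$ genuinely do not interfere when extracting the coefficient of $\log\epsilon^{-2}$, and that the double limit $\epsilon\to 0$ then $\ell\to\infty$ in Lemma \ref{lem:Mult} is taken in the right order so that $e_\ell \searrow e(X,\mathcal{I})$; once the divergent-term comparison is set up cleanly, the optimization over $\lambda$ and the reduction to $\widehat{\mathrm{vol}}$ via Theorem \ref{thm:NormVol} are immediate.
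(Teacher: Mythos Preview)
Your proposal is correct and follows essentially the same approach as the paper: test the Moser--Trudinger inequality against the approximate Green functions $\varphi_{\mathcal{I},\lambda,\epsilon}$ from Lemma~\ref{lem:Cons}, use Lemmas~\ref{lem:lct} and~\ref{lem:Mult} to isolate the coefficient of $\log\epsilon^{-2}$, optimize over $\lambda$, and conclude via Theorem~\ref{thm:NormVol}. The only cosmetic difference is that the paper optimizes over $\lambda$ with $e_\ell$ still in place and only afterwards lets $\ell\to\infty$, whereas you pass to $e(X,\mathcal{I})$ first; since the inequality holds for every $\lambda$ and every $\ell$ separately, the order is immaterial.
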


\begin{proof}
Fix $\gamma<\gamma_{crit}$ and $C_{1}>0$ such that
\begin{equation}
    \label{eqn:MTg}
    \frac{1}{\gamma}\log\int_{\Omega}e^{-\gamma \varphi}d\mu_{p}\leq C_{1}-E_{\phi}(\varphi)
\end{equation}
for any $\varphi\in\mathcal{T}_{\phi}(\Omega)$.

Fix $\mathcal{I}$ coherent ideal sheaf supported at $p$, and let $\{\varphi_{\mathcal{I},\lambda,\epsilon}\}_{\lambda>0,\epsilon\in(0,1]}\in\mathcal{T}(\Omega)$ as defined in Lemma \ref{lem:Cons}. 
Evaluating (\ref{eqn:MTg}) at $\{\varphi_{\mathcal{I},\lambda,\epsilon}\}_{\epsilon\in(0,1]}$ yields
\begin{eqnarray*}
C_{\gamma,\lambda}+\Big(\lambda-\frac{\mathrm{lct}(X,\mathcal{I})}{\gamma}\Big)\log \epsilon^{-2}
& \leq & \frac{1}{\gamma}\log\int_{\Omega}e^{-\gamma\varphi_{\mathcal{I},\lambda,\epsilon}}d\mu_{p} \\
& \leq & C_{1}-E_{\phi}(\varphi_{\mathcal{I},\lambda,\epsilon}) \\
& \leq & C_{1}+ C_{N,\lambda}+\frac{\lambda^{n+1}}{n+1}F_{N}(\epsilon)\log \epsilon^{-2}
\end{eqnarray*}
for any $N\in\N,\epsilon\in (0,1]$ thanks to Lemmas \ref{lem:lct} and \ref{lem:Mult}. 
We infer
$$
\Big(\lambda-\frac{\mathrm{lct}(X,\mathcal{I})}{\gamma}-\frac{\lambda^{n+1}}{n+1}F_{N}(\epsilon)\Big)\log\epsilon^{-2}\leq C_{1}+C_{N,\lambda}-C_{\gamma,\lambda},
$$
hence
\begin{equation}
    \label{eqn:2}
    \lambda-\frac{\lambda^{n+1}}{n+1}e_{N}\leq \frac{\mathrm{lct}(X,\mathcal{I})}{\gamma}
\end{equation}
for any $N\in\N, \lambda>0$ since $F_{N}(\epsilon)\to e_{N}$ as $\epsilon\searrow 0$ (Lemma \ref{lem:Mult}).

The function $g_{N}: \lambda \in (0,+\infty)\mapsto  \lambda-\frac{\lambda^{n+1}}{n+1}e_{N} \in \R$ 
reaches its maximum at $\lambda_{N,M}:=1/e_{N}^{1/n}$. It follows therefore from (\ref{eqn:2}) that
$$
\gamma\leq \frac{\mathrm{lct}(X,\mathcal{I})}{g_{N}(\lambda_{N,M})}=\frac{n+1}{n}\mathrm{lct}(X,\mathcal{I})e_{N}^{1/n}.
$$
Now $e_{N}\searrow e(X,\mathcal{I})$ as $N\to +\infty$ by Lemma \ref{lem:Mult}, hence
$$
\gamma\leq \frac{n+1}{n}\mathrm{lct}(X,\mathcal{I})e(X,\mathcal{I})^{1/n}.
$$
Since this holds for any coherent ideal sheaf $\mathcal{I}$ supported at $p$ , we obtain
$$
\gamma\leq \frac{n+1}{n}\inf_{\mathcal{I}}\mathrm{lct}(X,\mathcal{I})e(X,\mathcal{I})^{1/n}=\frac{n+1}{n}\widehat{\mathrm{vol}}(X,p)^{1/n}
$$
where the equality follows from Theorem \ref{thm:NormVol}.
\end{proof}

\section{Moser-Trudinger inequality} \label{sec:minoralpha}

\subsection{Uniform integrability vs Moser-Trudinger inequality}

Recall that  
$$
    \alpha(X,\mu_p):=\sup\left\{\alpha>0,  \, 
    \sup_{\varphi\in\mathcal{F}_1(\Omega)}\int_{\Omega}e^{-\alpha \varphi}d\mu_{p}<+\infty\right\}
$$
This uniform integrability index is a local counterpart to Tian's celebrated
$\alpha$-invariant, introduced in \cite{Tian87} in the quest for K\"ahler-Einstein metrics
on Fano manifolds.
We refer to \cite{DK01,Dem09,Zer09,ACKPZ09,DP14,GZh15,Pham18} for some contributions
to the local study of analogous invariants.

\smallskip

In this section we prove Theorem A, which can be seen as a  local analog of \cite[Proposition 4.13]{BBEGZ}.

\begin{thm} \label{thm:MT}
One has 
$
\gamma_{crit}(X,p)\geq \frac{n+1}{n}\alpha(X,\mu_p).
$
\end{thm}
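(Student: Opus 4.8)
The plan is to deduce the Moser--Trudinger inequality $\eqref{eq:MT}$ (for $\gamma<\tfrac{n+1}{n}\alpha(X,\mu_p)$) from the uniform integrability statement defining $\alpha(X,\mu_p)$, via a scaling argument on the energy functional. First I would fix $0<\gamma<\tfrac{n+1}{n}\alpha(X,\mu_p)$ and choose $\alpha$ with $\gamma<\tfrac{n+1}{n}\alpha<\tfrac{n+1}{n}\alpha(X,\mu_p)$, so that $\sup_{\psi\in\mathcal F_1(\Omega)}\int_\Omega e^{-\alpha\psi}\,d\mu_p=:M<+\infty$. By Lemma \ref{lem:coercive} and the invariance reductions of Section \ref{sec:invariance} (enlarging the domain, then rescaling), it suffices to prove $\eqref{eq:MT}$ for $\phi\equiv 0$ and $\f\in\mathcal T_0(\Omega)$, indeed even for $\f\in\mathcal F_1(\Omega)$ by the approximation/monotonicity properties of $E_0$ on the finite energy class (Theorem \ref{thm:Cegrell}).

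Next, the key normalization: given $\f\in\mathcal F_1(\Omega)$ with $\f\le 0$ (subtracting $\sup_\Omega\f$ if needed — though with zero boundary values $\f\le 0$ automatically), I would compare $\f$ with its ``rescaling'' $\frac{\alpha}{\gamma}\cdot\frac{1}{n+1}\f$ or, more precisely, exploit the identity $E_0(t\f)=t^{n+1}E_0(\f)$ valid for $t\in(0,1]$ since $t\f\in\mathcal F_1(\Omega)$. Write $\f=\psi+c$ where I choose the constant $c=c(\f)$ so that $\psi\in\mathcal F_1(\Omega)$ has a normalized energy, say $E_0(\psi)=-1$ (or one works directly with $u=\f/(-E_0(\f))^{1/(n+1)}$, which lies in $\mathcal F_1(\Omega)$ and has $E_0(u)=-1$). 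Then $\int_\Omega e^{-\gamma\f}\,d\mu_p=e^{-\gamma c}\int_\Omega e^{-\gamma\psi}\,d\mu_p$, and since $\psi\in\mathcal F_1(\Omega)$ with $\gamma<\alpha$ one gets $\int_\Omega e^{-\gamma\psi}\,d\mu_p\le\int_\Omega e^{-\alpha\psi}\,d\mu_p+\mu_p(\Omega)\le M+1$ by splitting $\{\psi\ge -1\}$ and $\{\psi<-1\}$. It then remains to track how the constant $c$ relates to $-E_0(\f)$: with $u=\f/(-E_0(\f))^{1/(n+1)}$ one has $\f=(-E_0(\f))^{1/(n+1)}u$, so $\gamma\f=\gamma(-E_0(\f))^{1/(n+1)}u$; the issue is that $(-E_0(\f))^{1/(n+1)}$ is not $(-E_0(\f))$, so a direct substitution is not quite the right move.

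The correct mechanism, which I expect to be the crux, is the following convexity trick (as in \cite[Prop.~4.13]{BBEGZ}): for $\f\in\mathcal F_1(\Omega)$ put $t=\big(\tfrac{n+1}{n}\cdot\tfrac{\alpha}{\gamma}\big)^{-1}\in(0,1)$, hmm — rather, one uses that for any $\f\in\mathcal F_1(\Omega)$ and any $s>0$, $e^{-\gamma\f}=e^{-\gamma\f+s E_0(\f)}e^{-s E_0(\f)}$, and applies a Young-type inequality $ab\le \tfrac{1}{p}a^p+\tfrac{1}{q}b^q$ to interpolate between the integrability bound (controlling $\int e^{-\alpha\f}$) and the energy. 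Concretely, I would write $\gamma\f=(1-\theta)\alpha\f+\theta\cdot(\text{something})$ with $\theta$ chosen so the cross term produces exactly $(1-\theta)^{?}E_0$; the arithmetic forcing the exponent $\tfrac{n+1}{n}$ comes from $E_0(t\f)=t^{n+1}E_0(\f)$ combined with $\int e^{-\alpha t\f}\,d\mu_p\le M'$ for $t\le 1$, optimizing over $t$. Taking logarithms, $\tfrac{1}{\gamma}\log\int e^{-\gamma\f}\,d\mu_p\le \tfrac{1}{\gamma}\big(\log M'-\alpha t\,\inf\f\big)$ is not yet it — so instead one applies the integrability bound to $\psi=\f/(-E_0(\f))^{1/(n+1)}$ to get $\int_\Omega e^{-\alpha\psi}\,d\mu_p\le M'$, i.e. $\int_\Omega e^{-\alpha(-E_0(\f))^{-1/(n+1)}\f}\,d\mu_p\le M'$, then invokes Jensen's inequality with the probability measure $\mu_p$: $\log\int_\Omega e^{-\gamma\f}\,d\mu_p = \log\int_\Omega \big(e^{-\alpha(-E_0(\f))^{-1/(n+1)}\f}\big)^{\gamma(-E_0(\f))^{1/(n+1)}/\alpha}\,d\mu_p$, and if the exponent $\gamma(-E_0(\f))^{1/(n+1)}/\alpha\le 1$ one bounds this by $\tfrac{\gamma(-E_0(\f))^{1/(n+1)}}{\alpha}\log M'$, while if it exceeds $1$ one uses $\|\cdot\|_{L^q}\le\|\cdot\|_{L^\infty}^{1-1/q}\|\cdot\|_{L^1}^{1/q}$-type control; either way, after taking $\tfrac1\gamma\log$ one must check that $\tfrac{1}{\alpha}(-E_0(\f))^{1/(n+1)}\cdot(\text{const})\le \tfrac{n}{n+1}(-E_0(\f))+C$, which holds by Young's inequality $x^{1/(n+1)}\le \tfrac{1}{n+1}x+\tfrac{n}{n+1}$ applied after absorbing constants, precisely producing the threshold $\tfrac{n+1}{n}\alpha$. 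The main obstacle is this last optimization — getting the exponent $\tfrac{n+1}{n}$ sharp and handling the two regimes ($L^q$ with $q\le1$ versus $q>1$) cleanly — together with ensuring the reduction to $\mathcal F_1(\Omega)$ (via density of $\mathcal T_\phi(\Omega)$ and lower semicontinuity of $-E_0$, Theorem \ref{thm:Cegrell}) is rigorous so that the inequality for $\mathcal T_\phi$-functions follows.
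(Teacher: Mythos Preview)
Your approach has a genuine gap, and it stems from conflating two different normalizations. The class $\mathcal F_1(\Omega)$ is defined by the Monge--Amp\`ere mass condition $\int_\Omega (dd^c\psi)^n\le 1$, \emph{not} by an energy condition. When you set $u=\f/(-E_0(\f))^{1/(n+1)}$ you obtain $E_0(u)=-1$, but there is no reason for $u$ to lie in $\mathcal F_1(\Omega)$, so you cannot apply the uniform integrability bound $\int e^{-\alpha u}\,d\mu_p\le M$ to it. Conversely, if you normalize by mass, setting $\psi=\f/m^{1/n}$ with $m=\int(dd^c\f)^n$, then $\psi\in\mathcal F_1(\Omega)$ and $\int e^{-\alpha\psi}\,d\mu_p\le M$; but this only says $\int e^{-(\alpha/m^{1/n})\f}\,d\mu_p\le M$, which gives no control on $\int e^{-\gamma\f}\,d\mu_p$ once $m>(\alpha/\gamma)^n$. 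Your Jensen/H\"older interpolation handles only the regime $m\le(\alpha/\gamma)^n$, and the ``$q>1$'' regime you flag is exactly where the argument breaks down---there the integrability exponent $\alpha/m^{1/n}$ degenerates while $-E_0(\f)=m^{(n+1)/n}(-E_0(\psi))$ need not be large (since $E_0(\psi)$ could be close to $0$). The Young inequality $x^{1/(n+1)}\le\tfrac{1}{n+1}x+\tfrac{n}{n+1}$ you invoke at the end does not repair this, because you never obtain a bound of the shape $\tfrac1\gamma\log\int e^{-\gamma\f}\le C\,(-E_0(\f))^{1/(n+1)}$ in that regime to which it could be applied.

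The paper's proof is entirely different and avoids rescaling $\f$. It uses the Legendre duality (Lemma~\ref{lem:2.11}) to write $\log\int e^{-\frac{n+1}{n}\alpha\f}\,d\mu_p$ as a supremum over probability measures $\nu$, picks a near-maximizer $\nu_\epsilon$ with finite entropy, and then invokes the key structural fact (Proposition~\ref{prop:Entropy}) that any such $\nu_\epsilon$ equals $(dd^c v_\epsilon)^n$ for some $v_\epsilon\in\mathcal F_1(\Omega)$. The $\alpha$-invariant is then applied to $v_\epsilon$---not to $\f$---via Corollary~\ref{cor:Entropy} to bound the entropy from below. Finally the energy comparison (Lemma~\ref{lem:Ener}) converts $\int(v_\epsilon-\f)(dd^c v_\epsilon)^n$ into $E_\phi(v_\epsilon)-E_\phi(\f)$, and the residual term $(n+1)E_\phi(v_\epsilon)-\int v_\epsilon(dd^c v_\epsilon)^n$ is bounded using $v_\epsilon\le\phi_0$ and $\nu_\epsilon(\Omega)=1$. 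The factor $\tfrac{n+1}{n}$ emerges from the algebraic identity $-\tfrac{n+1}{n}\alpha\int\f\,d\nu_\epsilon+\alpha\int v_\epsilon\,d\nu_\epsilon=\tfrac{n+1}{n}\alpha\int(v_\epsilon-\f)\,d\nu_\epsilon-\tfrac{\alpha}{n}\int v_\epsilon\,d\nu_\epsilon$, not from a scaling optimization. Your citation of \cite[Prop.~4.13]{BBEGZ} is apt, but that proposition is proved by exactly this entropy argument, not by the scaling mechanism you sketch.
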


When $(X,p)$ is smooth then $\alpha(X,\mu_p)=n$ and this statement is equivalent
(after an appropriate rescaling) to
\cite[Theorem 1.5]{BB22}, \cite[Theorem 9]{GKY13}.

Together with Theorem \ref{thm:UB}, we would obtain the precise value 
$$
\gamma_{crit}(X,p) \stackrel{?}{=} \frac{n+1}{n} \widehat{\mathrm{vol}}(X,p)^{1/n}
$$
if we knew that $\alpha(X,\mu_p)=\widehat{\mathrm{vol}}(X,p)^{1/n}$.
We establish in Section \ref{sec:ubalpha} the bound
$\alpha(X,\mu_p) \leq \widehat{\mathrm{vol}}(X,p)^{1/n}$
and analyze the reverse inequality in Section \ref{sec:lbalpha}.

\subsubsection{Entropy}

We let $\mathcal{P}(\Omega)$ denote the set of probability measures on $\Omega$.
Given two   measures $\mu,\nu\in\mathcal{P}(\Omega)$, 
the \emph{relative entropy of} $\nu$ \emph{with respect to} $\mu$ is  
$$
H_{\mu}(\nu):=\int_{\Omega}\frac{d\nu}{d\mu}\log \frac{d\nu}{d\mu} d\mu=\int_{\Omega}\log \frac{d\nu}{d\mu} d\nu
$$
if $\nu$ is absolutely continuous with respect to $\mu$, and as $H_{\mu}(\nu):=+\infty$ otherwise.

Given  $\mu\in\mathcal{P}(X)$, the relative entropy $H_{\mu}(\cdot)$ is the Legendre transform of the convex 
functional $g \in \mathcal{C}^{0}(\Omega)\cap L^{\infty}(\Omega) \mapsto \log \int_{\Omega}e^{g}d\mu \in \R$, i.e.
$$
H_{\mu}(\nu)=\sup_{g\in\mathcal{C}^{0}(\Omega)\cap L^{\infty}(\Omega)}\Big(\int_{\Omega}g\,d\nu-\log\int_{\Omega}e^{g}d\mu\Big).
$$
We shall need the following duality result.
 
\begin{lem}\cite[Lemma 2.11]{BBEGZ}
\label{lem:2.11}
Fix $\mu\in\mathcal{P}(\Omega)$. Then   
$$
\log\int_{\Omega}e^{g}d\mu=\sup_{\nu\in\mathcal{P}(\Omega)}\Big(\int_{\Omega}g\,d\nu-H_{\mu}(\nu)\Big)
$$
for each lower semicontinuous function $g:\Omega\to \R\cup \{+\infty\}$.
\end{lem}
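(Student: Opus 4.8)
\textbf{Proof plan for Lemma \ref{lem:2.11}.}
The statement is the dual form of the variational characterization of relative entropy: we want to show that for lower semicontinuous $g:\Omega\to\R\cup\{+\infty\}$,
$$
\log\int_{\Omega}e^{g}\,d\mu=\sup_{\nu\in\mathcal{P}(\Omega)}\Big(\int_{\Omega}g\,d\nu-H_{\mu}(\nu)\Big).
$$
The plan is to prove the two inequalities separately. For the inequality ``$\geq$'', fix any $\nu\in\mathcal{P}(\Omega)$. If $H_\mu(\nu)=+\infty$ the bound is trivial, so assume $\nu=\rho\,\mu$ with $\rho\log\rho\in L^1(\mu)$. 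I would first treat the case where $g$ is bounded and continuous: then the defining Legendre identity for $H_\mu$ (stated just above the lemma) gives $\int g\,d\nu-\log\int e^g\,d\mu\le H_\mu(\nu)$ immediately, which rearranges to the claim. For a general lower semicontinuous $g$, write $g=\sup_k g_k$ with $g_k:=\max(\min(g,k),-k)$ an increasing sequence of bounded continuous (or at least bounded Borel, which suffices) truncations; apply the bounded case to each $g_k$ and let $k\to\infty$, using monotone convergence on both $\int g_k\,d\nu\uparrow\int g\,d\nu$ and $\int e^{g_k}\,d\mu\uparrow\int e^{g}\,d\mu$ (the latter by monotone convergence since $e^{g_k}\uparrow e^g$). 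This yields $\int g\,d\nu-H_\mu(\nu)\le\log\int e^g\,d\mu$, and taking the supremum over $\nu$ gives ``$\geq$''.

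For the reverse inequality ``$\leq$'', the natural candidate is the Gibbs measure. First suppose $g$ is bounded; set $Z:=\int_\Omega e^g\,d\mu\in(0,\infty)$ and $\nu_g:=Z^{-1}e^{g}\mu$. Then $\nu_g\in\mathcal{P}(\Omega)$ and a direct computation gives $\dfrac{d\nu_g}{d\mu}=Z^{-1}e^g$, so
$$
H_\mu(\nu_g)=\int_\Omega\big(g-\log Z\big)\,d\nu_g=\int_\Omega g\,d\nu_g-\log Z,
$$
i.e. $\int_\Omega g\,d\nu_g-H_\mu(\nu_g)=\log Z=\log\int_\Omega e^g\,d\mu$, which shows the supremum is attained and equals $\log\int e^g\,d\mu$. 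For general lower semicontinuous $g$, if $\int_\Omega e^g\,d\mu=+\infty$ I must show the right-hand side is also $+\infty$: apply the bounded case to the truncations $g_k=\min(g,k)$ (lower semicontinuous, bounded above by $k$; note these are bounded since $g$ is bounded below on the compact space $\Omega$ by lower semicontinuity... actually $g$ need only be bounded below because $\Omega$ is compact and $g$ is l.s.c., so each $g_k$ is bounded), obtaining $\nu_{g_k}\in\mathcal P(\Omega)$ with $\int g_k\,d\nu_{g_k}-H_\mu(\nu_{g_k})=\log\int e^{g_k}\,d\mu\to+\infty$ by monotone convergence, so the supremum is $+\infty$ as required. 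If $\int_\Omega e^g\,d\mu<+\infty$, the same truncation sequence $\nu_{g_k}$ gives a supremum $\ge\log\int e^{g_k}\,d\mu\to\log\int e^g\,d\mu$, hence ``$\leq$''.

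The main subtlety — and the point I would be most careful about — is the lower-semicontinuity hypothesis and the fact that $g$ may take the value $+\infty$: one must be sure that the truncations $g_k$ are genuinely admissible (bounded Borel functions, which suffices for the Legendre identity of $H_\mu$ even if one only has it stated for continuous $g$, so a brief density remark may be needed), that $e^g$ is a well-defined nonnegative Borel function with $\int e^g\,d\mu$ meaningful in $[0,+\infty]$, and that the monotone convergence passages are applied to genuinely monotone sequences. A clean way to avoid fussing over continuous-versus-Borel is to note that $\Omega$ is compact and $g$ l.s.c., hence $g$ is an increasing pointwise limit of continuous functions, so $g_k$ can be taken continuous outright. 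Since the lemma is quoted verbatim from \cite[Lemma 2.11]{BBEGZ}, I would in practice keep this proof to a few lines referencing that source, but the argument above is the self-contained version.
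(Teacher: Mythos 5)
The paper offers no argument for this lemma at all: it is quoted verbatim from \cite[Lemma 2.11]{BBEGZ} and used as a black box, so there is nothing internal to compare against. Your proposal supplies the standard self-contained proof (the Gibbs variational principle): the inequality $\geq$ from the Legendre-transform description of $H_\mu$ stated just above the lemma (equivalently, from Jensen's inequality applied to $\log$ under $\nu=\rho\,\mu$, which would make this direction independent of that unproved assertion), and the inequality $\leq$ by testing with the Gibbs measure $\nu_g=e^g\mu/\int e^g\,d\mu$ after truncation — this is essentially the argument in BBEGZ itself, so the route is sound and the supremum is even seen to be attained in the bounded case. Two small points need tightening. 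First, $\Omega$ is a bounded strongly pseudoconvex \emph{domain}, not a compact space, so lower semicontinuity alone does not make $g$ bounded below; either note that in every application $g=-\alpha\varphi$ with $\varphi$ psh and bounded above (hence $g$ bounded below), or keep the two-sided truncation and drop the compactness remark. Second, with the two-sided truncation $g_k=\max(\min(g,k),-k)$ the sequence is not monotone where $g<-k$, so the claim ``$e^{g_k}\uparrow e^g$, monotone convergence'' is not literally correct; either truncate only from above once $g$ is known to be bounded below (then monotone convergence is fine), or use the domination $e^{g_k}\le\max(e^g,1)\in L^1(\mu)$ together with Fatou when $\int e^g\,d\mu=+\infty$. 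With these cosmetic repairs the proof is complete.
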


Recall that we have normalized the adapted volume form so that $\mu_{p}\in\mathcal{P}(\Omega)$.

\begin{coro}
\label{cor:Entropy}
Fix $0<\alpha<\alpha(X,\mu_p)$. Then there exists 
$C_{\alpha}>0$ such that
$$
H_{\mu_{p}}(\nu)\geq -\alpha\int_{\Omega}\varphi\, d\nu -C_{\alpha}
$$
for all $\varphi\in\mathcal{F}_1(\Omega)$ and for all $\nu\in\mathcal{P}(\Omega)$ 
such that $H_{\mu_{p}}(\nu)<+\infty$.
\end{coro}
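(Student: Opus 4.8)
The plan is to derive Corollary \ref{cor:Entropy} directly from the definition of $\alpha(X,\mu_p)$ together with the duality formula in Lemma \ref{lem:2.11}. First I would fix $0<\alpha<\alpha(X,\mu_p)$, so that by definition of the $\alpha$-invariant there is a constant $M_\alpha>0$ with
$$
\int_\Omega e^{-\alpha\varphi}\,d\mu_p\leq M_\alpha
\qquad\text{for all }\varphi\in\mathcal{F}_1(\Omega).
$$
Taking logarithms, $\log\int_\Omega e^{-\alpha\varphi}\,d\mu_p\leq\log M_\alpha=:C_\alpha$ uniformly in $\varphi\in\mathcal{F}_1(\Omega)$.

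Next I would apply Lemma \ref{lem:2.11} with $g=-\alpha\varphi$. Since $\varphi\in\mathcal{F}_1(\Omega)\subset PSH(\Omega)$ is upper semicontinuous and locally bounded above (indeed functions in $\mathcal{F}_1(\Omega)$ are bounded above on $\overline\Omega$, having fixed boundary values), the function $g=-\alpha\varphi$ is lower semicontinuous with values in $\R\cup\{+\infty\}$, so the lemma applies. It gives
$$
\log\int_\Omega e^{-\alpha\varphi}\,d\mu_p
=\sup_{\nu\in\mathcal{P}(\Omega)}\Big(-\alpha\int_\Omega\varphi\,d\nu-H_{\mu_p}(\nu)\Big).
$$
In particular, for any single $\nu\in\mathcal{P}(\Omega)$ with $H_{\mu_p}(\nu)<+\infty$ we have
$$
-\alpha\int_\Omega\varphi\,d\nu-H_{\mu_p}(\nu)\leq\log\int_\Omega e^{-\alpha\varphi}\,d\mu_p\leq C_\alpha,
$$
which rearranges to $H_{\mu_p}(\nu)\geq-\alpha\int_\Omega\varphi\,d\nu-C_\alpha$, as desired.

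The only point requiring a little care is that $\int_\Omega\varphi\,d\nu$ is well-defined (not $-\infty$ in a way that trivializes or breaks the rearrangement): this follows because $\nu$ has finite entropy with respect to $\mu_p$, hence is absolutely continuous with respect to $\mu_p$, and functions in $\mathcal{F}_1(\Omega)$ lie in $L^1(\mu_p)$ — in fact $\mathcal{E}^1(\Omega)\subset L^1(\mu_p)$ whenever $\alpha(X,\mu_p)>0$, since $e^{-\alpha\varphi}\in L^1(\mu_p)$ forces $\varphi^-\in L^1(\mu_p)$, and $\varphi$ is bounded above. One should also note that the supremum over $\nu$ on the right-hand side of Lemma \ref{lem:2.11} and the supremum over $\varphi\in\mathcal{F}_1(\Omega)$ of the left-hand side interact correctly: we only need the one-sided bound for each fixed pair $(\varphi,\nu)$, which is immediate. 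I do not expect any genuine obstacle here; the statement is essentially the Legendre-duality reformulation of the uniform integrability bound defining $\alpha(X,\mu_p)$, and the proof is a two-line application of Lemma \ref{lem:2.11} once the finiteness of the relevant integrals is observed.
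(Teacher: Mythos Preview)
Your proposal is correct and follows essentially the same approach as the paper: apply Lemma~\ref{lem:2.11} with $g=-\alpha\varphi$ and $\mu=\mu_p$, then use the uniform bound coming from the definition of $\alpha(X,\mu_p)$. The paper's proof is two lines and omits the extra discussion you include; note in particular that your final paragraph about $\int_\Omega\varphi\,d\nu$ being finite is not needed to derive the inequality (indeed, finiteness of this integral is a \emph{consequence} of the inequality once $H_{\mu_p}(\nu)<+\infty$, rather than a prerequisite for it).
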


\begin{proof}
This follows from Lemma \ref{lem:2.11} 
applied to $g=-\alpha \f$ and $\mu=\mu_p$.
By definition of $\alpha(X,\mu_p)$ we obtain
$-\log\int_{\Omega}e^{-\alpha \f}d\mu_p \geq -C_{\alpha}$.
\end{proof}

 This corollary shows in particular that $\mathcal{F}_{1}(\Omega)\subset L^{1}(\nu)$ 
 for any probability measure $\nu\in\mathcal{P}(X)$ with finite $\mu_p$-entropy. 
 Since the measure $\nu$ is moreover {\it non pluripolar}, the following result
 is a consequence of Theorem \ref{thm:Cegrell}.

\begin{prop}
\label{prop:Entropy}
Fix $\nu\in\mathcal{P}(\Omega)$ such that $H_{\mu_{p}}(\nu)<+\infty$. Then there exists 
a unique $v\in  \mathcal{F}_{1}(\Omega)$ such that
$$
\nu=(dd^{c}v)^{n}.
$$
\end{prop}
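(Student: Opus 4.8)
The plan is to obtain the statement as a direct application of the last item of Theorem~\ref{thm:Cegrell} (Cegrell) to the probability measure $\mu=\nu$: that result produces a \emph{unique} $v\in\mathcal{E}^{1}(\Omega)\cap\mathcal{F}_{1}(\Omega)$ with $(dd^{c}v)^{n}=\nu$ as soon as $\nu$ is a non-pluripolar probability measure with $\mathcal{E}^{1}(\Omega)\subset L^{1}(\nu)$, and then in particular $v\in\mathcal{F}_{1}(\Omega)$. So the two things to check are non-pluripolarity of $\nu$ and the integrability $\mathcal{E}^{1}(\Omega)\subset L^{1}(\nu)$; uniqueness within the a priori larger class $\mathcal{F}_{1}(\Omega)$ then needs a short extra remark.

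For non-pluripolarity: $H_{\mu_{p}}(\nu)<+\infty$ forces $\nu$ to be absolutely continuous with respect to $\mu_{p}$, and $\mu_{p}$ charges no pluripolar set (pluripolar subsets of $\Omega$ meet $X_{\reg}$ in a Lebesgue-null set and $\mu_{p}(\{p\})=0$, while $\mu_{p}$ is comparable to the volume form $dV_{X}$ away from $p$), hence $\nu$ charges none either. For $\mathcal{E}^{1}(\Omega)\subset L^{1}(\nu)$: by Corollary~\ref{cor:Entropy}, applied through Lemma~\ref{lem:2.11} to $g=-\alpha\varphi$ with $0<\alpha<\alpha(X,\mu_{p})$, one has $-\alpha\int_{\Omega}\varphi\,d\nu\le H_{\mu_{p}}(\nu)+C_{\alpha}$ for every $\varphi\in\mathcal{F}_{1}(\Omega)$, so $\mathcal{F}_{1}(\Omega)\subset L^{1}(\nu)$. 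To pass from $\mathcal{F}_{1}(\Omega)$ to all of $\mathcal{E}^{1}(\Omega)$ I would use that any $u\in\mathcal{E}^{1}(\Omega)$ is bounded above on $\overline{\Omega}$ and, by Cegrell's theory, has finite total Monge-Amp\`ere mass $M$; up to adding a constant and rescaling by $M^{1/n}$ (using the homogeneity of the complex Monge-Amp\`ere operator) $u$ is then controlled by a member of $\mathcal{F}_{1}(\Omega)$, so that $e^{-\alpha' u}\in L^{1}(\mu_{p})$ for some $\alpha'>0$, and Lemma~\ref{lem:2.11} again yields $\int_{\Omega}u\,d\nu\ge-\tfrac{1}{\alpha'}\bigl(H_{\mu_{p}}(\nu)+\log\int_{\Omega}e^{-\alpha' u}d\mu_{p}\bigr)>-\infty$.

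Granting these two points, Theorem~\ref{thm:Cegrell} gives the required $v\in\mathcal{F}_{1}(\Omega)$ with $(dd^{c}v)^{n}=\nu$, unique inside $\mathcal{E}^{1}(\Omega)\cap\mathcal{F}_{1}(\Omega)$. To upgrade uniqueness to the whole of $\mathcal{F}_{1}(\Omega)$, I would check that any $v'\in\mathcal{F}_{1}(\Omega)$ solving $(dd^{c}v')^{n}=\nu$ lies in $\mathcal{E}^{1}(\Omega)$: writing $v'=\lim_{j}v'_{j}$ with $v'_{j}\in\mathcal{T}_{\phi}(\Omega)$ decreasing and of uniformly bounded mass, Lemma~\ref{lem:Ener} gives $\int_{\Omega}(v'_{j}-\phi_{0})(dd^{c}v'_{j})^{n}\le E_{\phi}(v'_{j})-E_{\phi}(\phi_{0})$, and letting $j\to\infty$ along the monotone sequence (so $(dd^{c}v'_{j})^{n}\to\nu$) together with $v'\in L^{1}(\nu)$, already established, shows $E_{\phi}(v')>-\infty$; then $v'=v$ by the uniqueness in Theorem~\ref{thm:Cegrell}.

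I expect the main obstacle to be precisely the implication $\mathcal{F}_{1}(\Omega)\subset L^{1}(\nu)\Rightarrow\mathcal{E}^{1}(\Omega)\subset L^{1}(\nu)$: morally it is just normalization plus the scaling $u\mapsto u/M^{1/n}$, but making the comparison with $\mathcal{F}_{1}(\Omega)$ clean on the mildly singular space $\Omega$ — and handling the boundary data $\phi$ — requires some care, as does the monotone-limit passage in the uniqueness step. Everything else — the entropy duality, the non-pluripolarity, and the solvability itself — is off-the-shelf once Corollary~\ref{cor:Entropy} and Theorem~\ref{thm:Cegrell} are in place.
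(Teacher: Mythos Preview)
Your approach is essentially the paper's own: it simply states (in the paragraph preceding the proposition) that Corollary~\ref{cor:Entropy} gives $\mathcal{F}_{1}(\Omega)\subset L^{1}(\nu)$, that $\nu$ is non-pluripolar, and that the result is then ``a consequence of Theorem~\ref{thm:Cegrell}''. You are in fact more careful than the paper, which glosses over the passage from $\mathcal{F}_{1}\subset L^{1}(\nu)$ to the hypothesis $\mathcal{E}^{1}\subset L^{1}(\nu)$ actually stated in Theorem~\ref{thm:Cegrell}, and over the uniqueness in all of $\mathcal{F}_{1}(\Omega)$ rather than just $\mathcal{E}^{1}\cap\mathcal{F}_{1}$.
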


\subsubsection{Proof of Theorem \ref{thm:MT}}

Fix $\varphi\in \mathcal{T}_{\phi}(\Omega)$ and  $0<\alpha<\alpha(\Omega,\phi)$. 
By Lemma \ref{lem:2.11} for any $\epsilon>0$ there exists $\nu_{\epsilon}\in\mathcal{P}(\Omega)$ such that $H_{\mu_{p}}(\nu_{\epsilon})<+\infty$ and  
\begin{equation}
    \label{eqn:4}
    \log\int_{\Omega}e^{-\frac{n+1}{n}\alpha \varphi}d\mu_{p}\leq \epsilon -\frac{n+1}{n}\alpha\int_{\Omega}\varphi\,d\nu_{\epsilon}-H_{\mu_{p}}(\nu_{\epsilon}).
\end{equation}
  Proposition \ref{prop:Entropy} ensures the existence of
   $v_{\epsilon}\in \mathcal{F}_{1}(\Omega)$
    such that $\nu_{\epsilon}=(dd^{c}v_{\epsilon})^{n}$.
 It follows moreover from  Corollary \ref{cor:Entropy} that
\begin{equation}
    \label{eqn:5}
    H_{\mu_{p}}(\nu_{\epsilon})\geq -\alpha\int_{\Omega}v_{\epsilon}\, d\nu_{\epsilon}-C_{\alpha}.
\end{equation}
 Combining (\ref{eqn:4}) and (\ref{eqn:5}) we obtain
\begin{equation}
    \label{eqn:6}
    \log\int_{\Omega}e^{-\frac{n+1}{n}\alpha \varphi}d\mu_{p}
    \leq \epsilon+C_{\alpha}-\frac{n+1}{n}\alpha\int_{\Omega}\varphi\,d\nu_{\epsilon}
    +\alpha\int_{\Omega}v_{\epsilon}\, d\nu_{\epsilon}.
\end{equation}

We observe that
{\small
\begin{eqnarray*}
-\frac{n+1}{n}\alpha\int_{\Omega}\varphi\,d\nu_{\epsilon}+\alpha\int_{\Omega}v_{\epsilon}\,d\nu_{\epsilon}
&=& \frac{n+1}{n}\alpha \int_{\Omega}(v_{\epsilon}-\varphi) (dd^c v_{\epsilon})^n
-\frac{\alpha}{n} \int_{\Omega} v_{\epsilon} (dd^c v_{\epsilon})^n  \\
& \leq & -\frac{n+1}{n}\alpha E_{\phi}(\f)+
\frac{\alpha}{n} \left\{ (n+1) E_{\phi}(v_{\e})
-  \int_{\Omega} v_{\epsilon} (dd^c v_{\epsilon})^n \right\}.
\end{eqnarray*}
}
by using Lemma \ref{lem:Ener}
(the latter has been stated for functions in ${\mathcal T}_{\phi}(\Omega)$, it easily extends
to the class ${\mathcal F}_{1}(\Omega)$ by approximation).
Since $v_{\e} \leq \phi_0$ and $E_{\phi}(\phi_0)=0$, the same Lemma ensures  
$$
(n+1) E_{\phi}(v_{\e})-  \int_{\Omega} v_{\epsilon} (dd^c v_{\epsilon})^n
\leq -\int_{\Omega} \phi_0 (dd^c v_{\e})^n \leq -\inf_{\Omega} \phi_0,
$$
using that $\nu_{\epsilon}=(dd^c v_{\epsilon})^n$ is a probability measure.
Altogether this yields
$$
 \log \int_{\Omega}e^{-\frac{n+1}{n}\alpha\varphi}d\mu_{p}  
 \leq  \epsilon+C_{\alpha} -\frac{\alpha}{n} \inf_{\Omega} \phi_0
 -\frac{n+1}{n}\alpha E_{\phi}(\varphi).
$$
  Letting $\epsilon\searrow 0$ we conclude that
$$
\Big(\int_{\Omega}e^{-\frac{n+1}{n}\alpha\varphi}d\mu_{p}\Big)^{\frac{n}{(n+1)\alpha} }
\leq C_{\alpha}' e^{-E_{\phi}(\varphi)}
$$
for any function $\varphi\in\mathcal{T}_{\phi}(\Omega)$. Thus
$
\gamma_{crit}(X,p) \geq \frac{n+1}{n}\alpha(X,\mu_p).
$

 \subsection{Upper-bound on the $\alpha$-invariant} \label{sec:ubalpha}

\begin{defi}
We set 
$$
 \tilde{\alpha}(X,\mu_p):=\inf\left\{c(\varphi), \, 
    \varphi \in \mathcal{F}_1(\Omega)   \right\},
$$
where 
$c(\varphi):=\sup\Big\{c>0 ; \, \int_{\Omega}e^{-c\varphi}d\mu_{p}<+\infty\Big\}$.
\end{defi}

\subsubsection{Bounding the $\alpha$-invariant by the normalized volume}

\begin{prop} \label{pro:alphavsvol1}
One has $\alpha(X,\mu_p) \leq \tilde{\alpha}(X,\mu_p) \leq \widehat{\mathrm{vol}}(X,p)^{1/n}$.
\end{prop}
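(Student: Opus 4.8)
The statement consists of two inequalities. The first, $\alpha(X,\mu_p)\le\tilde\alpha(X,\mu_p)$, is immediate from the definitions: if $0<\alpha<\alpha(X,\mu_p)$ then $\sup_{\varphi\in\mathcal{F}_1(\Omega)}\int_\Omega e^{-\alpha\varphi}d\mu_p<+\infty$, so $\int_\Omega e^{-\alpha\varphi}d\mu_p<+\infty$ for every $\varphi\in\mathcal{F}_1(\Omega)$, whence $c(\varphi)\ge\alpha$ for all such $\varphi$ and therefore $\tilde\alpha(X,\mu_p)\ge\alpha$; letting $\alpha\uparrow\alpha(X,\mu_p)$ gives the claim.

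For $\tilde\alpha(X,\mu_p)\le\widehat{\mathrm{vol}}(X,p)^{1/n}$ the plan is to produce, for each coherent ideal sheaf $\mathcal{I}$ supported at $p$, functions in $\mathcal{F}_1(\Omega)$ whose integrability exponent $c(\cdot)$ is arbitrarily close to $\mathrm{lct}(X,\mathcal{I})\,\mathrm{e}(X,\mathcal{I})^{1/n}$; since Theorem~\ref{thm:NormVol} gives $\widehat{\mathrm{vol}}(X,p)^{1/n}=\inf_{\mathcal{I}}\big(\mathrm{lct}(X,\mathcal{I})^{n}\mathrm{e}(X,\mathcal{I})\big)^{1/n}=\inf_{\mathcal{I}}\mathrm{lct}(X,\mathcal{I})\,\mathrm{e}(X,\mathcal{I})^{1/n}$, this suffices. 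The test functions are rescaled pluricomplex Green functions of $\mathcal{I}$. In the notation of Lemma~\ref{lem:Cons}, fix $\lambda>0$, an open set $\Omega'\Subset\Omega$ containing $p$ on which $\varphi_\mathcal{I}=\log\sum_i|f_i|^2$ is defined and $\le 0$, and set
$$
g_{\mathcal{I},\lambda}:=\sup\big\{v\in PSH(\Omega)\ :\ v_{|\partial\Omega}\le\phi\ \text{ and }\ v\le\lambda\varphi_\mathcal{I}+\phi_0\ \text{ on }\overline{\Omega'}\big\}.
$$
This family is non-empty — it contains the functions $\varphi_{\mathcal{I},\lambda,\epsilon}$ of Lemma~\ref{lem:Cons} for $\epsilon\ge0$ — so $g_{\mathcal{I},\lambda}$ is psh, is squeezed between $\varphi_{\mathcal{I},\lambda,0}$ and $\phi_0$ (hence continuous up to $\partial\Omega$ with boundary values $\phi$), and coincides with $\lambda\varphi_\mathcal{I}+\phi_0$ on $\overline{\Omega'}$ by the two-sided bound $\varphi_{\mathcal{I},\lambda,0}\le g_{\mathcal{I},\lambda}\le\lambda\varphi_\mathcal{I}+\phi_0$ there. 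Realizing $g_{\mathcal{I},\lambda}$ as the decreasing limit, as $\epsilon\searrow0$, of the bounded envelopes attached to $\varphi_{\mathcal{I},\lambda,\epsilon}$ — which belong to $\mathcal{T}_\phi(\Omega)$ and have uniformly bounded Monge–Amp\`ere mass by the estimate below — shows $g_{\mathcal{I},\lambda}\in\mathcal{F}(\Omega)$.

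Two quantities are then computed. First, since $g_{\mathcal{I},\lambda}=\lambda\varphi_\mathcal{I}+\phi_0$ near $p$ with $\phi_0$ bounded, Proposition~\ref{prop:lct} gives $c(g_{\mathcal{I},\lambda})=\mathrm{lct}(X,\mathcal{I})/\lambda$. Second, by a balayage argument (as in Lemma~\ref{lem:1}) $(dd^cg_{\mathcal{I},\lambda})^n$ is supported in $\overline{\Omega'}$ and dominated there by $\big(dd^c(\lambda\varphi_\mathcal{I}+\phi_0)\big)^n$, so
$$
\int_\Omega(dd^cg_{\mathcal{I},\lambda})^n\le\sum_{j=0}^{n}\binom{n}{j}\lambda^{j}\int_{\overline{\Omega'}}(dd^c\varphi_\mathcal{I})^{j}\wedge(dd^c\phi_0)^{n-j}.
$$
As $\overline{\Omega'}\searrow\{p\}$ the term $j=n$ decreases to $\int_{\{p\}}(dd^c\varphi_\mathcal{I})^n=\mathrm{e}(X,\mathcal{I})$ (Proposition~\ref{prop:lct}), while the terms $0\le j\le n-1$ tend to $0$ because the mixed Monge–Amp\`ere currents of the isolated singularity $\varphi_\mathcal{I}$ carry no mass at $p$ in those degrees — this is exactly the Chern–Levine–Nirenberg computation already carried out in Lemma~\ref{lem:Mult}. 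Hence, given $\eta>0$, choosing $\lambda:=(\mathrm{e}(X,\mathcal{I})+\eta)^{-1/n}$ (which may be taken $\le1$, as $\mathrm{e}(X,\mathcal{I})\ge1$) and then $\Omega'$ small enough, we obtain $\int_\Omega(dd^cg_{\mathcal{I},\lambda})^n\le1$, so $g_{\mathcal{I},\lambda}\in\mathcal{F}_1(\Omega)$, while $c(g_{\mathcal{I},\lambda})=\mathrm{lct}(X,\mathcal{I})\,(\mathrm{e}(X,\mathcal{I})+\eta)^{1/n}$. Thus $\tilde\alpha(X,\mu_p)\le\mathrm{lct}(X,\mathcal{I})\,(\mathrm{e}(X,\mathcal{I})+\eta)^{1/n}$; letting $\eta\downarrow0$ and then taking the infimum over $\mathcal{I}$ gives $\tilde\alpha(X,\mu_p)\le\widehat{\mathrm{vol}}(X,p)^{1/n}$.

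The main obstacle is the analysis of the envelope $g_{\mathcal{I},\lambda}$ on the mildly singular space $X$: one must check that it is continuous up to $\partial\Omega$ with the prescribed boundary data, that it genuinely lies in the Cegrell class $\mathcal{F}(\Omega)$, and — most delicately — that its Monge–Amp\`ere mass is controlled as stated, i.e. that $(dd^cg_{\mathcal{I},\lambda})^n$ is supported in $\overline{\Omega'}$ with no spurious surface contribution on $\partial\Omega'$ and is there dominated by $\big(dd^c(\lambda\varphi_\mathcal{I}+\phi_0)\big)^n$. This rests on a balayage/comparison argument performed in small coordinate charts avoiding the singular point $p$, in the spirit of the proof of Lemma~\ref{lem:1}, together with the identity $\int_{\{p\}}(dd^c\varphi_\mathcal{I})^n=\mathrm{e}(X,\mathcal{I})$ and the Chern–Levine–Nirenberg estimates of Lemma~\ref{lem:Mult}; Demailly's comparison theorem can be invoked to pin down the mass of $g_{\mathcal{I},\lambda}$ at $p$ exactly, which yields a streamlined variant of the argument once one reduces to $\phi\equiv0$.
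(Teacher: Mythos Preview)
Your approach is correct and essentially the same as the paper's: both construct the pluricomplex Green-type envelope with prescribed singularity $\lambda\varphi_{\mathcal{I}}$ near $p$, compute its integrability exponent via Proposition~\ref{prop:lct}, and bound its Monge--Amp\`ere mass so as to place it in $\mathcal{F}_1(\Omega)$ for the right $\lambda$. The paper packages the construction as Lemma~\ref{lem:Cons2} and streamlines the mass estimate by taking $\phi_0$ maximal (so $(dd^c\phi_0)^n=0$, killing your mixed terms) and letting the inner domain shrink to $\{p\}$ together with Demailly's comparison theorem---exactly the fix you anticipate for the ``spurious surface contribution'' on $\partial\Omega'$ in your final paragraph.
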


\begin{proof}
It follows from the definition that 
$\alpha(X,\mu_p) \leq \tilde{\alpha}(X,\mu_p)$.

For any $\epsilon>0$ and $\mathcal{I}$ coherent ideal sheaf supported at $0$, the function
$$
\psi_{\mathcal{I},\epsilon}:=\psi_{\mathcal{I},\lambda,\epsilon},
\; \text{ with } \; 
\lambda=\left(\frac{1-\epsilon}{e(X,\mathcal{I})}\right)^{1/n}
$$
given by Lemma \ref{lem:Cons2} below, belongs to $\mathcal{F}_{1}(\Omega)$ and yields
$$
\tilde{\alpha}(X,\mu_p)\leq c(\psi_{\mathcal{I},\epsilon})
=\frac{1}{(1-\epsilon)^{1/n}}\mathrm{lct}(X,\mathcal{I})e(X,\mathcal{I})^{1/n}.
$$
The latter equality is a consequence of Proposition \ref{prop:lct}.
We conclude the proof by taking the infimum over all  ${\mathcal I}$'s and letting $\epsilon\searrow 0$.
\end{proof}

\begin{lem}
\label{lem:Cons2}
Let $\mathcal{I}$ be a coherent ideal sheaf supported at $p$. Then, for any $\lambda, \epsilon>0$ there exists a function $\psi_{\mathcal{I},\lambda,\epsilon}\in \mathcal{F}(\Omega)$ 
s.t.
\begin{itemize}
    \item[i)] $\psi_{\mathcal{I},\lambda,\epsilon}=\lambda\log\Big(\sum_{j=1}^{m}\lvert f_{j} \rvert^{2}\Big)$ near $0$ for local generators $f_{1},\dots,f_{m}$ of $\mathcal{I}$;
    \item[ii)] $\lambda^{n}e(X,\mathcal{I}) \leq 
    \int_{\Omega}\big(dd^{c}\psi_{\mathcal{I},\lambda,\epsilon}\big)^{n}
    \leq \lambda^{n}e(X,\mathcal{I})+\epsilon$.
\end{itemize}
\end{lem}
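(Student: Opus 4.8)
The plan is to realize $\psi_{\mathcal I,\lambda,\epsilon}$ as an envelope of plurisubharmonic functions lying below the logarithmic obstacle $\lambda\varphi_{\mathcal I}$ on a small ball around $p$, and to pin its Monge-Amp\`ere mass down by localizing it to that ball; the argument closely parallels the envelope construction in Lemma \ref{lem:1}.

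Fix local generators $f_1,\dots,f_m$ of $\mathcal I$; after replacing them by small constant multiples (which changes neither $\mathcal I$ nor $e(X,\mathcal I)$, and changes $\varphi_{\mathcal I}:=\log(\sum_j|f_j|^2)$ only by an additive constant, exactly as in Lemma \ref{lem:Cons}) I may assume the $f_j$ are holomorphic on a neighborhood of $\overline\Omega$ and $\lambda\varphi_{\mathcal I}\le\inf_{\partial\Omega}\phi$ there. For $r>0$ small set $B_r:=B_r(p)\cap X\Subset\Omega$ and define
$$\psi_{\mathcal I,\lambda,\epsilon}:=\Big(\sup\big\{w\in PSH(\Omega)\ :\ w^{*}\le\phi\ \text{on}\ \partial\Omega,\ \ w\le\lambda\varphi_{\mathcal I}\ \text{on}\ \overline{B_r}\big\}\Big)^{*},$$
the radius $r=r(\lambda,\epsilon)$ to be fixed below. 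Since $\lambda\varphi_{\mathcal I}$ itself belongs to the competing family, one gets $\lambda\varphi_{\mathcal I}\le\psi_{\mathcal I,\lambda,\epsilon}\le\lambda\varphi_{\mathcal I}$ on $B_r$, i.e. $\psi_{\mathcal I,\lambda,\epsilon}=\lambda\varphi_{\mathcal I}$ on $B_r$, which is (i). The inequality $\psi_{\mathcal I,\lambda,\epsilon}\le\phi$ on $\partial\Omega$ is clear; for the reverse I would use at each $z_0\in\partial\Omega$ the competitor $\max\big(\lambda\varphi_{\mathcal I},\,P(\phi)+\tau g_{\Omega,p}\big)$, where $g_{\Omega,p}<0$ is a plurisubharmonic function on $\Omega$ vanishing on $\partial\Omega$ with a logarithmic pole at $p$ and $\tau=\tau(r)\gg1$ is chosen so that $P(\phi)+\tau g_{\Omega,p}\le\lambda\varphi_{\mathcal I}$ on $\overline{B_r}$; this competitor equals $\phi$ at $z_0$, so $\psi_{\mathcal I,\lambda,\epsilon}=\phi$ on $\partial\Omega$. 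Finally, replacing the obstacle by $\max(\lambda\varphi_{\mathcal I},-k)$ produces bounded continuous psh functions with $\phi$-boundary values, decreasing to $\psi_{\mathcal I,\lambda,\epsilon}$ and with uniformly bounded Monge-Amp\`ere mass; hence $\psi_{\mathcal I,\lambda,\epsilon}\in\mathcal F(\Omega)$, and in particular $(dd^c\psi_{\mathcal I,\lambda,\epsilon})^n$ is a finite measure.

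For (ii) the lower bound is immediate from Proposition \ref{prop:lct}:
$$\int_{\Omega}\big(dd^{c}\psi_{\mathcal I,\lambda,\epsilon}\big)^{n}\ \ge\ \int_{B_r}\big(dd^{c}\lambda\varphi_{\mathcal I}\big)^{n}\ \ge\ \lambda^{n}\!\!\int_{\{p\}}\big(dd^{c}\varphi_{\mathcal I}\big)^{n}\ =\ \lambda^{n}\,e(X,\mathcal I).$$
For the upper bound, a balayage argument — solving homogeneous Dirichlet problems in small balls not meeting $p$, exactly as in the proof of Lemma \ref{lem:1} — shows that $(dd^{c}\psi_{\mathcal I,\lambda,\epsilon})^{n}$ is carried by the contact set $\{\psi_{\mathcal I,\lambda,\epsilon}=\lambda\varphi_{\mathcal I}\}\subseteq\overline{B_r}$. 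Choosing $r$ outside the at most countable set of radii charged by the finite measures $(dd^{c}\varphi_{\mathcal I})^{n}$ and $(dd^{c}\psi_{\mathcal I,\lambda,\epsilon})^{n}$, this gives
$$\int_{\Omega}\big(dd^{c}\psi_{\mathcal I,\lambda,\epsilon}\big)^{n}=\int_{B_r}\big(dd^{c}\psi_{\mathcal I,\lambda,\epsilon}\big)^{n}=\lambda^{n}\!\!\int_{B_r}\big(dd^{c}\varphi_{\mathcal I}\big)^{n}.$$
As $\varphi_{\mathcal I}$ has analytic singularities, $(dd^{c}\varphi_{\mathcal I})^{n}$ has finite mass near $p$, so continuity of measures from above yields $\int_{B_r}(dd^{c}\varphi_{\mathcal I})^{n}\searrow\int_{\{p\}}(dd^{c}\varphi_{\mathcal I})^{n}=e(X,\mathcal I)$ as $r\searrow0$; it then suffices to fix $r$ small (and generic) so that $\lambda^{n}\int_{B_r}(dd^{c}\varphi_{\mathcal I})^{n}\le\lambda^{n}e(X,\mathcal I)+\epsilon$.

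The two points I would treat with care are (a) that the envelope really coincides with $\lambda\varphi_{\mathcal I}$ near $p$ — which is why one first rescales the generators and uses the Green-type competitor to recover the boundary values, rather than a regularized maximum (the latter cannot reproduce the logarithmic pole, since near $p$ the obstacle lies below every bounded function) — and (b) that its Monge-Amp\`ere mass concentrates on $\overline{B_r}$, a balayage/comparison step entirely analogous to Lemma \ref{lem:1}. Granting these, (ii) reduces to the continuity of $r\mapsto(dd^{c}\varphi_{\mathcal I})^{n}(B_r)$ at $r=0$, which is precisely the mass identity recorded in Proposition \ref{prop:lct}.
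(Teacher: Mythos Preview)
Your construction is essentially the paper's: the same envelope
\[
\psi_r=\sup\big\{u\in PSH(\Omega):\ u\le \lambda\varphi_{\mathcal I}\ \text{on }B_r,\ u\le\phi_0\ \text{in }\Omega\big\},
\]
the same argument that $\psi_r\equiv\lambda\varphi_{\mathcal I}$ on $B_r$ (because $\lambda\varphi_{\mathcal I}$ is itself a competitor after normalization), the same type of competitor to recover the boundary values (the paper uses $\max(\lambda\varphi_{\mathcal I},A\rho+\phi_0)$ rather than your $P(\phi)+\tau g_{\Omega,p}$, but either works), and the same balayage to see that $(dd^c\psi_r)^n$ vanishes on $\Omega\setminus\overline{B_r}$.

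The one genuine difference is the upper bound in (ii). Your equality
\[
\int_\Omega(dd^c\psi_r)^n=\lambda^n\!\int_{B_r}(dd^c\varphi_{\mathcal I})^n
\]
tacitly assumes $(dd^c\psi_r)^n(\partial B_r)=0$, and your ``choose $r$ outside a countable bad set'' justification is circular: the measure whose null spheres you are avoiding is $(dd^c\psi_r)^n$, which itself depends on $r$. The paper sidesteps this by letting $r\searrow 0$: the envelopes $\psi_r$ increase to a psh limit $\varphi$, Bedford--Taylor continuity gives $(dd^c\psi_r)^n\to(dd^c\varphi)^n$, and since $\lambda\varphi_{\mathcal I}\le\varphi$ near $p$, Demailly's comparison theorem yields $(dd^c\varphi)^n(\{p\})\le(dd^c\lambda\varphi_{\mathcal I})^n(\{p\})=\lambda^n e(X,\mathcal I)$. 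Together with the lower bound this pins $(dd^c\varphi)^n=\lambda^n e(X,\mathcal I)\,\delta_p$, so one can pick $r_\epsilon$ small enough. Either patch your argument by showing directly that $\psi_r$ carries no Monge--Amp\`ere mass on $\partial B_r$, or adopt the paper's limit-plus-Demailly route.
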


\begin{proof}
Assume that $\phi_0$ is the maximal psh extension of $\phi$ to $\Omega$, i.e.
the largest psh function in $\Omega$ 
which lies below $\phi$ on $\partial \Omega$. It satisfies $(dd^c \phi_0)^n=0$ in $\Omega$.

Fix $f_{1},\dots,f_{m}$ local generators of the ideal $\mathcal{I}$ and set 
$\psi_{\lambda}:=\lambda\log \big(\sum_{j=1}^{m}\lvert f_{j} \rvert^{2}\big)$. 
We can assume without loss of generality that 
the $f_j$'s are well-defined in $\Omega$ and normalized so that
$\psi_{\lambda} \leq \phi_0-1$ in $\Omega$.
For $r>0$ we consider
$$
\f_{r}:=\sup \left\{ u \in PSH(\Omega), \; u \leq \psi_{\lambda} \text{ in } B(r)
\; \text{ and } \; u \leq \phi_0 \text{ in } \Omega \right\}.
$$
 The corresponding family of psh functions is non empty as it contains $\psi_{\lambda}$.
For $A>1$ large enough, the function
$$
w_r=\left\{ 
\begin{array}{ll}
\psi_{\lambda}  & \text{ in } B(r) \\
\max(\psi_{\lambda}, A \rho +\phi_0) & \text{ in } \Omega \setminus B(r)
\end{array}
\right.
$$
is psh and coincides with $A \rho+\phi_0$ near $\partial \Omega$.
It follows that  
\begin{itemize}
\item $\f_r \in PSH(\Omega)$ with $\f_r =\phi$ on $\partial \Omega$;
\item $\f_r \equiv \psi_{\lambda}$ in $B(r)$ hence 
$\lambda^{n}e(X,\mathcal{I}) \leq \int_{\Omega} (dd^{c} \f_r)^n$;
\item $(dd^{c} \f_r)^n=0$ in $\Omega \setminus \overline{B}(r)$ (balayage argument).
\end{itemize}

The family $r \mapsto \f_r$ increases -as $r>0$ decreases to $0$- to some psh limit $\f$
whose Monge-Amp\`ere measure $(dd^c \f)^n$ is concentrated at the origin.
It follows from Bedford-Taylor continuity theorem that 
$(dd^c \f)^n $ is the weak limit of $(dd^c \f_r)^n \geq \lambda^{n}e(X,\mathcal{I})  \delta_0$, hence
$(dd^c \f)^n \geq \lambda^{n}e(X,\mathcal{I})  \delta_0$.
Conversely $\psi_{\lambda} \leq \f$ near $0$, hence Demailly's comparison theorem ensures
that 
$$
(dd^c \f)^n(0) \leq (dd^c \psi_{\lambda})^n(0) \leq \lambda^{n}e(X,\mathcal{I}),
$$
whence equality.
Thus  $\phi_{\mathcal{I},\lambda,\epsilon}:=\f_{r_{\e}}$ satisfies the required properties.
\end{proof}

\subsubsection{Normalized volume vs uniform integrability}

\begin{prop} \label{pro:Skoda}
One has
$
\tilde{\alpha}(X,\mu_p)= \widehat{\mathrm{vol}}(X,p)^{1/n}.
$
\end{prop}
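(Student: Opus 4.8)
The plan is to prove $\tilde{\alpha}(X,\mu_p) = \widehat{\mathrm{vol}}(X,p)^{1/n}$ by establishing both inequalities. The inequality $\tilde{\alpha}(X,\mu_p) \leq \widehat{\mathrm{vol}}(X,p)^{1/n}$ was already obtained in Proposition \ref{pro:alphavsvol1}, so the real content here is the reverse inequality $\tilde{\alpha}(X,\mu_p) \geq \widehat{\mathrm{vol}}(X,p)^{1/n}$. First I would fix an arbitrary $\varphi \in \mathcal{F}_1(\Omega)$ and set out to show that $c(\varphi) \geq \widehat{\mathrm{vol}}(X,p)^{1/n}$, i.e. that $\int_\Omega e^{-c\varphi}\,d\mu_p < +\infty$ for every $c < \widehat{\mathrm{vol}}(X,p)^{1/n}$. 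Taking the infimum over $\varphi \in \mathcal{F}_1(\Omega)$ then gives the claim.

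The key mechanism is a \emph{Skoda-type integrability criterion} phrased via valuations. For a psh function $\varphi$ with finite Monge-Amp\`ere mass $\leq 1$, one expects a bound of the form $\nu_E(\varphi) \leq A_X(E)/\widehat{\mathrm{vol}}_{X,p}(E)^{1/n}$ (or a suitable normalization thereof) relating the generic Lelong number of $\varphi$ along any prime divisor $E$ over $X$ centered at $p$ to the log-discrepancy and the volume of $E$. Concretely I would: (1) pull back $\varphi$ under a log-resolution $\pi\colon Y \to \Omega$ and reduce the finiteness of $\int_\Omega e^{-c\varphi}\,d\mu_p$ to the finiteness of $\int_Y e^{-c\,\pi^*\varphi}\,\pi^*\mu_p$, which by the usual Ohsawa--Takegoshi / Skoda argument holds provided the Lelong numbers of $\pi^*\varphi$ along each exceptional $E_j$ are strictly less than $(a_j+1)/c$, where $a_j$ is the discrepancy; (2) bound the Lelong number $\nu_{E}(\varphi)$ in terms of the Monge-Amp\`ere mass by a convexity/comparison argument — this is where the invariant $\mathrm{vol}_{X,p}(E)$ enters, since the self-intersection obstruction forces $\nu_E(\varphi)^n \,\mathrm{vol}_{X,p}(E) \leq \int_\Omega (dd^c\varphi)^n \leq 1$; (3) combine these to get $\nu_E(\varphi) \leq \mathrm{vol}_{X,p}(E)^{-1/n}$, so that $c\,\nu_E(\varphi) < a_j+1$ as soon as $c < \widehat{\mathrm{vol}}_{X,p}(E)^{1/n} = A_X(E)\,\mathrm{vol}_{X,p}(E)^{1/n}$, for all $E$ simultaneously when $c < \widehat{\mathrm{vol}}(X,p)^{1/n}$.

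The main obstacle is step (2): one needs a sharp comparison between the Lelong number of a psh function of bounded Monge-Amp\`ere mass along an arbitrary divisorial valuation and the algebraic volume $\mathrm{vol}_{X,p}(E)$. On a smooth ambient space this is classical (Demailly's comparison theorem combined with multiplicity estimates), but over a singular germ $(X,p)$ one must either work on a resolution and carefully track the exceptional contributions, or invoke the non-Archimedean/valuative formalism. There is also the subtlety that a general $\varphi \in \mathcal{F}_1(\Omega)$ need not have algebraic singularities, so one should first reduce to that case — e.g. by noting that the worst contribution to non-integrability comes from the behaviour of $\varphi$ near $p$, which can be dominated by $\max$ of $\varphi$ with a suitable multiple of a function $\varphi_{\mathcal{I}}$, or by using that the relevant quantities $c(\varphi)$ are semicontinuous under the regularization procedures available on $\Omega$. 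I expect the cleanest route, and the one hinted at by the reference to \cite{BdFF} in the Appendix, is to pass entirely to the algebraic side: identify $c(\varphi)$ with an infimum over divisorial valuations of $A_X(E)/\nu_E(\varphi)$ via the integrability results of Proposition \ref{prop:lct}, and then optimize, using Theorem \ref{thm:NormVol} to recognize the resulting quantity as $\widehat{\mathrm{vol}}(X,p)^{1/n}$. Since a strengthening of this proposition is proved in the Appendix by an algebraic argument, I would present here the analytic proof via the resolution-and-Skoda route above, relying on Lemma \ref{lem:Cons2} and Proposition \ref{prop:lct} for the matching upper bound.
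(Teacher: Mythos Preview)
Your overall strategy matches the paper's: reduce to $\tilde\alpha\ge\widehat{\mathrm{vol}}(X,p)^{1/n}$, pull back to a log resolution, invoke a valuative integrability criterion, and bound Lelong numbers using the Monge--Amp\`ere mass constraint together with Demailly's comparison theorem. Two points deserve sharpening.

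First, in step (1) the criterion ``Lelong numbers of $\pi^*\varphi$ along each exceptional $E_j$ are strictly less than $(a_j+1)/c$'' is \emph{not} sufficient: checking only the divisors of one fixed resolution misses deeper singularities. The paper uses \cite[Theorem B.5]{BBJ21}, which characterizes local integrability of $e^{-c\varphi\circ\pi}\prod|s_i|^{2a_i}$ by a condition that must hold for \emph{every} prime divisor $F$ over $\tilde\Omega$ centered near the exceptional locus, namely $A_\Omega(F)\ge(1+\epsilon)c\,\nu(\varphi\circ\pi\circ g,F)$ up to a controlled error coming from the negative discrepancies. You later gesture at this (``infimum over divisorial valuations of $A_X(E)/\nu_E(\varphi)$''), but the statement in step (1) as written is the classical Skoda criterion, which is too weak here.

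Second, and more substantially, the inequality you assert in step (2), $\nu_E(\varphi)^n\,\mathrm{vol}_{X,p}(E)\le\int_\Omega(dd^c\varphi)^n$, is \emph{not} what the paper proves analytically, and it is not clear how to get it directly by ``a convexity/comparison argument''. Demailly's comparison theorem applies to psh functions with isolated singularities at $p$, and yields $\nu_{\mathcal I}(\varphi,p)^n\,e_p(\mathcal I)\le\int_\Omega(dd^c\varphi)^n$ for \emph{ideals} $\mathcal I$, where $\nu_{\mathcal I}(\varphi,p)=\min_G\nu(\varphi,G)/\ord_G\mathcal I$ is the generalized Lelong number. To pass from this to an arbitrary divisorial valuation $\ord_F$, the paper needs an additional step (Lemma \ref{lem:Lelongblowup}): for each $F$ and $\delta>0$ one constructs an ideal $\mathcal I$ (a valuation ideal $\mathfrak a_{mc'}(F)$ for suitable $m,c'$) with $\nu_{\mathcal I}(\varphi,p)\ge(1-\delta)\,\nu(\varphi,F)/\ord_F\mathcal I$. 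Chaining Demailly's inequality with this lemma and with $\mathrm{lct}(X,\mathcal I)\,e_p(\mathcal I)^{1/n}\ge\widehat{\mathrm{vol}}(X,p)^{1/n}$ gives $A_\Omega(F)/\nu(\varphi,F)\ge(1-\delta)\widehat{\mathrm{vol}}(X,p)^{1/n}$, which feeds back into the valuative integrability criterion. Your direct inequality in step (2) is essentially what the Appendix establishes via nef $b$-divisors (Proposition \ref{prop:Zpsh} and Lemma \ref{lem:neglem}); if you want the analytic route, you cannot skip the ideal-approximation lemma.
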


We refer the reader to the Appendix for a more algebraic approach based on \cite{BdFF}, which 
moreover provides a slightly stronger result.
 
When $(X,p)$ is mooth, it follows from \cite{DK01} that $\tilde{\alpha}(X,\mu_p)={\alpha}(X,\mu_p)$.
The situation is however more subtle in the singular context (see Section \ref{sec:resol}).

\begin{proof}
By Proposition \ref{pro:alphavsvol1}  it suffices to show that
 $\tilde{\alpha}(X,\mu_p)\geq \widehat{{\rm vol}}(X,p)^{1/n}$, i.e.
$    \int_\Omega e^{-\alpha \varphi}d\mu_p<+\infty$
for all $\varphi\in \mathcal{F}_1(\Omega)$ and  $\alpha< \widehat{\mathrm{vol}}(X,p)^{1/n}=\inf_{\mathcal{I}} {\rm lct}(X,\mathcal{I})^n e_p({\mathcal I})$.

In a log resolution $\pi:\Tilde{\Omega}\to \Omega$, this boils down to
{\small
$    \int_{\Tilde{\Omega}}e^{-\alpha \varphi\circ \pi}\prod_{i=1}^M \lvert s_i\rvert_{h_i}^{2a_i} dV<+\infty$,
}
where $s_i$ are holomorphic sections defining simple normal crossing exceptional divisors $E_1,\dots,E_M$, $K_{\tilde{\Omega}/\Omega}=\sum_{j=1}^M a_i E_i$ and where $dV$ is a smooth volume form. 
The log terminal condition ensures that $a_i>-1$ for all $i=1,\dots,M$.

As $\alpha<\widehat{\mathrm{vol}}(X,p)^{1/n}\leq n$, 
the integrability condition is equivalent to show that for any point $x\in \cup_{i=1}^M E_i$ there exists a small ball $B(x,r)$ such that
\begin{equation}
    \label{eqn:I3}
    \int_{B(x,r)}e^{-\alpha \varphi\circ \pi}\prod_{i=1}^M \lvert s_i\rvert_{h_i}^{2a_i} dV<+\infty.
\end{equation}

Set $U:=\sum_{i\, : \, a_i\geq 0} a_i\log \lvert s_i \rvert_{h_i}^2$,  $V:=\alpha \varphi\circ \pi$ and $W:=-\sum_{i\, :\, a_i<0}a_i\log \lvert s_i\rvert_{h_i}^2$. 
By \cite[Theorem B.5]{BBJ21} the
condition (\ref{eqn:I3}) holds iff
 there exists $\epsilon>0$ such that
\begin{equation}
    \label{eqn:2bis}
    \nu(U\circ g, F)+A_{\tilde{\Omega}}(F)\geq (1+\epsilon) \nu(V\circ g, F)+(1+\epsilon)\nu(W\circ g, F)
\end{equation}
for any $F$ prime divisor \emph{over} $\tilde{\Omega}$ with center in a small ball $\overline{B(x,r')}\subset B(x,r)$, i.e. $F\subset \Omega'$ for $g:\Omega'\to\tilde{\Omega}$ modification. 
Observe that
\begin{eqnarray*}
    \nu(U\circ g,F)+A_{\tilde{\Omega}}(F)-\nu(W\circ g, F)
    &=& \ord_F( g^*K_{\tilde{\Omega}/\Omega})+1+\ord_F( K_{\Omega'/\tilde{\Omega}}) \\
    &=& 1+\ord_F (K_{\Omega'/\Omega})=A_{\Omega}(F).
\end{eqnarray*}
Thus (\ref{eqn:2bis}) becomes
\begin{equation}
    \label{eqn:I4}
    \alpha(1+\epsilon)\leq \frac{A_{\Omega}(F)-\epsilon\nu(W\circ g,F)}{\nu(\varphi\circ \pi\circ g,F)}.
\end{equation}
As $a_i>-1$ for all $i$, \cite[Theorem B.5]{BBJ21} ensures the existence of $a>0$ such that
$
A_{\tilde{\Omega}}(F)\geq (1+a)\nu(W\circ g,F)
$
for any prime divisor $F$ over $\tilde{\Omega}$ as above. Thus
$$
A_{\tilde{\Omega}}(F)\leq A_\Omega(F)+\nu(W\circ g, F)\leq \frac{1}{1+a}A_{\tilde{\Omega}}(F)+A_\Omega(F),
$$
and
$
\nu(W\circ g,F)\leq \frac{1}{1+a}A_{\tilde{\Omega}}(F)\leq \frac{1}{a}A_\Omega(F).
$
Therefore (\ref{eqn:I4}) holds if
\begin{equation}
    \label{eqn:I5}
    \alpha(1+\epsilon)\leq \frac{a-\epsilon}{a}\frac{A_\Omega(F)}{\nu(\varphi\circ \pi\circ g,F)}.
\end{equation}

Since $\f \in {\mathcal F}_1(\Omega)$, it follows from the comparison theorem 
of Demailly  \cite[Theorem 4.2]{Dem85}
that for a coherent ideal sheaf $\mathcal{I}$ supported at $p\in \Omega$, 
\begin{equation}
    \label{eqn:BBB}
    1\geq \int_\Omega (dd^c \varphi)^n\geq \nu_\mathcal{I}(\varphi,p)^n\int_{\C^N}(dd^c f_\mathcal{I})^n\wedge [X]=\nu_\mathcal{I}(\varphi,p)^n e_p(\mathcal{I})
\end{equation}
where $f_\mathcal{I}= \log (\sum_i\lvert f_i\rvert^2)$ for generators $\{f_i\}_i$ of $\mathcal{I}$
and 
$$
\nu_\mathcal{I}(\varphi,p):=\sup\big\{s>0\,:\, \varphi\leq s f_\mathcal{I}+ O(1)\big\}
=\min_{G}\frac{\nu(\varphi\circ\pi\circ g,G)}{\ord_G \mathcal{I}},
$$
where $\pi\circ g: \Omega'\to \Omega$ is a log resolution for $\mathcal{I}$
and the minimum is over all exceptional divisors of $\Omega'\to \Omega$.
  Lemma \ref{lem:Lelongblowup} below ensures that
 for any prime divisor  $F$ and $\delta>0$ there exists an ideal $\mathcal{I}$ such that
\begin{eqnarray*}
    \frac{A_\Omega(F)}{\nu(\varphi\circ \pi\circ p,F)}
    &\geq& (1-\delta)\frac{A_\Omega(F)}{\ord_F\mathcal{I}}\big(\nu_\mathcal{I}(\varphi,p)\big)^{-1}
    \geq(1-\delta) \frac{A_\Omega(F)}{\ord_F\mathcal{I}} e_p(\mathcal{I})^{1/n} \\
    &\geq& (1-\delta){\rm lct}(X,\mathcal{I})e_p(\mathcal{I})^{1/n}
    \geq (1-\delta)\widehat{\mathrm{vol}}(X,p)^{1/n}.
\end{eqnarray*}
Thus (\ref{eqn:I5}) holds if
$ 
\alpha(1+\epsilon)\leq \frac{(a-\epsilon)(1-\delta)}{a}\widehat{\mathrm{vol}}(X,p)^{1/n} ,
$
concluding the proof.
 \end{proof}

 \begin{lemma} \label{lem:Lelongblowup}
Fix $\varphi\in \mathcal{F}_1(\Omega)$ and $F\subset \Omega'$ prime divisor such that $\pi\circ g(F)=p$. For any $\epsilon>0$ there exists a coherent ideal sheaf $\mathcal{I}$ supported at $p$ such that
$$
\nu_\mathcal{I}(\varphi,p)\geq(1-\epsilon)\frac{\nu(\varphi\circ \pi\circ g,F)}{\ord_F (\mathcal{I})}.
$$
\end{lemma}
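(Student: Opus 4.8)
Write $\gamma:=\nu(\varphi\circ\pi\circ g,F)$. If $\gamma=0$ the asserted inequality is trivial, since any coherent ideal $\mathcal{I}$ supported at $p$ has $\ord_F(\mathcal{I})>0$ and $\nu_\mathcal{I}(\varphi,p)\ge0$; so assume $\gamma>0$. The idea is to produce $\mathcal{I}$ by a Demailly-type analytic approximation of $\varphi$: for $m\in\N$ take for $\mathcal{I}$ the multiplier ideal of $m\varphi$ with respect to the adapted measure,
$$
\mathcal{I}_m:=\mathcal{I}(m\varphi)=\bigl\{h\in\mathcal{O}_{X,p}\,:\,|h|^2e^{-m\varphi}\in L^1_{\mathrm{loc}}(\mu_p)\text{ near }p\bigr\},
$$
a nonzero coherent ideal sheaf; since $\varphi\in\mathcal{F}_1(\Omega)$ has bounded Lelong numbers, its germ at $p$ contains a power of the maximal ideal, so $\mathcal{I}_m$ may be taken supported at $p$. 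Fix local generators $\sigma_1,\dots,\sigma_N$ of $\mathcal{I}_m$ near $p$ and put $\varphi_m:=\tfrac1m\log\bigl(\sum_j|\sigma_j|^2\bigr)=\tfrac1m\,\varphi_{\mathcal{I}_m}$.

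\emph{Estimate (i): a lower bound for $\ord_F(\mathcal{I}_m)$.} Choose a log resolution $\mu:Y\to\Omega$ realizing $F$ as one of its exceptional divisors, with $\mu^*\mu_p=\prod_j|s_{E_j}|^{2a_j}\,dV_Y$ and $a_j>-1$ (log terminality). Near the generic point of $F$ write $\varphi\circ\mu=\gamma\log|s_F|^2+\psi$ with $\psi$ psh of vanishing generic Lelong number along $F$, so that $e^{-m\psi}$ is locally integrable there; unwinding the integrability condition defining $\mathcal{I}_m$ at the generic point of $F$ forces $\ord_F(h)>m\gamma-(a_F+1)=m\gamma-A_\Omega(F)$ for every $h\in\mathcal{I}_m$, whence
$$
\ord_F(\mathcal{I}_m)>m\gamma-A_\Omega(F).
$$

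\emph{Estimate (ii), and conclusion.} By the Ohsawa--Takegoshi $L^2$-extension theorem in the form valid for $(X,p)$ equipped with $\mu_p$ (extending the value $1$ at a point $x$ near $p$ to a section $\sigma\in L^2(e^{-m\varphi}\mu_p)$ with $\|\sigma\|^2\le C\,e^{-m\varphi(x)}$, $C$ independent of $x$ and $m$), one gets $\sum_j|\sigma_j(x)|^2\gtrsim e^{m\varphi(x)}$, i.e. $\varphi\le\varphi_m+O(1)=\tfrac1m\varphi_{\mathcal{I}_m}+O(1)$ on a fixed neighbourhood of $p$; by the definition $\nu_{\mathcal{I}_m}(\varphi,p)=\sup\{s>0:\varphi\le s\,\varphi_{\mathcal{I}_m}+O(1)\}$ this gives $\nu_{\mathcal{I}_m}(\varphi,p)\ge\tfrac1m$. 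Combining with (i),
$$
\nu_{\mathcal{I}_m}(\varphi,p)\cdot\ord_F(\mathcal{I}_m)>\tfrac1m\bigl(m\gamma-A_\Omega(F)\bigr)=\gamma-\tfrac{A_\Omega(F)}{m},
$$
so choosing $m\ge A_\Omega(F)/(\epsilon\gamma)$ and setting $\mathcal{I}:=\mathcal{I}_m$ yields $\nu_\mathcal{I}(\varphi,p)\cdot\ord_F(\mathcal{I})\ge(1-\epsilon)\gamma$, which is the claim.

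The delicate point, and the step I expect to be the main obstacle, is securing the two analytic inputs in this singular local setting, above all the Ohsawa--Takegoshi estimate (ii) with a constant that stays uniform as one approaches the singular point $p$: since $\mu_p$ is not a smooth volume form and its density with respect to $dV_X$ blows up at $p$, the smooth statement cannot simply be quoted. One handles this by transporting the problem to the resolution $\mu:Y\to\Omega$, where $Y$ is smooth and $\mu^*\mu_p=\prod|s_{E_j}|^{2a_j}\,dV_Y$ is a measure whose weights satisfy $a_j>-1$, so that the ``klt'' version of the $L^2$-extension theorem applies, and then one checks that $\ord_F$, the function $\varphi_{\mathcal{I}_m}$, and the envelope defining $\nu_{\mathcal{I}_m}(\varphi,p)$ behave well under this reduction.
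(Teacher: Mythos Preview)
Your approach via multiplier ideals and Demailly approximation is a genuine alternative to the paper's route. The paper instead chooses the \emph{valuation ideals}
\[
\mathcal{A}_{mc'}(F)=\{f\in\mathcal{O}_{X,p}:\ord_F(f)\ge mc'\}
\]
for a rational $c'<c=\nu(\varphi\circ\pi\circ g,F)$, and works with the explicit formula $\nu_{\mathcal{I}}(\varphi,p)=\min_G \nu(\varphi,G)/\ord_G(\mathcal{I})$. The advantage of that choice is that $\ord_F(\mathcal{A}_{mc'}(F))/m\to c'$ is an elementary fact about the value semigroup of a divisorial valuation, and the comparison with $\varphi$ can be arranged without an explicit $L^2$-extension theorem on the singular space. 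Your approach has the virtue of being the standard Demailly mechanism, and both estimates (i) and (ii) are transparent once one has the right ideal.

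There is, however, a real issue in your implementation of estimate (ii). With $\mathcal{I}_m=\mathcal{I}(m\varphi;\mu_p)$, pulling back to a resolution $Y$ the weight becomes $e^{-m\varphi\circ\mu}\prod|s_{E_j}|^{2a_j}\,dV_Y$, and the factor with $a_j<0$ is not absorbed by a psh weight; applying Ohsawa--Takegoshi with the psh part $m\varphi\circ\mu+\sum_{a_j<0}(-a_j)\log|s_{E_j}|^2$ produces a bound $\int|\sigma|^2e^{-m\varphi}d\mu_p\le C\,e^{-m\varphi(x)}\prod_{a_j<0}|s_{E_j}(y)|^{2a_j}$ whose right-hand side blows up as $y\to E$, so the inequality $\varphi\le\varphi_m+O(1)$ is \emph{not} uniform near $p$. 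This is exactly the obstruction you flag, and the sketch ``the klt version of OT applies'' does not resolve it as written. The clean fix is to change the ideal: take $\mathcal{I}_m:=\mu_*\,\mathcal{I}(m\,\varphi\circ\mu;dV_Y)$ on a fixed smooth resolution $Y$ already carrying $F$. Since $X$ is normal, $\mu_*\mathcal{O}_Y=\mathcal{O}_X$ and every holomorphic function produced by OT on $Y$ descends to $X$; standard OT on the smooth $Y$ then gives $\varphi\circ\mu\le\tfrac1m\varphi_{\mathcal{I}(m\varphi\circ\mu)}+C/m$ with a uniform $C$, hence $\nu_{\mathcal{I}_m}(\varphi,p)\ge 1/m$, while estimate (i) becomes $\ord_F(\mathcal{I}_m)\ge m\gamma-1$ (discrepancy of $F$ on $Y$ rather than on $X$). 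The combination still yields $\nu_{\mathcal{I}_m}(\varphi,p)\cdot\ord_F(\mathcal{I}_m)\ge\gamma-1/m$, and the lemma follows.
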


\begin{proof}
Let $c:=\nu(\varphi\circ \pi\circ g,F)$ and for $c'\in \Q, c'\leq c$, set 
$$
\mathcal{A}_{mc'}(F):=\big\{f\in \mathcal{O}_{X,p}\, :\, \ord_F (f\circ \pi\circ g)\geq mc'\big\}
$$
 for $m\in \N$ divisible enough. Then $\mathcal{A}_{mc'}(F)$ is an ideal sheaf and 
\begin{equation}
    \label{eqn:Ro1}
    \limsup_{m\to +\infty} \frac{\ord_F (\mathcal{A}_{mc'}(F))}{m}=c'.
\end{equation}

In particular if $\varphi_{mc'}\in \PSH\big(B(p,r)\big)$ has algebraic singularities 
along $\mathcal{A}_{mc'}(F)$, then  for any $\epsilon>0$, $\varphi_{mc'}$ is less singular 
than $\frac{mc'}{c-\epsilon}\varphi$ around $p$ if $m\geq m_1(\epsilon)\gg 1$. 
For any $G$ exceptional divisor on $\Omega'$ and  $m\geq m_1(\epsilon)$ we infer
\begin{equation}
    \label{eqn:Ro2}
    \frac{\nu(\varphi\circ \pi\circ g,G)}{\ord_{G}(\mathcal{A}_{mc'}(F))/m}=\frac{\nu(\varphi\circ \pi\circ g,G)}{\nu(\varphi_{mc'}\circ \pi\circ g,G)/m}\geq \frac{c-\epsilon}{c'}.
\end{equation}

On the other hand, (\ref{eqn:Ro1}) implies that there exists $m_0(\epsilon)\geq m_1(\epsilon)\gg 1$ with
\begin{equation}
    \label{eqn:Ro3}
    \frac{\nu(\varphi\circ \pi\circ g,F)}{\ord_F(\mathcal{A}_{m_0c'}(F))/m_0}\leq \frac{c}{c'-\epsilon}.
\end{equation}

Combining (\ref{eqn:Ro2}) and (\ref{eqn:Ro3}) we obtain
\begin{eqnarray*}
    \min_{G}\frac{\nu(\varphi\circ \pi\circ g, G)}{\ord_G(\mathcal{A}_{m_0c'}(F))/m}
    \geq \frac{c-\epsilon}{c'}
    &\geq &  \Big(1-\epsilon\frac{c+c'}{cc'}\Big)\frac{c}{c'-\epsilon} \\
    &\geq & \Big(1-\epsilon\frac{c+c'}{cc'}\Big) 
    \frac{\nu(\varphi\circ \pi\circ g,F)}{\ord_F(\mathcal{A}_{m_0c'}(F))/m}.
\end{eqnarray*}

Since $c'$ and $\epsilon$ are arbitrary, and $x \mapsto f(x)=\frac{c+x}{cx}$ is decreasing, we deduce that for any $\epsilon>0$ there exists $c'\in\Q$ and $m_0=m_0(c,c',\epsilon)$ 
such that
$$
\nu_{\mathcal{A}_{m_0c'}(F)}(\varphi,p)\geq (1-\epsilon)\frac{\nu(\varphi\circ \pi\circ g,F)}{\ord_F(\mathcal{A}_{m_0c'}(F))}.
$$
Setting $\mathcal{A}:=\mathcal{A}_{m_0c'}(F)$  concludes the proof.
\end{proof}

\subsection{Lowerbounds on the $\alpha$-invariant} \label{sec:lbalpha}

We provide in this section two effective (but not sharp) lower bounds on $\alpha(X,\mu_p)$.

\subsubsection{Using projections on $n$-planes}

A celebrated result of Skoda ensures that $e^{- \f}$ is integrable
if the Lelong numbers of $ \f$ are small enough (see \cite[Theorem 2.50]{GZbook}).
This result has been largely extended by Demailly  
and Zeriahi  who provided uniform integrability
results for functions $\f \in {\mathcal F}_1(\Omega)$ 
\cite{Dem09,ACKPZ09}.

\smallskip

In this section we partially extend these results to our singular setting.

\begin{thm} \label{thm:lowerboundalpha}
One has 
$\alpha(X,\mu_p) \geq \frac{n}{{\rm mult}(X,p)^{1-1/n}} \frac{{\rm lct}(X,p)}{1+{\rm lct}(X,p)}$.
\end{thm}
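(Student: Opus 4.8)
The plan is to bound $\alpha(X,\mu_p)$ from below by combining two ingredients: a bound on the Lelong number of any $\f\in\mathcal{F}_1(\Omega)$ coming from Demailly's comparison theorem and the multiplicity of the germ, and a Skoda-type integrability criterion controlling $\int e^{-\alpha\f}\,d\mu_p$ in terms of Lelong numbers together with the integrability of the density $f=\mu_p/dV_X$ (which by Lemma~\ref{lem:integrabilityf} lies in $L^r(dV_X)$ for $r<1+\mathrm{lct}(X,p)$). First I would fix $\f\in\mathcal{F}_1(\Omega)$ and, using the ambient embedding $X\hookrightarrow\C^N$, project onto generic $n$-dimensional coordinate planes. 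For a generic $n$-plane $\pi$, the restriction (or rather the push-forward/slicing argument) relates the Monge--Amp\`ere mass $\int_\Omega(dd^c\f)^n\le 1$ to the Riesz masses of the subharmonic-type functions obtained on the plane, and the key quantitative input is that the local structure of $X$ near $p$ is a branched cover of degree $\mathrm{mult}(X,p)$ over such a plane. This yields a bound of the form $\nu(\f,p)\le \mathrm{mult}(X,p)^{1/n}$ on the Lelong number at $p$, refined to give that the projected function $u$ on the $n$-plane has Lelong number at the origin at most $\mathrm{mult}(X,p)^{1/n}/n \cdot$ (something) — more precisely one extracts a one-variable slice and uses $\int (dd^c\f)^n\le 1$ to control the Lelong number of the slice.

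Next I would run the Skoda integrability estimate. Writing $d\mu_p = f\,dV_X$ with $f\in L^r$, $r<1+\mathrm{lct}(X,p)$, H\"older's inequality gives
$$
\int_\Omega e^{-\alpha\f}\,d\mu_p \le \Big(\int_\Omega e^{-\alpha r'\f}\,dV_X\Big)^{1/r'}\,\|f\|_{L^r},
$$
with $r'=r/(r-1)$, so it suffices to have $\int_\Omega e^{-\alpha r'\f}\,dV_X<+\infty$ uniformly over $\mathcal{F}_1(\Omega)$. By Skoda/Demailly--Zeriahi on (a neighborhood in) $\C^N$ or, after the generic projection, on the $n$-plane, this holds as soon as $\alpha r' < n/\nu$, where $\nu$ is the (uniform) bound on the relevant Lelong number, i.e. $\nu\le \mathrm{mult}(X,p)^{1/n}$ after the normalization. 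Taking $r\nearrow 1+\mathrm{lct}(X,p)$ so that $r'\searrow \frac{1+\mathrm{lct}(X,p)}{\mathrm{lct}(X,p)}$, the condition becomes
$$
\alpha < \frac{n}{\mathrm{mult}(X,p)^{1/n}}\cdot\frac{\mathrm{lct}(X,p)}{1+\mathrm{lct}(X,p)},
$$
which gives $\alpha(X,\mu_p)\ge \frac{n}{\mathrm{mult}(X,p)^{1-1/n}}\cdot\frac{\mathrm{lct}(X,p)}{1+\mathrm{lct}(X,p)}$ once one checks the bookkeeping — the extra factor $\mathrm{mult}(X,p)^{1-1/n}$ versus $\mathrm{mult}(X,p)^{1/n}$ comes from the fact that the mass estimate on a branched cover of degree $\mathrm{mult}(X,p)$ improves the naive Lelong bound (the slice inherits mass $\le 1$ but the Lelong number at $p$ on $X$ and the Lelong number of the projected function on the plane differ by a factor governed by the multiplicity), so that effectively $\nu\le \mathrm{mult}(X,p)^{(n-1)/n}/n$ rather than $\mathrm{mult}(X,p)^{1/n}$; this is where the precise constant is pinned down.

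The main obstacle I anticipate is the passage from $X$ to a smooth $n$-dimensional model via generic projections: one must control, uniformly in $\f\in\mathcal{F}_1(\Omega)$, how the Monge--Amp\`ere mass and the Lelong number transform under such a projection, using that a generic linear projection $\C^N\to\C^n$ restricts to a finite branched cover $X\to\C^n$ of degree $\mathrm{mult}(X,p)$ near $p$. Making the inequality $\nu(u,0)\le \mathrm{mult}(X,p)^{(n-1)/n}$ for the pushed-forward potential precise — rather than the trivial $\nu\le 1$ one would get for the smooth case — requires a careful application of Demailly's comparison theorem \cite[Theorem 4.2]{Dem85} to the function $\varphi_{\mathfrak{m}}=\log(\sum|z_j|^2)$ attached to the maximal ideal at $p$, together with the identity $\int_{\{p\}}(dd^c\varphi_{\mathfrak m})^n = e(X,\mathfrak m)=\mathrm{mult}(X,p)$ from Proposition~\ref{prop:lct}. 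Once that comparison is in hand, the rest is a routine H\"older plus Skoda argument.
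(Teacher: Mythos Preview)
Your overall architecture is right and matches the paper: first use H\"older with $\mu_p=f\,dV_X$ and Lemma~\ref{lem:integrabilityf} to reduce to proving
$\alpha(\Omega,dV_X)\ge n/\mathrm{mult}(X,p)^{1-1/n}$, then pass to generic linear projections $\pi_I:\C^N\to\C^n$ (degree $\mathrm{mult}(X,p)$ branched covers near $p$) and invoke the uniform integrability of \cite{ACKPZ09} on the smooth target.

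The gap is in the middle step. You try to control the Lelong number $\nu(\f,p)$ via Demailly's comparison theorem with $\f_{\mathfrak m}$; but a Lelong number bound at the single point $p$ does \emph{not} feed into the uniform Skoda--Demailly--Zeriahi estimate, whose input is a bound on the total Monge--Amp\`ere mass on a smooth domain. What is actually needed is a bound on $\int_{\Omega_I}(dd^c\f_I)^n$ for the \emph{push-forward} $\f_I:=(\pi_I)_*\f$ (a psh function on $\Omega_I\subset\C^n$). The paper obtains the sharp bound
\[
\int_{\Omega_I}(dd^c\f_I)^n\le \mathrm{mult}(X,p)^{n-1}
\]
by writing $\f_I(z)=\sum_{i=1}^m\f(x_i)$ over the fiber $\{x_1,\dots,x_m\}$ (with $m=\mathrm{mult}(X,p)$), expanding $(dd^c\f_I)^n$ into mixed terms, bounding each mixed mass by $a_{i_1}\cdots a_{i_n}$ with $a_i^n=\int(dd^c\f)^n(x_i)$ via Cegrell's inequality \cite[Corollary~5.6]{Ceg04}, and maximizing $(\sum a_i)^n$ under $\sum a_i^n\le 1$. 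Plugging this mass bound into \cite{ACKPZ09} gives the exponent $n/\mathrm{mult}(X,p)^{(n-1)/n}$ directly, with no Lelong number heuristics. (One also needs the elementary inequality $(\pi_I)_*(e^{-\alpha\f})\le m\, e^{-\alpha(\pi_I)_*\f}$ for $\f\le 0$, to transfer the integral to the plane.)

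Finally, your numerics are internally inconsistent: Demailly comparison with $\f_{\mathfrak m}$ gives $\nu(\f,p)\le \mathrm{mult}(X,p)^{-1/n}$ (the multiplicity is in the denominator, since $e(X,\mathfrak m)=\mathrm{mult}(X,p)$), not $\mathrm{mult}(X,p)^{1/n}$; and the hand-wave from $\mathrm{mult}^{1/n}$ to $\mathrm{mult}^{1-1/n}$ in your last paragraph has no mechanism behind it. Replace the Lelong number discussion by the mass bound on $\f_I$ above and the proof goes through.
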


\begin{proof}
Recall that $\mu_p=fdV_X$ with $f \in L^r(dV_X)$. The exponent $r>1$ has been estimated in Lemma \ref{lem:integrabilityf}. Using H\"older inequality, we thus obtain
$$
\alpha(X,\mu_p) \geq \frac{{\rm lct}(X,p)}{1+{\rm lct}(X,p)} \alpha(\Omega,dV_X).
$$

The remainder of the proof thus consists in establishing the lower bound
$$
\alpha(\Omega,dV_X) \geq \frac{n}{{\rm mult}(X,p)^{1-1/n}}.
$$
Recall that $dV_X=\omega_{eucl}^n \wedge [X]$, where $\omega_{eucl}$ denotes the euclidean 
K\"ahler form. Thus $dV_X=\sum_I (\pi_I)^*(dV_I)$, where $I=(i_1,\ldots,i_n)$ is a n-tuple,
$\pi_I:\C^N \rightarrow \C_I^n$ denotes the linear projection on   $\C_I^n$,
and $dV_I$ is the euclidean volume form on $\C_I^n$.

We choose coordinates in $\C^N$ so that each projection map
$\pi_I: \Omega \rightarrow \Omega_I \subset \C^n$ is proper. For $\f \in {\mathcal F}_1(\Omega)$
we obtain
$$
\int_{\Omega} e^{-\alpha \f} dV_X = \sum_I \int_{\Omega_I} (\pi_I)_* (e^{-\alpha \f}) dV_I
\leq {\rm mult}(X,p) \sum_I \int_{\Omega_I} e^{-\alpha  (\pi_I)_* \f} dV_I.
$$

We assume here -without loss of generality- that $\f \leq 0$, and use the (sub-optimal) inequality
$(\pi_I)_* (e^{-\alpha \f}) \leq {\rm mult}(X,p) e^{-\alpha  (\pi_I)_* \f}$.
The function $\f_I:=(\pi_I)_* \f$ is psh in $\Omega_I=\pi_I(\Omega)$, with boundary values
$(\pi_I)_*(\phi)$. We claim that  
\begin{equation} \label{eq:massesMA}
\int_{\Omega_I} (dd^c \f_I)^n \leq {\rm mult}(X,p)^{n-1}.
\end{equation}
Once this is established, it follows from the main result of \cite{ACKPZ09} that
for all $0<\e$ small enough, there exists $C_{\e}>0$ independent of $\f$ such that
$$
\int_{\Omega_I} e^{- \frac{n-\e}{{\rm mult}(X,p)^{1-1/n}}  \f_I} dV_I \leq C_{\e},
$$
which yields the desired lower bound
$\alpha(\Omega,dV_X) \geq \frac{n}{{\rm mult}(X,p)^{1-1/n}}$.

\smallskip

It remains to check \eqref{eq:massesMA}. We decompose $\f_I(z)=\sum_{i=1}^m \f(x_i)$,
where $m={\rm mult}(X,p)$ and $x_1,\ldots,x_m$ denote the preimages of $z$ counted
with multiplicities. The assumption on the Monge-Amp\`ere mass of $\f$ reads
$$
\sum_{i=1}^m \int (dd^c \f)^n(x_i) \leq 1.
$$
We set $a_i^n:=\int (dd^c \f)^n(x_i)$ and use \cite[Corollary 5.6]{Ceg04} to estimate
\begin{eqnarray*}
\int (dd^c \f_I)^n
&=& \sum_{i_1,\ldots,i_n=1}^m \int dd^c \f(x_{i_1}) \wedge \cdots \wedge dd^c \f (x_{i_n}) \\
&\leq & \sum_{i_1,\ldots,i_n=1}^m a_{i_1} \cdots a_{i_n}=\left( \sum_{i}^m  a_i \right)^n.
\end{eqnarray*}
The latter sum is maximized when $a_1=\cdots=a_m=m^{-1/n}$, yielding \eqref{eq:massesMA}.
\end{proof}

\begin{example}
Let $X=\{z \in \C^{n+1}, \; F(z)=0\}$ be the $A_k$-singularity,
where $F(z)=z_0^{k+1}+z_1^2+\cdots z_n^2$. Arguing as we have done for the ODP $(k=1)$, 
one can check that 
$\mu_p \sim \frac{dV_X}{||F'||^2}$
so that ${\rm mult}(X,p)=2$ and ${\rm lct}(X,p)=n-2+\frac{2}{k+1}$. Now
\begin{equation*}
    \label{eqn:LB2}
    \widehat{\mathrm{vol}}(A_k,p)^{1/n}=
    \begin{cases}
    2^{1/n}\big(\frac{n-2}{n-1}\big)^{1-1/n}n & \mathrm{if}\,\,\, \frac{k+1}{2}\geq \frac{n-1}{n-2},\\
    (k+1)^{1/n}\big(\frac{(n-2)(k+1)+2}{k+1}\big) & \mathrm{if}\,\,\, \frac{k+1}{2}<\frac{n-1}{n-2},
    \end{cases}
\end{equation*}
as computed by C.Li in \cite[Example 5.3]{Li18}. For $n>>1$, the lower bound provided by Theorem 
\ref{thm:lowerboundalpha} is thus short of a factor $2={\rm mult}(X,p)$
by comparison with the expected lower bound $\widehat{\mathrm{vol}}(A_k,p)^{1/n}$.
\end{example}

\subsubsection{Using resolutions} \label{sec:resol}

\begin{prop} \label{pro:lowerboundalpha2}
\label{prop:Alpha_Resolution}
Let $\pi:\tilde{\Omega}\to \Omega$ be a resolution of singularities with simple normal crossing, and let $\{a_i\}_{i=1,\dots,M}$ be the discrepancies. Then
$$
\alpha(X,\mu_p)\geq \frac{\widehat{\mathrm{vol}}(X,p)^{1/n}}{1+\big(\max_i a_i\big)_+}.
$$

In particular if the singularity is "admissible"  then 
$\alpha(X,\mu_p)=\widehat{\mathrm{vol}}(X,p)^{1/n}$.
\end{prop}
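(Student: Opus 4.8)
The plan is to pull the problem back to the smooth resolution $\tilde{\Omega}$, estimate a log-canonical threshold there by a valuative computation, and conclude with a uniform integrability result. Set $m:=\big(\max_i a_i\big)_+$. It suffices to prove that for every $\alpha<\widehat{\mathrm{vol}}(X,p)^{1/n}/(1+m)$ one has $\sup_{\varphi\in\mathcal{F}_1(\Omega)}\int_\Omega e^{-\alpha\varphi}\,d\mu_p<+\infty$. Writing $\pi^*\mu_p=\lambda\prod_{i=1}^M\lvert s_{E_i}\rvert^{2a_i}\,dV_{\tilde{\Omega}}$, this amounts to bounding $\int_{\tilde{\Omega}}e^{-\alpha\,\varphi\circ\pi}\prod_i\lvert s_{E_i}\rvert^{2a_i}\,dV_{\tilde{\Omega}}$ uniformly in $\varphi$; since $\pi^{-1}(p)$ is compact, it is enough to obtain, for each $x\in\pi^{-1}(p)$ and a small coordinate ball $B(x,r)$, a bound for $\int_{B(x,r)}e^{-\alpha\,\varphi\circ\pi}\prod_i\lvert s_{E_i}\rvert^{2a_i}\,dV_{\tilde{\Omega}}$ independent of $\varphi$.

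The first ingredient is a control of the singularities of $u:=\varphi\circ\pi$. Arguing exactly as in the proof of Proposition \ref{pro:Skoda} — through $\int_\Omega(dd^c\varphi)^n\le1$, Demailly's comparison theorem \cite[Theorem 4.2]{Dem85}, Lemma \ref{lem:Lelongblowup} and Theorem \ref{thm:NormVol} — one gets, for every prime divisor $G$ over $\Omega$ centered at $p$,
$$
\nu(u\circ g,G)\ \le\ A_\Omega(G)\,\widehat{\mathrm{vol}}(X,p)^{-1/n}
$$
(up to an arbitrarily small multiplicative loss, harmless since $\alpha$ is taken strictly below the threshold). The factor $1+m$ enters at the next step. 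For a prime divisor $G$ over $\tilde{\Omega}$ whose center lies over $p$ one has $A_\Omega(G)=A_{\tilde{\Omega}}(G)+\sum_i a_i\,\mathrm{ord}_G(E_i)$; discarding the non-positive terms and using $a_i\le m$ gives $A_\Omega(G)\le A_{\tilde{\Omega}}(G)+m\sum_i\mathrm{ord}_G(E_i)$, and since $(\tilde{\Omega},\sum_i E_i)$ is a reduced simple normal crossing — hence log canonical — pair, $\sum_i\mathrm{ord}_G(E_i)\le A_{\tilde{\Omega}}(G)$; therefore $A_\Omega(G)\le(1+m)A_{\tilde{\Omega}}(G)$. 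Combining with the Lelong bound,
$$
\frac{A_{\tilde{\Omega}}(G)}{\nu(u\circ g,G)}\ \ge\ \frac{A_{\tilde{\Omega}}(G)}{A_\Omega(G)}\,\widehat{\mathrm{vol}}(X,p)^{1/n}\ \ge\ \frac{\widehat{\mathrm{vol}}(X,p)^{1/n}}{1+m}
$$
for every such $G$. By the valuative description of complex singularity exponents (following \cite{BdFF}), this means that the log-canonical threshold of $u$ at $x$ relative to $dV_{\tilde{\Omega}}$ is at least $\widehat{\mathrm{vol}}(X,p)^{1/n}/(1+m)$, with a bound uniform over $\varphi\in\mathcal{F}_1(\Omega)$.

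It remains to dispose of the weight and make the integrability uniform. Since $a_i>-1$ for all $i$ (log terminality), $\prod_i\lvert s_{E_i}\rvert^{2a_i}\le\prod_{i\,:\,a_i<0}\lvert s_{E_i}\rvert^{2a_i}\in L^{1+\delta}(B(x,r))$ for some $\delta>0$, so Hölder's inequality reduces the problem to proving $\sup_\varphi\int_{B(x,r)}e^{-\beta\,u}\,dV_{\tilde{\Omega}}<+\infty$ for some $\beta$ with $\alpha<\beta<\widehat{\mathrm{vol}}(X,p)^{1/n}/(1+m)$. Finally, $\{\varphi\circ\pi:\varphi\in\mathcal{F}_1(\Omega)\}$ is relatively compact in $L^1_{\mathrm{loc}}(\tilde{\Omega})$ (Proposition \ref{prop:Compact}) and every member has log-canonical threshold at $x$ at least $\widehat{\mathrm{vol}}(X,p)^{1/n}/(1+m)>\beta$; the effective, uniform form of the semicontinuity of complex singularity exponents (as developed in \cite{ACKPZ09,Dem09}, building on \cite{DK01}) then yields the required uniform bound. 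Letting $\alpha\nearrow\widehat{\mathrm{vol}}(X,p)^{1/n}/(1+m)$ gives $\alpha(X,\mu_p)\ge\widehat{\mathrm{vol}}(X,p)^{1/n}/(1+m)$.

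For the last assertion, an admissible singularity in the sense of \cite{LTW21} admits a log resolution all of whose discrepancies satisfy $a_i\le0$, so $m=0$ and the inequality just proved reads $\alpha(X,\mu_p)\ge\widehat{\mathrm{vol}}(X,p)^{1/n}$, which combined with Proposition \ref{pro:alphavsvol1} gives the equality. The main obstacle is this last step: converting the \emph{uniform} lower bound on the log-canonical thresholds of the family $\{\varphi\circ\pi\}$ into a genuinely uniform integrability estimate requires the effective version of Demailly–Kollár semicontinuity, and one must deal with the fact that the pulled-back potentials $\varphi\circ\pi$ need not have locally finite Monge–Ampère mass on $\tilde{\Omega}$ (their generic Lelong numbers along the $E_i$ may exceed $1$); circumventing this is where the care lies.
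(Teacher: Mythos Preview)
Your valuative computation in steps 2--4 is correct and elegant: the bound $\nu(\varphi\circ\pi\circ g,G)\le A_\Omega(G)\,\widehat{\mathrm{vol}}(X,p)^{-1/n}$ combined with $A_\Omega(G)\le(1+m)A_{\tilde\Omega}(G)$ does show that the \emph{untwisted} log canonical threshold of $\varphi\circ\pi$ with respect to $dV_{\tilde\Omega}$ is at least $\widehat{\mathrm{vol}}(X,p)^{1/n}/(1+m)$.

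The gap is in step 5. You claim H\"older reduces the weighted integral to $\int e^{-\beta u}\,dV_{\tilde\Omega}$ for some $\beta$ \emph{only slightly larger} than $\alpha$. But writing $w:=\prod_{a_i<0}|s_{E_i}|^{2a_i}$ and applying H\"older with exponents $p,q$, finiteness of $\int w^q\,dV_{\tilde\Omega}$ forces $q<1/b$ where $b:=\max_{a_i<0}|a_i|\in(0,1)$, hence $p>1/(1-b)$, so the exponent on $u$ becomes $\beta=p\alpha\ge\alpha/(1-b)$. Your argument therefore only yields
\[
\alpha(X,\mu_p)\ \ge\ \frac{(1-b)\,\widehat{\mathrm{vol}}(X,p)^{1/n}}{1+m},
\]
which is strictly weaker than the claim whenever some discrepancy is negative. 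In particular, in the admissible case ($m=0$ but typically $b>0$) you obtain $\alpha(X,\mu_p)\ge(1-b)\,\widehat{\mathrm{vol}}(X,p)^{1/n}$, not the asserted equality.

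The paper avoids this extra loss by a different use of H\"older: it keeps the negative-discrepancy weight inside a \emph{twisted} singularity exponent $c_W(\varphi\circ\pi)$ (for which the Demailly--Koll\'ar semicontinuity still applies to get uniformity), and then H\"olders \emph{back to the adapted measure} $\pi^*\mu_p$ rather than to $dV_{\tilde\Omega}$. The first H\"older factor is then controlled by $\tilde\alpha(X,\mu_p)=\widehat{\mathrm{vol}}(X,p)^{1/n}$ (Proposition~\ref{pro:Skoda}), and the only H\"older loss comes from integrating out the \emph{positive}-discrepancy part $e^{(1-q')U}$, which is precisely what produces the factor $1/(1+m)$. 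Your valuative route could be repaired along the same lines by bounding $c_W(\varphi\circ\pi)$ directly rather than the untwisted threshold, but a purely valuative estimate of $(A_{\tilde\Omega}(G)-\nu(W,G))/A_\Omega(G)$ does not readily give $1/(1+m)$ without going through $\mu_p$.
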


Following \cite[Definition 1.1]{LTW21} we say here that $(X,p)$ is an admissible singularity if
there exists a resolution $\pi:\tilde{X} \rightarrow X$ (with snc exceptional divisor $E=\sum_j E_j$ and $\pi$- ample divisor $-\sum b_j E_j$, $b_j \in \Q^+$)
such that the discrepancies $a_i \in (-1,0]$ are all non-positive. Recall that 
\begin{itemize}
\item any $2$-dimensional log terminal   singularity is admissible;
\item the vertex of the cone over a smooth Fano manifold is admissible;
\item $(X,p)$ is  admissible if it is $\Q$-factorial and admits a  crepant resolution.
\end{itemize}

 Theorem B from the introduction follows from the combination of 
 Proposition \ref{pro:alphavsvol1}, Theorem \ref{thm:lowerboundalpha} and Proposition \ref{pro:lowerboundalpha2}.

\begin{proof}
 We seek $\alpha>0$ such that
\begin{equation}
    \label{eqn:Request}
    \sup_{\varphi\in \mathcal{F}_1(\Omega)}\int_{\tilde{\Omega}} e^{-\alpha \varphi\circ \pi}\prod_{i=1}^M \lvert s_i\rvert_{h_i}^{2a_i} dV<+\infty. 
\end{equation}

If all the  $a_i$'s are non-positive we can use \cite[Main Theorem]{DK01} 
to show that $\alpha(X,\mu_p)=\Tilde{\alpha}(X,\mu_p)$, hence 
$\alpha(X,\mu_p)=\widehat{\mathrm{vol}}(X,p)^{1/n}$ by Proposition \ref{pro:Skoda}
(this follows from a simple contradiction argument).

In general  we set $U:=\sum_{i:a_i>0}a_i\log \lvert s_i\rvert^2_{h_i}$
and $W:=-\sum_{i:a_i\leq 0} a_i \log \lvert s_i \rvert^2_{h_i}$. 
Using \cite[Main Theorem]{DK01} we obtain
\begin{equation}
    \label{eqn:Alpha_Resolution}
    \alpha(X,\mu_p)\geq \inf_{\varphi\in \mathcal{F}_1(\Omega)}c_W(\varphi\circ \pi).
\end{equation}
where
$$
c_W(\varphi\circ\pi):=\sup\big\{\alpha>0\, : \, \int_{\tilde{\Omega}}e^{-\alpha \varphi\circ \pi- W}dV<+\infty\big\}.
$$
is the twisted complex singularity exponent. 
It then remains to estimate $c_W(\varphi\circ\pi)$ for a fixed $\varphi\in \mathcal{F}_1(\Omega)$. 
As $\pi^*d\mu_p=e^{U-W}dV$, H\"older inequality yields
\begin{equation}
    \label{eqn:Holder}
    \int_{\tilde{\Omega}}e^{-\alpha \varphi\circ \pi-W}dV\leq \Big(\int_{\tilde{\Omega}}e^{-p'\alpha \varphi\circ \pi}\pi^{*}d\mu_p\Big)^{1/p'}\Big(\int_{\tilde{\Omega}}e^{(1-q')U-W}dV\Big)^{1/q'}.
\end{equation}
Set $A:=(\max_i a_i)_+>0$. The second factor on the right-hand side of (\ref{eqn:Holder}) is finite for any $q'<\frac{A+1}{A}$, while the first factor on the right-hand side gives the condition $p'\alpha< \tilde{\alpha}=\widehat{\mathrm{vol}}(X,p)^{1/n}$. We infer
$
c_W(\varphi\circ \pi)\geq \frac{\widehat{\mathrm{vol}}(X,p)^{1/n}}{1+A},
$
which concludes the proof.
\end{proof}

As the proof shows, the main obstruction to proving the equality
$\alpha(X,\mu_p)=\Tilde{\alpha}(X,\mu_p)=\widehat{\mathrm{vol}}(X,p)^{1/n}$
is the lack of a Demailly-Kollàr result on complex spaces. 
Resolving the singularities, one ends up with a twisted version of Demailly-Kollàr's problem
on a smooth manifold. It is known that the general form of such a problem 
has a negative answer (see \cite[Remark 1.3]{Pham14}).

\section{Ricci inverse iteration}
\label{sec:RicciIteration}

In this section we prove Theorem C from the introduction.
The strategy is similar to that of \cite[Theorem 1]{GKY13}, with
a singular twist.

We fix $\gamma < \gamma_{crit}(X,p)$ and consider, for $j \in \N$,
the sequence of functions $\f_j \in PSH(\Omega)$ defined
by induction as follows: pick $\f_0 \in {\mathcal T}_{\phi}^{\infty}(\Omega)$ 
a smooth initial data, and let  
$\f_{j+1} \in PSH(\Omega) \cap {\mathcal C}^0(\overline{\Omega}) \cap {\mathcal C}^{\infty}(\overline{\Omega} \setminus \{p\}) $ be the unique solution to
$$
(dd^c \f_{j+1})^n=\frac{e^{-\gamma \f_j} \mu_p}{\int_{\Omega} e^{-\gamma \f_j} \mu_p}
$$
with boundary values ${\f_{j+1}}_{|\partial \Omega}=\phi$.
The existence and regularity of $\f_j$ off the singular locus follows from 
\cite[Theorem 1.4]{Fu21}, while the continuity of $\f_j$ near $p$ is a consequence of 
\cite[Theorem A]{GGZ23}.

We are going to establish uniform a priori estimates  on arbitrary derivatives of  the $\f_j$'s
in $\overline{\Omega} \setminus \{p\}$, thus $(\f_j)$ admits "smooth" cluster values.
We will show that the functional $F_{\gamma}$ is constant on 
the set ${\mathcal K}$ of these cluster points, so that any such   $\p$
is a solution of the Monge-Amp\`ere equation
$$
(dd^c \p)^n=\frac{e^{-\gamma \p} \mu_p}{\int_{\Omega} e^{-\gamma \p} \mu_p}
$$
with boundary values ${\p}_{|\partial \Omega}=\phi$.

\subsection{Uniform estimates}

\begin{prop} \label{prop:C0estimate}
There exists $C_0>0$ such that $||\f_j||_{L^{\infty}(\Omega)} \leq C_0$ for all $j \in \N$.
\end{prop}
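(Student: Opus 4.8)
The plan is to exploit the Ricci-iteration structure to obtain uniform bounds on the Monge--Amp\`ere energy of the $\f_j$, and then to convert these into a uniform sup-norm bound via the $L^p$ a priori estimate for the Dirichlet problem.

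The first point is that the Ding functional is non-decreasing along the iteration, $F_{\gamma}(\f_{j+1})\ge F_{\gamma}(\f_j)$. Indeed each $\f_j$ lies in $\mathcal{T}_{\phi}(\Omega)$, so Lemma \ref{lem:Ener} applied to $\f_{j+1},\f_j$ gives, with $\nu_j:=(dd^c\f_{j+1})^n=\frac{e^{-\gamma\f_j}\mu_{p}}{\int_{\Omega}e^{-\gamma\f_j}\mu_{p}}$,
$$E_{\phi}(\f_{j+1})-E_{\phi}(\f_j)\ \ge\ \int_{\Omega}(\f_{j+1}-\f_j)\,d\nu_j,$$
while Jensen's inequality for the probability measure $\nu_j$ yields
$$\int_{\Omega}(\f_{j+1}-\f_j)\,d\nu_j\ \ge\ -\frac1{\gamma}\log\int_{\Omega}e^{-\gamma(\f_{j+1}-\f_j)}\,d\nu_j\ =\ -\frac1{\gamma}\log\frac{\int_{\Omega}e^{-\gamma\f_{j+1}}\mu_{p}}{\int_{\Omega}e^{-\gamma\f_j}\mu_{p}}.$$
Adding these and rearranging gives the monotonicity, hence $F_{\gamma}(\f_j)\ge F_{\gamma}(\f_0)$ for all $j$.

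Next, since $\gamma<\gamma_{crit}(X,p)$, Lemma \ref{lem:coercive} provides $A,B>0$ with $F_{\gamma}\le A E_{\phi}+B$ on $\mathcal{T}_{\phi}(\Omega)$; combined with the previous step, $E_{\phi}(\f_j)$ is bounded below uniformly in $j$. Moreover $\f_j\le\max_{\partial\Omega}\phi=:C^+$ by the maximum principle, so $\int_{\Omega}e^{-\gamma\f_j}\mu_{p}\ge e^{-\gamma C^+}>0$. Now fix $\gamma'\in(\gamma,\gamma_{crit}(X,p))$ and set $p:=\gamma'/\gamma>1$. Applying Lemma \ref{lem:coercive} and Proposition \ref{pro:CoerciveMT} to $\gamma'$, there are $C>0$ and $a\in(0,1)$ with $\big(\int_{\Omega}e^{-\gamma'\f}\mu_{p}\big)^{1/\gamma'}\le C\,e^{-aE_{\phi}(\f)}$; evaluated at $\f_j$ and using the lower bound on $E_{\phi}(\f_j)$ this gives $\int_{\Omega}e^{-\gamma'\f_j}\mu_{p}\le C_2$ uniformly. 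Consequently the density $g_j:=e^{-\gamma\f_j}\big/\int_{\Omega}e^{-\gamma\f_j}\mu_{p}$ of $(dd^c\f_{j+1})^n$ with respect to $\mu_{p}$ satisfies
$$\int_{\Omega}g_j^{\,p}\,d\mu_{p}=\frac{\int_{\Omega}e^{-\gamma'\f_j}\mu_{p}}{\big(\int_{\Omega}e^{-\gamma\f_j}\mu_{p}\big)^{p}}\le C_2\,e^{\gamma'C^+}=:C_3,$$
a bound independent of $j$; here the slack $\gamma'>\gamma$ is precisely what turns the trivial $L^1$-control on $g_j$ into an $L^p$-bound with $p>1$.

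Finally, I would turn this uniform $L^p$ bound on the densities into the desired $L^\infty$ bound by the $L^p$-version of Ko\l{}odziej's estimate for the Dirichlet problem. Pulling back by a log resolution $\pi:\tilde{\Omega}\to\Omega$, the function $\pi^*\f_{j+1}$ is bounded and psh on $\tilde{\Omega}$ with boundary value $\phi$ and solves $(dd^c\pi^*\f_{j+1})^n=(g_j\circ\pi)\prod_i|s_{E_i}|^{2a_i}\,dV_{\tilde{\Omega}}$ (using that Monge--Amp\`ere measures of bounded psh functions put no mass on the exceptional divisor); since $a_i>-1$ the factor $\prod_i|s_{E_i}|^{2a_i}$ lies in $L^q(dV_{\tilde{\Omega}})$ for some $q>1$ (cf.\ Lemma \ref{lem:integrabilityf}), so by H\"older and the bound $C_3$ the right-hand density lies in $L^s(dV_{\tilde{\Omega}})$ for a fixed $s>1$ with uniformly bounded norm. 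The a priori estimate for the complex Monge--Amp\`ere equation with $L^s$ density (\cite{EGZ09,GGZ23}) then bounds $\|\pi^*\f_{j+1}\|_{L^\infty(\tilde{\Omega})}=\|\f_{j+1}\|_{L^\infty(\Omega)}$ by a constant depending only on $s$, $C_3$, $\phi$ and $\Omega$; together with $\|\f_0\|_{L^\infty(\Omega)}<\infty$ this yields the uniform constant $C_0$. The main obstacle is exactly this last step: making Ko\l{}odziej's estimate quantitative and, crucially, \emph{uniform in $j$} --- which is the whole reason the earlier steps are needed, to keep $\|g_j\|_{L^p(\mu_{p})}$ under control.
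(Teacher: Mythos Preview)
Your argument is correct and follows the same architecture as the paper's proof: monotonicity of $F_\gamma$ along the iteration, coercivity (Lemma~\ref{lem:coercive}) to pin down $E_\phi(\f_j)$ uniformly, then an $L^p$ bound on the Monge--Amp\`ere densities, and finally Ko\l odziej's estimate via \cite{GGZ23}. The one substantive difference is in the middle step: the paper converts the energy bound into an $L^p$ density bound by observing that $\{\f_j\}$ sits in a compact finite-energy set, hence has uniformly zero Lelong numbers (Theorem~\ref{thm:Cegrell}), and then invokes Skoda's uniform integrability theorem on a resolution to get $e^{-\gamma\f_j}$ bounded in $L^r(dV_X)$ for every $r>1$; you instead apply the Moser--Trudinger inequality directly at an auxiliary exponent $\gamma'\in(\gamma,\gamma_{crit})$ to get $e^{-\gamma'\f_j}$ bounded in $L^1(\mu_p)$, i.e.\ $g_j$ bounded in $L^{\gamma'/\gamma}(\mu_p)$. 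Your route is more self-contained (it uses only the defining property of $\gamma_{crit}$ and avoids Skoda/Cegrell compactness), at the cost of yielding a fixed exponent $p=\gamma'/\gamma$ rather than all $r>1$; either suffices for the final H\"older step and the appeal to \cite[Proposition~1.8]{GGZ23}.
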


This uniform estimate relies crucially on a technique introduced by Kolodziej in \cite{Kol98},
which has been extended to this singular setting in \cite{GGZ23}.

\begin{proof}
We assume without loss of generality that $\phi_0$ is the maximal psh extension of $\phi$ in $\Omega$.
In particular $\f_j \leq \phi_0$ for all $j \in \N$, and 
$E_{\phi}(\f_j) \leq E_{\phi}(\phi_0)=0$.
Our task is to establish a uniform lower bound $\f_j \geq -C_0$.

The assumption $\gamma < \gamma_{crit}(X,p)$ ensures, by Lemma \ref{lem:coercive}, that 
the functional $F_{\gamma}$ is coercive, in particular there exist  
$0 <a<1$ and $0<b$ such that 
$$
F_{\gamma}(\f_j) \leq a E_{\phi}(\f_j)+b
$$
for all $j \in \N$. 
It follows from \cite[Proposition 12]{GKY13} (exactly the same proof applies here)
that $j \mapsto F_{\gamma}(\f_j)$ is increasing, hence
$$
F_{\gamma}(\f_0) \leq F_{\gamma}(\f_j) \leq a E_{\phi}(\f_j) +b \leq b,
$$
showing that the energies $(E_{\phi}(\f_j))$ are uniformly bounded,
$-b' \leq E_{\phi}(\f_j) \leq 0$.

 The corresponding  family ${\mathcal G}_{b'}$
 of psh functions with $\phi$-boundary values and
   energy bounded by $b'$ is compact, and all its members have zero Lelong number 
   at all points in $\Omega$ (see Theorem \ref{thm:Cegrell}). 
   Passing through a resolution, one can thus  invoke
   Skoda's uniform integrability theorem \cite[Theorem 2.50]{GZbook} to conclude that
 the densities  $e^{-\gamma \f_j}$ are uniformly in $L^r(dV_X)$ for any $r>1$.
 
 Now $\mu_p=fdV_X$ with $f \in L^{1+\e}$ for some $\e>0$ since $(X,p)$ is log-terminal.
  H\"older inequality thus ensures
that the densities $g_j:=\frac{e^{-\gamma \f_j} f}{\int_{\Omega} e^{-\gamma \f_j} d\mu_p}$
are uniformly in $L^{1+\e'}(dV_X)$ for some $0<\e'<\e$. 

It therefore follows from \cite[Proposition 1.8]{GGZ23} (an extension of the main result of
\cite{Kol98} to the setting of pseudoconvex subsets of a singular complex space) that 
the $\f_j$'s are uniformly bounded.
\end{proof}

\subsection{${\mathcal C}^2$-estimates}

In this section we establish the following a priori estimates.

\begin{prop} \label{prop:C2estimate}
For all  compact subset $K$ of $\overline{\Omega} \setminus \{p\}$, 
there exists a constant  $C_2(K)>0$ such that for all $j \in \N$,
$$
 0 \leq \sup_K   \Delta_{\omega_X} \f_j   \leq C_2(K).
$$
\end{prop}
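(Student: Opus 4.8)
For the upper bound the plan is to run the classical Aubin--Yau Laplacian estimate, localised away from the singular point; the lower bound $\Delta_{\omega_X}\varphi_j\ge 0$ is immediate since $\varphi_j$ is plurisubharmonic, so $dd^c\varphi_j\ge 0$ and $\Delta_{\omega_X}\varphi_j=\operatorname{tr}_{\omega_X}(dd^c\varphi_j)\ge 0$ on $X_{\reg}$. I would fix $K\Subset\overline\Omega\setminus\{p\}$ and an open $V$ with $K\subset V$ and $\overline V\cap\{p\}=\emptyset$, on which $X$ is a smooth K\"ahler manifold, $\omega_X$ a fixed smooth metric of bounded bisectional curvature, and $\mu_p=f\,dV_X$ with $f$ smooth and bounded above and below. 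Writing $\omega_j:=dd^c\varphi_j$ and $c_{j-1}:=\int_\Omega e^{-\gamma\varphi_{j-1}}d\mu_p$, the iteration reads $\omega_j^n=c_{j-1}^{-1}e^{-\gamma\varphi_{j-1}}f\,dV_X$. The first step is to record two uniform facts. By Proposition \ref{prop:C0estimate}, $\|\varphi_{j-1}\|_{L^\infty}\le C_0$ (and $\varphi_{j-1}\le\phi_0$ is bounded above), so $c_{j-1}$ and $e^{-\gamma\varphi_{j-1}}f$ are pinched between positive constants on $\overline V$ independently of $j$; hence $\omega_j^n/\omega_X^n\in[c_0,C_0']$ on $\overline V$ uniformly in $j$, which forces $\operatorname{tr}_{\omega_X}\omega_j\ge n(\omega_j^n/\omega_X^n)^{1/n}\ge c_1>0$ there. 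Moreover, applying $-dd^c\log$ to the equation,
\[
\Ric(\omega_j)=\gamma\,dd^c\varphi_{j-1}+\Ric(\omega_X)-dd^c\log\!\big(f\,dV_X/\omega_X^n\big)\ge -C_V\,\omega_X\quad\text{on }\overline V,
\]
uniformly in $j$: the crucial point is that $dd^c\varphi_{j-1}\ge 0$ makes the Ricci lower bound \emph{free}, so no coupling of the estimates along the iteration is needed.

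For the interior estimate I would pick a cut-off $\chi\in\mathcal C^\infty(\overline\Omega)$ with $\chi\equiv 1$ near $K$ and $\chi\equiv 0$ near $p$ and outside $V$, and study the maximum over $\overline\Omega$ of $H_j:=\chi\log(\operatorname{tr}_{\omega_X}\omega_j)-A\varphi_j$ for $A$ large. If the maximum lies in $\{\chi>0\}\subset V$, I apply the Aubin--Yau/Siu inequality
\[
\Delta_{\omega_j}\log(\operatorname{tr}_{\omega_X}\omega_j)\ge-\frac{\operatorname{tr}_{\omega_X}\Ric(\omega_j)}{\operatorname{tr}_{\omega_X}\omega_j}-B\,\operatorname{tr}_{\omega_j}\omega_X,
\]
bound the first term on the right using the Ricci lower bound and $\operatorname{tr}_{\omega_X}\omega_j\ge c_1$, absorb the gradient terms produced by $\chi$ in the usual way, choose $A>B$, and conclude that $\operatorname{tr}_{\omega_j}\omega_X$ — and then, through the elementary bound of $\operatorname{tr}_{\omega_X}\omega_j$ by a constant times $(\operatorname{tr}_{\omega_j}\omega_X)^{n-1}(\omega_j^n/\omega_X^n)$ together with the density bound, also $\operatorname{tr}_{\omega_X}\omega_j$ — is bounded at that point, uniformly in $j$. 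If the maximum lies on $\partial\Omega$ I fall back on the boundary estimate below. Either way $H_j\le C$ with $C$ uniform, which yields $\Delta_{\omega_X}\varphi_j=\operatorname{tr}_{\omega_X}\omega_j\le e^{C+AC_0}$ on $\{\chi=1\}\supset K$.

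The remaining ingredient is a boundary second-order estimate near $\partial\Omega$ (which $K$ may meet), of Caffarelli--Kohn--Nirenberg--Spruck type, carried out as in \cite[Theorem 1]{GKY13}: the tangential--tangential second derivatives on $\partial\Omega$ are controlled by $\|\phi\|_{\mathcal C^2}$ and the boundary gradient bound — the latter obtained by squeezing $\varphi_j$ between $\phi_0$ and a barrier $A'\rho+\phi_0$ in a collar of $\partial\Omega$, using the uniform $\mathcal C^0$ bound and the comparison principle there; the tangential--normal derivatives by a barrier built from $\rho$ via strict pseudoconvexity; and the double--normal derivative through the equation using the two-sided bound on $\omega_j^n/\omega_X^n$. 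Since $\partial\Omega$ lies at positive distance from $p$, the singularity plays no role, and all constants depend only on $C_0$, the density bounds and the fixed data $(\partial\Omega,\phi,\rho,\omega_X)$, hence are uniform in $j$. I expect this boundary estimate to be the main obstacle: only the $\mathcal C^0$ bound on $\varphi_{j-1}$ is available, so the density near $\partial\Omega$ is known to be bounded above and below but not, a priori, uniformly $\mathcal C^1$, and the barrier construction must be arranged (following \cite{GKY13}) so as to require only $L^\infty$ control of the density together with the boundary gradient bound; everything else is routine Aubin--Yau, the only genuinely new bookkeeping being the localisation away from $p$ through $\chi$ and the fact that the iteration provides a lower bound for $\Ric(\omega_j)$ at no cost.
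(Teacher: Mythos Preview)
Your key simplification rests on a sign error. In the Aubin--Yau inequality $\Delta_{\omega_j}\log\operatorname{tr}_{\omega_X}\omega_j \ge -\operatorname{tr}_{\omega_X}\Ric(\omega_j)/\operatorname{tr}_{\omega_X}\omega_j - B\operatorname{tr}_{\omega_j}\omega_X$, what is needed to bound the first term from below is an \emph{upper} bound on $\Ric(\omega_j)$, not a lower one. Taking $-dd^c\log$ of the equation gives $\Ric(\omega_j)=\gamma\,dd^c\varphi_{j-1}+(\text{bounded on }\overline V)$; the positivity $dd^c\varphi_{j-1}\ge 0$ indeed makes the lower bound free, but the required upper bound is $\Ric(\omega_j)\le \gamma\,\omega_{j-1}+C\omega_X$, and $\omega_{j-1}$ is precisely what is not yet controlled. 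So the estimate \emph{is} coupled to the previous iterate. The paper (following \cite{GKY13}) handles this by adding $+\varphi_{j-1}$ to the test function: then $\Delta_{\omega_j}\varphi_{j-1}=\operatorname{tr}_{\omega_j}\omega_{j-1}$ cancels the bad term coming from $\operatorname{tr}_{\omega_X}\omega_{j-1}\le (\operatorname{tr}_{\omega_X}\omega_j)(\operatorname{tr}_{\omega_j}\omega_{j-1})$, and the resulting constant depends only on $\|\varphi_{j-1}\|_{L^\infty}$, which is uniform by Proposition~\ref{prop:C0estimate}. The coupling is thus mild, but it is present.

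Two further gaps. Since $\omega_j=dd^c\varphi_j$ with no background form, $\Delta_{\omega_j}(-A\varphi_j)=-An$ contributes nothing towards absorbing $-B\operatorname{tr}_{\omega_j}\omega_X$; ``choose $A>B$'' is vacuous, and one must add a strictly psh term (the paper uses $+A\rho'$ with $dd^c\rho'\ge\beta$). And ``absorb the gradient terms produced by $\chi$ in the usual way'' is not routine here: the first-order condition at the maximum introduces $\nabla\varphi_j$, which you have not bounded. The paper avoids cutoffs altogether by passing to a resolution $\pi\colon\tilde\Omega\to\Omega$, working with smooth approximants $\varphi_{j,\epsilon}$, and replacing your $\chi$ by a psh barrier $\rho'=K\rho\circ\pi+\sum_\ell b_\ell\log|s_\ell|^2$ tending to $-\infty$ along the exceptional divisor; a separate weighted gradient estimate is then established and used to close the boundary Laplacian step.
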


Here $\Delta_{\omega_X}h:=n \frac{dd^c h \wedge \omega_X^{n-1}}{\omega_X^n}$ denotes the 
Laplace operator with respect to the K\"ahler form $\omega_X$.
Such an estimate goes back to the  regularity theory developed in \cite{CKNS85}.
The strategy of the proof is similar to that of \cite[Theorem 15]{GKY13}, with a twist
due to the presence of the singular point $p$.

\begin{proof}
To obtain these estimates, one considers a resolution of the singularity  
$\pi:\tilde{\Omega} \rightarrow \Omega$. We let 
$E=\cup_{\ell=1}^m E_{\ell}$ denote the exceptional
divisor and let
\begin{itemize}
\item $s_{\ell}$ denote a holomorphic section of ${\mathcal O}(E_{\ell})$ such that $E_{\ell}=(s_{\ell}=0)$;
\item $b_{\ell}$ be positive rational numbers such that $-\sum_{\ell} b_{\ell}E_{\ell}$ is $\pi$-ample;
\item $h_{\ell}$ denote a smooth hermitian metric of ${\mathcal O}(E_{\ell})$
and $K>>1$  such that
$$
\beta:=K dd^c \rho \circ \pi- \sum_{\ell=1}^m b_{\ell} \Theta_{h_{\ell}}
\; \text{ is a K\"ahler form on } \tilde{\Omega}.
$$
\end{itemize} 
Observe that  the function $\rho':=K \rho \circ \pi+\sum_{\ell=1}^m b_{\ell} \log|s_{\ell}|_{h_{\ell}}^2$ is strictly 
psh   in $\tilde{\Omega}$, with $dd^c \rho' \geq \beta$ and $\rho'(z) \rightarrow -\infty$
as $z \rightarrow E$.

\smallskip

Recall that $\pi^* \mu_p=\Pi_{\ell=1}^m |s_{\ell}|^{2a_{\ell}} dV_{\tilde{\Omega}}$ with $a_{\ell}>-1$,
and set $|s|^2=\Pi_{\ell=1}^m |s_{\ell}|^{2b_{\ell}}$.
We are going to show that
there exist uniform constants $C_2>0, m \in \N$ such that 
\begin{equation} \label{eq:c2estimate}	
 0 \leq |s|^{2m}  |\Delta_{\beta} \f_{j}| (z)   \leq C_2
\end{equation}
for all $j \in \N, z \in \overline{\Omega}$, from which Proposition \ref{prop:C2estimate} follows.
Slightly abusing notation, we  still denote here by $\f_j$ the function $\f_j \circ \pi$.

We  approximate $\f_j$ by  the  smooth solutions $\f_{j,\e}$ of the Dirichlet problem
\begin{equation}
    \label{eqn:CMA}
    \begin{cases}
        (\e \beta+ dd^c\varphi_{j+1,\e})^n=\frac{e^{-\gamma \varphi_{j,\e}} \prod_{l=1}^m (\lvert s_l \rvert_{h_l}^2+\e^2)^{a_l}}{c_j} dV_{\tilde{\Omega}}\\
        \varphi_{j+1,\e|\partial \tilde{\Omega}}=\phi
    \end{cases}
\end{equation}
with $\f_{0,\e}=\f_0$ and  $c_j=\int_{\Omega} e^{-\gamma \f_j} d\mu_p$.
We are going to establish a priori estimates on these smooth approximants,
whose existence is guaranteed by \cite[Theorem 1.1]{GL10}.
We then show that $\f_{j,\e}$ converges to $\f_j$ as $\e$ decreases to zero.

\smallskip

\noindent {\it Step 1.}
We first claim that for all $j,\e$,
\begin{equation} \label{eq:c2estimate1}
\sup_{\partial \tilde{\Omega}} |\nabla \f_{j+1,\e}| \leq A_{1,j,\e},
\end{equation}
where $A_{1,j,\e}>0$ only depends on an upper-bound on $||\f_{j,\e}||_{L^{\infty}(\tilde{\Omega})}$.

Let $\Phi^-$ be a smooth psh extension of $-\phi$ to a neighborhood of $\overline{\Omega}$.
Observe that $\f_{j+1,\e}+\Phi^-\circ \pi$ is $\beta$-psh in $\tilde{\Omega}$,
with zero boundary values. 
Thus $\f_{j+1,\e}+\Phi^-\circ \pi \leq u$, where $u$ is the smooth solution in $\tilde{\Omega}$
to the Laplace equation $\Delta_{\beta} u=-n$ with zero boundary values.
We infer $\f_{j+1,\e} \leq \p_1:=u-\Phi^-\circ \pi$ in $\tilde{\Omega}$.

We now construct a psh function $\p_2 \leq \f_{j+1,\e}$  
with $\phi$-boundary values and such that $\sup_{\partial \tilde{\Omega}} |\p_2|$
is controlled from above by $||\f_{j,\e}||_{L^{\infty}(\tilde{\Omega})}$.
The upper bound on $\sup_{\partial \tilde{\Omega}} |\nabla \f_{j+1,\e}|$
thus follows from the inequalities $\p_2 \leq \f_{j,\e} \leq \p_1$.

Recall that $\pi^* \mu_p =\Pi_{\ell=1}^m |s_{\ell}|^{2a_{\ell}} dV_{\tilde{\Omega}}$.
We let $P \subset [1,m]$ denote the subset of indices such that $-1<a_{\ell} <0$.
For $\delta>0$ small enough, we observe that $v:=\rho'+\delta \sum_{\ell \in P} |s_{\ell}|^{2\delta}$
is strictly psh in $\tilde{\Omega}$ and satisfies, in $\tilde{\Omega} \setminus E$,
$$
dd^c v \geq c\left\{ \beta+\sum_{\ell \in P} 
\frac{i ds_{\ell} \wedge d\overline{s_{\ell}}}{|s_{\ell}|^{2(1-\delta})} \right\}
$$
for some $c>0$, hence $(dd^c v)^n \geq c' \pi^* \mu_p$.
Replacing $v$ by $\lambda_{j,\e} v$, we obtain
$$
(\e \beta+dd^c \lambda_{j,\e} v)^n \geq \lambda_{j,\e}^n (dd^c v)^n \geq
\frac{e^{-\gamma \varphi_{j,\e}} \prod_{l=1}^m (\lvert s_l \rvert_{h_l}^2+\e^2)^{a_l}}{c_j} dV_{\tilde{\Omega}},
$$ 
for some $\lambda_{j,\e}>0$ which only depends on an upper-bound 
on $||\f_{j,\e}||_{L^{\infty}(\tilde{\Omega})}$.
In other words $\lambda_{j,\e} v$ is a subsolution to the Monge-Amp\`ere equation
 in $\tilde{\Omega} \setminus E$.

We  modify $\lambda_{j,\e} v$ near  $\partial \tilde{\Omega}$ to produce a subsolution
with the right boundary values. Let $\chi$ be a cut-off function which is   $1$ 
near $E$ and has compact support in $\tilde{\Omega}$. The function
$\p_2=\chi \lambda_{j,\e} v+(1-\chi) \phi_0+A \rho \circ \pi$
satisfies all our requirements for $A>0$ large enough. 
Note however that it is only locally bounded in $\tilde{\Omega} \setminus E$.

Consider finally $\max(\p_2,\f_{j,\e})$.
This is a subsolution of the Dirichlet problem which is globally bounded in $\tilde{\Omega}$.
It follows  from the maximum principle that $\max(\p_2,\f_{j,\e}) \leq \f_{j,\e}$, hence
$\p_2  \leq \max(\p_2,\f_{j,\e}) \leq \f_{j,\e}$.

\medskip

 \noindent {\it Step 2.}
We next claim that there exist constants $A_2, A_{3,j+1,\e}>0$ such that 
\begin{equation} \label{eq:c2estimate2}
\sup_{\tilde{\Omega}}[\lvert s\rvert^{2A_2}_h\lvert\nabla \f_{j+1,\e}\rvert^2_{\beta}] \leq A_{3,j+1,\e}
\end{equation}
where $ A_{3,j+1,\e}$ only depends on an upper-bound on 
$\lVert \f_{k,\e} \rVert_{L^\infty(\tilde{\Omega})}$ for $k\leq j+1$.

\begin{proof}
The proof is a variant of \cite[Proposition 2.2]{DFS21}, which itself relies on previous estimates due
to Blocki and Phong-Sturm.

As we work in $\tilde{\Omega}\setminus\mathrm{Supp}(E)$, we identify $\beta$ with
$dd^c \big( K\rho\circ \pi +\log \lvert s\rvert^2_h\big)$. 
Replacing $\varphi_{j+1,\epsilon}$ by 
$\tilde{\varphi}_{j+1,\epsilon}:=\varphi_{j+1,\epsilon}-(K\rho \circ \pi+\log\lvert s \rvert^2_h)$ the 
 equation (\ref{eqn:CMA}) becomes
\begin{equation}
    \begin{cases}
        \big((1+\epsilon)\beta+ dd^c \tilde{\varphi}_{j+1,\epsilon}\big)^n
        =c_j^{-1}e^{-\gamma \varphi_{j,\epsilon}}\prod_{l=1}^m (\lvert s_l\rvert^2_{h_l}+\epsilon^2)^{a_l}dV_{\tilde{\Omega}},\\
        \tilde{\varphi}_{j+1,\epsilon|\partial \tilde{\Omega}}=\phi-\log\lvert s \rvert^2_h.
    \end{cases}
\end{equation}
As
$$
\Big\lvert\lvert\nabla\varphi_{j,\epsilon}\rvert_\beta -\lvert\nabla \tilde{\varphi}_{j,\epsilon}\rvert_\beta\Big\rvert\leq \frac{\lvert \nabla \lvert s \rvert^2_h\rvert_\beta}{\lvert s\rvert_h}
+C,
$$
to get the estimate (\ref{eq:c2estimate2}) for $\tilde{\varphi}_{j,\epsilon}$ it is enough to prove by induction that there exists positive constants $B_2, B_{3,j+1,\e}$ such that
\begin{equation}
    \label{eqn:Induction}
    \sup_{\tilde{\Omega}} [\lvert s\rvert_h^{2B_2}\lvert \nabla \tilde{\f}_{j+1,\e}\rvert^2_\beta]\leq \max\Big\{ \sup_{\tilde{\Omega}}[\lvert s \rvert_h^{2B_2}\lvert \nabla \tilde{\f}_{j,\e}\rvert_\beta^2], B_{3,j+1,\e}\Big\} 
\end{equation}
where $B_2$ is uniform in $j,\e$ while $ B_{3,j+1,\e}$ only depends on upper bounds on $\lVert \f_{j+1,\e}\rVert_{L^\infty(\tilde{\Omega})}, \lVert \f_{j,\e} \rVert_{L^\infty(\tilde{\Omega})}$, and where
 $\tilde{\f}_{0,\e}:=-\big(K\rho\circ \pi + \log \lvert s \rvert^2_h\big)$.
To lighten notations  we rewrite the equation 
\begin{equation}
    \label{eqn:MA_Gradient}
    \begin{cases}
        \big(\beta_\epsilon+ dd^c u\big)^n=e^{-v}\prod_{l=1}^m (\lvert s_l\rvert^2_{h_l}+\epsilon^2)^{a_l}\beta_\epsilon^n\\
        u_{|\partial \tilde{\Omega}}=\tilde{\phi}
    \end{cases}
\end{equation}
where $\beta_\epsilon:=(1+\epsilon)\beta$ is a non-degenerate smooth family of K\"ahler forms. Note that (\ref{eqn:Induction}) becomes
\begin{equation}
    \label{eqn:Induction2}
    \sup_{\tilde{\Omega}} [\lvert s\rvert_h^{2B_2}\lvert \nabla u\rvert^2_\beta]\leq \max\Big\{ \sup_{\tilde{\Omega}}[\lvert s \rvert_h^{2B_2}\lvert \nabla(v/\gamma-\log\lvert s \rvert_h^2-f_\epsilon)\rvert_\beta^2], B_{3,j+1,\e}\Big\} 
\end{equation}
where $\{f_\epsilon\}_{\epsilon>0}$ is a non-degenerate smooth family. In the estimates that follows we indicate with $C_i$ all the constants \emph{under control}, i.e. that depend on a upper bounds on $\lVert \varphi_{j+1,\epsilon}\rVert_{L^\infty(\tilde{\Omega})}, \lVert \varphi_{j,\epsilon} \rVert_{L^\infty(\Omega)}$. Observe that $\lVert u+\log\lvert s\rvert^2_h\rVert_{L^\infty(\tilde{\Omega})}, \lVert v\rVert_{L^\infty(\tilde{\Omega})}$ and $\sup_{\partial\tilde{\Omega}}\lvert \nabla u \rvert$ are under control. The constant $B_{3,j+1,\e}$ in (\ref{eqn:Induction2}) will clearly depend on the $C_i$'s.
We indicate with $D_i$ all the constants uniform in $j,\e$, which will be used to determine the uniform constant $B_2$ in (\ref{eqn:Induction2}).


We denote by $\Delta_\epsilon,\Delta'_\epsilon$ respectively the Laplacian operators with respect to $\beta_\epsilon$ and to $\eta_{\epsilon}:=\beta_\epsilon + dd^c u$.
Consider
$$
H:=\log \lvert \nabla u\rvert_{\beta_\epsilon}^2+\log \lvert s \rvert_h^{2k}-G(u)
$$
where $G(x)=Ax-\frac{B}{x+C+1}$ for $C$ chosen so that $u\geq -C$, while $A>0, B>0$ to be determined later. The constants $A,k$ are chosen to be uniform in $j,\e$ while $ B$ is under control. If $H$ reaches its maximum at $x_M$, then
\begin{equation}
    \label{eqn:Maximum}
    \lvert \nabla u \rvert_{\beta_\epsilon}^2\lvert s\rvert_h^{2(k+A)}\leq
    C_1\big( \lvert \nabla u \rvert_{\beta_\epsilon}^2\lvert s\rvert_h^{2(k+A)}\big)(x_M)
\end{equation}
for a constant $C_1$ under control.

As $u+\log\lvert s\rvert^2_h$ is smooth on $\tilde{\Omega}$, we ensure that $H(x)\simeq (k+A-1)\log\lvert s\rvert_h^2\to -\infty$ as $x\to  \mathrm{Supp}(E_j)$ by imposing $k\geq 1$. 
If $H$ reaches its maximum on $\partial \tilde{\Omega}$ then we are done 
since $\sup_{\partial \tilde{\Omega}}\lvert\nabla u \rvert$ is under control.
From now on we thus suppose   that $H$ reaches its maximum in $\tilde{\Omega}\setminus\{s=0\}$. A direct computation \cite[eq. (5.11), (5.20)]{PSS12} yields
\begin{multline}
    \label{eqn:Est1}
    \Delta'_\epsilon \log \lvert \nabla u\rvert^2_{\beta_\epsilon}\geq \frac{2\mathrm{Re}\langle \nabla v+\sum_{l=1}^m a_l\nabla \log \big(\lvert s_l \rvert^2_{h_l}+\epsilon^2\big), \nabla u\rangle_{\beta_{\epsilon}}}{\lvert \nabla u\rvert_{\beta_\epsilon}^2}
    -\Lambda \mathrm{tr}_{\eta_\epsilon}\beta_\epsilon\\ 
    +2\mathrm{Re}\langle \frac{\nabla \lvert \nabla u\rvert_{\beta_\epsilon}^2}{\lvert \nabla u\rvert_{\beta_\epsilon}^2},\frac{\nabla u}{\lvert \nabla u\rvert_{\beta_\epsilon}^2} \rangle_{\eta_\epsilon}-2\mathrm{Re}\langle \frac{\nabla \lvert \nabla u\rvert_{\beta_\epsilon}^2}{\lvert \nabla u\rvert_{\beta_\epsilon}^2},\frac{\nabla u}{\lvert \nabla u\rvert_{\beta_\epsilon}^2} \rangle_{\beta_\epsilon}
\end{multline}
where $\Lambda$ denotes a (uniform in $\e$) lower bound on the holomorphic bisectional curvature of $\beta_\epsilon$. 
At the point where $H$ reaches its maximum we obtain
$$
\frac{\nabla\lvert\nabla u\rvert_{\beta_\epsilon}^2}{\lvert \nabla u\rvert^2_{\beta_\epsilon}}=\nabla \log \lvert \nabla u\rvert^2_{\beta_\epsilon}=-\nabla \big(\log \lvert s \rvert_h^{2k}-G(u)\big)=-\frac{k\nabla \lvert s \rvert^2_h }{\lvert s\rvert^2_h}+G'(u)\nabla u,
$$
hence
\begin{eqnarray*}
\lefteqn{
    2\mathrm{Re}\langle \frac{\nabla \lvert \nabla u\rvert_{\beta_\epsilon}^2}{\lvert \nabla u\rvert_{\beta_\epsilon}^2},\frac{\nabla u}{\lvert \nabla u\rvert_{\beta_\epsilon}^2} \rangle_{\eta_\epsilon}-2\mathrm{Re}\langle  \frac{\nabla \lvert \nabla u\rvert_{\beta_\epsilon}^2}{\lvert \nabla u\rvert_{\beta_\epsilon}^2},\frac{\nabla u}{\lvert \nabla u\rvert_{\beta_\epsilon}^2} \rangle_{\beta_\epsilon} } \\
&=&    2k\mathrm{Re}\langle \frac{\nabla \lvert s \rvert^2_h}{\lvert s \rvert^2_h}, \frac{\nabla u}{\lvert \nabla u \rvert^2_{\beta_\epsilon}} \rangle_{\beta_\epsilon} -2k \mathrm{Re}\langle  \frac{\nabla \lvert s \rvert^2_h}{\lvert s \rvert^2_h}, \frac{\nabla u}{\lvert \nabla u \rvert^2_{\beta_\epsilon}} \rangle_{\eta_\epsilon}+2G'(u)\frac{\lvert \nabla u\rvert^2_{\eta_\epsilon}}{\lvert \nabla u\rvert^2_{\beta_\epsilon}}-2G'(u)\\
&\geq&    2k\mathrm{Re}\langle \frac{\nabla \lvert s \rvert^2_h}{\lvert s \rvert^2_h}, \frac{\nabla u}{\lvert \nabla u \rvert^2_{\beta_\epsilon}} \rangle_{\beta_\epsilon} -2k \mathrm{Re}\langle  \frac{\nabla \lvert s \rvert^2_h}{\lvert s \rvert^2_h}, \frac{\nabla u}{\lvert \nabla u \rvert^2_{\beta_\epsilon}} \rangle_{\eta_\epsilon}-2G'(u),
\end{eqnarray*}
using the monotonicity of $G(x)$ in  the last inequality. 
By (\ref{eqn:Maximum}) and asking $k\geq 2$, we can assume that
 $\lvert s \rvert_h^2 \lvert \nabla u\rvert_{\beta_\epsilon}\geq 1$ at $x_M$. 
 Thus, 
$$
    \lvert 2\mathrm{Re}\langle \frac{\nabla \lvert s \rvert^2_h}{\lvert s \rvert^2_h}, \frac{\nabla u}{\lvert \nabla u \rvert^2_{\beta_\epsilon}} \rangle_{\beta_\epsilon}\rvert\leq 2 \lvert \mathrm{Re}\langle \nabla \lvert s \rvert_h^2, \frac{\nabla u}{\lvert \nabla u\rvert_{\beta_\epsilon}} \rangle_{\beta_\epsilon} \rvert\leq D_1
    $$
    and
    \begin{eqnarray*}
        \lvert 2\mathrm{Re}\langle \frac{\nabla \lvert s \rvert^2_h}{\lvert s \rvert^2_h}, \frac{\nabla u}{\lvert \nabla u \rvert^2_{\beta_\epsilon}} \rangle_{\eta_\epsilon}\rvert
        &\leq&  2 \lvert \mathrm{Re}\langle \nabla \lvert s \rvert_h^2, \frac{\nabla u}{\lvert \nabla u\rvert_{\beta_\epsilon}} \rangle_{\eta_\epsilon} \rvert\leq  \lvert \nabla \lvert s\rvert^2_h \rvert_{\eta_\epsilon}^2+\frac{\lvert s\rvert_h^4\lvert \nabla u\rvert_{\eta_\epsilon}^2}{\lvert s\rvert_h^4\lvert \nabla u\rvert_{\beta_\epsilon}^2} \\
        &\leq&  \lvert \nabla \lvert s\rvert^2_h \rvert_{\beta_\epsilon}^2\mathrm{tr}_{\eta_\epsilon}{\beta_\epsilon}+\lvert s\rvert_h^4\lvert \nabla u\rvert^2_{\eta_\epsilon}.
    \end{eqnarray*}
We infer that at $x=x_M$,
\begin{eqnarray*} \label{eqn:Est2}
\lefteqn{
    2\mathrm{Re}\langle \frac{\nabla \lvert \nabla u\rvert_{\beta_\epsilon}^2}{\lvert \nabla u\rvert_{\beta_\epsilon}^2},\frac{\nabla u}{\lvert \nabla u\rvert_{\beta_\epsilon}^2} \rangle_{\eta_\epsilon}-2\mathrm{Re}\langle  \frac{\nabla \lvert \nabla u\rvert_{\beta_\epsilon}^2}{\lvert \nabla u\rvert_{\beta_\epsilon}^2},\frac{\nabla u}{\lvert \nabla u\rvert_{\beta_\epsilon}^2} \rangle_{\beta_\epsilon}  } \\
   &&\geq  -kD_1-k\lvert \nabla \lvert s\rvert^2_h \rvert_{\beta_\epsilon}^2\mathrm{tr}_{\eta_\epsilon}\beta_\epsilon-k\lvert s\rvert_h^4\lvert \nabla u\rvert^2_{\eta_\epsilon}-2G'(u) \\
   && \geq -kD_1-kD_2\mathrm{tr}_{\eta_\epsilon}\beta_\epsilon-k\lvert s \rvert_h^4 \lvert \nabla u \rvert^2_{\eta_\epsilon}-2G'(u),
\end{eqnarray*}
which is the first estimate of the right-hand side in (\ref{eqn:Est1}).

Next, as we want to prove (\ref{eqn:Induction2}), as a consequence of (\ref{eqn:Maximum}) and of $\lvert \nabla u\rvert^2_\beta\leq D_3 \lvert \nabla u \rvert^2_{\beta_\e}$ in the estimate that follows we can assume that 
$$
D_3C_1\lvert \nabla u\rvert^2_{\beta_\e} \geq \max\big\{\lvert \nabla (v/\gamma- \log\lvert s \rvert^2_h-f_\e) \rvert^2_\beta,  1\big\}
$$
at the point $x_M$. We deduce 
\begin{eqnarray*} \label{eqn:Est3}
    \lefteqn{ \Big\lvert \frac{2\mathrm{Re}\langle \nabla v+\sum_{l=1}^m \nabla a_l\log (\lvert s_l \rvert^2_{h_l}+\epsilon^2), \nabla u\rangle_{\beta_\epsilon}}{\lvert \nabla u \rvert_{\beta_\epsilon}^2} \Big\rvert}\\
    &\leq D_4+\frac{\lvert \nabla (v-\gamma\log\lvert s\rvert_h^2-\gamma f_\e) \rvert_{\beta_\epsilon}^2}{\lvert \nabla u \rvert_{\beta_\epsilon}^2}+\frac{D_5}{\lvert \nabla u \rvert_{\beta_\e}^2}\lvert s\rvert_h^{-2}+\frac{D_6}{\lvert \nabla u\rvert^2_{\beta_\epsilon}}\sum_{l=1}^m \lvert s_l \rvert_{h_l}^{-2}
    \leq C_2 + C_3\lvert s\rvert_h^{-2M}
\end{eqnarray*}
for $M:=\frac{1}{\min_lb_l}$ so that $Mb_l\geq 1$ for any $l$.
The previous inequalities yield
{\small
\begin{equation}
    \label{eqn:Est4}
    \Delta'_\epsilon\log \lvert \nabla u \rvert^2_{\beta_\epsilon}\\
    \geq -kD_1-C_2-C_3 \lvert s\rvert_h^{-2M}-\big(kD_2+\Lambda\big)\mathrm{tr}_{\eta_\epsilon}\beta_\epsilon-k\lvert s\rvert_h^4\lvert \nabla u\rvert^2_{\eta_\epsilon}-2G'(u).
\end{equation}
}
Moreover  
$$
    -\Delta'_\epsilon G(u)=-G'(u)\Delta'_\epsilon u-G''(u)\lvert \nabla u \rvert^2_{\eta_\epsilon}=G'(u) \mathrm{tr}_{\eta_\epsilon}\beta_\epsilon-nG'(u)-G''(u)\lvert \nabla u\rvert_{\eta_\epsilon}^2
    $$
and $\Delta'_\epsilon\log \lvert s \rvert^{2k}_h\geq -kD_7 \mathrm{tr}_{\eta_\epsilon}\beta_\epsilon$.
Together with (\ref{eqn:Est4}) we obtain
{\small
$$
 \Delta'_\epsilon H\geq \big(G'-kD_2-\Lambda-kD_7\big)\mathrm{tr}_{\eta_\epsilon}\beta_\epsilon-(n+2)G'-\big(G''+k\lvert s\rvert_h^4\big)\lvert \nabla u\rvert^2_{\eta_\epsilon}-kD_1-C_2-C_3\lvert s \rvert^{-2M}_h
$$
}
Taking $k=M(n+1)+1$, this can be rewritten
\begin{equation}
    \label{eqn:Est5}
     \Delta'_\epsilon H\geq (G'-D_8)\mathrm{tr}_{\eta_\epsilon}\beta_\epsilon-(n+2)G'-(G''+D_9\lvert s\rvert^4_h)\lvert \nabla u\rvert^2_{\eta_\epsilon}-C_4\lvert s \rvert_h^{-2M}.
\end{equation}

We now define
$
G(x):=(D_8+1)x-\frac{B}{x+C+1}
$
where $B>0$ is so large that
$$
\frac{2B}{(u+C+1)^3}-D_9\lvert s\rvert^4_h\geq \lvert s\rvert^2_h
$$
at $x_M$. Note that $B$ can be chosen such that it only depends on $C, D_9$ and on $\lVert u+\log\lvert s\rvert^2_h\rVert_{L^{\infty}(\tilde{\Omega})}$, i.e. it is under control. From (\ref{eqn:Est5}) we deduce at $x_M$
$$
0\geq \Delta'_\epsilon H\geq \mathrm{tr}_{\eta_\epsilon}\beta_\epsilon+\lvert s \rvert^2_h\lvert \nabla u \rvert^2_{\eta_\epsilon}-C_5\lvert s \rvert^{-2M}_h.
$$
This yields  
$
    \mathrm{tr}_{\eta_\epsilon}\beta_\epsilon\leq C_5\lvert s \rvert_h^{-2M}
     \text{ and } 
    \lvert\nabla u \rvert_{\eta_\epsilon}^2\leq C_5\lvert s \rvert_h^{-2M-2},
$
hence
{\small
\begin{equation*}
    \lvert\nabla u \rvert^2_{\beta_\epsilon}\leq \lvert\nabla u \rvert^2_{\eta_\epsilon}\mathrm{tr}_{\beta_\epsilon}\eta_\epsilon\leq
    \lvert \nabla u \rvert^2_{\eta_\e} \big(\mathrm{tr}_{\eta_\epsilon}\beta_\epsilon \big)^{n-1} \big(\frac{\eta_\e^n}{\beta_\e^n}\big)\leq
    C_6 \lvert s\rvert^{-2M}_h\lvert \nabla u \rvert^2_{\eta_\e} \big(\mathrm{tr}_{\eta_\epsilon}\beta_\epsilon \big)^{n-1}\leq
    C_7 \lvert s \rvert_h^{-2k}
\end{equation*}
}
where we also used \cite[Lemma 14.4]{GZbook}, the Monge-Ampère equation (\ref{eqn:MA_Gradient}) and the fact that $\prod_{l=1}^m\big(\lvert s_l\rvert^2_{h_l}+\e^2\big)^{a_l}\leq D_{10} \lvert s\rvert_h^{-2M}$ as $a_l>-1$. From (\ref{eqn:Maximum}) we deduce
$
\lvert\nabla u \rvert^2_{\beta_\epsilon}\lvert s\rvert^{2(k+D_8+1)}_h\leq C_8.
$
As $\{\beta_\epsilon\}_{\epsilon>0}$ is a non-degenerate continuous family of K\"ahler forms converging to $\beta$ as $\epsilon\to 0$, we get
$$
\lvert s\rvert^{2(k+D_8+1)}_h\lvert\nabla u \rvert^2_\beta\leq \max\big\{ \sup_{\tilde{\Omega}}[\lvert s \rvert_h^{2(k+D_8+1)} \lvert \nabla(v/\gamma-\log\lvert s\rvert^2-f_\e) \rvert_\beta^2], C_9\big\},
$$
i.e. (\ref{eqn:Induction2}), which concludes the proof by setting $B_2:=k+D_8+1, B_{3,j+1,\e}:=C_9$.
\end{proof}

\medskip

\noindent {\it Step 3.}
Fix $V$ a small neighborhood of $\partial \tilde{\Omega}$ (intersected with $\overline{\tilde{\Omega}}$). We claim that  
\begin{equation} \label{eq:c2estimate3}
\sup_{\partial \tilde{\Omega}} |\Delta_{\beta}   \f_{j,\e}| \leq C_V[1+\sup_V |\nabla \f_{j,\e}|^2],
\end{equation}  
for some uniform constant $C_V$   independent of $j,\e$.
This follows from a long series of estimates established in \cite[Lemma 18]{GKY13}
(which itself was adapting the technique developed by \cite{CKNS85})
when $\mu_p$ and $\Omega$ are smooth.
The statement of \cite[Lemma 18]{GKY13} mentions $\sup_{\tilde{\Omega}} |\nabla \f_j|^2$,
however the arguments only involve 
\begin{itemize}
\item local reasonings in a small fixed neighborhood of the boundary;
\item smoothness of $\mu_p$ in this neighborhood and pseudoconvexity of $\partial \tilde{\Omega}$.
\end{itemize}

\medskip

\noindent {\it Step 4.}
We now show that there exist   constants $m,B_{3,j,\e}>0$ such that 
\begin{equation} \label{eq:c2estimate4}
\sup_{\tilde{\Omega}} |s|^{2m} |\Delta_{\beta} \f_{j,\e}| \leq 
B_{3,j,\e} \left[1+\sup_{\partial \tilde{\Omega}} |\Delta_{\beta}   \f_{j,\e}| \right],
\end{equation}
where $B_{3,j,\e}$   only depends on an upper-bound on 
$||\f_{k,\e}||_{L^{\infty}(\tilde{\Omega})}$, for $k \leq j$.
This is a variant of \cite[Lemma 17]{GKY13}, for which we provide a detailed proof.

We set $\omega_j:=\e\beta+dd^c \f_{j,\e} $ and
observe that 
$$
\omega_j^n=e^{\p_{\e}-\f_{j-1,\e}-c'_{j-1}} \beta^n,
$$
where $\psi_{\e}$ is a difference of quasi-psh functions
in $\tilde{\Omega}$ such that  $e^{\p_{\e}} \leq c_1 |s|^{-2a}$ and 
$dd^c \p_{\e} \geq -c_1 |s|^{-2} \beta$ in $\tilde{\Omega}$,
for some uniform constants $a,c_1>0$. We consider
$$
H_j:=\log {\rm Tr}_{\beta}(\omega_j)+\f_{j-1,\e}  -A \f_{j,\e}  +A \rho',
$$
where $A>0$ is chosen below.  We use here the classical notations
$$
{\rm Tr}_{\eta}(\omega):=n \frac{\omega \wedge \eta^{n-1}}{\eta^n}
\; \; \text{ and } \; \;
\Delta_{\eta}(h):=n \frac{dd^c h \wedge \eta^{n-1}}{\eta^n}.
$$

Either $H_j$ reaches its maximum on $\partial \tilde{\Omega}$
and we are done, or it reaches its maximum at some point $x_j \in \tilde{\Omega} \setminus E$
since $\rho \rightarrow -\infty$ along $E$.
We are going to estimate $\Delta_{\omega_j} H_j$ from below and use the fact that
$0 \geq \Delta_{\omega_j} H_j(x_j)$.

It follows from \cite{Siu87} that 
$$
\Delta_{\omega_j} \log {\rm Tr}_{\beta}(\omega_j) \geq 
-\frac{{\rm Tr}_{\beta}( {\rm Ric}(\omega_j))}{{\rm Tr}_{\beta}(\omega_j)}-B {\rm Tr}_{\omega_j}(\beta),
$$
where $-B$ is a lower bound on the holomorphic bisectional curvature of $\beta$. Now
$$
-{\rm Ric}(\omega_j)=-{\rm Ric}(\beta)+dd^c (\p_{\e}-\f_{j-1,\e}) \geq -\omega_{j-1}-\frac{A_1}{|s|^2} \beta
$$
 in $\tilde{\Omega} \setminus E$. Moreover 
 ${\rm Tr}_{\beta}(\omega_{j-1}) \leq {\rm Tr}_{\beta}(\omega_j) {\rm Tr}_{\omega_j}(\omega_{j-1})$
 hence
 $$
 \Delta_{\omega_j} \log {\rm Tr}_{\beta}(\omega_j) \geq -{\rm Tr}_{\omega_j}(\omega_{j-1})
 -\frac{nA_1}{|s|^2{\rm Tr}_{\beta}(\omega_j)} -B {\rm Tr}_{\omega_j}(\beta).
 $$
 Using that $dd^c \rho' \geq \beta$, we obtain
 $$
 \Delta_{\omega_j} H_j \geq -An+(A-B) {\rm Tr}_{\omega_j}(\beta)-\frac{n A_1}{|s|^2{\rm Tr}_{\beta}(\omega_j)}.
 $$
 Using the classical inequality 
 $n [{\rm Tr}_{\omega_j}(\beta)]^{n-1}  \geq (\beta^n/\omega_j^n) {\rm Tr}_{\beta}(\omega_j)$,
 we infer
 \begin{equation} \label{eq:c2finale}
  \Delta_{\omega_j} H_j \geq -An+
 c(A-B) e^{\frac{-\p_{\e}}{n-1}}[{\rm Tr}_{\beta}(\omega_j)]^{\frac{1}{n-1}}
 -\frac{n A_1}{|s|^2{\rm Tr}_{\beta}(\omega_j)}.
 \end{equation}
Let us stress that the constant $c$ depends here on an upper 
bound on $||\f_{j-1,\e}||_{L^{\infty}(\tilde{\Omega})}$.

 We fix $A$ so large that $A>B$ and $\p_{\e}+A\rho' \leq c_1'-a \log|s|^2+A \rho'$ is bounded from above.
 At the point $x_j$ we obtain $0 \geq \Delta_{\omega_j} H_j$, therefore
 \begin{itemize}
 \item either $|s|^2 {\rm Tr}_{\beta}(\omega_j) \leq 1$ hence 
 $H_j(x_j) \leq (\f_{j-1,\e}  -A \f_{j,\e}  +A \rho')(x_j) \leq C$;
 \item or $|s|^2{\rm Tr}_{\beta}(\omega_j) \geq 1$ and \eqref{eq:c2finale} yields
 ${\rm Tr}_{\beta}(\omega_j) \leq C' e^{\p_{\e}(x_j)}$ hence
 $$
 H_j(x_j) \leq \p_{\e}(x_j)+A\rho'(x_j)+C'' \leq C'''.
 $$
 \end{itemize}
 Thus $H_j$ is uniformly bounded from above in both cases, and \eqref{eq:c2estimate4} follows
 (we use here   an upper 
bound on $||\f_{j-1,\e}||_{L^{\infty}(\tilde{\Omega})}$ and 
$||\f_{j,\e}||_{L^{\infty}(\tilde{\Omega})}$).

\medskip

\noindent {\it Step 5.}
We finally show by induction on $j$ that $\f_{j,\e}$ uniformly converges towards $\f_j$
as $\e$ decreases to $0$. There is nothing to prove for $j=0$ since  $\f_{0,\e}=\f_0$. 

For $j=1$, it follows from (a slight generalization of) 
\cite[Proposition 1.8]{GGZ23} that 
$||\f_{1,\e}||_{L^{\infty}(\tilde{\Omega})} \leq C_1$ is bounded uniformly in $\e>0$.
Proceeding by induction we similarly obtain that for all $j \in \N$,
$$
||\f_{j,\e}||_{L^{\infty}(\tilde{\Omega})} \leq C_j
$$
 is bounded uniformly in $\e>0$. By previous steps, the family 
 $(\f_{j,\e})_{\e}$ is relatively compact in ${\mathcal C}^{1,\alpha}$ for all $0<\alpha < 1$.
 Any cluster point $\p_j$, as $\e \rightarrow 0$, is a solution of 
 $$
(dd^c \p_{j+1})^n=\frac{e^{-\gamma \p_j} \mu_p}{c_j}
$$
with boundary values ${\p_{j+1}}_{|\partial \Omega}=\phi$,
hence $\p_j=\f_j$ by uniqueness.
Thus $\f_{j,\e}$ converges to $\f_j$ as $\e$ decreases to zero, and the convergence is moreover
uniform on $\tilde{\Omega}$ by \cite[Proposition 1.8]{GGZ23}.

We can thus let $\e$ tend to zero in previous inequalities.
Now $||\f_{j,\e}||_{L^{\infty}(\tilde{\Omega})} \rightarrow ||\f_j||_{L^{\infty}(\Omega)}$,
and the latter is uniformly bounded in $j$ by Proposition \ref{prop:C0estimate}.
For $\e=0$,  \eqref{eq:c2estimate1}, \eqref{eq:c2estimate2}, \eqref{eq:c2estimate3} and \eqref{eq:c2estimate4} thus provide uniform bounds in $j$,
and conclude the proof of \eqref{eq:c2estimate}.
The proof of Proposition \ref{prop:C2estimate} is thus complete.
\end{proof}

\subsection{Higher order estimates and convergence}

Once the uniform ${\mathcal C}^2$-estimate is established 
(Proposition \ref{prop:C2estimate}), one can then linearize the 
complex Monge-Amp\`ere equation and apply standard elliptic theory 
(Evans-Krylov method and Schauder bootstrapping) to derive higher order estimates:

\begin{prop}
Given $K$ a compact subset of ${\Omega} \setminus \{p\}$ and $\alpha>0,\ell \in \N$, there exists
$C(K,\ell,\alpha)>0$ such that for all $j \in \N$,
 $
 ||\f_j||_{{\mathcal C}^{\ell,\alpha}(K)} \leq C(K,\ell,\alpha).
 $
\end{prop}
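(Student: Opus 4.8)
The plan is to deduce these estimates from the uniform $C^0$ bound of Proposition~\ref{prop:C0estimate} and the uniform $C^2$ bound of Proposition~\ref{prop:C2estimate} by interior elliptic regularity, working directly on the complex manifold $\Omega\setminus\{p\}$ (where $\mu_p$ is a smooth positive volume form and the defining relation $(dd^c\f_{j+1})^n=e^{-\gamma\f_j}\mu_p\big/\!\int_\Omega e^{-\gamma\f_j}\mu_p$ is a genuine complex Monge--Amp\`ere PDE). Fix a compact $K\Subset\Omega\setminus\{p\}$. First I would check that these equations are \emph{uniformly elliptic on $K$, uniformly in $j$}. Writing $g_j$ for the right-hand side density, the function $\mu_p/dV_X$ is smooth and strictly positive on $K$, $e^{-\gamma\f_j}$ is bounded above and below on $K$ by Proposition~\ref{prop:C0estimate}, and the normalizing integral $\int_\Omega e^{-\gamma\f_j}\,d\mu_p$ is bounded above (trivially) and below (again by Proposition~\ref{prop:C0estimate}, since $\mu_p(\Omega)=1$); hence $0<c(K)\le g_j\le C(K)$ with constants independent of $j$. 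Combined with the bound $\Delta_{\omega_X}\f_j\le C_2$ of Proposition~\ref{prop:C2estimate} and the plurisubharmonicity of $\f_j$, this controls all eigenvalues of $i\partial\bar\partial\f_j$ from above by a constant, while the lower bound $\det(i\partial\bar\partial\f_j)\ge c'(K)>0$ forces the smallest eigenvalue to stay bounded below.

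Next I would upgrade $C^0$ to $C^{1,\alpha}$ for free: in any coordinate chart the bound on $\Delta_{\omega_X}\f_j$ together with $\f_j\in\mathrm{PSH}$ forces the Euclidean Laplacian of $\f_j$ to be uniformly bounded, so Calder\'on--Zygmund gives a uniform bound on $\f_j$ in $W^{2,q}(K)$ for every $q<\infty$, hence in $C^{1,\alpha}(K)$ for every $\alpha\in(0,1)$. It follows that $g_j$ --- a product and quotient of the smooth function $\mu_p/dV_X$, of $e^{-\gamma\f_j}$, and of a positive constant bounded below --- is uniformly bounded in $C^{0,\alpha}(K)$. Since $M\mapsto\log\det M$ is concave on positive Hermitian matrices, the interior Evans--Krylov estimate for the uniformly elliptic concave equation solved by $\f_{j+1}$ (followed by a first Schauder step to reach a fixed exponent) then yields, for every $K\Subset K'\Subset\Omega\setminus\{p\}$, a constant $C$ and an exponent $\beta\in(0,1)$, both independent of $j$, with $\|\f_{j+1}\|_{C^{2,\beta}(K)}\le C\big(1+\|\f_{j+1}\|_{C^0(K')}+\|g_j\|_{C^{0,\beta}(K')}\big)$.

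The remaining estimates come from Schauder bootstrapping combined with an induction on $\ell$. Suppose $\sup_j\|\f_j\|_{C^{\ell,\alpha}(K'')}<\infty$ for every compact $K''\Subset\Omega\setminus\{p\}$ and every $\alpha\in(0,1)$ --- this holds for $\ell=0$ by the preceding step, and trivially for the fixed smooth datum $\f_0$. Then $g_j$ is uniformly bounded in $C^{\ell,\alpha}$ on interior compacts, and differentiating the Monge--Amp\`ere equation shows that each first derivative $\partial_m\f_{j+1}$ solves a linear, uniformly elliptic equation whose coefficients are built from $i\partial\bar\partial\f_{j+1}$ --- already controlled in $C^{0,\beta}$ by the previous step --- and whose right-hand side is $\partial_m\log g_j\in C^{\ell-1,\alpha}$. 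Interior Schauder estimates then bound $\partial_m\f_{j+1}$ uniformly in $C^{\ell+1,\alpha}$, hence $\f_{j+1}$ uniformly in $C^{\ell+2,\alpha}$, on any compact avoiding $p$; together with $\f_0\in C^\infty$ this promotes the induction hypothesis from $\ell$ to $\ell+1$. Iterating yields the asserted bound $\|\f_j\|_{C^{\ell,\alpha}(K)}\le C(K,\ell,\alpha)$ for all $j$, all $\ell\in\N$ and all $\alpha>0$ (integer regularity being covered by H\"older interpolation).

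I do not expect a genuine obstacle: this proposition is a routine consequence of the a priori estimates already in hand, and the only delicate point is bookkeeping --- ensuring that every constant stays independent of $j$ despite the coupling $\f_{j+1}\leftrightarrow\f_j$. This is guaranteed by the fact that the $C^0$ bound of Proposition~\ref{prop:C0estimate} and the ``under control'' constants appearing in Proposition~\ref{prop:C2estimate} depend only on $\sup_j\|\f_j\|_{L^\infty}$, which is finite. One should also be careful to apply the interior elliptic estimates only on compacts that stay away from $p$, so that neither the degeneracy of $\mu_p$ nor that of the reference K\"ahler form used in Proposition~\ref{prop:C2estimate} near the exceptional divisor enters the argument.
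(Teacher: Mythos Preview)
Your proposal is correct and follows exactly the approach the paper indicates: the paper's own ``proof'' of this proposition is the single sentence preceding it, namely that once the uniform $C^2$-estimate of Proposition~\ref{prop:C2estimate} is in hand, one linearizes the Monge--Amp\`ere equation and applies Evans--Krylov together with Schauder bootstrapping. You have simply written out these standard details (uniform ellipticity on compacts of $\Omega\setminus\{p\}$ via the two-sided eigenvalue bounds, Calder\'on--Zygmund for $C^{1,\alpha}$, Evans--Krylov for $C^{2,\beta}$, then induction via the differentiated linear equation), and your bookkeeping of the $j$-dependence through the coupling $\f_{j+1}\leftrightarrow\f_j$ is handled correctly.
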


  It follows that the sequence $(\f_j)$ is relatively compact in
  ${\mathcal C}^{\infty}({\Omega} \setminus \{p\})$
  We let ${\mathcal K}$ denote the set of cluster values of the sequence $(\f_j)$.
  Any function $\p \in {\mathcal K}$ is
\begin{itemize}
\item psh   in  $\Omega$ and smooth in ${\Omega} \setminus \{p\}$,
with $\p_{|\partial \Omega}=\phi$;
  \item uniformly bounded in  $\overline{\Omega}$ (Proposition \ref{prop:C0estimate});
  \item continuous on $\overline{\Omega}$, as the uniform limit of $(\f_{j_k})$
  \cite[Proposition 1.8]{GGZ23};
\end{itemize}  
  
  The set ${\mathcal K}$ is invariant under the action of  
  $T_{\gamma} : \f \in {\mathcal T}_{\phi}(\Omega) \mapsto \p \in {\mathcal T}_{\phi}(\Omega)$,
  which associates, to a given $\f \in  {\mathcal T}_{\phi}(\Omega)$, the unique
  solution $\p \in {\mathcal T}_{\phi}(\Omega)$ to the complex Monge-Amp\`ere equation
  $$
  (dd^c \p)^n =\frac{e^{-\gamma\f} \mu_p}{\int_{\Omega} e^{-\gamma \f} d\mu_p}.
  $$
  
  It follows from \cite[Proposition 12]{GKY13} that the functional $F_{\gamma}$ is constant on 
  ${\mathcal K}$ and that ${\mathcal K}$ is pointwise invariant under the action of $T_{\gamma}$.
  Thus a cluster value of $(\f_j)$ provides a desired solution to Theorem C.

  \section{Appendix by S.Boucksom}

  The purpose of this appendix is to provide an alternative approach to Proposition \ref{pro:Skoda}, emphasizing the role of $b$-divisors. We use~\cite{BdFF} as a main reference for what follows.

\subsection{Nef $b$-divisors over a point}

Consider for the moment any normal singularity $(X,p)$, and set $n:=\dim X$. 

In what follows, a \emph{birational model} means a projective birational morphism $\pi\colon X_\pi\to X$ with $X_\pi$ normal. A \emph{$b$-divisor over $p$} is defined as a collection $B=(B_\pi)_\pi$ of $\R$-divisors $B_\pi$ on $X_\pi$ for all birational models $\pi$, compatible under push-forward, and such that each $B_\pi$ has support in $\pi^{-1}(p)$. The $\R$-vector space of $b$-divisors over $p$ can thus be written as the projective limit
$$
\Divb(X,p):=\varprojlim_\pi\Div_p(X_\pi),
$$
where $\Div_p(X_\pi)$ denotes the (finite dimensional) $\R$-vector space of divisors on $X_\pi$ with support in $\pi^{-1}(p)$, and we endow $\Divb(X,p)$ with the projective limit topology. 

\medskip

A $b$-divisor $B\in\Divb(X,p)$ is said to be \emph{Cartier} if it is determined by some birational model $\pi$, in the sense that $B_{\pi'}$ is the pullback of $B_\pi$ for any higher birational model $\pi'$. There is a symmetric, multilinear \emph{intersection pairing} 
\begin{equation}\label{equ:intpairing}
(B_1,\dots,B_n)\mapsto(B_1\inter B_n)\in\R
\end{equation}
for Cartier $b$-divisors $B_i$, defined as the intersection number $(B_{1,\pi}\inter B_{n,\pi})$ computed on $X_\pi$ for any choice of common determination $\pi$ of the $B_i$ (the result being independent of the choice of $\pi$, by the projection formula). 

\medskip

A \emph{valuation centered at $p$} is a valuation $v \colon \cO_{X,p}\to \R_{\ge 0}$ such that
 $v(\mathfrak{m}_p)>0$ on the maximal ideal 
 $\mathfrak{m}_p \subset \cO_{X,p}$.
  It is further \emph{divisorial} if it can be written as $v=c\ord_E$ for a prime divisor $E\subset\pi^{-1}(p)$ on some birational model $X_\pi$ and $c\in\Q_{>0}$. Given a $b$-divisor $B$ over $p\in X$, we then set $v(B):=c \, \ord_E(B_\pi)$. The function $v\mapsto v(B)$ so defined on the space $\DivVal(X,p)$ of divisorial valuations centered at $p$ is homogeneous with respect to the scaling of $\Q_{>0}$, and this yields a topological vector space  isomorphism between $\Divb(X,p)$ and the space of homogeneous functions on $\DivVal(X,p)$, endowed with the topology of pointwise convergence. 

\medskip

Pick a $b$-divisor $B$ over $p$. If $B$ is Cartier, we say that $B$ is \emph{(relatively) nef} if $B_\pi$ is $\pi$-nef for some (hence any) determination $\pi$. In the general case, we say that $B$ is nef if it can be written as a limit of  nef Cartier $b$-divisors. By the Negativity Lemma, any nef $b$-divisor $B\in\Divb(X,p)$ is automatically antieffective, \ie $v(B)\le 0$ for all $v\in\DivVal(X,p)$. By~\cite[Lemma~2.10]{BdFF}, we further have: 

\begin{lem}\label{lem:nefmov} A $b$-divisor $B$ over $p$ is nef iff, for each birational model $\pi$, the numerical class of $B_\pi$ in $\Num(X_\pi/X)$ is \emph{nef in codimension $1$} (aka \emph{movable}). 
\end{lem}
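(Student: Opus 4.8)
The plan is to reduce the equivalence to a statement about a single birational model and then quote the description of the movable cone of such a model. By definition of the projective limit topology on $\Divb(X,p)=\varprojlim_\pi\Div_p(X_\pi)$, a basic open neighbourhood of $B$ is determined by finitely many models $\pi_1,\dots,\pi_k$; passing to a model $\pi_0$ dominating all of them and using continuity of the (linear) pushforward maps $\Div_p(X_{\pi_0})\to\Div_p(X_{\pi_i})$, one sees that $B$ is a limit of nef Cartier $b$-divisors if and only if, for every single model $\pi_0$ and every $\varepsilon>0$, the class $B_{\pi_0}\in\Num(X_{\pi_0}/X)$ can be approximated within $\varepsilon$ by the $\pi_0$-incarnation of a nef Cartier $b$-divisor. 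So it suffices to prove: for each model $\pi$, $B_\pi$ lies in the closure of $\bigcup_{\mu\colon X'\to X_\pi}\mu_*\operatorname{Nef}(X'/X)\subset\Num(X_\pi/X)$ if and only if $B_\pi$ is nef in codimension $1$.

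For the forward implication I would argue as follows. If $B=\lim_k B_k$ with each $B_k$ a nef Cartier $b$-divisor, determined on a model $\pi_k$ which we may take to dominate the fixed model $\pi$, then $(B_k)_{\pi_k}$ is $\pi_k$-nef over $X$ and $(B_k)_\pi$ is its pushforward; by the Negativity Lemma (used to control the exceptional correction) together with the standard description of movable classes, the pushforward of a relatively nef class along a birational morphism is nef in codimension $1$. Since $(B_k)_\pi\to B_\pi$ in the finite-dimensional vector space $\Num(X_\pi/X)$ and the cone of classes nef in codimension $1$ is closed, $B_\pi$ is nef in codimension $1$.

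For the converse I would fix a model $\pi_0$ and $\varepsilon>0$. By hypothesis $B_{\pi_0}$ is movable, so by the relative version of the Boucksom--Demailly--P\u{a}un--Peternell description of the movable cone there is a birational model $\mu\colon X'\to X_{\pi_0}$ and an $\R$-divisor $D'$ on $X'$, with support in the locus over $p$ and nef over $X$, such that $\lVert\mu_*[D']-B_{\pi_0}\rVert<\varepsilon$. Let $C\in\Divb(X,p)$ be the Cartier $b$-divisor determined by $D'$ on $X'$; it is nef, and since $X_{\pi_0}$ is dominated by $X'$ one has $C_{\pi_0}=\mu_*D'$, whence $\lVert C_{\pi_0}-B_{\pi_0}\rVert<\varepsilon$. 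By the reduction of the first paragraph this exhibits $B$ as a limit of nef Cartier $b$-divisors, \ie as nef.

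The crux, and the main obstacle, is the two-sided comparison invoked in both directions: that on a fixed birational model $X_\pi\to X$ the cone of classes nef in codimension $1$ coincides with the closure of the cone generated by pushforwards $\mu_*\operatorname{Nef}(X'/X)$ from higher models. This is precisely the content of \cite[\S 2]{BdFF} (the Negativity Lemma together with the relative movable-cone theorem), and I would simply quote it; one should also check the mild point that a relative numerical class on $X_\pi$ with support over $p$ is represented by an honest $\R$-divisor with that support, which holds because $\Num(X_\pi/X)$ is spanned by the components of $\pi^{-1}(p)$. Granting these inputs, everything else is the bookkeeping above.
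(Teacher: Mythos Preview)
The paper does not prove this lemma at all: the sentence introducing it reads ``By~\cite[Lemma~2.10]{BdFF}, we further have,'' and the statement is simply quoted from that reference. So there is no argument in the paper to compare against.

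Your sketch is a reasonable unpacking of what lies behind \cite[Lemma~2.10]{BdFF}: the reduction via the projective-limit topology to a fixed model, the forward direction (pushforward of relatively nef is movable, and the movable cone is closed), and the converse via a relative movable-cone theorem. The bookkeeping is sound. However, you yourself defer the crux---that on a fixed model the movable cone coincides with the closure of pushforwards of nef classes from higher models---to ``\cite[\S 2]{BdFF} (the Negativity Lemma together with the relative movable-cone theorem).'' So your proof is not genuinely more self-contained than the paper's bare citation; it is the same citation with the surrounding topology spelled out. If you want a truly independent proof, you would need to justify that relative BDPP-type statement in the present setting (isolated singularity, divisors supported over $p$), which is exactly what \cite{BdFF} does.
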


\begin{example}\label{exam:Zfa} 
Consider an ideal $\fa\subset\cO_{X,p}$, and assume that $\fa$ is \emph{primary}, \ie containing some power of the maximal ideal. Then $\fa$ determines a nef Cartier $b$-divisor $Z(\fa)$, defined by $v(Z(\fa))=-v(\fa)$ for each $v\in\DivVal(X,p)$, and determined on the normalized blowup of $\fa$. For any tuple of primary ideals $\fa_1,\dots,\fa_n$, 
$$
e(\fa_1,\dots,\fa_n)=-(Z(\fa_1)\inter Z(\fa_n))
$$
further coincides with the mixed multiplicity of the $\fa_i$. 
\end{example}

\begin{example}\label{exam:Zv} 
For any valuation $v$ centered at $p$, the valuation ideals
$$
\fa_m(v):=\{f\in\cO_{X,p}\mid v(f)\ge m\}
$$
define a graded sequence of primary ideals $\fa_\bullet(v)$, and hence a nef $b$-divisor over $p$ 
$$
Z(v):=Z(\fa_\bullet(v))=\lim_m m^{-1} Z(\fa_m(v)),
$$
(see~\cite[Lemma~2.11]{BdFF}), which is not Cartier in general. 
\end{example} 

\begin{lem}\label{lem:neglem} If $B\in\Divb(X,p)$ is nef, then $B\le - v(B) Z(v)$ for all $v\in\DivVal(X,p)$, 
\end{lem}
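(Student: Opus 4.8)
The plan is to reduce the statement, by continuity and positive homogeneity, to the case $B=Z(\fa)$ for an $\mathfrak{m}_p$-primary ideal $\fa$, and then to dispatch that case by an elementary comparison of ideals. First observe that $B\le -v(B)Z(v)$ unwinds to $w(B)\le -v(B)\,w(Z(v))$ for every $w\in\DivVal(X,p)$; since $B\mapsto v(B)$ and $B\mapsto w(B)$ are $\R$-linear on $\Divb(X,p)$ and continuous for the pointwise topology, $Z(v)$ is fixed, and the cone $\{C:C\le 0\}$ is closed, the map $C\mapsto C+v(C)Z(v)$ is linear and continuous and we want its value at $B$ to be antieffective. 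Here I would invoke the description of the nef cone of $b$-divisors over $p$ from~\cite{BdFF}: every nef $B$ is a limit $B=\lim_k q_k^{-1}Z(\fb_k)$ with $q_k\in\N$ and $\fb_k\subset\cO_{X,p}$ an $\mathfrak{m}_p$-primary ideal (this uses that the $Z(\fa)$ are closed under the relevant cone operations, namely $Z(\fa)+Z(\fb)=Z(\fa\fb)$ and $\lambda Z(\fa)=q^{-1}Z(\fa^p)$ for $\lambda=p/q\in\Q_{>0}$, so that ``limit of positive combinations of $Z(\fa)$'s'' amounts to ``limit of $q^{-1}Z(\fb)$'s'', and the structural fact that such limits exhaust the nef cone). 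Granting the inequality $Z(\fa)\le -v(Z(\fa))Z(v)$ for proper $\mathfrak{m}_p$-primary $\fa$ (the case $\fa=\cO_{X,p}$ being the trivial $0\le 0$), one then has $q_k^{-1}Z(\fb_k)+v\bigl(q_k^{-1}Z(\fb_k)\bigr)Z(v)=q_k^{-1}\bigl(Z(\fb_k)+v(Z(\fb_k))Z(v)\bigr)\le 0$, and a pointwise limit of antieffective $b$-divisors is antieffective, giving $B+v(B)Z(v)\le 0$.

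For a proper $\mathfrak{m}_p$-primary $\fa$, set $c:=v(\fa)=\min\{v(f):f\in\fa\}$, which is positive since $\fa\subset\mathfrak{m}_p$ and $v$ is centered at $p$; thus $v(Z(\fa))=-c$. Recalling $w(Z(\fa))=-w(\fa)$ and $w(Z(v))=-w(\fa_\bullet(v))=-\lim_m m^{-1}w(\fa_m(v))$ from Example~\ref{exam:Zv}, the inequality to be proved becomes $w(\fa)\ge c\,w(\fa_\bullet(v))$ for all $w\in\DivVal(X,p)$. I would derive this from the inclusion $\fa^m\subseteq\fa_{\lfloor mc\rfloor}(v)$, valid for every $m\ge 1$: an element of $\fa^m$ is a finite sum of products of $m$ elements of $\fa$, hence has $v$-value $\ge mc\ge\lfloor mc\rfloor$. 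Since $w$ is monotone under reverse inclusion of ideals and $w(\fa^m)=m\,w(\fa)$ (valuations are additive on products), this gives $m\,w(\fa)\ge w(\fa_{\lfloor mc\rfloor}(v))$; dividing by $m$ and letting $m\to\infty$, using $m^{-1}\lfloor mc\rfloor\to c$ and the existence of $\lim_k k^{-1}w(\fa_k(v))=w(\fa_\bullet(v))$ (the sequence $k\mapsto w(\fa_k(v))$ is subadditive because $\fa_k(v)\fa_l(v)\subseteq\fa_{k+l}(v)$, so Fekete's lemma applies; cf.~\cite[Lemma~2.11]{BdFF}), one obtains $w(\fa)\ge c\,w(\fa_\bullet(v))$.

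The computational core above is short, so the main obstacle is really the first step: making the reduction to the $Z(\fa)$ case legitimate requires the structural description from~\cite{BdFF} that nef $b$-divisors over $p$ are exactly the limits $Z(\fa_\bullet)$ of graded sequences $(\fa_m)$ of $\mathfrak{m}_p$-primary ideals — equivalently, that the nef cone is the closed convex cone generated by the $Z(\fa)$ — and this is also the only point at which the hypothesis that $B$ be nef is genuinely used. If one preferred to avoid quoting that description, one could instead run the second paragraph with $\fa$ replaced by the relative base ideals $\fa_m:=\pi_*\cO_{X_\pi}(\lceil m\,B_\pi\rceil)$ attached to a nef Cartier model $B_\pi$ of $B$; but identifying $\lim_m m^{-1}Z(\fa_m)$ with $B$ then rests on the same asymptotic base-point-free behaviour of nef classes over a point established in~\cite{BdFF}, so the essential input is unchanged.
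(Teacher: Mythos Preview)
Your argument is correct. The paper takes a different, shorter route: writing $v=c\,\ord_E$ for a prime divisor $E$ on some model $X_\pi$, it identifies $Z(v)$ with the nef envelope $\mathrm{Env}_\pi(-c^{-1}E)$ of \cite[Definition~2.3]{BdFF} and then appeals to \cite[Proposition~2.12]{BdFF}, which (in effect) says that a nef $b$-divisor $B$ with $B_\pi\le D$ satisfies $B\le\mathrm{Env}_\pi(D)$; applying this with $D=\ord_E(B_\pi)\cdot E\ge B_\pi$ and using positive homogeneity of the envelope yields $B\le -v(B)Z(v)$ in one stroke. Your approach instead reduces to $B=Z(\fa)$ by approximation and then dispatches that case via the elementary inclusion $\fa^m\subseteq\fa_{\lfloor mc\rfloor}(v)$ and Fekete. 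Your computational core is more transparent and self-contained, but the reduction step still leans on a structural fact from \cite{BdFF} (that the nef cone over $p$ is the closure of the $\Q_{>0}$-cone on the $Z(\fa)$'s), as you acknowledge. Neither route avoids nontrivial input from \cite{BdFF}; they simply draw on different pieces of that machinery, the paper's being the envelope formalism and yours the asymptotic ideal description.
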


\begin{proof} Write $v=c \, \ord_E$ for a prime divisor $E$ on $X_\pi$ and $c\in\Q_{>0}$. Then $Z(v)$ coincides with $\mathrm{Env}_\pi(-c^{-1} E)$ (see~\cite[Definition~2.3]{BdFF}), and the result thus follows from~\cite[Proposition~2.12]{BdFF}. 
\end{proof}

\subsection{Normalized volume and $b$-divisors}

From now on, we assume that the normal singularity $p\in X$ is further \emph{isolated}. 

By~\cite[Theorem~4.14]{BdFF}, the intersection pairing~\eqref{equ:intpairing} then extends to arbitrary tuples of nef $b$-divisors over $p$. This extended pairing takes values in $\R\cup\{-\infty\}$, and is symmetric, additive and non-decreasing in each variable, and continuous along decreasing nets. 

\begin{defi} 
For any nef $b$-divisor $B$ over $p$, we define the \emph{Hilbert--Samuel multiplicity} of $B$ as 
$$
e(B):=-B^n\in [0,+\infty]. 
$$
When $(X,p)$ is further klt, we define the \emph{log canonical threshold} of $B$ as 
$$
\lct(B):=\inf_{v\in\DivVal(X,p)}\frac{\ld_X(v)}{-v(B)}\in [0,+\infty), 
$$
where $\ld_X(v)\ge 0$ denotes the log discrepancy of $v$. 
\end{defi}

\begin{example}\label{exam:idealb} 
For any primary ideal $\fa\subset\cO_{X,p}$, the associated nef Cartier $b$-divisor $B:=Z(\fa)$ (see Example~\ref{exam:Zfa}) satisfies $e(B)=e(\fa)$, and $\lct(B)=\lct(\fa)$ when $(X,p)$ is klt. 
\end{example}

\begin{example} Pick any valuation $v$ centered at $p$, with associated nef $b$-divisor $Z(v)$ (see Example~\ref{exam:Zv}). Then it follows from~\cite[Remark~4.17]{BdFF} that the volume
$
\vol(v):=\lim_{m\to\infty}\frac{n!}{m^n}\dim\cO_{X,p}/\fa_m(v)
$
satisfies
\begin{equation}\label{equ:volint}
\vol(v)=e(Z(v)). 
\end{equation}
\end{example}

\begin{lem}\label{lem:approx} For each nef $b$-divisor $B$ over $p$, we have $e(B)=\sup_{C\ge B} e(C)$, where $C$ ranges over all nef Cartier $b$-divisors of the form $C=m^{-1}Z(\fa)$ for a primary ideal $\fa\subset\cO_{X,p}$ and $m\in\Z_{>0}$, and such that $C\ge B$. 
\end{lem}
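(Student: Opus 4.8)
I will prove the two inequalities separately, the easy one being $\sup_{C\ge B}e(C)\le e(B)$ and the substantial one being the reverse.

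\textbf{The bound $\sup_{C\ge B}e(C)\le e(B)$.} Every competitor $C=m^{-1}Z(\fa)$ is a nef $b$-divisor by Example~\ref{exam:Zfa}, so the extended intersection pairing of \cite[Theorem~4.14]{BdFF} applies to it. Since that pairing is non-decreasing in each variable, $C\ge B$ forces $C^n\ge B^n$, hence $e(C)=-C^n\le -B^n=e(B)$. This half uses nothing about the special shape of $C$.

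\textbf{The bound $\sup_{C\ge B}e(C)\ge e(B)$.} It suffices, given any real $t<e(B)$, to produce a primary ideal $\fa\subset\cO_{X,p}$ and $m\in\Z_{>0}$ with $m^{-1}Z(\fa)\ge B$ and $e\big(m^{-1}Z(\fa)\big)\ge t$. The starting point is that, by the construction of the extended pairing in \cite{BdFF}, $B$ is the decreasing limit of the net of its nef Cartier over-approximants, along which $e(\cdot)$ is continuous (and then monotone increasing); equivalently $e(B)=\sup\{e(C):C\text{ nef Cartier},\ C\ge B\}$. Fix such a $C$ with $e(C)\ge t$, determined on a birational model $\pi$, so $C_\pi\le 0$ is $\pi$-nef. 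The plan is then to replace $C$ by a nef Cartier $b$-divisor $\tilde C\ge B$ whose incarnation on a sufficiently high model is relatively semiample, while keeping $e(\tilde C)$ as close to $e(C)$ as desired: on a model $\pi'$ dominating $\pi$ one perturbs $C_{\pi'}$ by a small positive rational multiple of a $\pi'$-ample exceptional divisor (available after refining $\pi'$, as in Proposition~\ref{prop:Alpha_Resolution}), using the slack in $C\ge B$ to keep the perturbed $b$-divisor $\ge B$. Once $-\tilde C_{\pi'}$ is relatively ample, $\cO_{X_{\pi'}}(m\tilde C_{\pi'})$ is relatively globally generated for $m\gg 0$, so the $\mathfrak m_p$-primary ideal $\fa:=\pi'_*\cO_{X_{\pi'}}(m\tilde C_{\pi'})$ satisfies $\fa\cdot\cO_{X_{\pi'}}=\cO_{X_{\pi'}}(m\tilde C_{\pi'})$; hence $Z(\fa)=m\tilde C$ is determined on $\pi'$, $m^{-1}Z(\fa)=\tilde C\ge B$, and by Example~\ref{exam:idealb} together with multilinearity $e\big(m^{-1}Z(\fa)\big)=m^{-n}e(\fa)=e(\tilde C)$. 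Letting $t\nearrow e(B)$ gives the claim, and combining with the first half yields equality.

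\textbf{The main obstacle.} The delicate point is the perturbation step. A $\pi'$-ample exceptional divisor is strictly negative along \emph{every} component of the fibre over $p$, so perturbing by it makes $\tilde C$ strictly more singular than $C$ in every divisorial direction; one must then check that enough room between $C$ and $B$ survives for $\tilde C\ge B$ to persist. In the extreme case where $B$ already coincides with its tightest nef Cartier over-approximant along some divisorial valuation, this is impossible for that particular $C$, and one is forced to first choose $C$ slightly less singular than the tightest over-approximant and to keep track of the (small) loss in $e(C)$ this entails. This is exactly where one uses the envelope and negativity machinery of \cite[Lemmas~2.10--2.11 and Proposition~2.12]{BdFF} — the same input behind Lemma~\ref{lem:neglem} — and the continuity of the extended pairing, rather than any hands-on estimate.
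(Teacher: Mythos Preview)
Your easy inequality is fine. The substantial direction, however, has a real gap in the perturbation step, and your final paragraph acknowledges rather than resolves it.

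Concretely: a $\pi'$-ample exceptional divisor $H$ is antieffective, so your perturbed $\tilde C=C+\varepsilon H$ satisfies $\tilde C\le C$, not $\tilde C\ge C$. Thus $\tilde C\ge B$ is \emph{not} inherited from $C\ge B$; you would need $\varepsilon\,(-v(H))\le v(C)-v(B)$ simultaneously for \emph{all} divisorial $v$ centered at $p$. Your proposed fix (``choose $C$ with slack everywhere'') is not justified: for a non-Cartier nef $B$, every nef Cartier $C\ge B$ close to $B$ may touch $B$ along some valuation, and you give no mechanism producing a strictly larger $C$ while keeping $e(C)$ close to $e(B)$. Invoking ``the envelope and negativity machinery of \cite{BdFF}'' at the end is a pointer, not an argument. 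A related issue is that your very first reduction, $e(B)=\sup\{e(C):C\text{ nef Cartier},\,C\ge B\}$, is asserted but not proved; the definition of nef only says $B$ is a limit of nef Cartier $b$-divisors, not a decreasing limit of such.

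The paper avoids both difficulties by a different two-step reduction. First, $B$ is the decreasing limit of the net $\mathrm{Env}_\pi(B_\pi)$ (\cite[Remark~2.17]{BdFF}); these envelopes are nef $b$-divisors lying above $B$ (but typically \emph{not} Cartier), and continuity of the extended pairing along decreasing nets reduces to $B=\mathrm{Env}_\pi(B_\pi)$. Second, \cite[Theorem~4.11]{BdFF} furnishes, for any such envelope, a decreasing sequence of $b$-divisors $C_i=m_i^{-1}Z(\fa_i)\ge B$ with $C_i\to B$, hence $e(C_i)\to e(B)$. No perturbation from nef to ample is needed, and the inequality $C_i\ge B$ comes for free from the construction of the $\fa_i$. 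If you want to repair your route, you should replace the ad hoc ampleness perturbation by this ideal-theoretic approximation of envelopes.
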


\begin{proof} 
Since $B$ is the limit of the decreasing net $(\mathrm{Env}_\pi(B_\pi))$ (see~\cite[Remark~2.17]{BdFF}), it is enough to prove the result when $B=\mathrm{Env}_\pi(B_\pi)$, by continuity of the intersection pairing along decreasing nets. By~\cite[Theorem~4.11]{BdFF}, we can then write $B$ as the limit of a decreasing sequence $(C_i)$ of nef Cartier $b$-divisors of the desired form, and we are done since $e(C_i)\to e(B)$. 
\end{proof}

Consider now a psh function $\f$ on $X$. The collection of its Lelong numbers on all birational models defines a homogeneous function $v\mapsto v(\f)$ on $\DivVal(X,p)$, and hence an antieffective $b$-divisor $Z(\f)$ over $p$, such that $v(Z(\f))=-v(\f)$. 

\begin{prop}\label{prop:Zpsh} 
The $b$-divisor $Z(\f)$ is nef. Further:
\begin{itemize}
\item[(i)] if $\f$ is locally bounded outside $p$, then 
$$
e(Z(\f))\le e(\f):=(dd^c\f)^n(\{p\});
$$
\item[(ii)] if $(X,p)$ is klt, then $\lct(Z(\f))=\lct(\f)$. 
\end{itemize}
\end{prop}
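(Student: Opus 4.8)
The plan is to establish the three assertions in turn. The common tools are the Siu decomposition of positive closed currents, the properties of the extended intersection pairing on nef $b$-divisors recalled above, Lemma~\ref{lem:approx}, and — for (i) and (ii) — Demailly's comparison theorem together with his analytic (Ohsawa--Takegoshi) approximation of psh functions. The crux of the whole argument will be, in item (i), the passage from a valuative comparison of Lelong numbers to a genuine $O(1)$-domination of functions, which relies essentially on algebraicity of the singularities; a secondary, more formal point is that Demailly's comparison theorem and analytic approximation must be applied on the possibly singular germ $(X,p)$ — harmless here, as explained below.

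\textbf{Nefness.} Fix a birational model $\pi\colon X_\pi\to X$ and let $E_1,\dots,E_M$ be the prime divisors contracted to $p$, with SNC support. Put $\nu_i:=\nu(\f,E_i)$, so $Z(\f)_\pi=-\sum_i\nu_i E_i$. The current $T:=dd^c(\pi^*\f)-\sum_i\nu_i[E_i]$ is positive (one subtracts from $dd^c(\pi^*\f)$ only part of its divisorial Siu component) and, by the very definition of the generic Lelong numbers $\nu_i$, has vanishing generic Lelong number along each $E_i$; hence $[T]$ lies in the relative movable cone of $X_\pi$ over $X$. Since $\pi^*\f$ is a global potential, $dd^c(\pi^*\f)$ is $dd^c$-exact, hence numerically trivial over $X$, so $[T]=-\sum_i\nu_i E_i=Z(\f)_\pi$ in $\Num(X_\pi/X)$. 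Thus $Z(\f)_\pi$ is movable for every $\pi$, and Lemma~\ref{lem:nefmov} yields the nefness of $Z(\f)$.

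\textbf{Proof of (i).} Assume $\f$ is locally bounded outside $p$. By Lemma~\ref{lem:approx}, $e(Z(\f))=\sup_C e(C)$, where $C=m^{-1}Z(\fa)$ runs over the nef Cartier $b$-divisors with $\fa\subset\cO_{X,p}$ primary, $m\in\Z_{>0}$, and $C\ge Z(\f)$. Fix such a $C$ and set $\psi:=m^{-1}\varphi_\fa$; then $Z(\psi)=C$, the function $\psi$ has algebraic singularities and is locally bounded off $p$ (as $\fa$ is primary), and by Example~\ref{exam:idealb} and Proposition~\ref{prop:lct},
\[
e(C)=m^{-n}e(\fa)=m^{-n}(dd^c\varphi_\fa)^n(\{p\})=(dd^c\psi)^n(\{p\}).
\]
The inequality $C\ge Z(\f)$ reads $\nu(\psi,E)\le\nu(\f,E)$ for every prime divisor $E$ over $p$. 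On a log resolution $\pi$ of $\fa$ one has $\pi^*\varphi_\fa=\sum_i\mathrm{ord}_{E_i}(\fa)\log|s_{E_i}|^2+O(1)$, while — since $\f$ is bounded off $p$, its Siu decomposition has no non-exceptional divisorial part — $\pi^*\f=\sum_i\nu(\f,E_i)\log|s_{E_i}|^2+v$ with $v$ the difference of a psh and a smooth function, hence locally bounded above; therefore, near $\pi^{-1}(p)$,
\[
\pi^*\psi-\pi^*\f=\sum_i\Bigl(\tfrac{\mathrm{ord}_{E_i}(\fa)}{m}-\nu(\f,E_i)\Bigr)\log|s_{E_i}|^2-v+O(1)\ \ge\ -C',
\]
each summand being $\ge0$ there, so $\psi\ge\f-C'$ near $p$. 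Demailly's comparison theorem now gives $(dd^c\psi)^n(\{p\})\le(dd^c\f)^n(\{p\})=e(\f)$, and taking the supremum over $C$ yields $e(Z(\f))\le e(\f)$. (This passage from $\nu(\psi,\cdot)\le\nu(\f,\cdot)$ to $\psi\ge\f-O(1)$ is the heart of the matter and would fail for a general psh $\psi$.)

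\textbf{Proof of (ii).} Assume $(X,p)$ klt; then $\mu_p$ has finite mass and, on any log resolution, $\pi^*\mu_p=\prod_i|s_{E_i}|^{2a_i}dV$ with $a_i=\ld_X(E_i)-1>-1$. For a psh function $\psi$ with algebraic singularities this gives at once, on a common log resolution, the valuative identity $\lct(\psi)=\inf_E\ld_X(E)/\nu(\psi,E)=\lct(Z(\psi))$ (Proposition~\ref{prop:lct}). The inequality $\lct(\f)\le\lct(Z(\f))$ is standard: if $c<\lct(\f)$ then $e^{-c\f}\in L^1(\mu_p)$, and a Jensen-type estimate (equivalently Skoda's theorem) forces $c\,\nu(\f,E)\le\ld_X(E)$ for every prime divisor $E$ over $p$, whence $c\le\lct(Z(\f))$. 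For the reverse inequality, let $\f_m$ be Demailly's analytic approximants: psh with algebraic singularities, $\f_m\ge\f-C_1/m$, and $\nu(\f_m,E)\le\nu(\f,E)$ for all $E$. If $c<\lct(Z(\f))$ then $c\le\inf_E\ld_X(E)/\nu(\f,E)\le\inf_E\ld_X(E)/\nu(\f_m,E)=\lct(\f_m)$, so $\int_\Omega e^{-c\f_m}d\mu_p<\infty$ and hence $\int_\Omega e^{-c\f}d\mu_p\le e^{cC_1/m}\int_\Omega e^{-c\f_m}d\mu_p<\infty$; letting $c\uparrow\lct(Z(\f))$ gives $\lct(\f)\ge\lct(Z(\f))$. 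The only genuinely non-elementary input here is the analytic approximation of $\f$ (equivalently, the strong openness property), used as a black box; as in (i), the singular ambient space causes no trouble because all estimates take place locally on a resolution.
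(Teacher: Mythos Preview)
Your treatment of nefness and of (i) is correct and matches the paper's proof essentially line for line; the paper compresses your passage from $Z(\psi)\ge Z(\varphi)$ to $\psi\ge\varphi+O(1)$ into one sentence invoking the Siu decomposition on a log resolution of $\fa$, which is exactly what you spell out.

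In (ii), however, the ``reverse'' inequality $\lct(\varphi)\ge\lct(Z(\varphi))$ has a genuine gap. From $\varphi_m\ge\varphi-C_1/m$ you obtain $-c\varphi_m\le -c\varphi+cC_1/m$, hence
\[
\int_\Omega e^{-c\varphi_m}\,d\mu_p\ \le\ e^{cC_1/m}\int_\Omega e^{-c\varphi}\,d\mu_p,
\]
which is the \emph{opposite} of the inequality you wrote. Demailly's analytic approximants are \emph{less} singular than $\varphi$ (this is precisely why $\nu(\varphi_m,E)\le\nu(\varphi,E)$), so they bound $e^{-c\varphi}$ only from \emph{below}; they cannot, by themselves, force $e^{-c\varphi}\in L^1(\mu_p)$. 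What you actually need is the implication
\[
\big(\forall E:\ c\,\nu(\varphi,E)<A_X(E)\big)\ \Longrightarrow\ e^{-c\varphi}\in L^1(\mu_p),
\]
and this is exactly the valuative integrability criterion of \cite[Theorem~B.5]{BBJ21}, which the paper invokes directly (applied to $\pi^\star\varphi$ on a log resolution). Your parenthetical remark that strong openness is the essential non-elementary input is correct in spirit---\cite[Theorem~B.5]{BBJ21} ultimately rests on Guan--Zhou---but the explicit approximation step as you wrote it does not go through, and strong openness must be used in the more substantive form of the valuative criterion rather than via the naive comparison with $\varphi_m$.
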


\begin{proof} 
Consider the closed positive $(1,1)$-current $T:=dd^c\f$, and pick a log resolution $\pi\colon X_\pi\to X$ of $(X,p)$. The Siu decomposition of $\pi^\star T=dd^c\pi^\star\f$ shows that $\pi^\star T+[Z(\f)_\pi]$ is a positive current with zero generic Lelong numbers along each component of $\pi^{-1}(p)$. By Demailly regularization, it follows that the class of $Z(\f)_\pi$ in $\Num(X_\pi/X)$ is nef in codimension $1$, and hence that $Z(\f)$ is nef (see Lemma~\ref{lem:nefmov}). 

Assume next that $\f$ is locally bounded outside $p$, and pick a primary ideal $\fa\subset\cO_{X,p}$ and $m\in\Z_{>0}$ such that $C:=m^{-1} Z(\fa)\ge Z(\f)$. Choose a finite set of local generators $(f_i)$ of $\fa$, and consider the psh function $\p:=m^{-1}\log\sum_i|f_i|$. Then $Z(\f)\le C=Z(\p)$, and hence $\f\le\p+O(1)$ (to see this, pull back $\f$ and $\p$ to a log resolution of $\fa$, and use the Siu decomposition). By Demailly's comparison theorem, it follows that $e(C)=e(\p)\le e(\f)$, and taking the supremum over $C$ yields (i), by Lemma~\ref{lem:approx}. 

Finally, (ii) is a rather simple consequence of~\cite[Theorem~B.5]{BBJ21} applied to the pullback of $\f$ to a log resolution of $(X,p)$. 
\end{proof}

We can now state the following variant of Proposition~4.8. 

\begin{thm} Let $(X,p)$ be an isolated klt singularity. Then 
$$
\hvol(X,p)=\inf_B e(B)\lct(B)^n=\inf_\f e(\f)\lct(\f)^n,
$$
where $B$ runs over all nef $b$-divisors over $p$, and $\f$ runs over all psh functions on $X$ that are locally bounded outside $p$. 
\end{thm}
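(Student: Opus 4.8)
The plan is to show that the three quantities $N:=\hvol(X,p)$, $N_B:=\inf_B e(B)\lct(B)^n$ (infimum over nonzero nef $b$-divisors over $p$, the zero divisor contributing the degenerate product $0\cdot(+\infty)^n$) and $N_\f:=\inf_\f e(\f)\lct(\f)^n$ (infimum over psh $\f$ on $X$ locally bounded outside $p$) all coincide, by proving the chain of inequalities $N_B\le N$, $N\le N_B$, $N\le N_\f$, $N_\f\le N$. I would use throughout the reformulation $N=\inf_v\ld_X(v)^n\vol(v)$ of Definition~\ref{def:normvol}, the infimum running over divisorial valuations $v$ centered at $p$ (legitimate since $\ld_X(v)^n\vol(v)$ is unchanged under rescaling $v$).

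\emph{Inequalities involving $b$-divisors.} For $N_B\le N$, to a divisorial valuation $v$ I would associate the nef $b$-divisor $Z(v)$ of Example~\ref{exam:Zv}. It satisfies $e(Z(v))=\vol(v)$ by~\eqref{equ:volint}; moreover $-v(Z(v))=v(\fa_\bullet(v))\ge 1$, since the valuation ideals satisfy $v(\fa_m(v))\ge m$ and $m\mapsto v(\fa_m(v))$ is subadditive, so $\lct(Z(v))=\inf_w\ld_X(w)/(-w(Z(v)))\le\ld_X(v)/(-v(Z(v)))\le\ld_X(v)$. Hence $e(Z(v))\lct(Z(v))^n\le\ld_X(v)^n\vol(v)$, and taking the infimum over $v$ gives $N_B\le N$. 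For the opposite inequality $N\le N_B$, fix a nonzero nef $b$-divisor $B$ and apply the negativity Lemma~\ref{lem:neglem}: $B\le -v(B)\,Z(v)$ for every divisorial $v$. By monotonicity and homogeneity of the extended intersection pairing this yields $B^n\le(-v(B))^nZ(v)^n=-(-v(B))^n\vol(v)$, i.e.\ $e(B)\ge(-v(B))^n\vol(v)$. Given $\delta>0$, choosing a divisorial $v_\delta$ with $\ld_X(v_\delta)/(-v_\delta(B))\le\lct(B)+\delta$ then gives $e(B)\big(\lct(B)+\delta\big)^n\ge\ld_X(v_\delta)^n\vol(v_\delta)\ge N$, and letting $\delta\to0$ yields $e(B)\lct(B)^n\ge N$. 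Together these prove $N=N_B$.

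\emph{Inequalities involving potentials.} For $N\le N_\f$, given $\f$ as in the statement, Proposition~\ref{prop:Zpsh} tells us $Z(\f)$ is nef with $e(Z(\f))\le e(\f)$ and $\lct(Z(\f))=\lct(\f)$; combining with $N=N_B$ we get $e(\f)\lct(\f)^n\ge e(Z(\f))\lct(Z(\f))^n\ge N$. For $N_\f\le N$, I would invoke Liu's Theorem~\ref{thm:NormVol} to write $N=\inf_{\fa}\lct(X,\fa)^n\,e(X,\fa)$, the infimum over $\mathfrak m_p$-primary coherent ideals $\fa$. For such an $\fa$, with local generators $f_1,\dots,f_m$, the function $\f_\fa:=\log\big(\sum_i|f_i|^2\big)$ is psh on $X$ and continuous (hence locally bounded) away from $p$, and Proposition~\ref{prop:lct} gives $e(\f_\fa)=(dd^c\f_\fa)^n(\{p\})=e(X,\fa)$ and $\lct(\f_\fa)=\lct(X,\fa)$. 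Thus $N_\f\le e(\f_\fa)\lct(\f_\fa)^n=\lct(X,\fa)^n\,e(X,\fa)$ for every such $\fa$, whence $N_\f\le N$ and finally $N=N_\f$.

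I do not expect a serious obstacle here: the depth has already been invested, namely the intersection theory of nef $b$-divisors over an isolated singularity (in particular $Z(v)^n=-\vol(v)$ and the negativity Lemma~\ref{lem:neglem}), Demailly's comparison theorem (the reason Proposition~\ref{prop:Zpsh}(i) only yields the inequality $e(Z(\f))\le e(\f)$ rather than an equality), and Liu's valuative formula for $\hvol$; the argument above is then essentially a matching of definitions. The one point requiring a genuine if minor verification is the normalization estimate $-v(Z(v))\ge 1$ used in the first step, and one should be slightly careful that the possibly strict inequality $e(Z(\f))<e(\f)$ does no harm — which it does not, since $e(Z(\f))$ appears only on the larger side of every inequality we need, and this is precisely why the theorem is stated with an infimum rather than the pointwise identity.
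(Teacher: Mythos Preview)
Your proof is correct and follows essentially the same route as the paper's. The only minor variation is in how you obtain $N_B\le N$: the paper invokes Liu's theorem to write $N=\inf_\fa e(\fa)\lct(\fa)^n$ and then uses Example~\ref{exam:Zfa} to view each primary ideal as a nef Cartier $b$-divisor, whereas you go directly through $Z(v)$ and the estimate $-v(Z(v))\ge 1$; both are valid, and since you still invoke Liu's theorem for $N_\f\le N$, the overall ingredients are identical.
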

\begin{proof} By Theorem \ref{thm:NormVol} we have $\hvol(X,p)=\inf_\fa e(\fa)\lct(\fa)^n$, where $\fa\subset\cO_{X,p}$ runs over all primary divisors, and hence $\hvol(X,p)\ge\inf_B e(B)\lct(B)^n$, by Example~\ref{exam:idealb}. Conversely, pick a nef $b$-divisor $B$ over $p$. For any $v\in\DivVal(X,p)$, Lemma~\ref{lem:neglem} yields $B\le -v(B) Z(v)$. By monotonicity and homogeneity of the intersection pairing, this yields $B^n\le (-v(B))^n Z(v)^n$, \ie $e(B)\ge (-v(B))^n \vol(v)$, by~\eqref{equ:volint}. Thus 
$$
e(B)\left(\frac{\ld_X(v)}{-v(B)}\right)^n\ge\ld_X(v)^n\vol(v)\ge\hvol(X,p). 
$$
Taking the infimum over $v$ yields $e(B)\lct(B)^n\ge\hvol(X,p)$ for any nef $b$-divisor $B$ over $p$, and hence also $e(\f)\lct(\f)^n\ge\hvol(X,p)$ for any psh function $\f$ locally bounded outside $p$, by Proposition~\ref{prop:Zpsh}. 
\end{proof}


\begin{thebibliography}{99}


  \bibitem[ACC12]{ACC12} P.\AA hag, U.Cegrell, R.Czy\.z,
\emph{ On Dirichlet's principle and problem. }
Math. Scand. 110 (2012), no. 2, 235-250. 

  \bibitem[ACKPZ09]{ACKPZ09} P.\AA hag, U.Cegrell, S.Ko\l odziej, H.H.Pham, A.Zeriahi,
\emph{Partial pluricomplex energy and integrability exponents of psh functions. }
Adv. Math. 222 (2009), no. 6, 2036-2058.

 
 
  

  \bibitem [BGL22] {BGL20} B.Bakker, H.Guenancia, C.Lehn, 
{\it Algebraic approximation and decomposition theorem for K\"ahler Calabi-Yau varieties},
 Invent. Math. 228 (2022), no. 3, 1255-1308. 

  
  \bibitem[BT82]{BT82} E.~Bedford, B.~A. Taylor, 
\emph{A new capacity for plurisubharmonic functions}.
Acta Math. {149} (1982), no.~1-2, 1--40.  
  
 
    \bibitem[BB22]{BB22} R.Berman, B.Berndtsson
\emph{Moser-Trudinger type inequalities for complex Monge-Amp\`ere operators and Aubin's "hypoth\`ese fondamentale"}.
  Ann. Fac. Sci. Toulouse Math. (6) 31 (2022), no. 3, 595-645. 
 
 \bibitem[BB10]{BB10} R.J.Berman, S.Boucksom, 
 \emph{Growth of balls of holomorphic sections and energy at equilibrium. }
 Invent. Math. 181 (2010), no. 2, 337-394.
 

\bibitem[BBEGZ19]{BBEGZ} R.J.Berman, S.Boucksom, P.Eyssidieux, V.Guedj, A.Zeriahi,
  \emph{K\"{a}hler-{E}instein metrics and {K}\"{a}hler-{R}icci flow on log  {F}ano varieties}, 
  J. Reine Ang. Math. {751} (2019), 27--89.
   

\bibitem[BBGZ13]{BBGZ13} R.~J. Berman, S.~Boucksom, V.~Guedj, and A.~Zeriahi, 
\emph{A variational approach to complex {M}onge-{A}mp\`ere equations}, 
  Pub.Math. I.H.E.S. {117} (2013), 179--245.  

\bibitem[BBJ21]{BBJ21}
R.~J. Berman, S.Boucksom, M.Jonsson, 
\emph{{A variational approach to the  Yau-Tian-Donaldson conjecture}},
   J. Amer. Math. Soc. (2021) no. 3, 605-652.
   
   \bibitem[BdFF12]{BdFF} 
S.~Boucksom, T.~de Fernex, C.~Favre. 
\newblock\emph{The volume of an isolated singularity}. 
\newblock Duke Math. J. \textbf{161} (2012), 1455--1520. 

\bibitem[BFJ08]{hiro} 
S.~Boucksom, C.~Favre and M.~Jonsson.
\newblock \emph{Valuations and plurisubharmonic singularities}.
\newblock Publ. Res. Inst. Math. Sci.  \textbf{44}  (2008), 449--494. 

\bibitem[Blo06]{Blo06} Z.B\l ocki,
\emph{ The domain of definition of the complex Monge-Amp\`ere operator. }
Amer. J. Math. 128 (2006), no. 2, 519-530. 

\bibitem[Blum18]{Blum18} H.Blum
\emph{Existence of valuations with smallest normalized volume.}
 Compos. Math. 154 (2018), no. 4, 820-849.
   
\bibitem[CKNS85]{CKNS85} L.Caffarelli, J.J.Kohn, L.Nirenberg, J.Spruck,
\emph{The Dirichlet problem for nonlinear second-order elliptic equations. II.
 Complex Monge-Ampère, and uniformly elliptic, equations.}
 Comm. Pure Appl. Math. 38 (1985), no. 2, 209-252. 

 \bibitem[Ceg98]{Ceg98} U.Cegrell,
\emph{Pluricomplex energy. }
Acta Math. 180 (1998), no. 2, 187-217. 

 \bibitem[Ceg04]{Ceg04} U.Cegrell,
\emph{The general definition of the complex Monge-Amp\`ere operator.}
 Ann. Inst. Fourier (Grenoble) 54 (2004), no. 1, 159-179. 
 

 

  

\bibitem[CDS15]{CDS15} X.X.~Chen, S.~Donaldson, S.~Sun,
\emph{K\"ahler-Einstein metrics on Fano manifolds, I, II \&  III }, 
  J. Amer. Math. Soc. 28 (2015), 183-197, 199-234 \& 235-278.
  
  
    \bibitem[DFS23]{DFS21} V.Datar, X.Fu, J.Song,
\emph{K\"ahler-Einstein metric near an isolated log canonical singularity}.
 J. Reine Angew. Math. 797 (2023), 79-116.
 

  \bibitem[Dem85]{Dem85}  J.-P. Demailly,
  {\it Mesures de Monge-Amp\`ere et caract\'erisation g\'eom\'etrique des vari\'et\'es alg\'ebriques affines.} 
  M\'em. Soc. Math. France (N.S.) No. 19 (1985), 124 pp. 
  
  
    
  
  \bibitem[Dem09]{Dem09} J.-P. Demailly, {\it Estimates on Monge-Ampère operators derived from a local algebra inequality}.
  Complex analysis and digital geometry, 131-143, 
Uppsala Universitet, 
2009. 
  

    
  
   \bibitem[Dem12]{Dem12}  J.-P. Demailly,  
   \emph{Analytic methods in algebraic geometry},
Surveys of Modern Mathematics, 1. International Press; 
Higher Education Press, Beijing, 2012. viii+231 pp.

 


 \bibitem[DK01]{DK01}  J.-P. Demailly, J.Kollàr,
  {\it Semi-continuity of complex singularity exponents and K\"ahler-Einstein metrics on Fano orbifolds. }
Ann. Sci. E.N.S.. (4) 34 (2001), no. 4, 525-556. 
  
  
  \bibitem[DP14]{DP14}  J.-P. Demailly, H.H.Pham,
  {\it A sharp lower bound for the log canonical threshold.}
   Acta Math. 212 (2014), no. 1, 1-9.

  
 

   
  
  
   
 

   
   
   
      \bibitem[Dr18]{Druel18}  S.Druel
\emph{A decomposition theorem for singular spaces with trivial canonical class of dimension at most five.}
    Invent. Math. 211 (2018), no. 1, 245-296.
   
   
   \bibitem[ELS03]{ELS03} L.Ein, R.Lazarsfeld, K.E.Smith,
\emph{Uniform approximation of Abhyankar valuation ideals in smooth function fields. }
Amer. J. Math. 125 (2003), no. 2, 409-440.
  

\bibitem[EGZ09]{EGZ09} P.~Eyssidieux, V.~Guedj, A.~Zeriahi, 
\emph{Singular {K}\"{a}hler-{E}instein  metrics},
 J. Amer. Math. Soc. {22} (2009), no.~3, 607--639.

 
 \bibitem[FN80]{FN80} J.-E.Forn\ae ss, R.Narasimhan,
\emph{The Levi problem on complex spaces with singularities.}
 Math. Ann. 248 (1980), no. 1, 47-72. 
 
  \bibitem[Fu23]{Fu21} X.Fu,
\emph{Dirichlet problem of complex Monge-Amp\`ere equation near an isolated KLT singularity.}
Commun. Contemp. Math. 25 (2023), no. 5, Paper No. 2250019, 18 pp.



 
 \bibitem[Fuj19]{Fuj19} K. Fujita, \textit{Uniform K-stability and plt blowups of log Fano pairs.}, Kyoto Journal of Mathematics 59, 2 (2019), 399–418
 
  \bibitem[GL10]{GL10}  B.Guan, Li
\emph{Complex Monge-Amp\`ere equations and totally real submanifolds.}
 Adv. Math. 225 (2010), no. 3, 1185-1223. 
 
 
  \bibitem[GZh15]{GZh15}  Q.Guan, X.Zhou,
\emph{A proof of Demailly's strong openness conjecture.}
 Ann. of Math. (2) 182 (2015), no. 2, 605-616. 
 
 
  \bibitem[GGZ23]{GGZ23} V.~Guedj, H.Guenancia, A.Zeriahi,
\emph{Continuity of singular K\"ahler-Einstein potentials}.
Int. Math. Res. Not. (2023), no. 2, 1355-1377.


   
 \bibitem[GKY13]{GKY13} V.~Guedj, B.Kolev, N.Yeganefar
\emph{K\"ahler-Einstein fillings.}
 J. Lond. Math. Soc. (2) 88 (2013), no. 3, 737-760.
 


 

\bibitem[GZ]{GZbook} V.~Guedj, A.~Zeriahi, 
\emph{Degenerate complex {M}onge-{A}mp\`ere  equations}, 
EMS Tracts in Mathematics, vol.~26, European Mathematical Society  (EMS), Z\"{u}rich, 2017.


\bibitem[Hart77]{Hart77} R.Hartshorne,
\emph{Algebraic geometry.}
 Graduate Texts in Mathematics, No. 52. Springer-Verlag, New York-Heidelberg, 1977. xvi+496 pp.

 
 \bibitem [HS17] {HS17} H.-J.Hein, S.Sun,
 {\it Calabi-Yau manifolds with isolated conical singularities. }
 Publ. Math. I.H.E.S. 126 (2017), 73-130.
 
  \bibitem [HP19] {HP19} A.H\"oring, T.Peternell,
{\it Algebraic integrability of foliations with numerically trivial canonical bundle. }
Invent. Math. 216 (2019), 395-419. 
 
  
  \bibitem[Kol98]{Kol98} S.~Ko{\l}odziej, 
  \emph{The complex {M}onge-{A}mp\`ere equation}, 
  Acta Math.  {180} (1998), 69--117.
  
    \bibitem[Li18]{Li18} C.Li,
{\it  Minimizing normalized volumes of valuations.}
 Math. Z. 289 (2018), no. 1-2, 491-513.
  
    \bibitem[Li22]{Li22} C.Li,
{\it G-uniform stability and K\"ahler-Einstein metrics on Fano varieties. }
Invent. Math. 227 (2022), no. 2, 661-744. 

 

 \bibitem[LTW21]{LTW21} C. Li, G. Tian, F. Wang,
{\em On Yau-Tian-Donaldson conjecture for singular Fano varieties},
Comm. Pure Appl. Math. 74 (2021), no. 8, 1748-1800.

  \bibitem[LWX21]{LWX21} C. Li, X.Wang, C.Xu,
 {\em Algebraicity of the Metric Tangent Cones and Equivariant K-stability.}
 J. Amer. Math. Soc.  34 (2021) no. 4, 1175-1214

  
  \bibitem[Liu18]{Liu18} Y.Liu ,
   \emph{The volume of singular K\"ahler-Einstein Fano varieties.}
    Compos. Math. 154 (2018), no. 6, 1131-1158. 
    
      \bibitem[LWZ22]{LWZ22} Y.Liu , X.Wang, Z.Zhang,
   \emph{  Finite generation for valuations computing stability thresholds and applications to K-stability.}
   Annals of Math. 196 (2022), Issue 2, 507-566. 
   
   \bibitem[Pham14]{Pham14} H.Hiep Pham,
   \emph{The weighted log canonical threshold. }
   C. R. Math. Acad. Sci. Paris 352 (2014), no. 4, 283-288. 
   
    \bibitem[Pham18]{Pham18} H.Hiep Pham,
   \emph{  Log canonical thresholds and Monge-Ampère masses.}
    Math. Ann. 370 (2018), no. 1-2, 555-566.
    
    
  \bibitem[PSS12]{PSS12} D.H.Phong, J.Song, K.Sturm,
   \emph{Complex Monge-Amp\`ere equations.}
    Surveys in Differential Geometry 17, 1 (2012), 327-410.
   

\bibitem[Siu87]{Siu87} Y.-T. Siu,
\emph{Lectures on Hermitian-Einstein metrics for stable bundles and K\"ahler-Einstein metrics.}
 DMV Seminar, 8. Birkh\"auser Verlag, Basel, 1987.
   
       \bibitem[Tian87]{Tian87}  G.Tian,
   \emph{On K\"ahler-Einstein metrics on certain K\"ahler manifolds with $C_1(M)>0$.}
    Invent. Math. 89 (1987), no. 2, 225-246.
   
   
  
      \bibitem[Zer09]{Zer09}  A.Zeriahi
   \emph{A stronger version of Demailly's estimate on Monge-Amp\`ere operators.}
    Complex analysis and digital geometry, 144-146, 
    Uppsala Universitet, Uppsala, 2009. 
 
\bibitem [Yau78]{Yau78}  S.~T.~Yau,
\emph{On the Ricci curvature of a compact K{\"a}hler manifold and the complex Monge-Amp{\`e}re equation. I.}
 Comm. Pure Appl. Math. {31} (1978), no. 3, 339-411.  
  

\end{thebibliography}
  \end{document}